\def\res{\hbox{{\vrule height .22cm}{\leaders\hrule\hskip.2cm}}}
\DeclareMathOperator{\dist}{dist}
\DeclareMathOperator{\divergence}{div}	
\DeclareMathOperator{\Lip}{Lip}	
\DeclareMathOperator{\gph}{graph}
\DeclareMathOperator{\spt}{spt}
\DeclareMathOperator{\Clos}{Clos}
\newcommand{\dive}[1]{\divergence_{#1}}
\newcommand{\graph}[2]{\gph_{#1} #2}
\DeclareMathOperator{\ok}{k}
\DeclareMathOperator{\op}{p}
\DeclareMathOperator{\oq}{q}
\DeclareMathOperator{\prerot}{rot}
\newcommand{\bop}{\boldsymbol{\op}}
\newcommand{\boq}{\boldsymbol{\oq}}
\newcommand{\ETA}{\boldsymbol{\eta}}
\newcommand{\rot}{\boldsymbol{\prerot}}
\newcommand{\bB}{\bar{B}}
\newcommand{\bE}{\mathbf{E}}
\newcommand{\bL}{\mathbf{L}}
\newcommand{\bM}{\mathbf{M}}
\newcommand{\bV}{\mathbf{V}}
\newcommand{\bW}{\mathbf{W}}
\newcommand{\bbE}{\mathbb{E}}
\newcommand{\bbN}{\mathbb{N}}
\newcommand{\bbR}{\mathbb{R}}
\newcommand{\cH}{\mathcal{H}}
\newcommand{\cV}{\mathcal{V}}
\newcommand{\sR}{\mathscr{R}}
\newcommand{\sT}{\mathscr{T}}
\numberwithin{equation}{subsection}
\newtheorem{appendixlemma}[equation]{Lemma}
\newtheorem*{lemma}{Lemma}
\newtheorem*{definition}{Definition}
\newtheorem*{theorem}{Theorem}
\newtheorem*{corollary}{Corollary}
\begin{document}
\begin{flushleft}

AUTHOR: Leobardo Rosales 

\medskip

TITLE: Partial boundary regularity for co-dimension one area-minimizing currents at immersed $C^{1,\alpha}$ tangential boundary points.

\medskip

ABSTRACT: We give partial boundary regularity for co-dimension one absolutely area-minimizing currents at points where the boundary consists of a sum of $C^{1,\alpha}$ submanifolds, possibly with multiplicity, meeting tangentially, given that the current has a tangent cone supported in a hyperplane with constant orientation vector; this partial regularity is such that we can conclude the tangent cone is unique. The proof closely follows that giving the boundary regularity result of Hardt and Simon in \cite{HS79}.

\medskip

KEYWORDS: Currents; Area-minimizing; Boundary Regularity.

\medskip

MSC numbers: 28A75; 49Q05; 49Q15; 

\section{Introduction}

Through a careful modification of the work found in \cite{HS79}, we are able to give partial regularity for co-dimension one absolutely area-minimizing currents at points where the boundary is tangentially $C^{1,\alpha}$ immersed. Our main result, Theorem \ref{main}, can be heuristically stated as follows:

\medskip

{\bf Theorem \ref{main}} \emph{Suppose $T$ is an $n$-dimensional absolutely area-minimizing integer rectifiable current in an open subset of $\bbR^{n+1}$ containing the origin, and that near the origin $\partial T$ consists of a sum of $(n-1)$-dimensional $C^{1,\alpha}$ orientable submanifolds for some $\alpha \in (0,1]$, each possibly with multiplicity, meeting tangentially (with same orientation) at the origin. Suppose as well that $T$ has a tangent cone at the origin 
$$\mathbb{C} = M \bbE^{n} \res \{ (y_{1},\ldots,y_{n}) \in \bbR^{n} : y_{n} > 0 \} + m \bbE^{n} \res \{ (y_{1},\ldots,y_{n}) \in \bbR^{n}: y_{n} < 0 \}$$
where $M,m$ are positive integers with $m \leq M-1.$ Then near the origin, there is a large region of the horizontal hyperplane $\bbR^{n} \times \{0\}$ such that the support of $T$ over this region is the graph of a $C^{1,\frac{\alpha}{4n+6}}$ function.}

\medskip

Furthermore, the region is such that we can conclude $\mathbb{C}$ is the unique tangent cone of $T$ at the origin. Here, $\bbE^{n}$ is the current associated to the hyperplane $\bbR^{n} \times \{0\}$ with usual orientation; see 4.1.7 of \cite{F69}. See 4.3.16 of \cite{F69} for the definition of a tangent cone of a current.

\medskip

Theorem \ref{main} is precisely a generalization of Corollory 9.3 of \cite{HS79}, after applying the Hopf-type boundary point lemma given by Lemma 7  of \cite{FG57}, also appearing in \cite{HS79} as Lemma 10.1. We can get full boundary regularity via \cite{W83} in the special case that $\partial T$ is supported on exactly one $C^{1,\alpha}$ submanifold (if for example $m = M-1$), letting in this case $m \in \{0,\ldots,M-1\}$ and $M \geq 1.$ By \cite{W83} and the fact that the tangent cone of $T$ at the origin is $\mathbb{C}$ as above, if $m=0$ then $T$ corresponds to a $C^{1,\alpha}$ hypersurface-with-boundary, and if $m \geq 1$ then the support of $T$ near the origin is a real analytic hypersurface, with $T$ having multiplicity $M,m$ on either side of $\partial T.$  

\subsection{Modifying the work of Hardt and Simon}

To the reader thoroughly familiar with the entirety of \cite{HS79}, we can describe the key estimate and minor modification which allows us to carry over the proofs found in \cite{HS79}. 

\medskip

First, the key estimate needed is given by \eqref{appendixlemma2projectionmass}. This estimate is crucial to prove Lemma \ref{sec6.4}, analogous to Lemma 6.4 of \cite{HS79}. We use \eqref{appendixlemma2projectionmass} to prove Lemma \ref{sec6.4}, to show the function defined by taking the top sheet of the harmonic blowups (when the harmonic blowups are linear as in Lemma \ref{sec6.4}) is harmonic, leading to conclude the harmonic blowups are all given by the same linear function. See \eqref{6.4(15)}, where we specifically refer to \eqref{appendixlemma2projectionmass} in the proof of Lemma \ref{sec6.4}.

\medskip

The minor modification which must be made is seen in Lemma \ref{sec3.2}, which is analogous to Lemma 3.2 of \cite{HS79}. One can see in the right-hand side of the conclusion, we have replaced $c_{7} \tau^{-2} \kappa^{2}_{T}$ in Lemma 3.2 of \cite{HS79} with $c_{7} \kappa_{T}$ in Lemma \ref{sec3.2}. This difference arises from the fact that, as opposed to \cite{HS79}, the boundary $\partial T$ no longer corresponds to integrating over an embedded submanifold. As such, a slightly different proof must be given for Lemma \ref{sec3.2} than the proof of Lemma 3.2 of \cite{HS79}. We must subsequently take care that the rest of \cite{HS79} follows through keeping in mind Lemma \ref{sec3.2}; notably, we must check that Lemmas \ref{sec4.1},\ref{sec6.4} still hold.

\medskip

Besides these two points, the rest of \cite{HS79} passes through essentially without change, with only minor modifications due to the more general structure of $\partial T.$ For the reader already familiar with \cite{HS79} who wishes a more terse exposition, we reference \cite{R15}, which is a shorter version of this work in which only the differences with \cite{HS79} are explained. However, we take this opportunity to reintroduce the seminal work of \cite{HS79}, using more modern notation. We also include clarifying exposition, some of which is taken from \cite{DS93}, which extends the results of \cite{HS79} to minimizing currents with prescribed mean curvature.

\subsection{An application of Theorem \ref{main}} 

We note an application of Theorem \ref{main}, which in fact motivated the present work. Recently in \cite{R13} the author introduced the $c$-isoperimetric mass of currents, which is defined for each $c>0$ by $$\bM^{c}(T) = \bM(T) + c \bM (\partial T)^{\kappa}$$ whenever $T$ is an $n$-dimensional integer multiplicity rectifiable current in $\bbR^{n+k},$ $\bM$ is the usual mass on currents, and $\kappa = \frac{n}{n-1}$ is the isoperimetric exponent. 

\medskip

This leads to define and study a minimization problem. Let $\Gamma$ be an $(n-1)$-dimensional integer rectifiable current in $\bbR^{n+k}$ with compact support and $\partial \Gamma = 0,$ which we refer to as the fixed boundary. Define $\mathbf{I}_{\Gamma}(\bbR^{n+k})$ to be the set of $n$-dimensional integer rectifiable currents $T$ with compact support so that $\partial T = \Gamma + \Sigma$ where $\Gamma$ and $\Sigma$ have disjoint supports. We then say $\mathbf{T}_{c} \in \mathbf{I}_{\Gamma}(\bbR^{n+k})$ is a solution to the $c$-Plateau problem with respect to fixed boundary $\Gamma$ if $\mathbf{T}_{c}$ minimizes $\bM^{c}$ amongst all $T \in \mathbf{I}_{\Gamma}(\bbR^{n+k})$ (see Definition 3.3 of \cite{R13} with $U = \bbR^{n+k}$). For such $\mathbf{T}_{c},$ writing $\partial \mathbf{T}_{c} = \Gamma + \Sigma_{c}$ we refer to $\Sigma_{c}$ as the free boundary.

\medskip

Theorem 8.2 of \cite{R13} concludes there is no solution to the $c$-Plateau problem $\mathbf{T}_{c}$ with $\partial \mathbf{T}_{c} = \Gamma + \Sigma_{c}$ with nonzero free boundary $\Sigma_{c}$ a smooth embedded $(n-1)$-dimensional submanifold with parallel mean curvature (that is constant mean curvature in the sense of \cite{H73}) so that $\mathbf{T}_{c}$ near $\Sigma_{c}$ is a smooth submanifold-with-boundary. This can be used in Theorem 9.1 of \cite{R13} to show that in case the fixed boundary $\Gamma$ is one-dimensional in the plane, that is if $n=2,k=0,$ then free boundaries must always be empty. However, so-called non-trivial solutions in the limit can occur, as seen in Theorem 10.2 of \cite{R13} which shows that for small values of $c>0$ when $\Gamma$ is the square in the plane, the infimum of $\bM^{c}$ is attained in the limit by a sequence of currents in $\mathbf{I}_{\Gamma}(\bbR^{2})$ which converge to a nonempty current not in $\mathbf{I}_{\Gamma}(\bbR^{2}).$

\medskip

The author conjectures that this holds generally in $n=2,k=1:$ if the fixed boundary $\Gamma$ is one-dimensional in $\bbR^{3},$ then for each $c>0$ either every solution to the $c$-Plateau problem $\mathbf{T}_{c}$ with fixed boundary $\Gamma$ has empty free boundary, so that $\partial \mathbf{T}_{c} = \Gamma,$ or the infimum value of $\bM^{c}$ can only be attained in the limit by a sequence of currents in $\mathbf{I}_{\Gamma}(\bbR^{3}).$ Evidence for this is given by recent work by the author in \cite{R16a}, where it is proved (in case $n=2,k=1$) that $\mathbf{T}_{c}$ at singular points of the free boundary $\Sigma_{c}$ must have complicated topology; more specifically, $\mathbf{T}_{c}$ cannot be supported in a finite union of $C^{1}$ surfaces-with-boundary.

\subsection{More complete regularity}

The results of this work directly lead to the following regularity result, given by Theorem 3.18 of \cite{R16b}:

\medskip

{\bf Theorem 3.18 of \cite{R16b}:} \emph{Suppose $T$ is an $n$-dimensional absolutely area-minimizing integer rectifiable current in an open subset of $\bbR^{n+1}$ containing the origin, and that near the origin $\partial T$ consists of a sum of $(n-1)$-dimensional $C^{1,\alpha}$ orientable submanifolds for some $\alpha \in (0,1]$ through the origin, each possibly with multiplicity, which pairwise meet only tangentially (with same orientation). Suppose as well that $T$ has a tangent cone at the origin 
$$\mathbb{C} = M \bbE^{n} \res \{ (y_{1},\ldots,y_{n}) \in \bbR^{n} : y_{n} > 0 \} + m \bbE^{n} \res \{ (y_{1},\ldots,y_{n}) \in \bbR^{n}: y_{n} < 0 \}$$
where $M,m$ are positive integers with $m \leq M-1.$ Then the support of $T$ near the origin is the graph of a smooth solution to the minimal surface equation $u:\bbR^{n} \rightarrow \bbR,$ and the orientation vector of $T$ near the origin corresponds to the upward pointing unit normal of the graph of $u.$}

\medskip

We make clear that in Theorem 3.18 of \cite{R16b}, we not only assume that $\partial T$ consists of orientable submanifolds (with multiplicity) intersecting tangentially (with same orientation) at the origin, in fact we assume that anywhere a pair of these submanifolds intersect they do so tangentially. In \cite{R16b} we say such $T$ has \emph{$C^{1,\alpha}$ tangentially immersed boundary}, and there the author studies such $T$ more completely.

\medskip

As proving Theorem 3.18 of \cite{R16b} takes some effort in and of itself, we focus here on proving Theorem \ref{main}.

\subsection{Counterexamples}

The examples of stable branched minimal immersions given by \cite{SW07} and \cite{R10} show the absolutely area-minimizing hypothesis cannot be relaxed to stability. Indeed, Theorem 1 of \cite{SW07} holds that if $u_{0}$ is a solution to the two-valued minimal surface equation (see the operator $\mathcal{M}_{0}$ at the start of \S 3 of \cite{R10}) over the punctured unit disk in $\bbR^{2}$ which can be extended continuously across the origin, then $$G = \{ (re^{i \theta},u_{0}(r^{1/2} e^{i \theta/2})): r \in (0,1), \theta \in \bbR \}$$ is a stable minimal immersion with $C^{1,\alpha}$ branch point at $(0,u_{0}(0)),$ for some $\alpha \in (0,1).$ \cite{SW07} and \cite{R10} show a large non-trivial class of such solutions exist. We can thus show there is a solution $u_{0}$ to the two-valued minimal surface equation which can be extended continuously across the origin, so that $\{ (re^{i \theta},u(r^{1/2}e^{i\theta/2})): r \in (0,1), \theta \in (0,3\pi) \}$ satisfies the assumptions of Theorem \ref{main} (with $M=2,m=1,$ and with the absolutely area-minimizing condition replaced by stability) but fails to satisfy the partial regularity conclusions given there.  

\medskip

Neither does Theorem \ref{main} hold in higher co-dimensions. A counterexample is given by considering the region $\{ (re^{i \theta}, r^{3/2} e^{\frac{3i \theta}{2}}) : r > 0, \ \theta \in [0,3 \pi] \}$ of the holomorphic variety $\{ (z,w) : z^{3} = w^{2} \} \subset \mathbb{C} \times \mathbb{C} \cong \bbR^{4},$ which is still calibrated and hence area-minimizing. 

\medskip

The best general result in all co-dimensions is thus as in \cite{A75}, boundary regularity in case of currents with $C^{1,\alpha}$ embedded boundary at points of density near $1/2$; see Theorem 0.1 of \cite{DS02}, which concludes this in fact for almost minimizing currents of arbitrary co-dimension, or more generally, \cite{B10} which does this for stationary varifolds. Observe again, that the examples from \cite{SW07} and \cite{R10} show the density $=1/2$ assumption cannot be relaxed without the area-minimizing hypothesis. 

\subsection{Future work}

Clearly, we wish to extend Theorem \ref{main} to the case when $m=0,$ that is when $T$ has tangent cone at the origin
$$\mathbb{C} = M \bbE^{n} \res \{ (y_{1},\ldots,y_{n}) \in \bbR^{n}: y_{n}>0 \}$$
for $M$ a positive integer. What prevents us from modifying the work of \cite{HS79} in case $m=0$ is specifically \eqref{appendixlemma2projectionmass}, which is only generally true if $m \geq 1.$ 

\medskip

Naturally, we also wish to give a similar result to Theorem \ref{main} in all cases, without assuming $T$ has a tangent cone at the origin of a certain form. At least, we make the following:

\medskip

{\bf Conjecture:} \emph{Suppose $T$ is an $n$-dimensional absolutely area-minimizing integer rectifiable current in an open subset of $\bbR^{n+1}$ containing the origin, and that near the origin $\partial T$ consists of a sum of $(n-1)$-dimensional $C^{1,\alpha}$ orientable submanifolds for some $\alpha \in (0,1],$ each possibly with multiplicity, meeting tangentially (with same orientation) at the origin. Then $T$ has unique tangent cone at the origin.}

\medskip

To prove this, it may be necessary to apply the techniques of \cite{W14}, which gives a general regularity theory for co-dimension one stable minimal hypersurfaces. We suspect this as \cite{W14} proceeds by considering the geometric structure of a co-dimension one stable minimal hypersurface near points where such a hypersurface has a tangent cone consisting of a sum of half-hyperplanes meeting along a common co-dimension two subspace (the ``spine'' of the tangent cone). Moreover, \cite{W14} carries through this examination in part by generalizing the techniques of \cite{HS79}, albeit to an exceedingly sophisticated degree.

\subsection{Summary}

We now discuss the organization of this work. 

\medskip

Our aim is to extend Corollary 9.3 of \cite{HS79} to the conditions set forth by Theorem \ref{main}. This involves making small but ubiquitous changes to the proofs found in \cite{HS79}, up to Theorem 11.1 found therein. This task is undertaken in sections \ref{sec1}-\ref{proofmain}. In section \ref{sec1}, in particular in sections \ref{sec1.1,1.2,1.3},\ref{sec1.4}, we introduce our notation, which is different and more modern than the notation used in \cite{HS79}. To modify \cite{HS79}, we must rely on the calculations established in the Appendix, which contain the deeper differences between the present setting and the proof of \cite{HS79}. 

\medskip

Each section of \cite{HS79} is devoted to a large theoretical step, further divided into subsections, given either by closely related computations, lemma, or theorem. We follow the same general structure as well. We include minor, although clarifying, corrections to \cite{HS79}. We will also include expository comments, some taken from \cite{DS93}.

\medskip

Before all this, we state in section \ref{mainresults} the main result Theorem \ref{main}, giving exactly the assumptions necessary. Corollary \ref{uniquetangentcones} concludes uniqueness of tangent cones for $T$ satisfying the conditions of Theorem \ref{main}. We also remark in Theorem \ref{existencetangentcones}, using \cite{B76}, why currents $T$ as in Theorem \ref{main} must have tangent cones at such tangential boundary points.   

\section{Main results} \label{mainresults}

For the definition of absolutely area-minimizing, consult 5.1.6 of \cite{F69}. We denote $\bbE^{n-1}, \bbE^{n}$ to be the currents associated respectively to $\bbR^{n-1} \times \{0\},\bbR^{n} \times \{0\}$ in $\bbR^{n+1}$ each with usual orientation, as in 4.1.7 \cite{F69}. Given $r>0,$ we denote the homothety $\ETA_{0,\rho}(x) = x/\rho,$ and for a current $T$ we let $\ETA_{0,\rho \sharp} T$ be the push-forward of $T$ by $\ETA_{0,\rho}.$ Let also $\Clos A$ denote the closure of $A \subseteq \bbR^{n+1}.$ We now state our main result. 

\begin{theorem} \label{main} Suppose $\alpha \in (0,1]$ and $T$ is an $n$-dimensional absolutely area-minimizing locally rectifiable integer multiplicity current in $\{ x \in \bbR^{n+1}:|x| < 3\}.$ We also suppose $T$ satisfies the hypothesis:
\begin{enumerate} 
\item[($\ast$)] 
$\begin{aligned} \partial T \res \{ (x_{1},\ldots,& x_{n+1}) \in \bbR^{n+1}: |(x_{1},\ldots,x_{n-1})| < 2, \ |x_{n}| < 2 \} \\ 
& = (-1)^{n} \sum_{\ell=1}^{N} m_{\ell} \Phi_{T,\ell \sharp}(\bbE^{n-1} \res \{ z \in \bbR^{n-1}: |z| < 2 \}), 
\end{aligned}$ 
\\ where $m_{\ell}$ are positive integers, and for each $\ell \in \{1,\ldots,N\}$ and $z \in \bbR^{n-1}$ with $|z|<2$
$$\Phi_{T,\ell}(z) = (z,\varphi_{T,\ell}(z),\psi_{T,\ell}(z))$$ 
where $\varphi_{T,\ell},\psi_{T,\ell} \in C^{1,\alpha}(\{ z \in \bbR^{n-1}:|z|<2 \})$ with $\varphi_{T,\ell}(0) = 0 = \psi_{T,\ell}(0), \ D \varphi_{T,\ell}(0) = 0 = D \psi_{T,\ell}(0).$
\item[($\ast \ast$)] $T$ has a tangent cone 
$$M \bbE^{n} \res \{ (y_{1},\ldots,y_{n}) \in \bbR^{n}: y_{n} > 0 \} + m \bbE^{n} \res \{ (y_{1},\ldots,y_{n}) \in \bbR^{n}: y_{n} < 0 \}$$ 
at the origin, where $M \in \{ 2,3,\ldots \}$ and $m \in \{ 1,\ldots,M-1 \}.$
\end{enumerate}
Then there is a $\delta = \delta(n,M,m,\alpha) \in (0,1)$ sufficiently small, so that letting 
$$\begin{aligned} 
\tilde{V} & = \{ y=(y_{1},\ldots y_{n}) \in \bbR^{n} : y_{n} > |y|^{1+\frac{\alpha}{4n+6}}, \ |y| < \delta \} \\
\tilde{W} & = \{ y=(y_{1},\ldots y_{n}) \in \bbR^{n} : y_{n} < - |y|^{1+\frac{\alpha}{4n+6}}, \ |y| < \delta \},
\end{aligned}$$ 
then for $\rho>0$ sufficiently small depending on $T$ we have 
$$\begin{aligned} 
\bop^{-1}(\tilde{V}) \cap \spt \ETA_{0,\rho \sharp} T & = \graph{\tilde{V}}{\tilde{v}} \\
\bop^{-1}(\tilde{W}) \cap \spt \ETA_{0,\rho \sharp} T & = \graph{\tilde{W}}{\tilde{w}} \end{aligned}$$ 
for some $\tilde{v} \in C^{1,\frac{\alpha}{4n+6}}(\Clos \tilde{V}), \tilde{w} \in C^{1,\frac{\alpha}{4n+6}}(\Clos \tilde{W})$ such that $\tilde{v}|_{\tilde{V}},\tilde{w}|_{\tilde{W}}$ satisfy the minimal surface equation with $D \tilde{v}(0) = 0 = D \tilde{w}(0).$ Furthermore, we have 
$$\sup_{y \in \tilde{V}} \frac{|D^{2}\tilde{v}(y)|}{|y|^{\frac{\alpha}{4n+6}-1}} + \sup_{y \in \tilde{W}} \frac{|D^{2}\tilde{w}(y)|}{|y|^{\frac{\alpha}{4n+6}-1}} \leq c$$ 
for some $c=c(n,M,m) \in (0,\infty).$ 
\end{theorem}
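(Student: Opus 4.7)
\medskip

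\textbf{Plan of proof for Theorem \ref{main}.}

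The proof follows the strategy of Hardt--Simon \cite{HS79}: establish a single-step excess decay estimate at the origin, iterate it at every dyadic scale, and use the decay to produce at points away from the spine $\bbR^{n-1}\times\{0\}\times\{0\}$ of the tangent cone a graphical parametrization of $\spt T$ with H\"older control on the gradient.

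First I would normalize. By $(\ast\ast)$ and the monotonicity formula, the flat distance from $\ETA_{0,\rho\sharp}T$ to $\mathbb{C}$ tends to zero as $\rho\to 0^+$; by $(\ast)$, the dimensionless $C^{1,\alpha}$-norm $\kappa_{T}(\rho)$ of the rescaled boundary data $(\varphi_{T,\ell},\psi_{T,\ell})$ scales like $\rho^{\alpha}$. Thus, after replacing $T$ by $\ETA_{0,\rho\sharp}T$ for a small $\rho=\rho(n,M,m,\alpha,T)$, we may treat $T$ as an arbitrarily small perturbation of $\mathbb{C}$ and $\partial T$ as an arbitrarily small $C^{1,\alpha}$ perturbation of the spine. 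Away from the spine, on cylinders over $\{y\in\bbR^{n}:y_{n}>\sigma\}$ and $\{y_{n}<-\sigma\}$, Allard-type interior regularity splits $\spt T$ into $M$, respectively $m$, $C^{1,1/2}$-sheets with height excess controlled by the cylindrical excess $E$ of $T$. Normalizing sheet heights by $E$ and sending $E\to 0$ yields a \emph{harmonic blow-up}. The heart of the argument is Lemma \ref{sec6.4}, which asserts that every such blow-up reduces to a single linear function on $\bbR^{n}$; harmonicity of the top sheet is obtained from the projection-mass estimate \eqref{appendixlemma2projectionmass} (which is where $m\ge 1$ is essential), harmonicity of the remaining sheets then follows, and matching across the spine is forced by the shared orientation of the two halves of $\mathbb{C}$.

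Once Lemma \ref{sec6.4} is available, the standard iteration of \cite{HS79} produces geometric decay $E(2^{-k})\le C 2^{-k\beta}$ of the excess at dyadic scales. The loss $1/(4n+6)$ in the exponent $\beta=\alpha/(4n+6)$ arises because at each step Lemma \ref{sec3.2} contributes a boundary error of order $c_{7}\kappa_{T}\sim 2^{-k\alpha}$ which must be balanced against the geometric gain, whose constant carries an $n$-dependent polynomial factor. The region $\tilde{V}=\{y\in\bbR^{n}:y_{n}>|y|^{1+\beta},\ |y|<\delta\}$ is precisely the set of points at which this balance can be iterated successfully down to the scale $|y|$, so that on $\tilde{V}$ the $M$ upper sheets coalesce into a single $C^{1,\beta}(\Clos\tilde{V})$ function $\tilde{v}$ satisfying the minimal surface equation with $D\tilde{v}(0)=0$; symmetrically, $\tilde{w}$ is obtained on $\tilde{W}$. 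The pointwise bound on $|D^{2}\tilde{v}|$ and $|D^{2}\tilde{w}|$ follows by evaluating the excess decay at the scale $|y|$ associated to each point $y$.

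The principal obstacle is Lemma \ref{sec6.4}. In \cite{HS79} the boundary is an embedded $C^{1,\alpha}$ submanifold, so each sheet of a harmonic blow-up is automatically harmonic; here, because $\partial T$ is a sum of several $C^{1,\alpha}$ graphs meeting tangentially, only the extreme top and bottom sheets are directly harmonic, and one must deploy \eqref{appendixlemma2projectionmass} to propagate harmonicity to the remaining sheets. This is the step that genuinely requires $m\ge 1$, matching the obstruction noted in \S 1.5 to extending the theorem to $m=0$. A secondary issue, flagged in \S 1.1, is the replacement of $c_{7}\tau^{-2}\kappa_{T}^{2}$ by $c_{7}\kappa_{T}$ in Lemma \ref{sec3.2}, a consequence of $\partial T$ no longer being integration over an embedded submanifold: one must verify that this weaker boundary estimate remains compatible with the iteration, which comes down to checking that Lemmas \ref{sec4.1} and \ref{sec6.4} still go through in the present setting.
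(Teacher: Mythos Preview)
Your outline captures the broad Hardt--Simon strategy, but it omits the final, essential step. The excess decay iteration (Corollary \ref{sec9.3} in the paper) does \emph{not} by itself force the $M$ upper sheets to coalesce: it only yields $M$ ordered $C^{1,a}$ functions $\tilde{v}_1 \leq \cdots \leq \tilde{v}_M$ on $\Clos\tilde{V}$, each satisfying the minimal surface equation, each with $D\tilde{v}_i(0)=0$. Passing from this to a single $\tilde{v}$ requires an additional argument: one observes that $u = \tilde{v}_M - \tilde{v}_1 \geq 0$ satisfies a uniformly elliptic divergence-form equation on the $C^{1,a}$ domain $\tilde{V}$, vanishes at $0$, and has $Du(0)=0$; the Hopf-type boundary point lemma for divergence-form equations (Lemma \ref{sec10.1}, due to Finn--Gilbarg) then forces $u \equiv 0$. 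This step is explicitly singled out in the paper's proof (\S\ref{proofmain}) and is not a consequence of the iteration. The same Hopf mechanism is also used earlier, inside Theorem \ref{sec8.2}, to show $f_1=\cdots=f_M$ and $g_1=\cdots=g_m$ for arbitrary harmonic blowups; your outline does not account for either application.

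Your description of Lemma \ref{sec6.4} is also garbled. The individual blowups $f_i$ on $\bV$ and $g_j$ on $\bW$ are \emph{always} harmonic---here as in \cite{HS79}---since they are locally uniform limits of rescaled solutions to the minimal surface equation; the claim that ``only the extreme top and bottom sheets are directly harmonic'' is incorrect. What Lemma \ref{sec6.4} actually proves is that when all blowups are already linear ($f_i = \beta_i \boq_0$, $g_j = \gamma_j \boq_0$), the piecewise function $b$ equal to $\beta_M y_n$ on $\Clos\bV$ and $\gamma_m y_n$ on $\Clos\bW$ is harmonic on the \emph{full} ball $B^n_{1/2}$, i.e.\ across the spine $\bL$. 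This is achieved by showing $b$ minimizes the Dirichlet integral, and the projection-mass identity \eqref{appendixlemma2projectionmass} enters precisely at \eqref{6.4(15)} to make the comparison argument work; this is where $m \geq 1$ is genuinely needed. Harmonicity of $b$ across $\bL$ forces $\beta_M = \gamma_m$, a symmetric argument gives $\beta_1 = \gamma_1$, and the ordering then collapses all slopes.
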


Note that $M-m = \sum_{\ell=1}^{N} m_{\ell}.$ As noted in the introduction, the case $m=M-1$ is just Corollary 9.3 of \cite{HS79}, together with Lemma 10.1 of \cite{HS79}. Also, if $N=1,$ that is when $\partial T$ is an $(n-1)$-dimensional $C^{1,\alpha}$ submanifold with multiplicity, then Theorem \ref{main} follows in this case by the higher multiplicity boundary regularity given by \cite{W83}. Nonetheless, the proof we give below will cover all cases. The following corollary immediately follows.

\begin{corollary} \label{uniquetangentcones} 
If $T$ is as in Theorem \ref{main}, then $T$ has unique tangent cone 
$$M \bbE^{n} \res \{ (y_{1},\ldots,y_{n}) \in \bbR^{n}: y_{n} > 0 \} + m \bbE^{n} \res \{ (y_{1},\ldots,y_{n}) \in \bbR^{n}: y_{n} < 0 \}$$ 
at the origin. 
\end{corollary}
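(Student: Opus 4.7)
The plan is to show every tangent cone of $T$ at the origin equals $\mathbb{C}$, which gives uniqueness. Tangent cones of $T$ at $0$ coincide with those of $\tilde{T}_{*} := \ETA_{0,\rho_{*} \sharp} T$ at $0$ for any fixed $\rho_{*} > 0$, so I will fix $\rho_{*}$ small enough for Theorem \ref{main} to apply to $T$. Setting $\beta = \alpha/(4n+6)$, the theorem will then give $\spt \tilde{T}_{*} \cap \bop^{-1}(\tilde{V}) = \graph{\tilde{V}}{\tilde{v}}$ and $\spt \tilde{T}_{*} \cap \bop^{-1}(\tilde{W}) = \graph{\tilde{W}}{\tilde{w}}$ with $\tilde{v} \in C^{1,\beta}(\Clos \tilde{V})$, $\tilde{w} \in C^{1,\beta}(\Clos \tilde{W})$ both vanishing together with their gradients at the origin and satisfying $|D^{2}\tilde{v}(y)|, |D^{2}\tilde{w}(y)| \leq c|y|^{\beta-1}$; integration from the origin will then yield the uniform pointwise bounds $|\tilde{v}(y)|, |\tilde{w}(y)| \leq c|y|^{1+\beta}$ and $|D\tilde{v}(y)|, |D\tilde{w}(y)| \leq c|y|^{\beta}$.

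Next I will compute the tangent cones of $\tilde{T}_{*}$ at $0$ directly by scaling. For $\sigma > 0$, the restriction of $\ETA_{0,\sigma \sharp} \tilde{T}_{*}$ to $\bop^{-1}(\sigma^{-1}\tilde{V})$ is the graph of $y \mapsto \sigma^{-1} \tilde{v}(\sigma y)$. As $\sigma \searrow 0$ the set $\sigma^{-1}\tilde{V} = \{y_{n} > \sigma^{\beta}|y|^{1+\beta}, |y| < \delta/\sigma\}$ exhausts the half-space $\{y \in \bbR^{n} : y_{n} > 0\}$, and on any compact subset thereof the rescaled function and its gradient obey $|\sigma^{-1}\tilde{v}(\sigma y)| \leq c\sigma^{\beta}|y|^{1+\beta}$ and $|D[\sigma^{-1}\tilde{v}(\sigma y)]| \leq c\sigma^{\beta}|y|^{\beta}$, both tending to zero uniformly. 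Thus the graph piece of $\ETA_{0,\sigma \sharp} \tilde{T}_{*}$ converges as currents over $\{y_{n} > 0\}$ to the zero graph carrying the constant integer multiplicity $M_{*}$ of $\tilde{T}_{*}$ on $\graph{\tilde{V}}{\tilde{v}}$, and $M_{*}$ is an intrinsic invariant of $T$ independent of $\sigma$; matching against the special blow-up sequence that produces the assumed tangent cone $\mathbb{C}$ in $(\ast\ast)$ will then force $M_{*} = M$. The analogous argument on the $\tilde{W}$-side produces the constant multiplicity $m$ over $\{y_{n} < 0\}$.

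What will remain is to rule out ``hidden'' mass of $\ETA_{0,\sigma \sharp} \tilde{T}_{*}$ concentrating in the cuspidal complement $\{|y_{n}| \leq \sigma^{\beta}|y|^{1+\beta}\}$, whose projection onto $\bbR^{n}$ collapses onto the spine $\{y_{n} = 0\}$ as $\sigma \searrow 0$. I handle this by mass accounting: the monotonicity formula together with the density at $0$ inherited from $(\ast\ast)$ forces the total mass of $\ETA_{0,\sigma \sharp} \tilde{T}_{*}$ in any ball to converge to the mass of $\mathbb{C}$ in that ball, which equals the limiting mass of only the two graph pieces; hence the cusp mass must vanish in the limit. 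Combined with the graph convergence over $\{y_{n} \neq 0\}$, this gives $\ETA_{0,\sigma \sharp} \tilde{T}_{*} \to \mathbb{C}$ along every $\sigma \searrow 0$, completing the uniqueness. The main obstacle is exactly this mass accounting on the cusp; the graph collapse via the exponent $\beta$ is the routine step, given the $C^{1,\beta}$ estimates of Theorem \ref{main}.
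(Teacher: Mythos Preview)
Your proposal is correct and supplies the details that the paper omits: the paper simply states that the corollary ``immediately follows'' from Theorem \ref{main} and gives no separate proof. Your argument---collapsing the $C^{1,\beta}$ graphs via the pointwise estimates $|\tilde v(y)|\le c|y|^{1+\beta}$ obtained by integrating the second-derivative bound, matching the constant graph multiplicities against the assumed tangent cone from $(\ast\ast)$, and using monotonicity with $\Theta_T(0)=\tfrac{M+m}{2}$ to kill any mass in the shrinking cusp $\{|y_n|\le \sigma^\beta|y|^{1+\beta}\}$---is exactly the intended unpacking of ``immediate.'' One small point you may wish to make explicit is $\tilde v(0)=0$ (Theorem \ref{main} states $D\tilde v(0)=0$ but not the value); this follows from the height bound \eqref{4.1(2)} applied to the rescalings $S_\rho$ in the proof of Corollary \ref{sec9.3}, and is needed for your integration step to yield $|\tilde v(y)|\le c|y|^{1+\beta}$ rather than merely $|\tilde v(y)-\tilde v(0)|\le c|y|^{1+\beta}$.
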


Before proceeding, we prove a lemma showing the existence of tangent cones to start.

\begin{lemma} \label{existencetangentcones} 
Suppose $\alpha \in (0,1]$ and $T$ is an $n$-dimensional absolutely area-minimizing locally rectifiable integer multiplicity current in $\{ x \in \bbR^{n+1}: |x| < 3 \}$ satisfying hypothesis $(\ast).$ Then $T$ has an oriented tangent cone at the origin, and every oriented tangent cone of $T$ at the origin is absolutely area minimizing with density at the origin equal to the density of $T$ at the origin. 
\end{lemma}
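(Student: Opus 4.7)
The plan is to combine a boundary monotonicity formula with the standard compactness scheme for area-minimizing currents. The crucial structural input comes from hypothesis $(\ast)$: since $\varphi_{T,\ell}(0)=\psi_{T,\ell}(0)=0$ and $D\varphi_{T,\ell}(0)=D\psi_{T,\ell}(0)=0$, we have $|\Phi_{T,\ell}(z)-(z,0,0)|\le C|z|^{1+\alpha}$, so $\spt\partial T$ sits inside a cusp-like region of the form $\{|(x_{n},x_{n+1})|\le C|(x_{1},\ldots,x_{n-1})|^{1+\alpha}\}$ near the origin, and the approximate tangent plane to $\partial T$ at $\Phi_{T,\ell}(z)$ differs from $\bbR^{n-1}\times\{0\}\times\{0\}$ by $O(|z|^{\alpha})$.

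First I would derive a boundary monotonicity formula of the form
$$\frac{d}{d\rho}\left(\frac{\|T\|(B_{\rho})}{\omega_{n}\rho^{n}}\right)\ge -\frac{E(\rho)}{\rho^{n}},$$
where the correction $E(\rho)$ is controlled by $\int_{\spt\partial T\cap B_{\rho}}|x^{\perp}|/|x|^{n}\,d\|\partial T\|$, with $x^{\perp}$ denoting the component of $x$ normal to the approximate tangent plane of $\partial T$. The cusp bound gives $|x^{\perp}|\le C|x|^{1+\alpha}$ on $\spt\partial T$, while $(\ast)$ gives $\|\partial T\|(B_{\rho})\le C\rho^{n-1}$. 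These together yield $E(\rho)\le C\rho^{n-1+\alpha}$, so the quantity $\|T\|(B_{\rho})/\omega_{n}\rho^{n}+c\rho^{\alpha}$ is monotone nondecreasing in $\rho$. Hence $\Theta^{n}(\|T\|,0):=\lim_{\rho\downarrow 0}\|T\|(B_{\rho})/\omega_{n}\rho^{n}$ exists and is finite, and the density ratios are uniformly bounded for small $\rho$. This is where I expect to cite \cite{B76}, which supplies the boundary monotonicity in exactly this tangential setting.

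Next I would run the standard compactness argument. For any sequence $\rho_{j}\downarrow 0$, put $T_{j}=\ETA_{0,\rho_{j}\sharp}T$. By scaling, $\|T_{j}\|(B_{R})=\rho_{j}^{-n}\|T\|(B_{R\rho_{j}})\le CR^{n}$ uniformly in $j$ from the monotonicity above, and similarly $\|\partial T_{j}\|(B_{R})\le CR^{n-1}$ directly from the graph representation in $(\ast)$. The compactness theorem for integer multiplicity rectifiable currents then produces a subsequence (not relabeled) converging weakly to a locally rectifiable integer multiplicity current $\mathbb{C}$ on $\bbR^{n+1}$. Because each $T_{j}$ is absolutely area-minimizing and the rescaled boundaries $\ETA_{0,\rho_{j}\sharp}\partial T$ concentrate on $\bbR^{n-1}\times\{0\}\times\{0\}$ (again by the cusp bound), lower semicontinuity of mass together with the standard cutting/comparison argument shows $\mathbb{C}$ is absolutely area-minimizing in every bounded open subset of $\bbR^{n+1}$.

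Finally I would identify $\mathbb{C}$ as a cone using the equality case of monotonicity. Weak convergence plus the uniform monotonicity above give $\|\mathbb{C}\|(B_{\rho})/\omega_{n}\rho^{n}=\Theta^{n}(\|T\|,0)$ for every $\rho>0$, so the monotonicity for $\mathbb{C}$ holds with equality throughout. This forces the radial part of the first variation of $\|\mathbb{C}\|$ to vanish, which is equivalent to $\mathbb{C}$ being a cone, and simultaneously gives $\Theta^{n}(\|\mathbb{C}\|,0)=\Theta^{n}(\|T\|,0)$. The main technical obstacle is step one: honestly deriving the boundary monotonicity when $\partial T$ is not embedded but only a tangential sum of $C^{1,\alpha}$ sheets through the origin. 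Once the cusp geometry is used to absorb the boundary term into a correction of order $\rho^{\alpha}$, everything else is routine.
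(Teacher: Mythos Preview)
Your proposal is correct and follows essentially the same route as the paper. The paper's proof is even more compressed: it invokes Theorems~3.6 and~3.3 of \cite{B76} directly, reducing the entire argument to verifying the single hypothesis
\[
\nu_{1}^{\partial T}(0)=\int_{B_{1}}\frac{|\vec{\partial T}\wedge x|}{|x|^{n}}\,d\mu_{\partial T}(x)<\infty,
\]
which is exactly your observation that $|x^{\perp}|\le C|x|^{1+\alpha}$ on $\spt\partial T$ together with $\mu_{\partial T}(B_{\rho})\le C\rho^{n-1}$. Brothers' theorems already package the boundary monotonicity, compactness, area-minimality of the limit, cone structure, and equality of densities that you sketched out by hand; so your three steps are precisely what those citations deliver.
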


\begin{proof} 
By Theorems 3.6,3.3 of \cite{B76} we only need to check the finiteness of 
$$\nu_{1}^{\partial T}(0) = \int_{\{ x \in \bbR^{n+1}: |x| < 1 \}} \frac{|\vec{\partial T} \wedge x|}{|x|^{n}} \ d \mu_{\partial T} (x).$$ 
This follows by hypothesis $(\ast).$ 
\end{proof}

\medskip

Observe of course that $T$ may satisfy hypothesis $(\ast)$ but not ($\ast \ast$), if for example $T$ is a union of half-planes in space, appropriately oriented.

\section{Notation and preliminaries} \label{sec1}

While it is tempting to use the same notation as \cite{HS79}, we take this opportunity to offer cleaner, modern notation. This will not inconvenience the reader, as this article can be read independently of \cite{HS79}.

\medskip

For $\bbN = \{ 1,2,\ldots \},$ in what follows we fix numbers
$$n,M \in \bbN \text{ with } n \geq 2, \ m \in \{0\} \cup \bbN \text{ with } m \leq M-1, \ \alpha \in (0,1].$$
In this section we will also use $\tilde{n} \in \{ 1,\ldots,n+1 \}.$ Observe that while we will only show Theorem \ref{main} holds with $m \in \bbN,$ many of the calculations here hold with $m=0.$ In particular, we only use $m \geq 1$ in Lemma \ref{sec6.4}, Theorem \ref{sec7.3}, section \ref{sec8}, Theorem \ref{sec9.1}, section \ref{proofmain}, and in \eqref{appendixlemma2projectionmass}.

\medskip

We will introduce and use constants $c_{1},\ldots,c_{49},$ a few of which are exactly as in \cite{HS79}. We must take care to show that $c_{1},\ldots,c_{46}$ depend only on $n,M,m$ in order to apply the iterative arguments used to prove the crucial Lemma \ref{sec9.2}. The constants $c_{47},c_{48},c_{49}$ will only be used in proving the general Hopf-type boundary point Lemma \ref{sec10.1}; as such, $c_{47},c_{48},c_{49}$ are exactly as in \cite{HS79}.

\subsection{Notation associated to Euclidean space.} \label{sec1.1,1.2,1.3}

This section combines sections 1.1,1.2,1.3 of \cite{HS79}. We will mostly work in the Euclidean spaces $\bbR^{n-1},\bbR^{n},\bbR^{n+1}.$
\begin{itemize}
 \item We shall typically write points in the three Euclidean spaces
$$\begin{aligned}
x&=(x_{1},\ldots,x_{n+1}) \in \bbR^{n+1}, \\
y&=(y_{1},\ldots,y_{n}) \in \bbR^{n}, \\
z&=(z_{1},\ldots,z_{n-1}) \in \bbR^{n-1}.
\end{aligned}$$
We shall as well use $\tilde{x} \in \bbR^{n}, \tilde{y} \in \bbR^{n}, \tilde{z} \in \bbR^{n-1}.$ We will also identify $\bbR^{n}$ with $\bbR^{n} \times \{ 0 \} \subset \bbR^{n+1}$ by identifying $y \in \bbR^{n}$ with $(y,0) \in \bbR^{n+1}$; we do the same for $\bbR^{n-1}$ in both $\bbR^{n}$ and $\bbR^{n+1}.$
 \item If $A \subseteq \bbR^{n+1},$ then we shall denote the closure of $A$ by $\Clos{A},$ and we denote the boundary of $A$ by $\partial A = \Clos{A} \setminus A.$ 
 \item Define the following functions. Fix $\rho \in (0,\infty)$ and $\theta \in \bbR.$
$$\begin{aligned}
\ETA_{\rho}:&\bbR^{n+1} \rightarrow \bbR^{n+1}, \ \ETA_{\rho}(x) = x/\rho. \\
\rot_{\theta}:&\bbR^{n+1} \rightarrow \bbR^{n+1}, \\
& \rot_{\theta}(x) = (x_{1},\ldots,x_{n-1},x_{n} \cos \theta - x_{n+1} \sin \theta,x_{n} \sin \theta + x_{n+1} \cos \theta). \\
\bop:&\bbR^{n+1} \rightarrow \bbR^{n}, \ \bop(x) = (x_{1},\ldots,x_{n}). \\
\boq_{0}:&\bbR^{n+1} \rightarrow \bbR, \ \boq_{0}(x)= x_{n}. \\
\boq_{1}:&\bbR^{n+1} \rightarrow \bbR, \ \boq_{1}(x)= x_{n+1}.
\end{aligned}$$
Note that the notation for $\ETA_{\rho}$ is derived from the usual notation for the more general homothety $\ETA_{\tilde{x},\rho}(x)=\frac{x-\tilde{x}}{\rho}.$ Since we will only once need the homothety $\ETA_{-x,1}$ (translation by $x,$ in the proof of Theorem \ref{sec5.4}), then we use $\ETA_{\rho}$ for rescaling. 
 \item We denote the following subsets of $\bbR^{n+1}.$ For $x \in \bbR^{n+1}$ and $\rho \in (0,\infty),$ we shall denote the open and closed ball by
$$\begin{aligned}
B_{\rho}(x) & = \{ \tilde{x} \in \bbR^{n+1}: |\tilde{x}-x| < \rho \} \\
\bB_{\rho}(x) & = \{ \tilde{x} \in \bbR^{n+1}: |\tilde{x}-x| \leq \rho \}.
\end{aligned}$$
We will mainly consider balls centered at the origin, and so we denote
$$B_{\rho} = B_{\rho}(0) \text{ and } \bB_{\rho} = \bB_{\rho}(0).$$
Of great importance will be vertical closed cylinders centered at the origin, so we denote
$$C_{\rho} = \{ \tilde{x} \in \bbR^{n+1}: |\bop(\tilde{x})| \leq \rho \}.$$
 \item Given $z \in \bbR^{n-1}$ and $y \in \bbR^{n},$ we denote the open and closed balls in $\bbR^{n-1},\bbR^{n}$ with $\rho \in (0,\infty)$
$$\begin{array}{cc}
B^{n-1}_{\rho}(z) = B_{\rho}(z) \cap \bbR^{n-1}, & \bB^{n-1}_{\rho}(z) = \bB_{\rho}(z) \cap \bbR^{n-1} \\
B^{n}_{\rho}(y) = B_{\rho}(y) \cap \bbR^{n}, & \bB^{n}_{\rho}(y) = \bB_{\rho}(y) \cap \bbR^{n}.
\end{array}$$
We will specifically use balls centered at the origin, denoted by
$$\begin{array}{cc}
B^{n-1}_{\rho} = B_{\rho} \cap \bbR^{n-1}, & \bB^{n-1}_{\rho} = \bB_{\rho} \cap \bbR^{n-1} \\
B^{n}_{\rho} = B_{\rho} \cap \bbR^{n}, & \bB^{n}_{\rho} = \bB_{\rho} \cap \bbR^{n}
\end{array}$$
The following sets will also be important, so we denote them as in \cite{HS79}:
$$\bL = B^{n-1}_{1}, \bV = \{ y \in B^{n}_{1} :y_{n}>0 \}, \bW = \{ y \in B^{n}_{1} :y_{n}<0 \}$$ 
$$\bV_{\sigma} = \{ y \in \bV: \dist(y, \partial \bV) > \sigma \}, \bW_{\sigma} = \{ y \in \bW: \dist(y, \partial \bW) > \sigma \}$$
for $\sigma \in (0,1).$
 \item Let $e_{1},\ldots,e_{n+1} \in \bbR^{n+1}$ be the standard basis vectors; we will also denote the standard basis $e_{1},\ldots,e_{n-1} \in \bbR^{n-1}$ and $e_{1},\ldots,e_{n} \in \bbR^{n}.$ We let $\ast: \bigwedge_{n} \bbR^{n+1} \rightarrow \bbR^{n+1}$ be the Hopf map
 $$\ast \left( \sum_{i=1}^{n+1} x_{k} (-1)^{k-1} e_{1} \wedge \ldots \wedge e_{k-1} \wedge e_{k+1} \wedge \ldots \wedge e_{n+1} \right) = \sum_{k=1}^{n+1} x_{k} e_{k}.$$
 Note that $\ast (e_{1} \wedge \ldots \wedge e_{n}) = (-1)^{n} e_{n+1}.$
 \item Let $\cH^{\tilde{n}}$ be $\tilde{n}$-dimensional Hausdorff measure in $\bbR^{n+1}.$ Denote $\varpi_{\tilde{n}} = \cH^{\tilde{n}}(B_{1} \cap \bbR^{\tilde{n}}).$ 
 \item $D$ will denote differentiation over Euclidean space, with dimension clear by context, and $D_{k}$ partial differentiation with respect to the $k^{\text{th}}$ variable.
 
\medskip

$\Lip(\phi)$ will denote the Lipschitz constant for a Lipschitz function $\phi: \bbR^{n+1} \rightarrow \bbR^{n+1}.$

\medskip

We will use various spaces of H$\ddot{\text{o}}$lder continuously differentiable functions. For example, $C^{k,a}_{c}(A,\tilde{A})$ will denote the $k$ times H$\ddot{\text{o}}$lder continuously differentiable functions of order $a$ with compact support in $A$ and values in $\tilde{A},$ where $k \in \bbN$ and $a \in [0,1].$
\end{itemize}

\subsection{Notation associated to $T \in \sR_{n}(\bbR^{n+1}).$} \label{sec1.4}

This is section 1.4 of \cite{HS79}. For a thorough introduction to currents, see \cite{F69},\cite{S83}.
\begin{itemize}
 \item Recall that $\mathcal{D}^{\tilde{n}}(\bbR^{n+1})$ denotes the smooth compactly supported $\tilde{n}$-forms. The $\tilde{n}$-dimensional currents over $\bbR^{n+1}$ are then the set of linear functionals over $\mathcal{D}^{\tilde{n}}(\bbR^{n+1}).$
 
 \medskip

Given an $\tilde{n}$-dimensional current $T,$ as usual $\partial T$ will denote the associated boundary of $T.$ So, $\partial T$ is the $(\tilde{n}-1)$-dimensional current defined by $\partial T(\omega)=T(d \omega)$ for $\omega \in \mathcal{D}^{\tilde{n}-1}(\bbR^{n+1}).$
 \item For $T$ a current over $\bbR^{n+1}$ and $\phi:\bbR^{n+1} \rightarrow \bbR^{n+1},$ we denote $\phi_{\sharp} T$ the push-forward current of $T$ by $\phi$; we shall frequently make use of $\ETA_{\rho \sharp} T.$
 \item Denote by $\bbE^{\tilde{n}}$ the $\tilde{n}$-dimensional current in $\bbR^{n+1}$ given by $\bbE^{\tilde{n}}(\omega) = \int_{\bbR^{\tilde{n}}} \langle \omega, e_{1} \wedge \ldots \wedge e_{\tilde{n}} \rangle \ d \cH^{\tilde{n}}$ for $\omega \in \mathcal{D}^{\tilde{n}}(\bbR^{n+1}).$
 \item For $T$ an $\tilde{n}$-dimensional current in $\bbR^{n+1},$ we let $\mu_{T}$ denote the associated mass measure of $T$; This is given by $\mu_{T}(U) = \sup_{\omega \in \mathcal{D}^{\tilde{n}}(U),|\omega| \leq 1} T(\omega)$ when $U$ is an open subset of $\bbR^{n+1}.$ We define the support of $T$ by $\spt T = \spt \mu_{T}.$ Let $\vec{T}$ be the $\bigwedge_{\tilde{n}}(\bbR^{n+1})$-valued orientation of $T.$ Thus, $T(\omega) = \int \langle \omega,\vec{T} \rangle \ d \mu_{T}$ for $\omega \in \mathcal{D}^{\tilde{n}}(\bbR^{n+1}).$ 

\medskip

We denote the mass of $T$ by $\bM(T)=\mu_{T}(\bbR^{n+1}).$

\medskip
 
 For $A$ a $\mu_{T}$-measurable set, we let $T \res A$ denote the restriction current $(T \res A)(\omega) = \int_{A} \langle \omega,\vec{T} \rangle \ d \mu_{T}$ for $\omega \in \mathcal{D}^{\tilde{n}}(\bbR^{n+1}).$ In some instances we may use $\bM(T \res A)$ instead of $\mu_{T}(A),$ in order to avoid cluttered notation.

\medskip
 
We denote the density of $T$ at $x \in \bbR^{n+1}$ by $\Theta_{T}(x) = \lim_{\rho \searrow 0} \frac{\mu_{T}(B_{\rho}(x))}{\varpi_{\tilde{n}} \rho^{\tilde{n}}},$ whenever this limit exists.   
 \item We shall let $\sR_{\tilde{n}}(\bbR^{n+1})$ denote the $\tilde{n}$-dimensional (integer) rectifiable currents over $\bbR^{n+1}.$ 
 
 \medskip
 
 For $T \in \sR^{\tilde{n}}(\bbR^{n+1}),$ denote the approximate ($\tilde{n}$-dimensional) tangent space of $T$ at $x$ by $T_{x}T,$ for the $\mu_{T}$-almost-every $x \in \bbR^{n+1}$ such that this space exists.
 \item For $T \in \sR_{n}(\bbR^{n+1}),$ we denote the associated generalized unit normal vector field $\cV^{T}$ defined by $\cV^{T}(x) = \ast \vec{T}(x)$ for $\mu_{T}$-almost-every $x$; note that $\cV^{T}(x) \perp T_{x}T.$ We will also write the components $\cV^{T}=(\cV^{T}_{1},\ldots,\cV^{T}_{n+1}).$
 \item For $f \in C^{1}(\bbR^{n+1})$ let $\nabla^{T} f = Df - (Df \cdot \cV^{T}) \cV^{T}$ denote tangential differentiation over $T.$ For $X \in C^{1}(\bbR^{n+1};\bbR^{n+1})$ let $\dive{T}{X} = \dive{\bbR^{n+1}}{X} - \cV^{T} \cdot (DX \cV^{T})$ denote the divergence over $T.$
 \item We say $T \in \sR_{n}(\bbR^{n+1})$ is (absolutely) area minimizing if for any $R \in \sR_{n}(\bbR^{n+1})$ with $\partial R = \partial T$ we have $\bM(T) \leq \bM(R).$
\end{itemize}

\medskip

We now define the cylindrical and spherical excess of a current.

\medskip

\begin{definition} \label{excess} Suppose $T \in \sR_{n}(\bbR^{n+1}).$ We define the \emph{cylindrical excess} of $T$ at radius $r \in (0,\infty)$ to be
$$\bE_{C}(T,r) = \frac{\mu_{T}(C_{r}) - \mu_{\bop_{\sharp} T}(C_{r})}{r^{n}}.$$
If $\Theta_{T}(0)$ exists, we define the \emph{spherical excess} of $T$ at radius $r \in (0,\infty)$ by
$$\bE_{S}(T,r) = \frac{\mu_{T}(\bB_{r})}{r^{n}}-\varpi_{n} \Theta_{T}(0).$$
\end{definition}

From the definition of cylindrical excess, as well as Remark 27.2(3) of \cite{S83}, we get the following monotonicity for $0<r<s<\infty$
\begin{equation} \label{1.4(1)} \begin{aligned} 
r^{n} \bE_{C}(T,r) & = \int_{C_{r}} 1-|\cV^{T}_{n+1}| \ d \mu_{T} \\
& \leq \int_{C_{s}} 1-|\cV^{T}_{n+1}| \ d \mu_{T} = s^{n} \bE_{C}(T,s). 
\end{aligned} \end{equation}
This is 1.4(1) of \cite{HS79}, with a slight correction.

\subsection{The family $\sT.$} \label{sec1.5,1.6}

This is sections 1.5,1.6 of \cite{HS79} with substantial changes. Most notably, we replace section 1.5 of \cite{HS79} with assumption \eqref{defprojection}. 

\medskip

\begin{definition} Let $M \in \bbN,$ $m \in \{ 0,\ldots,M-1 \},$ and $\alpha \in (0,1].$ Define $\sT = \sT(M,m,\alpha)$ to be the collection of (absolutely) area minimizing $T \in \sR_{n}(\bbR^{n+1})$ such that the following four hold.

\medskip

First, $T$ satisfies the basic support, mass, and density identities
\begin{equation} \label{defmass}
\begin{aligned} 
\spt T & \subset \bB_{3} \\
\bM(T) & \leq 3^{n} (1+ M \varpi_{n}) \\
\Theta_{T}(0) & = \frac{M+m}{2}. 
\end{aligned}
\end{equation}

\medskip

Second, there are $N,m_{1},\ldots,m_{N} \in \bbN$ so that
\begin{equation} \label{defboundary}
\begin{aligned}
\sum_{\ell=1}^{N} m_{\ell} & = M-m \text{ and } \\
\partial T \res \{ x \in \bbR^{n+1}: & |(x_{1},\ldots,x_{n-1})|<2, \ |x_{n}|<2 \} \\
& = (-1)^{n} \sum_{\ell=1}^{N} m_{\ell} \Phi_{T,\ell \sharp} \left( \bbE^{n-1} \res B^{n-1}_{2} \right)
\end{aligned}
\end{equation}
where for each $\ell =1,\ldots,N$ we have $\Phi_{T,\ell}(z) = (z,\varphi_{T,\ell}(z),\psi_{T,\ell}(z))$ for functions $\varphi_{T,\ell},\psi_{T,\ell} \in C^{1,\alpha}(B^{n-1}_{2})$ satisfying $\varphi_{T,\ell}(0)=0=\psi_{T,\ell}(0),$ $D\varphi_{T,\ell}(0)=0=D\psi_{T,\ell}(0).$

\medskip

Third, and moreover, defining
\begin{equation} \label{defderivative}
\begin{aligned}
& \kappa_{T} = \frac{2}{\alpha} \max_{\ell=1,\ldots,N} \sup_{z \neq \tilde{z}} \frac{|(D \varphi_{T,\ell}(z),D \psi_{T,\ell}(z))-(D \varphi_{T,\ell}(\tilde{z}),D \psi_{T,\ell}(\tilde{z}))|}{|z-\tilde{z}|^{\alpha}} \\
& \text{then } \kappa_{T} \leq 1.
\end{aligned}
\end{equation} 

\medskip

Fourth and final, if we define $\varphi^{max}_{T},\varphi^{min}_{T}: B^{n-1}_{2} \rightarrow \bbR$ by
$$\varphi^{max}_{T}(z) = \max_{\ell = 1,\ldots,N} \varphi_{T,\ell}(z), \hspace{.25in} \ \varphi^{min}_{T}(z) = \min_{\ell = 1,\ldots,N} \varphi_{T,\ell}(z),$$
then
\begin{equation} \label{defprojection}
\begin{aligned}
\bop_{\sharp} (T \res C_{2}) \res & \{ y \in B^{n}_{2}: y_{n} \notin [\varphi_{T}^{min}(y_{1},\ldots,y_{n-1}), \varphi_{T}^{max}(y_{1},\ldots,y_{n-1})] \} \\ 
= & M \bbE^{n} \res \{ y \in B^{n}_{2}: y_{n} > \varphi^{max}_{T}(y_{1},\ldots,y_{n-1}) \} \\ 
& + m \bbE^{n} \res \{ y \in B^{n}_{2}: y_{n} < \varphi^{min}_{T}(y_{1},\ldots,y_{n-1}) \}. 
\end{aligned}
\end{equation}
\end{definition}

Using \eqref{appendixlemma2identity} and then \eqref{appendixlemma2retraction}, we compute for $r \in (0,2]$
\begin{equation} \label{1.6(1)} 
\begin{aligned} 
\bE_{S}(T,r) & \leq \frac{\mu_{T}(C_{r})}{r^{n}} - \left( \frac{M+m}{2} \right) \varpi_{n} \\
& \leq \bE_{C}(T,r) + \frac{\mu_{\bop_{\sharp} T}(C_{r})}{r^{n}} - \left( \frac{M+m}{2} \right) \varpi_{n} \\
& \leq \bE_{C}(T,r) + (M-m) \varpi_{n-1} r^{\alpha} \kappa_{T}.
\end{aligned} 
\end{equation}
This is a modification of 1.6(1) of \cite{HS79}, a bound on the spherical excess by the cylindrical excess. Later, in Lemma \ref{sec7.1} and Theorem \ref{sec7.3} we must essentially bound the cylindrical excess by the spherical excess.

\section{First variation and monotonicity} \label{sec2}

This is section 2 of \cite{HS79}, with mostly only changes in presentation. Monotonicity formulas are computed in this section, via the first variation. We introduce the constants $c_{1},\ldots,c_{5}.$

\medskip

Throughout this section (and after) we shall write $\sT = \sT(M,m,\alpha)$ with $M \in \bbN,$ $m \in \{ 0,\ldots,M-1 \},$ and $\alpha \in (0,1].$ Recall that we need $m \geq 1$ only in Lemma \ref{sec6.4}, Theorem \ref{sec7.3}, section \ref{sec8}, Theorem \ref{sec9.1}, section \ref{proofmain}, and in \eqref{appendixlemma2projectionmass}.

\medskip

Starting with this section, if the reader subtracts two to the section number, then one gets the counterpart section, subsection, lemma, theorem, and equation of \cite{HS79}.

\subsection{First variation} \label{sec2.1} 

The following first variation formula is well-known; we sketch the proof. We thus give a more precise version of the first variation formula given in section 2.1 of \cite{HS79}.

\begin{lemma} For any $T \in \sT,$ there is a $\mu_{\partial T}$-measurable vector field $\nu_{T}:(B^{n}_{2} \times \bbR) \rightarrow \bbR^{n+1}$ with $\nu_{T}(x) \perp T_{x} \partial T$ and $|\nu_{T}(x)| \leq 1$ for $\mu_{\partial T}$-almost-every $x \in B^{n}_{2} \times \bbR$ such that
$$\int \dive{T}{X} \ d \mu_{T} = \int \nu_{T} \cdot X \ d \mu_{\partial T}$$
for every $X \in C_{c}^{1}(B^{n}_{2} \times \bbR).$
\end{lemma}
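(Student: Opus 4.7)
The plan is to run the standard first-variation argument for area-minimizing currents with boundary. Given $X \in C_c^1(B^n_2 \times \bbR;\bbR^{n+1})$, set $\phi_t(x) = x + tX(x)$, which is a diffeomorphism of $\bbR^{n+1}$ for $|t|$ small. Applying the homotopy formula of 4.1.9 in \cite{F69} along the straight-line homotopy between the identity and $\phi_t$, one obtains a current $S_t\in\sR_n(\bbR^{n+1})$ with $\spt S_t$ close to $\spt\partial T$, satisfying $\partial S_t = \phi_{t\sharp}(\partial T) - \partial T$. Consequently $\phi_{t\sharp}T - S_t$ has the same boundary as $T$, and absolute area minimality yields
$$\bM(T) \leq \bM(\phi_{t\sharp}T - S_t) \leq \bM(\phi_{t\sharp}T) + \bM(S_t).$$

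Next I would apply the standard asymptotic expansions (compare \S 9 of \cite{S83})
$$\bM(\phi_{t\sharp}T) = \bM(T) + t\int\dive{T}{X}\,d\mu_T + O(t^2), \qquad \bM(S_t) \leq t\int|X^\perp|\,d\mu_{\partial T} + O(t^2),$$
where $X^\perp(x)$ denotes the orthogonal projection of $X(x)$ onto the $2$-plane $(T_x\partial T)^\perp$ for $\mu_{\partial T}$-a.e.\ $x$. Substituting the expansions, subtracting $\bM(T)$, dividing by $t>0$, and letting $t\searrow 0$, then repeating the argument with $X$ replaced by $-X$, yields the pointwise first-variation inequality
$$\left|\int\dive{T}{X}\,d\mu_T\right| \leq \int|X^\perp|\,d\mu_{\partial T}.$$

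This inequality says the linear functional $X\mapsto\int\dive{T}{X}\,d\mu_T$ is dominated by $\int|X^\perp|\,d\mu_{\partial T}$, and in particular vanishes on fields $X$ tangent to $\partial T$ for $\mu_{\partial T}$-a.e.\ $x$. The vector-valued Riesz representation theorem then produces a $\mu_{\partial T}$-measurable $\nu_T$ with $|\nu_T|\leq 1$ and $\nu_T(x)\perp T_x\partial T$ for $\mu_{\partial T}$-a.e.\ $x$ representing this functional, which is exactly the conclusion.

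The step I expect to need the most care is the homotopy mass bound on $S_t$. Since by \eqref{defboundary} the current $\partial T$ is a sum of pushforwards by $C^{1,\alpha}$ maps $\Phi_{T,\ell}$ rather than integration over an embedded submanifold, the Jacobian computation must be done pointwise at each rectifiable $x\in\spt\partial T$: the tangent $n$-plane of $S_t$ at the corresponding point is spanned by $X(x)+O(t)$ together with $T_x\partial T$, giving $n$-volume $|X^\perp(x)|+O(t)$. Integrating against $\mu_{\partial T}$, with multiplicities counted as in \eqref{defboundary}, produces the stated bound; the rest of the argument is routine.
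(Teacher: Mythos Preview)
Your argument is correct and is essentially the same approach the paper takes: the paper simply cites Lemma~3.1 of \cite{B76} for the inequality $\left|\int\dive{T}{X}\,d\mu_T\right|\leq\int|X\wedge\vec{\partial T}|\,d\mu_{\partial T}$ (which is precisely your $\int|X^\perp|\,d\mu_{\partial T}$, obtained via the same homotopy and area-minimality comparison you describe) and then invokes \S39 of \cite{S83} for the Riesz-representation step producing $\nu_T$. You have written out what those references contain; no substantive difference.
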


Note that (as opposed to \cite{HS79}) we have used $\cV^{T}$ to denote the generalized unit normal vector field of $T,$ given by $\cV^{T} = \ast \vec{T}.$ In this lemma and here throughout, $\nu_{T}$ denotes the generalized co-normal of $\partial T$ with respect to $T$. The proof here is taken from the proof of (2.10) of \cite{E89}.

\medskip

\begin{proof} Examining the proof of Lemma 3.1 of \cite{B76}, we conclude $|\int \dive{T}{X} \ d \mu_{T}| \leq \int |X \wedge \vec{\partial T}| \ d \mu_{\partial T}$; to see this, note that in our setting we may take $\lambda=0$ in the proof of Lemma 3.1 of \cite{B76}. This together with section 39 of \cite{S83} implies the lemma. \end{proof}

\subsection{Monotonicity} \label{sec2.2} 

We give the monotonicity formulas of section 2.2 of \cite{HS79}. The conclusions bear only changes in notation.

\begin{lemma} \label{2.2} There are constants $c_{1},c_{2},c_{3},c_{4}$ depending on $n,M,m$ so that for any $T \in \sT,$ $0 < r \leq s < 2,$ and $\varkappa \in (0,1)$
\begin{equation} \label{2.2(1)}
\frac{\mu_{T}(\bB_{r})}{r^{n}} e^{c_{1} \kappa_{T} r^{\alpha}} \text{ is nondecreasing in } r,
\end{equation}
\begin{equation} \label{2.2(2)}
\frac{1}{c_{2}} \leq \frac{\mu_{T}(\bB_{r})}{r^{n}} \leq c_{2},
\end{equation}
\begin{equation} \label{2.2(3)}
\int_{\bB_{2}} \frac{1}{|x|^{(1-\varkappa)n}} d \mu_{T}(x) \leq \frac{c_{3}}{\varkappa},
\end{equation}
\begin{equation} \label{2.2(4)}
\begin{aligned}
\left| \frac{\mu_{T}(\bB_{s})}{s^{n}} - \frac{\mu_{T}(\bB_{r})}{r^{n}} \right. & \left. - \int_{\bB_{s} \setminus \bB_{r}} \frac{| x \cdot \cV^{T}(x)|^{2}}{|x|^{n+2}} \ d \mu_{T}(x) \right| \\
& \leq \frac{c_{4} \kappa_{T}}{2} \left[ (s^{\alpha}-r^{\alpha}) + \alpha \left(1- \left( \frac{r}{s} \right)^{n} \right) r^{\alpha} \right],
\end{aligned}
\end{equation}
\begin{equation} \label{2.2(5)}
\left| \bE_{S}(T,s) - \int_{\bB_{s}} \frac{|x \cdot \cV^{T}(x)|^{2}}{|x|^{n+2}} \ d \mu_{T}(x) \right| \leq c_{4} \kappa_{T}.
\end{equation}
\end{lemma}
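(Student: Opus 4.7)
The plan is to derive all five conclusions from the classical first-variation-based monotonicity identity, adapted to handle the boundary contribution from $\partial T$. The first step is to apply the first variation formula of Section \ref{sec2.1} to radial test vector fields $X(x)=\phi(|x|)x$, where $\phi \in C_c^1([0,\infty))$ approximates $\chi_{[0,r]}$. A direct computation gives
\begin{equation*}
\dive{T} X = n\phi(|x|) + \phi'(|x|)\Bigl(|x| - \tfrac{(x\cdot\cV^T)^2}{|x|}\Bigr),
\end{equation*}
and passing to the limit $\phi \to \chi_{[0,r]}$ in the first variation identity yields the standard differential monotonicity identity
\begin{equation*}
\frac{d}{dr}\!\left(\frac{\mu_T(\bB_r)}{r^n}\right) \;=\; \frac{d}{dr}\!\int_{\bB_r} \frac{(x\cdot\cV^T)^2}{|x|^{n+2}}\,d\mu_T \;-\; \frac{1}{r^{n+1}}\int_{\bB_r} \nu_T\cdot x\,d\mu_{\partial T}.
\end{equation*}

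The decisive computation is the boundary estimate. At any $x=\Phi_{T,\ell}(z)=(z,\varphi_{T,\ell}(z),\psi_{T,\ell}(z))$, the tangent vectors $\partial_k\Phi_{T,\ell}(z)=(e_k,D_k\varphi_{T,\ell}(z),D_k\psi_{T,\ell}(z))$ span $T_x\partial T$, so since the normal component minimizes distance over tangent vectors,
\begin{equation*}
|x^\perp| \;\leq\; \Bigl|x - \textstyle\sum_k z_k\,\partial_k\Phi_{T,\ell}(z)\Bigr| \;=\; \sqrt{(\varphi_{T,\ell}(z) - z\cdot D\varphi_{T,\ell}(z))^2 + (\psi_{T,\ell}(z) - z\cdot D\psi_{T,\ell}(z))^2}.
\end{equation*}
Using $\varphi_{T,\ell}(0)=0=D\varphi_{T,\ell}(0)$ (likewise $\psi_{T,\ell}$) together with the definition \eqref{defderivative} (which gives Hölder seminorm $\tfrac{\alpha\kappa_T}{2}$ for $D\varphi_{T,\ell},D\psi_{T,\ell}$), Taylor's theorem gives $|\varphi_{T,\ell}(z)-z\cdot D\varphi_{T,\ell}(z)| \leq \tfrac{\alpha\kappa_T}{2(1+\alpha)}|z|^{1+\alpha}$ and the same for $\psi_{T,\ell}$. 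Since $\nu_T \perp T_x\partial T$ with $|\nu_T|\leq 1$, this yields $|\nu_T\cdot x|\leq |x^\perp|\leq \alpha\kappa_T|z|^{1+\alpha}$ pointwise on $\partial T$. Integrating via the pushforward structure \eqref{defboundary} (with $J\Phi_{T,\ell}\leq 2$ on a compact subset and $\sum_\ell m_\ell = M-m$) produces
\begin{equation*}
\Bigl|\int_{\bB_\tau}\nu_T\cdot x\,d\mu_{\partial T}\Bigr| \;\leq\; C(n,M,m)\,\alpha\,\kappa_T\,\tau^{n+\alpha}.
\end{equation*}
The factor $\alpha$ is essential: it cancels the $1/\alpha$ arising when integrating $\tau^{\alpha-1}$ below, which is exactly why $c_1,\ldots,c_4$ can be chosen depending only on $n,M,m$.

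With the boundary bound in hand, the five conclusions follow in short order. Integrating the differential identity from $r$ to $s$ and plugging in the boundary estimate produces \eqref{2.2(4)}; the first summand $(s^\alpha-r^\alpha)$ is the direct bound $C\alpha\kappa_T\int_r^s \tau^{\alpha-1}d\tau$, while the refined second summand $\alpha(1-(r/s)^n)r^\alpha$ is extracted by an integration by parts in $\tau$ that separates bulk and near-$r$ contributions. Letting $r\searrow 0$ in \eqref{2.2(4)}, using $\mu_T(\bB_r)/r^n \to \varpi_n\Theta_T(0)=\varpi_n(M+m)/2$ from \eqref{defmass} and the vanishing of the tangential integral at the origin, gives \eqref{2.2(5)}. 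Dropping the (nonnegative) tangential term from the differential identity gives $\frac{d}{dr}(\mu_T(\bB_r)/r^n)\geq -C\kappa_T r^{\alpha-1}$; multiplying through by the integrating factor $e^{c_1\kappa_T r^\alpha}$ with $c_1=c_1(n,M,m)$ sufficiently large makes the product nondecreasing, giving \eqref{2.2(1)}. Then \eqref{2.2(2)} follows at once: the lower bound by evaluating \eqref{2.2(1)} as $r\searrow 0$ (where the limit is $\varpi_n(M+m)/2 > 0$), and the upper bound by evaluating at $r$ near $2$ and using the mass bound in \eqref{defmass}. Finally, \eqref{2.2(3)} is a routine layer-cake computation: integration by parts in $\int_0^2 r^{-(1-\varkappa)n}\,d\mu_T(\bB_r)$ combined with the upper bound from \eqref{2.2(2)} gives a term $\leq c_2(1-\varkappa)n\int_0^2 r^{\varkappa n-1}\,dr = c_2(1-\varkappa)\,2^{\varkappa n}/\varkappa \leq c_3/\varkappa$. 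The main obstacle is the boundary estimate: unlike in Hardt–Simon where $\partial T$ was a single embedded submanifold, here it is a signed sum of $C^{1,\alpha}$ graphs possibly meeting tangentially at the origin, but the tangentiality hypothesis $D\varphi_{T,\ell}(0)=0=D\psi_{T,\ell}(0)$ is imposed on each graph individually, so the estimate is obtained graph-by-graph and then summed with total multiplicity $M-m$.
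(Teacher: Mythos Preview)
Your overall strategy---first variation with radial vector fields, pointwise boundary estimate on each graph $\Phi_{T,\ell}$, then the five conclusions---is the same as the paper's. The boundary estimate and the derivations of \eqref{2.2(4)}, \eqref{2.2(5)}, \eqref{2.2(3)} are fine. However, there is a genuine circularity in your order of proving \eqref{2.2(1)} and the lower bound in \eqref{2.2(2)}.

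The integrating-factor step does not go through without a lower bound on $\phi(r)=\mu_T(\bB_r)/r^n$. From $\phi'(r)\geq -C\alpha\kappa_T r^{\alpha-1}$, making $\phi(r)e^{c_1\kappa_T r^\alpha}$ nondecreasing requires $c_1\alpha\kappa_T r^{\alpha-1}\phi(r)+\phi'(r)\geq 0$, i.e.\ $c_1\phi(r)\geq C$ for all $r\in(0,2)$. No choice of $c_1=c_1(n,M,m)$ achieves this unless $\phi$ is already bounded below by a constant depending only on $n,M,m$. You then propose to obtain that lower bound \emph{from} \eqref{2.2(1)} by sending $r\searrow 0$; this is circular. (The weaker additive statement ``$\phi(r)+C\kappa_T r^\alpha$ is nondecreasing'' does follow directly, but since $C\kappa_T r^\alpha$ can exceed $\varpi_n(M+m)/2$ when $\kappa_T$ and $r$ are near their upper bounds, it does not by itself yield a uniform positive lower bound either.)

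The paper breaks the circle by proving the lower bound in \eqref{2.2(2)} \emph{independently} of monotonicity, via a geometric projection: defining $L(x)=(x_1,\dots,x_{n-1},|(x_n,x_{n+1})|)$, one has $\Lip L=1$ and $L_\sharp(\partial T\res B_2)\neq 0$, hence $L_\sharp T\neq 0$; then 4.1.31 of \cite{F69} forces $L_\sharp T$ to cover a fixed conical sector in $\bbR^n$, giving $r^{-n}\mu_T(\bB_r)\geq r^{-n}\mu_{L_\sharp T}(\bB_r)\geq c(n)>0$. With that lower bound in hand the paper derives \eqref{2.2(1)} exactly via your integrating-factor computation (taking $c_1=(n+2)c_2c_4$), and then the upper bound in \eqref{2.2(2)} from \eqref{2.2(1)} and \eqref{defmass}.

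A minor remark: your differential approach, integrated from $r$ to $s$, naturally produces only the error term $c_4\kappa_T(s^\alpha-r^\alpha)$; the second summand $\alpha(1-(r/s)^n)r^\alpha$ in \eqref{2.2(4)} arises in the paper from the specific Lipschitz vector field that equals $(r^{-n}-s^{-n})x$ on $\bB_r$ and $(|x|^{-n}-s^{-n})x$ on $\bB_s\setminus\bB_r$. Since the second summand is dominated by the first, your bound is actually sharper, but the ``integration by parts'' explanation for extracting it is not convincing and is in any case unnecessary.
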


These are respectively formulas 2.2(1)-(5) of \cite{HS79}, but where $c_{1},c_{2},c_{3},c_{4}$ now depend on $n,M,m.$ Observe that our use of $\varkappa \in (0,1)$ in \eqref{2.2(3)} differs from that of $\beta \in (0,n)$ as used in 2.2(3) of \cite{HS79}; in fact, we won't be needing \eqref{2.2(3)}, but present it for potential future use, and to keep our references to equations as close to \cite{HS79} as feasible.

\medskip

\begin{proof} We first prove \eqref{2.2(4)}. Consider the Lipschitz vector field
$$X(x) = \left\{ 
\begin{array}{cl}  
(r^{-n}-s^{-n}) x & \text{ for } |x| \leq r, \\
(|x|^{-n}-s^{-n}) x & \text{ for } r < |x| \leq s, \\
0 & \text{ for } s < |x|,
\end{array} \right.$$
and recall that by the previous section we have $\int \dive{T}{X} \ d \mu_{T} = \int \nu_{T} \cdot X \ d \mu_{\partial T}.$  Using \eqref{defboundary} we conclude
$$\begin{aligned}
\left| \int \dive{T}{X} \ d \mu_{T} \right| & \leq \int |X \wedge \vec{\partial T}| \ d \mu_{\partial T} \\
& = \sum_{\ell=1}^{N} m_{\ell} \int_{\Phi_{T,\ell}(B^{n-1}_{2})} |X \wedge \vec{\partial T}| \ d \cH^{n-1}
\end{aligned}$$
Furthermore, for each $\ell=1,\ldots,N$ we have by \eqref{defderivative} that for $\cH^{n-1}$-almost-every $x \in \Phi_{T,\ell}(B^{n-1}_{2})$ (as in 2.2(6) of \cite{HS79}) 
\begin{equation} \label{2.2(6)}
\begin{aligned}
& |x| \leq (1+\kappa_{T})|(x_{1},\ldots,x_{n-1})| \leq 2 |(x_{1},\ldots,x_{n-1})|, \\
& |x \wedge \vec{\partial T}(x)| \leq c_{5} \kappa \alpha |(x_{1},\ldots,x_{n-1})|^{1+\alpha},
\end{aligned}
\end{equation}
where $c_{5}$ depends in fact only on $n.$ Thus, \eqref{2.2(4)} follows by integrating with respect to $(x_{1},\ldots,x_{n-1})$, noting that
$$\begin{aligned}
\int_{r}^{s} (t^{-n}-s^{-n}) t^{n-1+\alpha} \ dt & \leq \alpha^{-1} (s^{\alpha}-r^{\alpha}) \\
(r^{-n}-s^{-n}) \int_{0}^{r} t^{n-1+\alpha} \ dt & \leq (1-(r/s)^{n}) r^{\alpha},
\end{aligned}$$
and recalling by \eqref{defboundary} that $\sum_{\ell=1}^{N} m_{\ell} = M-m$; from this we get $c_{4}$ depending on $n,M,m.$

\medskip

Second, define $L:\bbR^{n+1} \rightarrow \bbR^{n}$ (as in \cite{HS79}) by 
$$L(x) = (x_{1},\ldots,x_{n-1},|(x_{n},x_{n+1})|) \text{ for } x \in \bbR^{n+1}.$$
Note that $\Lip(L)=1$ and that $L_{\sharp} (T \res B_{2}) \neq 0$ because
$$\big( \partial L_{\sharp}(T \res B_{2}) \big) \res B^{n}_{2} = L_{\sharp} (\partial T \res B_{2}) \neq 0.$$
From \eqref{2.2(6)} and 4.1.31 of \cite{F69} we infer that for $r \in (0,2)$
$$\begin{aligned}
r^{-n} \mu_{T}(\bB_{r}) & \geq r^{-n} \mu_{L_{\sharp} T}(\bB_{r}) \\
& \geq r^{-n} \bM ( \bbE^{n} \res \{ y \in \bB^{n}_{r}: 2|(y_{1},\ldots,y_{n-1})| \leq |y|, \ y_{n}>0 \}) \\
& = \cH^{n} (\{ y \in \bB^{n}_{r}: 2|(y_{1},\ldots,y_{n-1})| \leq |y|, \ y_{n}>0 \}).
\end{aligned}$$
This along with \eqref{2.2(4)} implies the first inequality in \eqref{2.2(2)}. Letting $\phi(r) = r^{-n} \mu_{T}(\bB_{r}),$ we deduce from this inequality and \eqref{2.2(4)} that
$$(n+2) c_{2}c_{4} \kappa_{T} \alpha r^{\alpha-1} \phi(r) + \liminf_{s \searrow r} \frac{\phi(s)-\phi(r)}{s-r} \geq 0,$$
which implies \eqref{2.2(1)} with $c_{1}=(n+2) c_{2}c_{4}.$ 

\medskip

By increasing $c_{2}$ if necessary, the second inequality in \eqref{2.2(2)} now follows from \eqref{2.2(1)} and \eqref{defmass}; we can also show \eqref{2.2(3)} from \eqref{2.2(2)}. Meanwhile, \eqref{2.2(5)} is verified by letting $r \searrow 0$ in \eqref{2.2(4)}. \end{proof}

\subsection{Remark} \label{sec2.3}

We remark here as in section 2.3 of \cite{HS79}. If $T \in \sT$ and $\lambda \in (0,1/3],$ then \eqref{2.2(4)} applied with $r = 3 \lambda$ and $s=1$, \eqref{defmass}, and \eqref{1.6(1)} imply
$$\begin{aligned}
\bM( (\ETA_{\lambda \sharp} T) \res \bB_{3}) = & 3^{n} \big( (3 \lambda)^{-n} \mu_{T}(\bB_{3 \lambda}) \big) \\
\leq & 3^{n} \big( \mu_{T}(\bB_{1}) +c_{4} \kappa_{T} \big) \\
< & 3^{n} \left( \bE_{C}(T,1) + \left( \frac{M+m}{2} \right) \varpi_{n} \right) \\
& + 3^{n} ((M-m) \varpi_{n-1} + c_{4}) \kappa_{T}.
\end{aligned}$$
We can hence choose $c_{4}$ depending on $n,M,m$ so that $\bE_{C}(T,1) + \kappa_{T} \leq (1+c_{4})^{-1}$ implies $\bM( (\ETA_{\lambda \sharp}T) \res \bB_{3}) < 3^{n} (1+ M \varpi_{n}).$ Thus
\begin{equation} \label{2.3(1)}
(\ETA_{\lambda \sharp} T) \res B_{3} \in \sT \text{ if } \bE_{C}(T,1)+\kappa_{T} \leq (1+c_{4})^{-1} \text{ and } \lambda \in (0,1/3],
\end{equation}
moreover with
\begin{equation} \label{2.3(2)}
\kappa_{(\ETA_{\lambda \sharp} T) \res B_{3}} \leq \lambda^{\alpha} \kappa_{T}.
\end{equation}
These are 2.3(1)(2) of \cite{HS79}, with only differences in notation. These equations will be used to iteratively apply results for $T \in \sT$ to rescalings of $T,$ most notable in the proof of Lemma \ref{sec7.1}.

\section{An area comparison lemma} \label{sec3}

\medskip

The results of this section shall be used in the next to conclude preliminary bounds on the excess. This section is analogous to section 3 of \cite{HS79}. However, we must make a serious change to section 3.2 of \cite{HS79}. 

\medskip

The last constant introduced in section \ref{sec2} was $c_{5},$ analogous to the last constant $c_{5}$ introduced in section 2 of \cite{HS79}. We note that the first constant introduced in section 3 of \cite{HS79} is $c_{7}$; in other words, the constant $c_{6}$ is mistakenly skipped. To make it easier for the reader to compare our current work to \cite{HS79}, we as well skip $c_{6}$ and introduce $c_{7},c_{8},c_{9}.$ Also, while section 3 of \cite{HS79} also introduced $c_{10}$ and $c_{11},$ we will not be needing them.

\subsection{Remark} \label{sec3.1}

A general fact about exterior algebras is stated, as in section 3.1 of \cite{HS79}. 

\medskip

If $F \in C^{1}(\bbR^{n+1};\bbR^{n+1})$ and $T \in \sT,$ then for $\mu_{T}$-almost-every $x \in \bbR^{n+1}$
$$\ast \big( (\wedge_{n} DF)(x) \vec{T}(x) \big) = \cV^{T}(x) \Delta (x)$$
where $\Delta(x)$ is the $(n+1) \times (n+1)$ matrix with $(i,j)^{\text{th}}$ entry $(-1)^{i+j}$ times the determinant of the $n \times n$ matrix obtained by deleting the $i^{\text{th}}$ row and $j^{\text{th}}$ column from the $(n+1) \times (n+1)$ matrix with $i^{\text{th}}$ row $D_{i}F(x).$

\subsection{Lemma} \label{sec3.2} 

We give a lemma analogous to that of section 3.2 of \cite{HS79}. However, the conclusion of our lemma is slightly different. As such, we must take care that any applications of Lemma \ref{sec5.2} still follow through as for Lemma 3.2 of \cite{HS79}. Recall that $\boq_{1}:\bbR^{n+1} \rightarrow \bbR$ is given by $\boq_{1}(x)=x_{n+1}$ for $x \in \bbR^{n+1}.$ This lemma uses the first variation Lemma \ref{sec2.1} with ``vertical'' deformation vector fields, to estimate the change in the mass of $T$ when pushforwarded by certain maps $F:\bbR^{n+1} \rightarrow \bbR^{n+1}$ fixing cylinders $C_{\rho}.$ 

\begin{lemma}
There are constants $c_{7},c_{8}$ depending on $n,M,m$ such that if $T \in \sT,$ $\rho \in (0,\infty),$ $\tau \in (0,1),$ $A \subseteq C_{1+\tau}$ is a Borel set satisfying $A = \op^{-1}[\op(A)],$ and $\phi \in C^{1}(\bbR^{n};[0,1])$ with $\sup_{\op(A)} |D \phi| \leq \rho/\tau,$ then 
$$\bM (F_{\sharp}(T \res A)) - \bM (T \res A) \leq c_{7} \kappa_{T} + \frac{c_{8}(1+\rho^{2})}{\tau^{2}} \int_{A_{\tau}} \boq_{1}^{2} \ d \mu_{T},$$
where $F \in C^{1}(\bbR^{n+1};\bbR^{n+1})$ is given by
$$F(x) = (\bop(x),\phi(\bop(x)) x_{n+1}),$$
and where $A_{\tau} = \{ x \in \bbR^{n+1} : \dist (x,A) < \tau \}.$
\end{lemma}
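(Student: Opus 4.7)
The plan is to bound $\bM(F_\sharp(T\res A))-\bM(T\res A)$ by combining the area-formula estimate on the pushforward with a Caccioppoli-type inequality derived from the first variation Lemma \ref{sec2.1}. By the standard pushforward inequality,
\[
\bM(F_\sharp(T\res A))-\bM(T\res A)\le\int_A\bigl(|\wedge_n DF\,\vec{T}|-1\bigr)\,d\mu_T.
\]
Using Section \ref{sec3.1} together with the block form $DF=\begin{pmatrix}I_n&0\\x_{n+1}(D\phi)^T&\phi\end{pmatrix}$, the cofactor computation gives $\ast(\wedge_n DF\,\vec{T})=(\phi\nu_h-x_{n+1}\nu_v D\phi,\,\nu_v)$, where I decompose $\cV^T=(\nu_h,\nu_v)\in\bbR^n\times\bbR$, whence
\[
|\wedge_n DF\,\vec{T}|^2-1=-(1-\phi^2)|\nu_h|^2-2\phi x_{n+1}\nu_v(D\phi\cdot\nu_h)+x_{n+1}^2\nu_v^2|D\phi|^2.
\]
The identity $(J-1)^2\ge0$ gives $J-1\le(J^2-1)/2$ for every $J\ge0$; dropping the nonpositive first term and applying Young's inequality to the cross term (using $\phi,|\nu_v|\le1$ and $|D\phi|\le\rho/\tau$ on $A$) yields the pointwise bound
\[
|\wedge_n DF\,\vec{T}|-1\le\tfrac{1}{2}|\nu_h|^2+\tfrac{\rho^2}{\tau^2}x_{n+1}^2\qquad\text{on }A.
\]

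To control $\int_A|\nu_h|^2\,d\mu_T$, I carry out a Caccioppoli argument. Choose $\zeta\in C^1_c(\bbR^n;[0,1])$ with $\zeta\equiv1$ on $\bop(A)$, $\spt\zeta\subset\bop(A)_\tau$, and $|D\zeta|\le2/\tau$, and apply Lemma \ref{sec2.1} to the vertical vector field $Z(x)=\zeta(\bop(x))^2\,x_{n+1}\,e_{n+1}$, multiplied if desired by a vertical bump equal to $1$ on $\{|x_{n+1}|\le3\}$ so that $Z\in C^1_c(B^n_2\times\bbR;\bbR^{n+1})$; the vertical cutoff is harmless since $\spt T\subset\bB_3$. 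A direct computation yields
\[
\dive{T}Z=\zeta^2|\nu_h|^2-2\zeta x_{n+1}\nu_v(D\zeta\cdot\nu_h),
\]
and another Young inequality combined with $\int\dive{T}Z\,d\mu_T=\int Z\cdot\nu_T\,d\mu_{\partial T}$ gives
\[
\int_A|\nu_h|^2\,d\mu_T\le\frac{16}{\tau^2}\int_{A_\tau}x_{n+1}^2\,d\mu_T+2\int|x_{n+1}|\,d\mu_{\partial T}.
\]

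The boundary integral is estimated using \eqref{defboundary} and \eqref{defderivative}: integrating the $\alpha$-H\"older bound $|D\psi_{T,\ell}(z)|\le(\alpha/2)\kappa_T|z|^\alpha$ from $\psi_{T,\ell}(0)=0$, $D\psi_{T,\ell}(0)=0$ gives $|\psi_{T,\ell}(z)|\le\kappa_T|z|^{1+\alpha}/2$, and since $\mu_{\partial T}$ is the $m_\ell$-weighted push-forward of $\cH^{n-1}\res B^{n-1}_2$ by each $\Phi_{T,\ell}$ with bounded Jacobian, summing using $\sum_\ell m_\ell=M-m$ yields $\int|x_{n+1}|\,d\mu_{\partial T}\le c(n,M,m)\kappa_T$. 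Assembling the three displays and noting $8+\rho^2\le8(1+\rho^2)$ produces the claimed estimate with $c_7,c_8$ depending only on $n,M,m$.

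The main obstacle relative to Lemma 3.2 of \cite{HS79} is precisely the boundary contribution. In the embedded case treated there, one performs an integration by parts along the boundary submanifold itself to upgrade the boundary term from $O(\kappa_T)$ to $O(\kappa_T^2/\tau^2)$; here $\partial T$ is only a finite sum of $C^{1,\alpha}$ graphs meeting tangentially at the origin (possibly with multiplicity) rather than a single embedded submanifold, so that refined integration by parts is unavailable and $\int|x_{n+1}|\,d\mu_{\partial T}=O(\kappa_T)$ is the sharpest estimate one can extract, explaining the replacement of $c_7\tau^{-2}\kappa_T^2$ by $c_7\kappa_T$ in the statement.
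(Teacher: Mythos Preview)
Your proof is correct and follows essentially the same route as the paper: the pushforward bound via the cofactor identity of Section~\ref{sec3.1}, the Caccioppoli estimate from the first variation formula applied to $X=\zeta^2\boq_1 e_{n+1}$, and the direct $O(\kappa_T)$ bound on the boundary term using \eqref{defboundary},\eqref{defderivative}. The only differences are cosmetic (you use $J-1\le(J^2-1)/2$ where the paper writes $(J^2-1)/(J+1)$, and you take $\zeta$ as a function of $\bop(x)$ rather than of $x$), and your closing remark on why the $\kappa_T^2/\tau^2$ improvement from \cite{HS79} is unavailable here matches the paper's explanation.
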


\begin{proof} Letting $\Delta$ be given as in Remark \ref{sec3.1}, we compute
$$|\cV^{T} \Delta|^{2} = \sum_{i=1}^{n} \big( \cV^{T}_{i} \phi - \boq_{1} \cV^{T}_{i} D_{i} \phi \big)^{2} + (\cV^{T}_{n+1})^{2}$$
$\mu_{T}$-almost-everywhere. Using 4.1.30 of \cite{F69}, Remark \ref{sec3.1}, the inequalities
$$\phi^{2} \leq 1, \ |\cV^{T}_{n+1}| \leq 1, \ \sum_{i=1}^{n} (\cV^{T}_{i})^{2}=1-(\cV^{T}_{n+1})^{2},$$
Schwartz's inequality, and Cauchy's inequality ($2ab \leq a^{2}+b^{2}$), we estimate
\begin{equation} \label{3.2(1)}
\begin{aligned}
\bM & (F_{\sharp} (T \res A)) - \bM (T \res A) \\
\leq & \int_{A} |(\wedge_{n} DF)(x) \vec{T}(x)| \ d \mu_{T}(x) - \mu_{T}(A) \\
= & \int_{A} (|\cV^{T} \Delta|-1) \ d \mu_{T} = \int_{A} \frac{|\cV^{T} \Delta|^{2}-1}{|\cV^{T} \Delta|+1} \ d \mu_{T} \\
\leq & \int_{A} \Big| \sum_{i=1}^{n} (\cV^{T}_{i})^{2} \phi^{2}- \big( 1-(\cV^{T}_{n+1})^{2} \big) \Big| d \mu_{T} \\
& + \int_{A} 2 \Big| \boq_{1} \phi \cV^{T}_{n+1} \sum_{i=1}^{n} \cV^{T}_{i} D_{i} \phi \Big| + ( \boq_{1} \cV^{T}_{n+1})^{2} |D\phi|^{2} \ d\mu_{T} \\
\leq & \int_{A} \big( 1-(\cV^{T}_{n+1})^{2} \big) + 2|\boq_{1}| \sqrt{1-(\cV^{T}_{n+1})^{2}} \ |D\phi| + \boq_{1}^{2} |D\phi|^{2} \ d \mu_{T} \\
\leq & 2 \int_{A} \big( 1-(\cV^{T}_{n+1})^{2} \big) \ d \mu_{T} + 2 \int_{A} \boq_{1}^{2} |D\phi|^{2} \ d \mu_{T};
\end{aligned}
\end{equation}
this is 3.2(1) of \cite{HS79}. The second term is $\leq \frac{2 \rho^{2}}{\tau^{2}} \int_{A_{\tau}} \boq_{1}^{2} \ d \mu_{T}$ as is needed.

\medskip

Consider the first variation formula from Lemma \ref{sec2.1} with the vector field $X(x) =  \zeta(x)^{2} \boq_{1}(x) e_{n+1}$ where $\zeta \in C^{1}(\bbR^{n+1};[0,1])$ satisfies $\spt \zeta \subset A_{\tau},$ $\zeta|_{A}=1,$ and $\sup |D\zeta| \leq \frac{c_{9}}{\tau}$ with $c_{9}=c_{9}(n).$ We can compute
$$\begin{aligned} 
\int & \big( 1-(\cV^{T}_{n+1})^{2} \big) \zeta^{2} \ d \mu_{T} \\ 
& = \int (-2 \zeta \boq_{1} \nabla^{T} \zeta) \cdot e_{n+1} \ d \mu_{T} + \int \zeta^{2} \boq_{1} (e_{n+1} \cdot \nu_{T}) \ d \mu_{\partial T} \\ 
& \leq \int \frac{1}{2} \big( 1-(\cV^{T}_{n+1})^{2} \big) + 2 |D\zeta|^{2} \boq_{1}^{2} \ d \mu_{T} + \int \zeta^{2} |\boq_{1}| \ d \mu_{\partial T},
\end{aligned}$$ 
where recall that $\nu_{T}$ is the co-normal of $\partial T$ with respect to $T.$ Thus
$$\int \big( 1-(\cV^{T}_{n+1})^{2} \big) \zeta^{2} \ d \mu_{T} \leq \frac{4 c_{9}^{2}}{\tau^{2}} \int_{A_{\tau}} \boq_{1}^{2} \ d \mu_{T} + 2 \int \zeta^{2} |\boq_{1}| \ d \mu_{\partial T}.$$
We can also compute
$$\begin{aligned} 
\int \zeta^{2} |\boq_{1}| \ d \mu_{\partial T} & \leq \int_{C_{1+2\tau}} |\boq_{1}| \ d \mu_{\partial T} \\ 
& \leq \left(\frac{\alpha}{2}\right) \kappa_{T} (1+2\tau)^{1+\alpha} \mu_{\partial T}(C_{1+2\tau}) \\
& \leq 2^{-1} 3^{n+1} \left( 1+\frac{3^{2}}{4}+\frac{3^{4}}{16} \right)^{\frac{1}{2}} (M-m) \varpi_{n-1} \kappa_{T},
\end{aligned}$$ 
using \eqref{defboundary},\eqref{defderivative}, and $\alpha,\kappa_{T},\tau \in (0,1]$ as well as
$$\begin{aligned}
\mu_{\partial T} & (C_{1+2\tau}) \\
\leq & (M-m) \varpi_{n-1} \left( 1+ \frac{\alpha^{2} \kappa_{T}^{2}}{4} (1+2\tau)^{2 \alpha} + \frac{\alpha^{4} \kappa_{T}^{4}}{16} (1+2\tau)^{4\alpha} \right)^{1/2} \\
& \times (1+2\tau)^{n-1}.
\end{aligned}$$
We conclude the lemma with $c_{7}$ depending on $n,M,m$ and $c_{8}$ actually just depending on $c_{9}$ (and hence, only on $n$). \end{proof}

\section{Some preliminary bounds on excess} \label{sec4}

This section compares the cylindrical excess to the height excess, using subharmonicity while referring to either Theorem 7.5(6) of \cite{A72} or Theorem 3.4 of \cite{MS73}. The proofs and results here are the same as in section 4 of \cite{HS79}, although we make a slight clarification to the proof found in section 4.1 of \cite{HS79}, given here in section \ref{sec4.1}. In this section we introduce $c_{12},\ldots,c_{16}$; in the previous section we introduced $c_{7},c_{8},c_{9}$ while skipping $c_{10},c_{11},$ which as opposed to \cite{HS79} we did not need.

\subsection{Lemma} \label{sec4.1} 

\begin{lemma}
There are positive constants $c_{12},c_{13},c_{14},c_{15} \geq 1$ depending only on $n,M,m$ so that for all $\sigma \in (0,1)$ and $T \in \sT,$
\begin{equation} \label{4.1(1)}
\begin{aligned}
c_{12}^{-1} \sigma^{2} \bE_{C}(T,1) - \kappa_{T} & \leq \int_{C_{1+\sigma}} \boq_{1}^{2} \ d \mu_{T} \\
& \leq c_{13} \sup_{C_{1+\sigma} \cap \spt T} \boq_{1}^{2};
\end{aligned}
\end{equation}
\begin{equation} \label{4.1(2)}
\begin{aligned}
c_{14}^{-1} \sigma^{n} \sup_{C_{1-\sigma} \cap \spt T} \boq_{1}^{2} - \kappa_{T}^{2} & \leq \int_{C_{1-\frac{\sigma}{2}}} \boq_{1}^{2} \ d \mu_{T} \\
& \leq c_{15} \sigma^{-n-1} \left( \bE_{C}(T,1)+\kappa_{T} \right).
\end{aligned}
\end{equation}
\end{lemma}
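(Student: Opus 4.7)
The proof breaks into four one-sided estimates. The upper bound in \eqref{4.1(1)} is immediate from $\spt T \subset \bB_{3}$ and the mass monotonicity \eqref{2.2(2)}: $\int_{C_{1+\sigma}} \boq_{1}^{2} \, d\mu_{T} \leq \mu_{T}(\bB_{3}) \sup_{C_{1+\sigma} \cap \spt T} \boq_{1}^{2} \leq c_{2} \cdot 3^{n} \sup \boq_{1}^{2}$. The lower bound in \eqref{4.1(1)} is essentially the Caccioppoli-type identity already derived inside the proof of Lemma \ref{sec3.2}: testing the first variation formula against $X = \zeta^{2} \boq_{1} e_{n+1}$ for a cutoff $\zeta \equiv 1$ on $C_{1}$, $\zeta \equiv 0$ outside $C_{1+\sigma}$, with $|D\zeta| \leq c_{9}/\sigma$, yields
$$\int_{C_{1}} (1-(\cV^{T}_{n+1})^{2})\, d\mu_{T} \leq \frac{4c_{9}^{2}}{\sigma^{2}} \int_{C_{1+\sigma}} \boq_{1}^{2} \, d\mu_{T} + 2 \int |\boq_{1}| \, d\mu_{\partial T}.$$
The boundary integral is $\leq c\kappa_{T}$ by \eqref{defboundary},\eqref{defderivative}, and the left side dominates $\bE_{C}(T,1)$ by \eqref{1.4(1)}, since $|\cV^{T}_{n+1}| \leq 1$ gives $1-(\cV^{T}_{n+1})^{2} \geq 1-|\cV^{T}_{n+1}|$. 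Rearrangement delivers the lower bound.

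For the left inequality of \eqref{4.1(2)}, I would rely on the subharmonicity of $\boq_{1}^{2}$ on $T$ away from the boundary: since $T$ is area-minimizing, the coordinate function $\boq_{1}$ satisfies $\Delta_{T} \boq_{1} = 0$ on $\spt T \setminus \spt \partial T$, hence $\Delta_{T} \boq_{1}^{2} = 2|\nabla^{T} \boq_{1}|^{2} \geq 0$. Applying Theorem 7.5(6) of \cite{A72}, or equivalently Theorem 3.4 of \cite{MS73}, on balls $\bB_{\sigma/2}(x_{0})$ centered at points $x_{0} \in C_{1-\sigma} \cap \spt T$ yields the pointwise bound $\boq_{1}(x_{0})^{2} \leq c\sigma^{-n} \int_{\bB_{\sigma/2}(x_{0})} \boq_{1}^{2} \, d\mu_{T} + c'\kappa_{T}^{2}$, with the $\kappa_{T}^{2}$ term absorbing the correction from proximity of $\bB_{\sigma/2}(x_{0})$ to $\spt \partial T$ controlled by \eqref{defboundary},\eqref{defderivative}. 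Taking the supremum over such $x_{0}$ produces the left inequality.

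The right inequality of \eqref{4.1(2)} is the principal obstacle: it bounds the height integral by the excess, the direction opposite to the Caccioppoli machine, and requires a Moser-type iteration combining the Michael-Simon Sobolev inequality on the minimal current $T$ (\cite{MS73}) with the Caccioppoli inequality from the proof of Lemma \ref{sec3.2}. The identity $|\nabla^{T} \boq_{1}|^{2} = 1-(\cV^{T}_{n+1})^{2}$ identifies the Dirichlet energy of $\boq_{1}$ with $\leq 2\bE_{C}(T,1)$. Applying Michael-Simon to $\zeta^{p} \boq_{1}^{q}$ for a sequence of exponents $q$ and bootstrapping produces $\sup_{C_{1-\sigma/2} \cap \spt T} \boq_{1}^{2} \leq c \sigma^{-n-1} (\bE_{C}(T,1) + \kappa_{T})$, whence $\int_{C_{1-\sigma/2}} \boq_{1}^{2} \leq \mu_{T}(C_{1-\sigma/2}) \sup \boq_{1}^{2} \leq c_{15} \sigma^{-n-1}(\bE_{C}(T,1)+\kappa_{T})$ by \eqref{2.2(2)}. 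The $\sigma^{-n-1}$ factor reflects the cost of localizing with a cutoff of gradient $\sim 1/\sigma$ propagated through the iteration. The main care required is to track the \emph{linear} boundary error $\kappa_{T}$ (rather than the $\kappa_{T}^{2}$ of \cite{HS79}, owing to the change in Lemma \ref{sec3.2}) through every step of the iteration, and to check that it does not pick up an additional $\sigma^{-1}$-type factor that would break the downstream arguments in Lemmas \ref{sec6.4} and \ref{sec7.1}.
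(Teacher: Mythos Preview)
Your treatment of the upper bound in \eqref{4.1(1)} and of the left inequality in \eqref{4.1(2)} is essentially correct and close to the paper, with one refinement: rather than applying the mean value inequality to $\boq_{1}^{2}$ and waving at a boundary correction, the paper applies it to $\max\{\boq_{1}-\kappa_{T},0\}^{2}$ (and its negative counterpart), which vanishes identically on $\spt\partial T$ by \eqref{defderivative}, so the subharmonic mean value inequality from \cite{A72} or \cite{MS73} applies cleanly and the $\kappa_{T}^{2}$ falls out of $(a+b)^{2}\leq 2a^{2}+2b^{2}$ at the end.

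Your argument for the \emph{lower} bound in \eqref{4.1(1)} is actually simpler than the paper's: you use the Caccioppoli identity $\int\zeta^{2}(1-(\cV^{T}_{n+1})^{2})\,d\mu_{T}\leq 4c_{9}^{2}\sigma^{-2}\int_{C_{1+\sigma}}\boq_{1}^{2}\,d\mu_{T}+c\kappa_{T}$ directly (exactly the inequality derived inside the proof of Lemma \ref{sec3.2}), needing only stationarity. The paper instead invokes the full area-minimizing property: with $\tau=\sigma/2$ it compares $T\res C_{1+\tau}$ against $F_{\sharp}(T\res C_{1+\tau})+R_{T}$ via the homotopy formula, where $F$ and $R_{T}$ are as in Lemma \ref{appendixlemma3}, obtaining $\bE_{C}(T,1)\leq \bM(F_{\sharp}(T\res A))-\bM(T\res A)+\bM(R_{T})$ and then applying Lemma \ref{sec3.2} and \eqref{appendixlemma3RT}. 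Your route is legitimate and more economical here.

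The real issue is the \emph{right} inequality in \eqref{4.1(2)}. Moser iteration (Caccioppoli plus Michael--Simon) produces a bound of the form $\sup_{C_{1-\sigma}}\boq_{1}^{2}\leq c\sigma^{-n}\int_{C_{1-\sigma/2}}\boq_{1}^{2}\,d\mu_{T}$ --- that is precisely the \emph{left} inequality you already used, not the right one. Your sentence ``the identity $|\nabla^{T}\boq_{1}|^{2}=1-(\cV^{T}_{n+1})^{2}$ identifies the Dirichlet energy of $\boq_{1}$ with $\leq 2\bE_{C}(T,1)$'' bounds $\int|\nabla^{T}\boq_{1}|^{2}$, not $\int\boq_{1}^{2}$; Moser iteration does not start from the Dirichlet energy, and passing from one to the other requires a Poincar\'e inequality on the current $T$ with ``Dirichlet data'' on the $(n-1)$-dimensional set $\spt\partial T$, which you have not supplied and which is not standard. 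As written, the argument is circular.

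The paper's device is entirely different and avoids iteration. Under the harmless smallness assumption \eqref{4.1(4)} (otherwise the inequality is trivial by \eqref{defmass}), write pointwise on $\bB_{1}$
\[
\boq_{1}(x)=x\cdot e_{n+1}=x\cdot\cV^{T}(x)+x\cdot(e_{n+1}-\cV^{T}(x)),
\]
so that
\[
\int_{\bB_{1}}\boq_{1}^{2}\,d\mu_{T}\leq 2\int_{\bB_{1}}(x\cdot\cV^{T})^{2}\,d\mu_{T}+2\int_{\bB_{1}}|e_{n+1}-\cV^{T}|^{2}\,d\mu_{T}.
\]
The first integral is $\leq \int_{\bB_{1}}|x\cdot\cV^{T}|^{2}|x|^{-n-2}\,d\mu_{T}\leq \bE_{S}(T,1)+c_{4}\kappa_{T}$ by \eqref{2.2(5)}, and the second is $\leq 4\bE_{C}(T,1)$ by \eqref{1.4(1)}; with \eqref{1.6(1)} this gives \eqref{4.1(5)} directly. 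A short contradiction argument (using interior monotonicity and Lemma \ref{appendixlemma4}) then shows $C_{1-\sigma/2}\cap\spt T\subset\bB_{1}$, which is \eqref{4.1(6)}, and the proof is complete. This monotonicity-based decomposition of $\boq_{1}$ is the missing idea in your proposal.
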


The equations \eqref{4.1(1)},\eqref{4.1(2)} are 4.1(1)(2) of \cite{HS79}. We as well remark that this lemma means that $\bE_{C}(T,1)$ can be bounded above and below by fixed multiples of $\sup_{C_{1+\sigma} \cap \spt T} \boq_{1}^{2}$ and $\sup_{C_{1-\sigma} \cap \spt T} \boq_{1}^{2},$ respectively, up to a term which is a fixed multiple of $\kappa_{T}.$ As remarked in \cite{DS93}, we will repeatedly need to estimate the cylindrical excess by the height. The first equation \eqref{4.1(1)} follows by the area comparison Lemma \ref{sec3.2} and a comparison argument based on the minimality of $T,$ while the proof of \eqref{4.1(2)} uses Allard's technique of Moser interation; see Theorem 7.5(6) of \cite{A72}.

\medskip

\begin{proof} The second inequality in \eqref{4.1(1)} follows immediately from \eqref{defmass} with $c_{13} = 3^{n}(1+M \varpi_{n}).$ To prove the first inequality in \eqref{4.1(1)}, first observe that $\kappa_{T} > 3^{n} \big( 1+M \varpi_{n} \big) \sigma^{2}$ implies by \eqref{defmass} that 
$$c_{12}^{-1} \sigma^{2} \bE_{C}(T,1) - \kappa_{T} < 0$$
so long as we choose $c_{12} \geq 1.$ 

\medskip

Now assume $\kappa_{T} \leq 3^{n} \big( 1+M \varpi_{n} \big) \sigma^{2}.$ With $\tau = \sigma/2$ let $\phi,$ $F,$ $h,$ and $R_{T}$ be as in Lemma \ref{appendixlemma3}. Using the homotopy formula (see 4.1.8-9 of \cite{F69} or 26.22 of \cite{S83}) we can thus compute
$$\partial \big( (T \res C_{1+\tau}) - F_{\sharp}(T \res C_{1+\tau}) - R_{T} \big) = \partial \big( T-F_{\sharp}T-R_{T} \big) = 0.$$
Since $T$ is area-minimizing, then this implies
$$\bM(T \res C_{1+\tau}) \leq \bM(F_{\sharp}(T \res C_{1+\tau})+R_{T}).$$
Since $F(x) = \bop(x)$ for $x \in C_{1},$ then letting $A = C_{1+\tau} \setminus C_{1}$ we compute
$$\begin{aligned} 
\bE_{C}(T,1) & = \bM(T \res C_{1}) - \bM(F_{\sharp}(T \res C_{1})) \\
& \leq \bM(F_{\sharp}(T \res C_{1+\tau})+R_{T}) - \bM(T \res A) - \bM(F_{\sharp}(T \res C_{1})) \\
& \leq \bM(F_{\sharp}(T \res A)) - \bM(T \res A) + \bM(R_{T}).
\end{aligned}$$ 
Using \eqref{appendixlemma3RT} (since $\kappa_{T} \leq 4 \cdot 3^{n} (1+M \varpi_{n}) \tau^{2}$), Lemma \ref{sec3.2} (with $\rho = 3$ and $A = C_{1+\tau} \setminus C_{1}$), and $\tau= \sigma/2,$ gives 
$$\begin{aligned} 
\bE_{C}(T,1) \leq & c_{7} \kappa_{T} + \frac{40c_{8}}{\sigma^{2}} \int_{C_{1+\sigma}} \boq_{1}^{2} \ d \mu_{T} \\ 
& + \left( \frac{\sqrt{21}}{8}+ 2^{\frac{9n-7}{2}} 3^{n^{2}-\frac{1}{2}} \right) (M-m) \varpi_{n-1} (1+M \varpi_{n})^{n-1} \kappa_{T}. 
\end{aligned}$$ 
We can then choose $c_{12} \geq 1,$ depending on $n,M,m$ so that the first inequality of \eqref{4.1(1)} holds.  

\medskip

We now prove the first inequality in \eqref{4.1(2)}. For this, we will show the function $\max \{ \boq_{1}-\kappa_{T},0 \}^{2}$ is $T$-subharmonic, in the sense that
$$\int \nabla^{T} (\max \{ \boq_{1}-\kappa_{T},0 \}^{2}) \cdot \nabla^{T} \zeta \ d \mu_{T} \leq 0$$
for all $\zeta \in C^{1}_{c}(\bbR^{n+1} \setminus \spt \partial T;[0,\infty)).$ In fact, by the first variation formula \ref{sec2.1}, for any such $\zeta$
$$0 = \int \dive{T}{(\zeta e_{n+1})} \ d \mu_{T} = \int e_{n+1} \cdot \nabla^{T} \zeta \ d \mu_{T} = \int \nabla^{T} \boq_{1} \cdot \nabla^{T} \zeta \ d \mu_{T}.$$
Since $\max \{t-\kappa_{T},0 \}^{2}$ is a nondecreasing convex function and $\boq_{1}$ is $T$-harmonic, then by Lemma 7.5(3) of \cite{A72} the function $\max \{ \boq_{1}-\kappa_{T},0 \}^{2}$ is $T$-subharmonic.

\medskip

Having shown $\max \{ \boq_{1}-\kappa_{T},0 \}^{2}$ is subharmonic, we now use the mean value theorem for subharmonic functions; for this, apply either the argument of Theorem 7.5(6) of \cite{A72} (to the varifold associated with $T$, see Example 4.8(4) of \cite{A72}), or the argument of Theorem 3.4 of \cite{MS73} (with $\tilde{g}^{ij} = \delta_{ij} - \cV^{T}_{i} \cV^{T}_{j},$ $\mu = \mu_{T},$ and $M=U=B^{n}_{2}(0) \times \bbR$). We deduce the bound
\begin{equation} \label{4.1(3)}
\begin{aligned}
\sup_{C_{1-\sigma} \cap \spt T} & \max \{ \boq_{1}-\kappa_{T},0 \}^{2} \\
= & \sup_{x \in \bop^{-1}(\{0\})} \sup_{B_{1-\sigma}(x) \cap \spt T} \max \{ \boq_{1}-\kappa_{T},0 \}^{2} \\
\leq & \sup_{x \in \bop^{-1}(\{0\})} \frac{c_{16}}{\sigma^{n}} \int_{B_{1-\frac{\sigma}{2}}(x)} \max \{ \boq_{1}-\kappa_{T},0 \}^{2} \ d \mu_{T} \\
\leq & \frac{c_{16}}{\sigma^{n}} \int_{C_{1-\frac{\sigma}{2}}} \max \{ \boq_{1}-\kappa_{T},0 \}^{2} \ d \mu_{T}
\end{aligned}
\end{equation}
where $c_{16}=c_{16}(n) \geq 1;$ this is 4.1(3) of \cite{HS79}. We similarly verify
$$\sup_{C_{1-\sigma} \cap \spt T} \min \{ -\kappa_{T}-\boq_{1},0 \}^{2} \leq \frac{c_{16}}{\sigma^{n}} \int_{C_{1-\frac{\sigma}{2}}} \min \{ -\kappa_{T}-\boq_{1},0 \}^{2} \ d \mu_{T}.$$
Combining the latter two estimates with Cauchy's inequality ($2ab \leq a^{2}+b^{2}$) gives the first inequality in \eqref{4.1(2)}. 

\medskip

To prove the second inequality in \eqref{4.1(2)}, we follow \cite{HS79} but with small changes in constants. In particular, we now let
$$c_{15} = 16 \cdot 3^{3n+3} (3+c_{4}+ (M-m) \varpi_{n-1})(1+M \varpi_{n}) \max\{ 1,\varpi_{n}^{-1} \} c_{16}.$$
We presently may assume 
\begin{equation} \label{4.1(4)} 
\bE_{C}(T,1) + \kappa_{T} < 3^{n+2} (1+M \varpi_{n}) c_{15}^{-1} \sigma^{n+1}, 
\end{equation} 
analogous to 4.1(4) of \cite{HS79}. Otherwise using \eqref{defmass} we get
$$\int_{C_{1-\frac{\sigma}{2}}} \boq_{1}^{2} \ d \mu_{T} \leq 3^{n+2}(1+M \varpi_{n}) \leq c_{15} \sigma^{-n-1} (\bE_{C}(T,1)+\kappa_{T}),$$
which is the second inequality in \eqref{4.1(2)}. Assuming \eqref{4.1(4)}, then we use \eqref{2.2(5)}, \eqref{1.4(1)}, \eqref{1.6(1)} (and assuming $c_{4} \geq 3$) to get
\begin{equation} \label{4.1(5)} 
\begin{aligned} 
\int_{\bB_{1}} \boq_{1}^{2} \ d \mu_{T} & = \int_{\bB_{1}} \left( x \cdot \cV^{T}(x) + x \cdot (e_{n+1}-\cV^{T}(x)) \right)^{2} \ d \mu_{T}(x) \\
& \leq 2 \int_{\bB_{1}} (x \cdot \cV^{T}(x))^{2} + |e_{n+1}-\cV^{T}(x)|^{2} \ d \mu_{T}(x) \\
& \leq 2 \bE_{S}(T,1) + 2 c_{4} \kappa_{T} + 8 \bE_{C}(T,1) \\ 
& \leq 2 (2+c_{4}+(M-m) \varpi_{n-1})(\bE_{C}(T,1) + \kappa_{T}),
\end{aligned} 
\end{equation} 
as in 4.1(5) of \cite{HS79}. Recalling the definition of $c_{15}$ and $\sigma \in (0,1)$ we conclude
$$\int_{\bB_{1}} \boq_{1}^{2} \ d \mu_{T} \leq c_{15} \sigma^{-n-1} (\bE_{C}(T,1)+\kappa_{T}).$$ 
As in \cite{HS79}, we conclude the proof by showing
\begin{equation} \label{4.1(6)}
C_{1-\frac{\sigma}{2}} \cap \spt T \subset \bB_{1}, 
\end{equation}
as in 4.1(6) of \cite{HS79}. For this, by the mean value theorem for subharmonic functions as used in \eqref{4.1(3)}, Cauchy's inequality, \eqref{4.1(5)}, and \eqref{4.1(4)} we have
$$\begin{aligned} 
\sup_{\bB_{1-\frac{\sigma}{6}} \cap \spt T} \boq_{1}^{2} & \leq 2 \cdot 6^{n} c_{16} \sigma^{-n} \int_{\bB_{1}} \boq_{1}^{2} \ d \mu_{T} + 4 \kappa_{T}^{2} \\ 
& \leq 4 \cdot 6^{n} (3+c_{4}+(M-m) \varpi_{n-1}) c_{16} \sigma^{-n} (\bE_{C}(T,1)+\kappa_{T}) \\ 
& \leq \sigma/12. 
\end{aligned}$$ 
This together with \eqref{4.1(4)} and \eqref{defderivative} implies
$$\partial (T \res B_{1-\frac{\sigma}{6}}) \res C_{1-\frac{\sigma}{3}} = (\partial T) \res C_{1-\frac{\sigma}{3}}.$$
Recalling \eqref{4.1(4)} (and the definition of $c_{15}$), we additionally conclude  
$$\bop_{\sharp} \big( (T \res B_{1-\frac{\sigma}{6}}) \res C_{1-\frac{\sigma}{3}} \big) = \bop_{\sharp}(T \res C_{1-\frac{\sigma}{3}})$$
using Lemma \ref{appendixlemma4}. This together with \eqref{1.4(1)}, the interior monotonicity formula (see 5.4.5(2) of \cite{F69} or Theorem 17.6 of \cite{S83}), and \eqref{4.1(4)} imply that for any $x \in (C_{1-\frac{\sigma}{2}} \setminus \bB_{1}) \cap \spt T$
$$\begin{aligned}
\bE_{C}(T,1) \geq & (1-\sigma/3) \bE_{C}(T,1-\sigma/3) \\
\geq & (2/3)^{n} \mu_{T}(B_{\frac{\sigma}{6}}(x)) \\
\geq & (2/3)^{n} \varpi_{n} (\sigma/6)^{n} \geq 3^{-2n}  \varpi_{n} \sigma^{n+1} > \bE_{C}(T,1).
\end{aligned}$$
As this is a contradiction, we must have $(C_{1-\frac{\sigma}{2}} \setminus \bB_{1}) \cap \spt T = \emptyset.$ \end{proof}

\medskip

\subsection{Remark} \label{sec4.2} 

Applying \eqref{4.1(2)} with $\sigma = \frac{1}{4}$ gives
$$\sup_{C_{\frac{3}{4}} \cap \spt T} |\boq_{1}| \leq \frac{1}{8}$$
whenever $T \in \sT$ and $\bE_{C}(T,1)+\kappa_{T} \leq 4^{-2n-4} c_{15}^{-1}(1+c_{14})^{-1}.$ Recall the function $\rot_{\theta}$ as given in section \ref{sec1.1,1.2,1.3}, and observe that the sets
$$\{ x \in \bbR^{n+1}: |(x_{1},\ldots,x_{n-1})| \leq 1/2, |x_{n}| < 1/2 \}$$ 
$$\rot_{\theta}(\boq_{1}^{-1}([-1/8,1/8]) \cap \partial B_{3/4})$$
do not intersect whenever $|\theta| \leq 1/8.$ We conclude from Remark \ref{sec2.3}
\begin{equation} \label{4.2(1)}
\begin{aligned}
& (\ETA_{\frac{1}{4} \sharp} \rot_{\theta \sharp} T) \res B_{3} = (\rot_{\theta \sharp} \ETA_{\frac{1}{4} \sharp} T) \res B_{3} \in \sT \\
\text{whenever} & \ |\theta| \leq 1/8, \ T \in \sT, \text{ and} \\
& \bE_{C}(T,1)+\kappa_{T} \leq \min \left\{ \frac{1}{(1+c_{4})},\frac{1}{4^{2n+4}c_{15}(1+c_{14})} \right\};
\end{aligned}
\end{equation} 
this is 4.2(1) of \cite{HS79}. For such $\theta$ and $T,$ we may use \eqref{2.3(2)}, \eqref{4.1(1)} (with $\sigma \nearrow 1$), and \eqref{4.1(2)} (with $\sigma = \frac{1}{4}$) to estimate
\begin{equation} \label{4.2(2)}
\kappa_{(\ETA_{\frac{1}{4} \sharp} \rot_{\theta \sharp} T) \res B_{3}} = \kappa_{(\ETA_{\frac{1}{4} \sharp} T) \res B_{3}} \leq 4^{-\alpha} \kappa_{T},
\end{equation}
\begin{equation} \label{4.2(3)}
\begin{aligned}
\bE_{C} ((\ETA_{\frac{1}{4} \sharp} \rot_{\theta \sharp} T) \res B_{3},1) \leq & c_{12} c_{13} \sup_{C_{2} \cap \spt (\ETA_{\frac{1}{4} \sharp} \rot_{\theta \sharp} T)} \boq_{1}^{2} + c_{12} 4^{-\alpha} \kappa_{T} \\
\leq & 16 c_{12} c_{13} \sup_{C_{\frac{1}{2}} \cap \spt (\rot_{\theta \sharp} T)} \boq_{1}^{2} + c_{12} 4^{-\alpha} \kappa_{T} \\
\leq & 16 c_{12} c_{13} ( \theta^{2} + \sup_{C_{\frac{3}{4}} \cap \spt T} \boq_{1}^{2} ) + c_{12} 4^{-\alpha} \kappa_{T} \\
\leq & 4^{2n+4} c_{12}(1+c_{13})(1+c_{14})(1+c_{15}) \\
& \times (\theta^{2} + \bE_{C}(T,1) + \kappa_{T}),
\end{aligned}
\end{equation}
as in 4.2(2)(3) of \cite{HS79}. Finally, combining \eqref{4.2(1)},\eqref{4.2(2)},\eqref{4.2(3)} and Remark \ref{sec2.3} we conclude that
\begin{equation} \label{4.2(4)}
\begin{aligned}
& (\ETA_{\lambda \sharp} \rot_{\theta \sharp} T) \res B_{3} \in \sT \text{ with } \kappa_{(\ETA_{\lambda \sharp} \rot_{\theta \sharp} T) \res B_{3}} \leq \lambda^{\alpha} \kappa_{T} \\
\text{whenever} & \ T \in \sT, \ \lambda \in (0,1/12), \text{ and} \\
& \theta^{2}+ \bE_{C}(T,1)+ \kappa_{T} \leq c_{16}^{-1}
\end{aligned}
\end{equation}
where $c_{16} = c_{16}(n,M,m) = 4^{2n+5}(1+c_{4})(1+c_{12})(1+c_{13})(1+c_{14})(1+c_{15})$; this is as in 4.2(4) of \cite{HS79}.

\medskip

As remarked in \cite{DS93}, one typically uses Lemma \ref{sec4.1} to see that if $T \in \sT$ with $\bE_{C}(T,1)+\kappa_{T}$ small, then slightly tilting and rescaling $T$ yields another member of $\sT$ will small excess of order $\bE_{C}(T,1).$

\section{Interior nonparametric estimates} \label{sec5}

Section 5.1 of \cite{HS79} proves a general decomposition theorem, while sections 5.2,5.3 of \cite{HS79} state the well-known gradient estimates for solutions to the minimal surface equation. Section 5.4 of \cite{HS79}, which proves an approximate graphical decomposition for $T \in \sT$ with sufficiently small cylindrical excess, passes with no serious changes. Sections \ref{sec5.1}-\ref{sec5.4} are direct counterparts to sections 5.1-5.4 of \cite{HS79}, with only minor mostly notational changes. We introduce in this section $c_{17},\ldots,c_{25}.$

\subsection{Lemma} \label{sec5.1} 

The following lemma concerns general rectifiable currents, and so passes completely unchanged as in section 5.1 of \cite{HS79}. We will use this to prove Theorem \ref{sec5.4}, which states that $T$ with $\bE_{C}(T,1)+\kappa_{T}$ sufficiently small can be respectively decomposed into a sum of graphs over large regions of $\bV$ and $\bW.$

\begin{lemma}
If $V$ is an open subset of $\bbR^{n},$ $S \in \sR_{n}(\bbR^{n+1}),$ $S \res \bop^{-1}(V)=S,$ $(\partial S) \res \bop^{-1}(V)=0,$ $\mathfrak{m} \in \bbN,$ $\bop_{\sharp} S = \mathfrak{m} (\bbE^{n} \res V),$ and $\bM(S)-\bM(\bop_{\sharp}S) < \cH^{n}(V),$ then for each $i \in \{ 1,\ldots,\mathfrak{m} \}$ there exists $S_{i} \in \sR_{n}(\bbR^{n+1})$ so that
$$\bop^{-1}(V) \cap \spt \partial S_{k} = \emptyset, \ \bop_{\sharp} S_{k} = \bbE^{n} \res V, \ S = \sum_{i=1}^{\mathfrak{m}} S_{i}, \ \mu_{S} = \sum_{i=1}^{\mathfrak{m}} \mu_{S_{i}}.$$
\end{lemma}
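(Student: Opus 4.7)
The strategy is to apply the slicing theorem in the projection $\bop$ and produce the $S_{i}$ as graphs of Borel measurable sheet functions, with any ``loop-like'' residual of $S$ absorbed into one of them.

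By the slicing theorem (4.3.2, 4.3.8 of \cite{F69}), for $\cH^{n}$-a.e.\ $y \in V$ the slice $S_{y} := \langle S, \bop, y\rangle$ is an integer rectifiable $0$-current whose signed multiplicities sum to $\mathfrak{m}$, as forced by the hypothesis $\bop_{\sharp}S = \mathfrak{m}(\bbE^{n}\res V)$. The coarea inequality $\bM(S) \geq \int_{V}\bM(S_{y})\,d\cH^{n}(y)$ combined with the mass hypothesis gives
$$\int_{V}(\bM(S_{y}) - \mathfrak{m})\,d\cH^{n}(y) \leq \bM(S) - \bM(\bop_{\sharp}S) < \cH^{n}(V),$$
so the slice mass excess beyond $\mathfrak{m}$ is small in an averaged sense; however $S$ is still allowed to contain pieces (closed cycles in $\bop^{-1}(V)$) which project to zero under $\bop$ and contribute only to this excess, so one cannot simply argue that the slice is a sum of $\mathfrak{m}$ positively oriented atoms pointwise.

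Construct the $S_{i}$ in three steps. First, using measurable selection in the slicing theorem, produce $\mathfrak{m}$ Borel measurable sheet functions $u_{1}(y) \leq u_{2}(y) \leq \cdots \leq u_{\mathfrak{m}}(y)$ whose graph-points $(y, u_{i}(y))$ lie in $\spt S$ with orientation matching $\vec{S}$ and together account for $\mathfrak{m}$ positively oriented atoms (counted with multiplicity) in each slice $S_{y}$ --- this is feasible because the signed sum of $S_{y}$ equals $\mathfrak{m}$. Second, define $\widetilde{S}_{i}$ to be the integer rectifiable current associated to the graph of $u_{i}$ with multiplicity one and orientation $e_{1}\wedge\cdots\wedge e_{n}$, so in particular $\bop_{\sharp}\widetilde{S}_{i} = \bbE^{n}\res V$. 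Third, set $R := S - \sum_{i=1}^{\mathfrak{m}}\widetilde{S}_{i}$, which by construction satisfies $\bop_{\sharp}R = 0$, $(\partial R)\res \bop^{-1}(V) = 0$, and is supported in $\spt S$ with orientations compatible with $\vec{S}$; then take $S_{i} := \widetilde{S}_{i}$ for $i < \mathfrak{m}$ and $S_{\mathfrak{m}} := \widetilde{S}_{\mathfrak{m}} + R$.

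Verification of the four conclusions is then direct: $\bop_{\sharp}S_{i} = \bbE^{n}\res V$ and $S = \sum_{i}S_{i}$ hold by construction; $(\partial S_{i})\res \bop^{-1}(V) = 0$ follows from the graph structure of each $\widetilde{S}_{i}$ together with the cycle property of $R$ inside $\bop^{-1}(V)$; and $\mu_{S} = \sum_{i}\mu_{S_{i}}$ follows from the orientation compatibility secured by the measurable selection. The main obstacle, and the place where the hypothesis $\bM(S) - \bM(\bop_{\sharp}S) < \cH^{n}(V)$ enters crucially, is arranging the selection in step one so that the graph sheets $\widetilde{S}_{i}$ use up exactly the $\mathfrak{m}$ positively oriented slice atoms without conflict and leave a residual $R$ which can be absorbed into $S_{\mathfrak{m}}$ without cancellation of mass --- the strict mass inequality combined with the integrality of the slice multiplicities is what ensures such a coherent assignment is possible over all of $V$.
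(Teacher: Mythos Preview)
Your approach has a genuine gap at the point where you declare $\widetilde{S}_{i}$ to be ``the integer rectifiable current associated to the graph of $u_{i}$ with multiplicity one.'' A merely Borel measurable function $u_{i}:V\to\bbR$ does not in general have a graph of locally finite $\cH^{n}$-measure, and even when it does, the resulting current need not satisfy $(\partial\widetilde{S}_{i})\res\bop^{-1}(V)=0$. Your claim that ``$(\partial S_{i})\res\bop^{-1}(V)=0$ follows from the graph structure'' is exactly what would need to be proved, and it is false without further regularity (Lipschitz, or at least BV-type control). Slicing tells you what happens fibre by fibre, but it gives you no mechanism for gluing the fibres into a current with no boundary; that is the entire content of the lemma.

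The paper avoids this by exploiting the codimension-one structure. It cones $S$ vertically to a hyperplane at height $s>\sup_{\spt S}\boq_{1}$ via the homotopy $h(t,x)=(\bop(x),(1-t)s+tx_{n+1})$, obtaining an $(n+1)$-current whose boundary is then decomposed, via Federer 4.5.17, into a nested family $\partial(\bbE^{n+1}\res M_{i})$ with $M_{i}\subset M_{i-1}$ and with the total mass measure splitting additively. Restricting each $\partial(\bbE^{n+1}\res M_{i})$ to $\bop^{-1}(V)\cap\{\boq_{1}<s\}$ gives currents $R_{i}$ which are \emph{automatically} boundaryless in $\bop^{-1}(V)$ (being boundaries of sets), and each projects to $0$ or $\pm\bbE^{n}\res V$. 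The mass hypothesis $\bM(S)-\bM(\bop_{\sharp}S)<\cH^{n}(V)$ is used only to count: it forces exactly $\mathfrak{m}$ of the $R_{i}$ to have nonzero projection, and the nesting $M_{i}\subset M_{i-1}$ makes these consecutive, so one can absorb the remaining $R_{i}$ into $S_{1}$ and $S_{\mathfrak{m}}$ without cancellation. The BV/set-of-finite-perimeter decomposition is doing the work that your measurable selection cannot.
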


\begin{proof} Choose $s > \sup_{\spt S} \boq_{1}$ and let $h:\bbR \times \bbR^{n+1} \rightarrow \bbR^{n+1}$ be given by
$$h(t,x) = (\bop(x),(1-t) s + t x_{n+1})$$
for $(t,x) \in \bbR \times \bbR^{n+1}.$ Then by 4.1.9,4.5.17 of \cite{F69} we can choose Lebesgue measurable sets $M_{i} \subset \bbR^{n+1},$ for each integer $i,$ so that $M_{i} \subset M_{i-1}$ and
$$\begin{aligned}  
\partial h_{\sharp} ( (\bbE^{1} \res [0,1]) \times S) & = \sum_{i=-\infty}^{\infty} \partial (\bbE^{n+1} \res M_{i}) \\
\mu_{\partial h_{\sharp} ( (\bbE^{1} \res [0,1]) \times S) } & = \sum_{i=-\infty}^{\infty} \mu_{\partial (\bbE^{n+1} \res M_{i})};
\end{aligned}$$
see 4.1.8 of \cite{F69} or Definition 26.16 of \cite{S83}. Defining 
$$R_{i} = \partial (\bbE^{n+1} \res M_{i}) \res \bop^{-1}(V) \cap \{ x \in \bbR^{n+1}: \boq_{1}(x)<s \},$$
where recall $\boq_{1}(x) = x_{n+1}$ for $x \in \bbR^{n+1},$ then
$$\begin{aligned}
\bop_{\sharp} R_{i} & = - \bop_{\sharp} \big( \partial ( \bbE^{n+1} \res M_{i}) \res \{ x \in \bbR^{n+1}: \boq_{1}(x) \geq s \} \big) \in \{ 0, \pm \bbE^{n} \res V \}, \\
S & = \sum_{i=-\infty}^{\infty} R_{i}, \ \mu_{S} = \sum_{i=-\infty}^{\infty} \mu_{R_{i}}.
\end{aligned}$$
Letting $I = \{ i: \bop_{\sharp} R_{i} \neq 0 \},$ we infer that $\# I = \mathfrak{m}$ and hence $\bop_{\sharp} R_{i} = \bbE^{n} \res V$ for $i \in I$, because
$$\begin{aligned}
\mathfrak{m} \cH^{n}(V) & = \bM ( \bop_{\sharp} S ) = \bM \left( \sum_{i \in I} \bop_{\sharp} R_{i} \right) \leq \sum_{i \in I} \bM( \bop_{\sharp} R_{i} ) \\
& = (\# I) \cH^{n}(V) \leq \sum_{i \in I} \bM(R_{i}) = \bM(S) < (\mathfrak{m}+1) \cH^{n}(V).
\end{aligned}$$
Moreover, $M_{i} \subset M_{i-1}$ for all $i$ implies $I = \{ \mathfrak{i}+1,\ldots,\mathfrak{i}+\mathfrak{m} \},$ where $\mathfrak{i} = (\inf_{i \in I} i)-1.$ Setting
$$S_{1} = \sum_{i=-\infty}^{\mathfrak{i}+1} R_{i}, \ S_{2}=R_{\mathfrak{i}+2}, \ldots, S_{\mathfrak{m}-1}=R_{\mathfrak{i}+\mathfrak{m}-1}, \ S_{\mathfrak{m}}=\sum_{i=\mathfrak{i}+\mathfrak{m}}^{\infty} R_{i},$$
we conclude the lemma. \end{proof}

\subsection{Remark} \label{sec5.2} 

We introduce standard $L^{2}$ gradient and DeGiorgi-Nash H\"older continuity estimates for uniformly elliptic partial differential equations. We introduce constants $c_{17},c_{18}$ as in \cite{HS79}, which in fact depend only on $n.$ These estimates are now well-known, but we give them again for convenience.

\begin{lemma} 
There exist constants $c_{17},c_{18}$ depending only on $n$ such that if $\lambda \in (0,1),$ $\rho \in (0,\infty),$ $y \in \bbR^{n},$ $a_{kl}: B^{n}_{\rho}(y) \rightarrow \bbR$ are Lebesgue measurable functions for each $k,l \in \{ 1,\ldots,n \}$ satisfying $a_{kl}=a_{lk},$
\begin{equation} \label{5.2(1)}
\lambda |\xi|^{2} \leq \sum_{k,l=1}^{n} a_{kl} \xi_{k} \xi_{l} \leq \lambda^{-1} |\xi|^{2} \text{ whenever } \xi=(\xi_{1},\ldots,\xi_{n}) \in \bbR^{n},
\end{equation}
and $u$ is a weak solution of
$$\sum_{k,l=1}^{n} D_{l} (a_{kl} D_{k} u) = 0$$
(with $u,Du$ locally square integrable) over $B^{n}_{\rho}(y),$ then
\begin{equation} \label{5.2(2)}
\int_{\bB^{n}_{\frac{3 \rho}{4}}(y)} |Du|^{2} \ d \cH^{n} \leq \frac{c_{17}}{\lambda^{4} \rho^{2}} \int_{B^{n}_{\rho}(y)} u^{2} \ d \cH^{n}, 
\end{equation}
\begin{equation} \label{5.2(3)}
\sup_{\bB^{n}_{\frac{\rho}{2}}(y)} u^{2} \leq \frac{c_{17}}{\lambda^{2n} \rho^{n}} \int_{B^{n}_{\frac{3 \rho}{4}}(y)} u^{2} \ d \cH^{n}, \text{ and }
\end{equation}
\begin{equation} \label{5.2(4)}
|u(w)-u(\tilde{w})| \leq c_{18} \Big( \sup_{B^{n}_{\frac{\rho}{2}}(y)} |u| \Big) \left( \frac{|w-\tilde{w}|}{\rho} \right)^{\varkappa} \text{ for } w,\tilde{w} \in \bB^{n}_{\frac{\rho}{4}}(y)
\end{equation}

where $\varkappa = \varkappa(n,\lambda) \in (0,1).$ 
\end{lemma}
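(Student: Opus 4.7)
The plan is to prove these three estimates as standard results for weak solutions of divergence-form uniformly elliptic equations with bounded measurable coefficients, appealing to classical references where convenient. Since $\lambda,\rho,y$ appear only as scaling parameters, one may as well take $\lambda = 1,$ $\rho = 1,$ $y = 0$ in the course of the argument and then rescale; I would however keep the $\lambda$-dependence visible in order to match the constants recorded in \eqref{5.2(2)}--\eqref{5.2(4)}.

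For the Caccioppoli estimate \eqref{5.2(2)}, I would test the weak formulation with $\eta^{2} u,$ where $\eta \in C^{1}_{c}(B^{n}_{\rho}(y);[0,1])$ is a cutoff equal to $1$ on $\bB^{n}_{3\rho/4}(y)$ with $|D\eta| \leq c/\rho.$ Expanding the derivative, ellipticity \eqref{5.2(1)} bounds the left side from below by $\lambda \int \eta^{2} |Du|^{2},$ while the right side is controlled by $2\lambda^{-1} \int \eta |u| |D\eta|\, |Du|.$ Cauchy's inequality $2ab \leq \varepsilon a^{2} + \varepsilon^{-1} b^{2}$ with $\varepsilon = \lambda^{2}/2$ absorbs the gradient term, yielding a bound of the form $\int \eta^{2} |Du|^{2} \leq C \lambda^{-4} \rho^{-2} \int u^{2},$ which is \eqref{5.2(2)}.

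For the $L^{\infty}$ bound \eqref{5.2(3)}, I would run Moser iteration. Since $u$ solves a divergence-form equation, for each $p \geq 1$ the function $|u|^{p}$ is a weak subsolution of the same operator, so one may apply the Caccioppoli-type inequality to $|u|^{p}$ on pairs of nested balls. Combined with the Sobolev embedding $W^{1,2} \hookrightarrow L^{2n/(n-2)}$ (with the usual modification when $n=2$), this gives a reverse-H\"older inequality relating the $L^{2p \cdot n/(n-2)}$ norm on the smaller ball to the $L^{2p}$ norm on the larger ball. Iterating on a sequence of radii decreasing geometrically from $3\rho/4$ to $\rho/2$ and summing the resulting geometric series gives the claimed $L^{2}$-to-$L^{\infty}$ estimate, with the powers of $\lambda$ tracked through each step producing the factor $\lambda^{-2n}$ recorded in \eqref{5.2(3)}.

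For the H\"older estimate \eqref{5.2(4)}, I would simply invoke the De Giorgi--Nash--Moser theorem for divergence-form uniformly elliptic equations with bounded measurable coefficients, for example Theorem 8.22 of Gilbarg--Trudinger, to obtain a H\"older exponent $\varkappa = \varkappa(n,\lambda) \in (0,1)$ and the stated interior H\"older bound. The only real ``obstacle'' is bookkeeping: one must carefully track the absorption constants from Cauchy's inequality through the Caccioppoli step and through each stage of the Moser iteration in order to verify the explicit polynomial powers of $\lambda$ that appear in \eqref{5.2(2)} and \eqref{5.2(3)}, but there is no conceptual difficulty and no use is made of the special structure of $\sT.$
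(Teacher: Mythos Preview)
Your proposal is correct and essentially matches the paper's approach: the paper simply cites Moser's 1960 paper (Lemma~1 and Theorem~1 of \cite{MJ60}), and what you have sketched is precisely the content of those results---the Caccioppoli inequality via the test function $\eta^{2}u$, the $L^{2}\to L^{\infty}$ bound via Moser iteration, and the De~Giorgi--Nash H\"older estimate. There is no additional argument in the paper beyond the citation, so your more detailed outline is in fact strictly more informative.
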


\begin{proof} See Lemma 1,Theorem 1 of \cite{MJ60}. \end{proof}

\medskip

We remark that \eqref{5.2(1)}-\eqref{5.2(4)} are 5.2(1)-(4) of \cite{HS79}. In particular, inequality \eqref{5.2(4)} is the DeGiorgi-Nash estimate.

\subsection{Lemma} \label{sec5.3} 

This section, analogous to section 5.3 of \cite{HS79} introduces the well-known gradient estimates for solutions to the minimal surface equation. In \cite{HS79} these estimates are derived for the sake of completeness using the results of section 5.2 there in; we do the same. The constants $c_{19},c_{20},c_{21},c_{22},c_{23}$ are introduced in this section, which remain unchanged as in \cite{HS79}, depending only on $n.$ We give the following lemma, which is merely Remark 5.3 of \cite{HS79}.

\begin{lemma}
There exist constants $c_{19},c_{20} \geq 1$ depending on $n$ so that for any solution $v$ of the minimal surface equation on an open subset $\Omega$ of $\bbR^{n}$ and any point $y \in \Omega,$
\begin{equation} \label{5.3(1)}
\sup_{\bB^{n}_{\frac{\rho}{2}}(y)} |Dv| < c_{19} \left( \sup_{\Omega} |v|/\rho \right) \exp(c_{20} \sup_{\Omega} |v|/\rho)
\end{equation}
where $\rho = \dist (y,\partial \Omega).$
\end{lemma}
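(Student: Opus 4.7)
My plan is to follow the classical interior gradient estimate for the minimal surface equation, which is derived from the divergence-form $L^2$ bounds and the DeGiorgi--Nash--Moser machinery of Lemma~\ref{sec5.2}. After translating so that $y=0$ and normalizing, I would work with $v$ on $B^n_\rho$ and try to bound $\sup_{\bB^n_{\rho/2}}|Dv|$ in terms of $H := \sup_\Omega|v|/\rho$. The minimal surface equation in divergence form is
$$\sum_{k=1}^n D_k\!\left(\frac{D_k v}{\sqrt{1+|Dv|^2}}\right) = 0,$$
and differentiating in the direction $x_l$ shows that each partial derivative $D_l v$ is a weak solution of a linear divergence-form equation $D_k(a_{kl'} D_{k'}(D_l v)) = 0$ with coefficients
$$a_{kl'}(x) = \frac{\delta_{kl'}}{\sqrt{1+|Dv|^2}} - \frac{D_k v\, D_{l'}v}{(1+|Dv|^2)^{3/2}}$$
that are symmetric and elliptic, but with ellipticity constant $\lambda = (1+|Dv|^2)^{-1}$ that degenerates precisely where the estimate is meant to give control.

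The first step is to obtain a Caccioppoli-type inequality: testing the minimal surface equation against $v\zeta^2$ for a standard cutoff $\zeta$ supported in $B^n_\rho$, integration by parts and Cauchy's inequality yield
$$\int_{\bB^n_{3\rho/4}} \frac{|Dv|^2}{\sqrt{1+|Dv|^2}}\, d\cH^n \;\leq\; C\rho^{n-2}\sup_\Omega v^2,$$
which is an $L^2$-flux bound for $Dv$ (weighted by the degenerate ellipticity). Next, I would apply the gradient estimate \eqref{5.2(2)} and the sup-estimate \eqref{5.2(3)} from Lemma~\ref{sec5.2} to $u = D_l v$ with $\lambda$ chosen as the worst-case ellipticity $\lambda = (1 + M^2)^{-1}$ where $M = \sup_{\bB^n_{3\rho/4}}|Dv|$. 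This yields an implicit inequality of the form
$$M^2 \;\leq\; \frac{C}{\lambda^{2n+4}\rho^n}\!\int_{B^n_{3\rho/4}}|Dv|^2 \;\leq\; \frac{C(1+M^2)^{2n+4}}{\rho^2}\sup_\Omega v^2,$$
i.e. $M \leq C(1+M)^{2n+4}\,H$. Iterating this kind of implicit bound, exactly as in the classical Moser/DeGiorgi-Nash scheme, converts the polynomial dependence on $M$ on the right-hand side into an \emph{exponential} dependence on $H$, yielding $M \leq c_{19} H \exp(c_{20} H)$ with constants depending only on $n$.

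The main obstacle is precisely the non-uniform ellipticity: the constants appearing in \eqref{5.2(2)}--\eqref{5.2(4)} blow up polynomially in $1 + \sup|Dv|^2$, and one must carefully set up the Moser iteration so that each step loses only a controllable fraction of the domain and the degeneracy accumulates in a geometric fashion. The cleanest way around this is to carry out the iteration on the shrinking balls $\bB^n_{\rho_k}$ with $\rho_k = (\tfrac{1}{2} + 2^{-k-1})\rho$ and combine the iterates so the product of the ellipticity-dependent constants telescopes into a single exponential. Once this is in place the inequality $c_{19} H \exp(c_{20} H)$ follows by absorbing lower-order terms, with $c_{19}, c_{20}$ depending only on $n$ because everything is reduced to Lemma~\ref{sec5.2}, whose constants depend only on $n$.
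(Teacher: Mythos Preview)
Your proposal has a genuine gap: the self-referential implicit inequality you arrive at, $M \leq C(1+M)^{2n+4}H$, does not close. When $H$ is large (or even just of order one), this inequality is satisfied by arbitrarily large $M$ and gives no information whatsoever; no amount of Moser-style iteration on shrinking balls will fix this, because at every step the ellipticity constant $\lambda=(1+M_k^2)^{-1}$ involves the very supremum you are trying to bound. Moreover, your Caccioppoli inequality only controls $\int |Dv|^2/\sqrt{1+|Dv|^2}$, which for large gradient behaves like $\int |Dv|$, not $\int |Dv|^2$; so even the starting $L^2$ bound for the iteration is unavailable without an a priori gradient bound. In short, the linear machinery of Lemma~\ref{sec5.2} alone cannot produce \eqref{5.3(1)}: the interior gradient estimate for the minimal surface equation is a genuinely nonlinear result.

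The paper proceeds quite differently. It invokes the Bombieri--De Giorgi--Miranda estimate \cite{BDM69} (see also \cite{T72},\cite{S76}) as a black box. For $\sup_\Omega|v|/\rho\geq 1$, BDM gives \eqref{5.3(1)} directly. For $\sup_\Omega|v|/\rho<1$, BDM already yields a uniform bound $\sup|Dv|\leq c_{21}=c_{21}(n)$; \emph{this} is what supplies the missing uniform ellipticity, after which \eqref{5.2(3)} applied to $D_{\tilde{k}}v$ and \eqref{5.2(2)} applied to $v$ give $\sup_{\bB^n_{\rho/2}}|Dv|^2 \leq c_{23}\varpi_n\sup_\Omega|v|^2/\rho^2$. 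The role of Lemma~\ref{sec5.2} is thus only to sharpen the small-$H$ case to a linear-in-$H$ bound; the heavy lifting --- the a priori control that breaks the circularity you encountered --- comes from \cite{BDM69}.
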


We note that \eqref{5.3(1)} is 5.3(1) of \cite{HS79}. For a more modern reference, see Theorem 16.5 of \cite{GT83}.

\medskip

\begin{proof} For $n=2$ this was proved in Theorem 1 of \cite{JS63}. In general, for $n \geq 2,$ we consider two cases.

\medskip

First, suppose $\sup_{\Omega} |v|/\rho \geq 1.$ Here, \eqref{5.3(1)} follows precisely from the main estimate of \cite{BDM69} (see also \cite{T72},\cite{S76}).

\medskip

Second, suppose $\sup_{\Omega} |v|/\rho < 1.$ Here the main estimate of \cite{BDM69} implies that $\sup_{\Omega} |Dv| \leq c_{21}$ for some $c_{21}=c_{21}(n).$ We may then choose an appropriate $\lambda = \lambda(c_{21}) \in (0,1)$ so that we may first apply \eqref{5.2(3)} with $u=D_{\tilde{k}}v,$ for each $\tilde{k} = 1,\ldots,n,$ and
$$a_{kl} = \frac{1}{\sqrt{1+|Dv|^{2}}} \left( \delta_{kl} - \frac{D_{k}vD_{l}v}{1+|Dv|^{2}} \right),$$
and second apply \eqref{5.2(2)} with $u=v$ and $a_{kl} = \frac{\delta_{kl}}{\sqrt{1+|Dv|^{2}}}$ to conclude that
$$\begin{aligned}
\sup_{\bB^{n}_{\frac{\rho}{2}}(y)} |Dv|^{2} & \leq \frac{c_{22}}{\rho^{n}} \int_{B_{\frac{3 \rho}{4}}(y)} |Dv|^{2} \ d \cH^{n} \\
& \leq \frac{c_{23}}{\rho^{n+2}} \int_{B^{n}_{\rho}(y)} |v|^{2} \ d \cH^{n} \leq c_{23} \varpi_{n} \sup_{\Omega} |v|^{2}/\rho^{2},
\end{aligned}$$
where $c_{22},c_{23}$ depend only on $n.$ \end{proof}

\subsection{Theorem} \label{sec5.4} 

We give a theorem as in section 5.4 of \cite{HS79}. Note that we make a small, technical correction in defining $\bV_{T}$ and $\bW_{T}$ below. Also, here we conclude the existence of functions $v^{T}_{1} \leq v^{T}_{2} \leq \ldots \leq v^{T}_{M}$ and $w^{T}_{1} \leq \ldots \leq w^{T}_{m}.$ As in \cite{HS79}, we introduce $c_{24},c_{25} \geq 1$ but now depending on $n,M,m.$ We make some clarifications and simplifications to the proof found in \cite{HS79}.

\begin{theorem}
If $m \geq 1,$ then there are constants $c_{24},c_{25} \geq 1$ depending on $n,M,m$ so that for any $T \in \sT$ with
$$\sigma_{T} = c_{24} (\bE_{C}(T,1)+\kappa_{T})^{\frac{1}{2n+3}} \leq \frac{1}{4},$$
then with 
$$\bV_{T} = B^{n}_{\frac{1}{4}} \cap \bV_{\sigma_{T}} \text{ and } \bW_{T} = B^{n}_{\frac{1}{4}} \cap \bW_{\sigma_{T}}$$
(recall $\bV_{\sigma},\bW_{\sigma}$ defined in section \ref{sec1.1,1.2,1.3}) we have
$$\begin{aligned}
\bop^{-1}(\bV_{T}) \cap \spt T & = \bigcup_{i=1}^{M} \graph{\bV_{T}}{v^{T}_{i}} \\
\bop^{-1}(\bW_{T}) \cap \spt T & = \bigcup_{j=1}^{m} \graph{\bW_{T}}{w^{T}_{j}} \\
\end{aligned}$$
for some analytic functions $v^{T}_{i} \in C^{\infty}(\bV_{T})$ and $w^{T}_{j} \in C^{\infty}(\bW_{T})$ satisfying the minimal surface equation, and such that 
$$v^{T}_{1} \leq v^{T}_{2} \leq \ldots \leq v^{T}_{M} \text{ and } w^{T}_{1} \leq w^{T}_{2} \leq \ldots \leq w^{T}_{m}.$$ 
Furthermore, for each $i \in \{ 1,\ldots,M \},$ $j \in \{ 1,\ldots,m \},$ and $l \in \{ 1,2,3 \}$
\begin{equation} \label{5.4(1)}
|D^{l} v^{T}_{i}(y)| \leq c_{25} \frac{(\bE_{C}(T,1)+\kappa_{T})^{\frac{1}{2}}}{\dist(y,\partial \bV)^{l}} \text{ for } y \in \bV_{T},
\end{equation}
\begin{equation} \label{5.4(2)}
 |D^{l} w^{T}_{j}(y)| \leq c_{25} \frac{(\bE_{C}(T,1)+\kappa_{T})^{\frac{1}{2}}}{\dist(y,\partial \bW)^{l}} \text{ for } y \in \bW_{T},
\end{equation}
\begin{equation} \label{5.4(3)}
\begin{aligned}
\int_{\bV_{T}} & \left( \frac{\partial}{\partial r} \left( \frac{v^{T}_{i}(y)}{|y|} \right) \right)^{2} |y|^{2-n} \ d \cH^{n}(y) \\
& + \int_{\bW_{T}} \left( \frac{\partial}{\partial r} \left( \frac{w^{T}_{j}(y)}{|y|} \right) \right)^{2} |y|^{2-n} \ d \cH^{n}(y) \\
\leq & 4 (\bE_{S}(T,1)+c_{4} \kappa_{T}) \leq 4 \bE_{C}(T,1) + 4 ((M-m) \varpi_{n-1} + c_{4} ) \kappa_{T},
\end{aligned}
\end{equation}
where $\frac{\partial}{\partial r} f(y) = \frac{y}{|y|} \cdot Df(y)$ and $c_{4}$ is as in \eqref{2.2(4)}.

\medskip

If $m = 0,$ then we still conclude the existence of $v^{T}_{1},\ldots,v^{T}_{M} \in C^{\infty}(\bV_{T})$ satisfying the corresponding properties above.
\end{theorem}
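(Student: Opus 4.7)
The overall strategy is to show that, over a region well clear of both the spine $\bbR^{n-1} \times \{0\}$ and $\spt \partial T$, the support of $T$ is a smooth union of minimal graphs; to invoke the decomposition Lemma \ref{sec5.1} to split this into the individual graphs $v^T_i$ on $\bV_T$ and $w^T_j$ on $\bW_T$; and finally to derive the pointwise estimates from Lemma \ref{sec5.3} and the integral estimate \eqref{5.4(3)} from the monotonicity identity \eqref{2.2(5)}.

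For the smoothness, first apply \eqref{4.1(2)} with $\sigma = \tfrac14$ to get $\sup_{C_{3/4} \cap \spt T} \boq_1^2 \leq c(\bE_C(T,1) + \kappa_T)$. At any $y_0 \in \bV_T$, the ball $B_{\sigma_T/4}(y_0,0)$ is then disjoint from $\spt \partial T$ (whose height over $\bB^{n-1}_2$ is controlled by $\kappa_T \ll \sigma_T$) and from the spine. By \eqref{1.4(1)} and the monotonicity bound \eqref{2.2(2)}, the cylindrical excess of $T$ in this ball, suitably rescaled, is of order $\sigma_T^{-n}(\bE_C(T,1) + \kappa_T) \lesssim \sigma_T^{n+3}$ by the choice of $\sigma_T$, hence arbitrarily small once $c_{24}$ is taken to absorb the universal constant in the interior regularity theorem for codimension-one area-minimizing currents. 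This yields that $\spt T \cap \bop^{-1}(B^n_{\sigma_T/4}(y_0))$ is a disjoint union of smooth (in fact analytic, by standard minimal-surface theory) minimal graphs.

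To perform the decomposition, hypothesis \eqref{defprojection} combined with $\sigma_T \gg \kappa_T$ yields $\bop_\sharp(T \res \bop^{-1}(\bV_T)) = M(\bbE^n \res \bV_T)$, and similarly $\bop_\sharp(T \res \bop^{-1}(\bW_T)) = m(\bbE^n \res \bW_T)$. Using $\bE_C(T,1) < \cH^n(\bV_T)$ (automatic from smallness), I apply Lemma \ref{sec5.1} to $S = T \res \bop^{-1}(\bV_T)$ with $\mathfrak{m} = M$, obtaining $M$ multiplicity-one currents $S^T_1, \ldots, S^T_M$ each projecting onto $\bbE^n \res \bV_T$; by the smoothness from the previous paragraph, each $S^T_i$ is the graph of a single-valued smooth function $v^T_i$ satisfying the minimal surface equation, and the ordering in the construction of Lemma \ref{sec5.1} yields $v^T_1 \leq \ldots \leq v^T_M$ (analyticity is then automatic). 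The $w^T_j$ on $\bW_T$ arise analogously when $m \geq 1$; the $\bW_T$-side is simply vacuous when $m=0$. The derivative bound \eqref{5.4(1)} then follows by applying Lemma \ref{sec5.3} to each $v^T_i$ on the ball $B^n_{\dist(y,\partial\bV)/2}(y)$, with $\sup |v^T_i| \leq c(\bE_C(T,1)+\kappa_T)^{1/2}$ inherited from the height estimate; the higher-order bounds come from differentiating the minimal surface equation and applying Schauder estimates to the linearized equation, and \eqref{5.4(2)} is identical.

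For \eqref{5.4(3)}, parameterize each $\graph{\bV_T}{v^T_i}$ by $x = (y, v^T_i(y))$; since $\cV^T = \pm(-Dv^T_i, 1)/\sqrt{1+|Dv^T_i|^2}$ on this graph, direct computation gives
\[
\frac{|x \cdot \cV^T(x)|^2}{|x|^{n+2}}\, d\mu_T(x) = \frac{(v^T_i - y \cdot Dv^T_i)^2}{\sqrt{1+|Dv^T_i|^2}\,(|y|^2 + (v^T_i(y))^2)^{(n+2)/2}}\, d\cH^n(y),
\]
which, using that $|Dv^T_i|$ and $|v^T_i|/|y|$ are small on $\bV_T$ (by \eqref{5.4(1)} and the height bound), is at least $\tfrac14 \bigl(\tfrac{\partial}{\partial r}(v^T_i/|y|)\bigr)^2 |y|^{2-n}\, d\cH^n(y)$ after a short computation. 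Summing over all $i$ (and analogously over $j$ on $\bW_T$), whose graphs are pairwise-disjoint subsets of $\bB_1 \cap \spt T$, and combining with \eqref{2.2(5)} and \eqref{1.6(1)} yields \eqref{5.4(3)}. The main obstacle is balancing the smallness required for Allard-type regularity ($\sigma_T^n \gtrsim \bE_C(T,1) + \kappa_T$ after rescaling) against the height bound from \eqref{4.1(2)} (which demands $\sigma_T^{2n+1} \gtrsim \bE_C(T,1) + \kappa_T$ in order that $\sup|\boq_1| \ll \sigma_T$ on the relevant balls); these two constraints together force the exponent $\sigma_T \sim (\bE_C(T,1) + \kappa_T)^{1/(2n+3)}$ appearing in the statement.
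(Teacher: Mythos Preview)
Your proposal is correct and follows essentially the same route as the paper: decompose via Lemma~\ref{sec5.1}, invoke interior regularity for codimension-one area-minimizing currents (the paper cites Federer~5.3.14--15 for this), extract the gradient bound from Lemma~\ref{sec5.3} and higher derivatives from Schauder theory for the linearized equation, and obtain \eqref{5.4(3)} by rewriting $\bigl(\tfrac{\partial}{\partial r}(v^T_i/|y|)\bigr)^2|y|^{2-n}$ in terms of $|x\cdot\cV^T|^2/|x|^{n+2}$ and then appealing to \eqref{2.2(5)} and \eqref{1.6(1)}.

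One small adjustment is needed. You apply Lemma~\ref{sec5.1} with $V=\bV_T=B^n_{1/4}\cap\bV_{\sigma_T}$ and claim $\bE_C(T,1)<\cH^n(\bV_T)$ is ``automatic from smallness.'' But $\cH^n(\bV_T)\to 0$ as $\sigma_T\nearrow 1/4$, while $\bE_C(T,1)$ can be as large as $(1/(4c_{24}))^{2n+3}$ in that regime, so the hypothesis of Lemma~\ref{sec5.1} can fail. The paper sidesteps this by applying Lemma~\ref{sec5.1} on the larger domain $\bV_{\sigma_T/3}$ (whose measure is bounded below uniformly for $\sigma_T\le 1/4$), obtaining the sheets there and then restricting to $\bV_T$; your argument goes through verbatim with that change. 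Also note that \eqref{5.4(3)} is stated for each fixed pair $(i,j)$, not for the sum over $i,j$---the paper bounds each $v^T_i$-integral and each $w^T_j$-integral separately by the $\bop^{-1}(\bV)$- and $\bop^{-1}(\bW)$-portions of $\int_{\bB_1}|x\cdot\cV^T|^2/|x|^{n+2}\,d\mu_T$ and then adds; your summing argument yields a slightly weaker (but still sufficient) conclusion.
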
 

Note that the equations here are analogous to 5.4(1)(2)(3) of \cite{HS79}.

\medskip

\begin{proof} Suppose $m \geq 1,$ and let $\epsilon \in (0,1)$ be as in 5.3.14 of \cite{F69} with $\lambda,\kappa,m,n$ replaced by $1,1,n,n+1$. Recall $c_{14},c_{15}$ from \eqref{4.1(2)}, which depend only on $n,M,m.$ Also recall $c_{19},c_{20} \geq 1$ from Remark \ref{sec5.3}, which depend only on $n.$ We can thus choose $c_{24}=c_{24}(n,M,m) \geq 1$ so that 
$$\left( \frac{\sigma}{c_{24}} \right)^{2n+3} < \min \left\{ \frac{\sigma^{2n+3}}{c_{14}(1+c_{15}) c^{2}_{19} \exp(c_{20})}, \ \cH^{n}(\bV_{\frac{\sigma}{3}}), \ \frac{\epsilon}{2^{n}} \right\}$$
for any $\sigma \in (0,1).$ With this choice of $c_{24}$ we now fix a current $T \in \sT$ for which $\sigma = \sigma_{T} = c_{24} (\bE_{C}(T,1)+\kappa_{T})^{\frac{1}{2n+3}} \leq \frac{1}{4}.$

\medskip

First, we wish to apply Lemma \ref{sec5.1}. Using $c_{24} \geq 1$ and $\sigma \leq \frac{1}{4},$ we find that $\kappa_{T} < \frac{\sigma}{3}.$ We conclude
$$(\spt \partial T) \cap \bop^{-1}(\bV_{\frac{\sigma}{3}} \cap \bW_{\frac{\sigma}{3}}) = \emptyset.$$
Estimating $\bE_{C}(T,1)+\kappa_{T}=(\sigma/c_{24})^{2n+3}$ by the second quantity in the minimum above, we apply Lemma \ref{sec5.1} with $V,\mathfrak{m},S$ replaced by $\bV_{\frac{\sigma}{3}},M,T \res \bop^{-1}(\bV_{\frac{\sigma}{3}})$ (and respectively, $\bW_{\frac{\sigma}{3}},m,T \res \bop^{-1}(\bW_{\frac{\sigma}{3}})$) to obtain corresponding $S_{i}$ for $i=1,\ldots,M$ (respectively $S_{j}$ for $j=1,\ldots,m$) which are each absolutely area minimizing.

\medskip 

Second, we wish to apply interior regularity for area-minimizing currents. Estimating by the first quantity, we get using \eqref{4.1(2)} and $\sigma \leq \frac{1}{4}$
$$\sup_{C_{1-\frac{\sigma}{3} \cap \spt T}} \boq_{1} \leq (3^{n+1} \sigma^{n+2})^{\frac{1}{2}} < 1/2.$$
We now estimate by the third quantity and apply 5.3.15 of \cite{F69} with $\lambda,\kappa,m,n,r,s,S$ replaced by 
$$1,1,n,n+1,\left( \frac{\sigma}{c_{24}} \right)^{2n+3},\frac{1}{2}\left( \frac{\sigma}{c_{24}} \right)^{2n+3},\ETA_{-x,1 \sharp} S_{i}$$
for $x \in \bop^{-1}(\bV_{\frac{2\sigma}{3}}) \cap \spt S_{i}$ (and respectively, $\ETA_{-x,1 \sharp} S_{j}$ for $x \in \bop^{-1}(\bW_{\frac{2\sigma}{3}}) \cap \spt S_{j}$) to conclude that $\bop^{-1}(\bV_{\frac{2\sigma}{3}}) \cap \spt T$ (respectively, $\bop^{-1}(\bW_{\frac{2 \sigma}{3}}) \cap \spt T$) partitions into graphs of (at most) $M$ (respectively, $m$) solutions of the minimal surface equation over $\bV_{\frac{2 \sigma}{3}}$ (respectively, $\bW_{\frac{2\sigma}{3}}$); note that estimating by the first quantity and $\sigma \leq \frac{1}{4}$ gives $(\sigma/c_{24})^{2n+3} \leq \frac{\sigma}{3},$ and that $\ETA_{-x,1}$ is translation by $x.$

\medskip

Third, we show \eqref{5.4(1)} (respectively, similarly \eqref{5.4(2)}). Fix $i \in \{ 1,\ldots,M \}$ and abbreviate $v = v^{T}_{i}.$ Using \eqref{4.1(2)} (with $\sigma = \frac{1}{2}$ therein) gives
$$\sup_{B^{n}_{\frac{1}{2}} \cap \bV_{\frac{2 \sigma}{3}}} |v| \leq 2^{\frac{n+1}{2}} c_{14}^{\frac{1}{2}} (1+c_{15})^{\frac{1}{2}} (\bE_{C}(T,1)+ \kappa_{T})^{\frac{1}{2}}$$
Choose any $y \in \bV_{T} = B^{n}_{\frac{1}{2}} \cap \bV_{\sigma_{T}},$ then using \eqref{5.3(1)} with $\Omega = \bV_{\frac{2 \sigma}{3}}$ and $\rho = \dist(y,\partial (B^{n}_{\frac{1}{2}} \cap \bV_{\frac{2 \sigma}{3}})) \geq \max \{ \frac{\sigma}{3}, \frac{1}{3} \dist(y,\partial \bV) \},$ as well as using $\sigma = c_{24} (\bE_{C}(T,1)+\kappa_{T})^{\frac{1}{2n+3}} \leq \frac{1}{4}$ and $c_{24} \geq 1,$ we conclude
$$\begin{aligned}
|Dv(y)| < & c_{19} \Big( \sup_{B^{n}_{\frac{1}{2}} \cap \bV_{\frac{2 \sigma}{3}}} |v|/\rho \Big) \exp \Big( c_{20} \sup_{B^{n}_{\frac{1}{2}} \cap \bV_{\frac{2 \sigma}{3}}} |v|/\rho \Big) \\
\leq & \left( \frac{ 3c_{19} \sup_{B^{n}_{\frac{1}{2}} \cap \bV_{\frac{2 \sigma}{3}}} |v|}{ \dist(y,\partial \bV) } \right) \exp \Big( 3c_{20} \sup_{B^{n}_{\frac{1}{2}} \cap \bV_{\frac{2 \sigma}{3}}} |v|/\sigma \Big) \\
\leq & \left( \frac{ 3 \cdot 2^{\frac{n+1}{2}} c_{14}^{\frac{1}{2}} (1+c_{15})^{\frac{1}{2}} c_{19} (\bE_{C}(T,1)+ \kappa_{T})^{\frac{1}{2}}}{ \dist(y,\partial \bV) } \right) \\
& \times \exp \Big( 3 \cdot 2^{\frac{n+1}{2}} c_{14}^{\frac{1}{2}} (1+c_{15})^{\frac{1}{2}} c_{20} (\bE_{C}(T,1)+ \kappa_{T})^{\frac{1}{2}}/\sigma \Big) \\
\leq & \left( \frac{ 3 \cdot 2^{\frac{n+1}{2}} c_{14}^{\frac{1}{2}} (1+c_{15})^{\frac{1}{2}} c_{19} (\bE_{C}(T,1)+ \kappa_{T})^{\frac{1}{2}}}{ \dist(y,\partial \bV) } \right) \\
& \times \exp \Big( 3 \cdot 2^{\frac{n+1}{2}} c_{14}^{\frac{1}{2}} (1+c_{15})^{\frac{1}{2}} c_{20} \sigma^{n+\frac{1}{2}}c_{24}^{-n-\frac{3}{2}} \Big) \\
\leq & \left( \frac{ 3 \cdot 2^{\frac{n+1}{2}} c_{14}^{\frac{1}{2}} (1+c_{15})^{\frac{1}{2}} c_{19} (\bE_{C}(T,1)+ \kappa_{T})^{\frac{1}{2}}}{ \dist(y,\partial \bV) } \right) \\
& \times \exp \left( c_{14}^{\frac{1}{2}} (1+c_{15})^{\frac{1}{2}} c_{20} \right)
\end{aligned}$$
Thus, \eqref{5.4(1)} holds with $k=1$ if we choose $c_{25}$ depending on $n,c_{14},c_{15},c_{19},c_{20},c_{24}$ (and hence on $n,M,m$). Moreover, each of the partial derivatives $D_{l}v$ satisfies a linear divergence structure equation of the type treated in Remark \ref{sec5.2}, with constant $\lambda \in (0,1)$ depending only on $n,M,m$ (see for example the proof of Remark \ref{sec5.3}). The inequality \eqref{5.4(1)} with $l=2,3$ thus follows from \eqref{5.2(4)} and the interior Schauder theory (see section 6.3 of \cite{GT83}) for uniformly elliptic equations with H$\ddot{\text{o}}$lder continuous coefficients (respectively, \eqref{5.4(2)} also similarly holds). 

\medskip

To prove \eqref{5.4(3)}, consider again $v=v^{T}_{i}$ with fixed $i \in \{ 1,\ldots,M \}.$ For any $y \in \bV_{T},$ we compute using \eqref{4.1(2)} (as above with $\sigma=\frac{1}{2}$ therein), $\sigma \leq \frac{1}{4},$ $c_{19},c_{20} \geq 1,$ $|y| \in (\sigma,\frac{1}{4}),$ \eqref{5.3(1)} (as in showing \eqref{5.4(1)}), and estimating by the first quantity to give 
$$\begin{aligned}
|y|^{2}+|v(y)|^{2} \leq & \ |y|^{2} + 2^{n+1} c_{14} (1+c_{15}) (\bE_{C}(T,1)+ \kappa_{T}) \\
\leq & \ |y|^{2}+ 2^{-3n-1} \sigma^{2} \leq 2^{\frac{3}{(n+2)}} |y|^{2} < 1, \\
|Dv(y)|^{2} \leq & \ 9 \cdot 2^{n+1} c_{14} (1+c_{15}) c^{2}_{19} (\sigma/c_{24})^{2n+3}\sigma^{-2} \\
& \times \exp \left( 3 \cdot 2^{\frac{n+3}{2}} c_{14}^{\frac{1}{2}} (1+c_{15})^{\frac{1}{2}} c_{20} (\sigma/c_{24})^{\frac{2n+3}{2}}\sigma^{-1} \right) \leq 1. \\
\end{aligned}$$
Since $\cV^{T}(y,v(y)) = \ast \vec{T}(y,v(y)) = (-1)^{n} \frac{(-Dv(y),1)}{\sqrt{1+|Dv(y)|^{2}}}$ for $y \in \bV_{T},$ we compute 
\begin{equation} \label{5.4(4)}
\begin{aligned}
\int_{\bV_{T}} & \left( \frac{\partial}{\partial r} \left( \frac{v(y)}{|y|} \right) \right)^{2} |y|^{2-n} \ d \cH^{n}(y) \\
& = \int_{\bV_{T}} \frac{(y \cdot Dv(y) - v(y))^{2}}{|y|^{n+2}} \ d \cH^{n}(y) \\
& \leq 4 \int_{\bV_{T}} \frac{|(y,v(y)) \cdot \cV^{T}(y,v(y))|^{2}}{(|y|^{2}+|v(y)|^{2})^{\frac{n}{2}+1}} \sqrt{1+|Dv(y)|^{2}} \ d \cH^{n}(y) \\
& \leq 4 \int_{\bB_{1} \cap \bop^{-1}(\bV)} \frac{|x \cdot \cV^{T}(x)|^{2}}{|x|^{n+2}} \ d \mu_{T}(x);
\end{aligned}
\end{equation}
this is exactly as in 5.4(4) of \cite{HS79}. We also verify (as in 5.4(5) of \cite{HS79}) 
\begin{equation} \label{5.4(5)}
\begin{aligned}
\int_{\bW_{T}} \left( \frac{\partial}{\partial r} \left( \frac{w^{T}_{j}(y)}{|y|} \right) \right)^{2} & |y|^{2-n} \ d \cH^{n}(y) \\
& \leq 4 \int_{\bB_{1} \cap \bop^{-1}(\bW)} \frac{|x \cdot \cV^{T}(x)|^{2}}{|x|^{n+2}} \ d \mu_{T}(x)
\end{aligned}
\end{equation}
for each $j \in \{ 1,\ldots,m \}$. Since \eqref{2.2(5)} and \eqref{1.6(1)} imply that
$$\begin{aligned}
\int_{\bB_{1}} \frac{|x \cdot \cV^{T}(x)|^{2}}{|x|^{n+2}} \ d \mu_{T}(x) & \leq \bE_{S}(T,1)+c_{4} \kappa_{T} \\
& \leq \bE_{C}(T,1) + ((M-m) \varpi_{n-1} + c_{4}) \kappa_{T},
\end{aligned}$$
then \eqref{5.4(3)} follows from \eqref{5.4(4)} and \eqref{5.4(5)}. 

\medskip

The case $m=0$ follows similarly. \end{proof}

\section{Blowup sequences and harmonic blowups} \label{sec6}

This section introduces blowup sequences and harmonic blowups, with the aim to prove the necessary rigidity result Lemma \ref{sec6.4}. More precisely (as remarked in \cite{DS93}), we study the limit functions $f_{i}:\bV \rightarrow \bbR$ and $g_{j}:\bW \rightarrow \bbR$ obtained from the local graph representations $v^{(\ok)}_{i}: \bV_{\sigma_{T_{\ok}}} \rightarrow \bbR$ and $w^{(\ok)}_{j}: \bW_{\sigma_{T_{\ok}}} \rightarrow \bbR$ of a sequence of currents $T_{\ok} \in \sT$, with excess $\bE_{C}(T_{\ok},1) \rightarrow 0$ and $\bE_{C}(T_{\ok},1)^{-1} \kappa_{T_{\ok}} \rightarrow 0$ as $\ok \rightarrow \infty,$ by rescaling with a factor $\bE_{C}(T_{\ok},1)^{-1}$ in the vertical direction and passing to a convergent subsequence. Our aim is to prove the rigidity result Lemma \ref{sec6.4}, which allows us to relate the height of $T_{\ok},$  for large $\ok \in \bbN$ with the corresponding quantity associated with the set of limit functions. 

\medskip

This section is analogous to section 6 of \cite{HS79}. Only minor, mostly notational changes must be made. The only serious change is seen in justifying \eqref{6.4(13)}, which is analogous to 6.4(13) of \cite{HS79}. Whereas 6.4(13) of \cite{HS79} is proved using section 3.2 of \cite{HS79}, we must use Lemma \ref{sec3.2} (which differs from Lemma 3.2 of \cite{HS79}) to show \eqref{6.4(13)}. We introduce specifically in section \ref{sec6.4} constants $c_{26},\ldots,c_{33}$ depending on $n,M,m.$

\medskip

In section \ref{sec6.4} it will be necessary to assume $m \geq 1.$

\subsection{Definition} \label{sec6.1} 

We give the same definition of a blowup sequence and harmonic blowup as in \cite{HS79}. In this case, we must take functions $v_{i}^{(\ok)},f_{i}$ and $w_{j}^{(\ok)},g_{j}$ respectively with $i \in \{1,\ldots,M\}$ and $j \in \{1,\ldots,m\},$ but still require 6.1(1)-(4) from \cite{HS79} to hold.  

\medskip

Recalling Definition \ref{sec1.5,1.6} and Theorem \ref{sec5.4}, with $m \geq 1$ suppose that for each $\ok \in \bbN,$ $i \in \{ 1,\ldots,M \},$ and $j \in \{ 1,\ldots,m \},$ we have
$$T_{\ok} \in \sT, \varepsilon_{\ok} = \bE_{C}(T_{\ok},1)^{\frac{1}{2}}, \kappa_{\ok} = \kappa_{T_{\ok}}$$
$$\begin{array}{ccc}
v^{(\ok)}_{i}: \bV \rightarrow \bbR, & v^{(\ok)}_{i}|_{\bV_{T_{\ok}}} = v^{T_{\ok}}_{i}, & v^{(\ok)}_{i}|_{\bV \setminus \bV_{T_{\ok}}} = 0 \\
w^{(\ok)}_{j}: \bW \rightarrow \bbR, & w^{(\ok)}_{j}|_{\bW_{T_{\ok}}} = w^{T_{\ok}}_{j}, & w^{(\ok)}_{j}|_{\bW \setminus \bW_{T_{\ok}}} = 0;
\end{array}$$
in case $m=0$ we simply define $w^{(\ok)}_{j} \equiv 0$ over $\bW.$ This leads to the following definition.
\begin{definition}
We say that the sequence $\{ T_{\ok} \}_{\ok \in \bbN} \subset \sT$ is a blowup sequence with associated harmonic blowups $f_{i},g_{j}$ if as $\ok \rightarrow \infty$
\begin{equation} \label{6.1(1)}
\varepsilon_{\ok} \text{ converges to zero,}
\end{equation}
\begin{equation} \label{6.1(2)}
\varepsilon^{-2}_{\ok} \kappa_{\ok} \text{ converges to zero,}
\end{equation}
\begin{equation} \label{6.1(3)}
\varepsilon^{-2}_{\ok} v^{(\ok)}_{i} \text{ converges uniformly on compact subsets of } \bV \text{ to } f_{i},
\end{equation}
\begin{equation} \label{6.1(4)}
\varepsilon^{-2}_{\ok} w^{(\ok)}_{j} \text{ converges uniformly on compact subsets of } \bW \text{ to } g_{j}.
\end{equation}
\end{definition} 

These requirements are the same as 6.1(1)-(4) of \cite{HS79}. Note that when $m=0$ we simply have $g_{j} \equiv 0.$ It readily follows from the estimates of \eqref{5.4(1)},\eqref{5.4(2)}, or from 5.3.7 of \cite{F69}, that the functions $f_{i}: \bV \rightarrow \bbR,$ $g_{j} : \bW \rightarrow \bbR$ are harmonic. Moreover, by \eqref{4.1(2)},
\begin{equation} \label{6.1(5)}
\sup_{\bV \cap B^{n}_{\rho}} |f_{i}|^{2} + \sup_{\bW \cap B^{n}_{\rho}} |g_{j}|^{2} \leq 2 c_{14}(1+c_{15})(1-\rho)^{-2n-1}
\end{equation}
for each $\rho \in (0,1)$; see 6.1(5) of \cite{HS79}. We will frequently use, by \eqref{6.1(5)}, \eqref{5.4(1)},\eqref{5.4(2)}, and the Arzela-Ascoli theorem, the following fact:

\begin{lemma}
Every sequence $\{ T_{\ok} \}_{\ok \in \bbN} \subset \sT$ for which
$$\lim_{\ok \rightarrow \infty} \big( \bE_{C}(T_{\ok},1)+\bE_{C}(T_{\ok},1)^{-1} \kappa_{T_{\ok}} \big) = 0$$
contains a blowup sequence.
\end{lemma}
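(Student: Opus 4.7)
The plan is to prove the claim by passing to a subsequence satisfying the defining properties \eqref{6.1(1)}--\eqref{6.1(4)} of a blowup sequence, using Arzel\`a--Ascoli together with the a priori estimates already established in Lemma \ref{sec4.1} and Theorem \ref{sec5.4}. The conditions \eqref{6.1(1)} and \eqref{6.1(2)} are immediate from the hypothesis: $\bE_{C}(T_{\ok},1) \to 0$ yields $\varepsilon_{\ok} = \bE_{C}(T_{\ok},1)^{1/2} \to 0$, and $\bE_{C}(T_{\ok},1)^{-1}\kappa_{T_{\ok}} \to 0$ is precisely $\varepsilon_{\ok}^{-2}\kappa_{\ok} \to 0$. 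So all the work is in \eqref{6.1(3)} and \eqref{6.1(4)}.

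The main step is to establish uniform boundedness and equicontinuity of the rescaled sheets on any compact subset of $\bV$ or $\bW$. I would fix exhaustions $K_{1} \subset K_{2} \subset \cdots$ of $\bV$ and $\tilde{K}_{1} \subset \tilde{K}_{2} \subset \cdots$ of $\bW$ by compact sets, and for each $\ell$ choose $\sigma_{\ell} \in (0,1)$ small enough that $K_{\ell} \subset \bB^{n}_{1-\sigma_{\ell}} \cap \bV_{\sigma_{\ell}}$ and analogously for $\tilde{K}_{\ell}$. Since $\varepsilon_{\ok}^{2} + \varepsilon_{\ok}^{-2}\kappa_{\ok} \to 0$, the parameter $\sigma_{T_{\ok}}$ from Theorem \ref{sec5.4} satisfies $\sigma_{T_{\ok}} \leq \sigma_{\ell}$ for all sufficiently large $\ok$, so the graphs $v_{i}^{T_{\ok}}$ and $w_{j}^{T_{\ok}}$ are defined on $K_{\ell}$ and $\tilde{K}_{\ell}$ respectively. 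The bound \eqref{4.1(2)} then gives
\[
\sup_{K_{\ell}} (v_{i}^{(\ok)})^{2} + \sup_{\tilde{K}_{\ell}}(w_{j}^{(\ok)})^{2} \leq C(\sigma_{\ell}) \big( \bE_{C}(T_{\ok},1) + \kappa_{\ok} \big),
\]
so after rescaling, the $L^{\infty}$ norms of the rescaled functions on $K_{\ell}$ and $\tilde{K}_{\ell}$ are uniformly bounded (in $\ok$) once we invoke $\varepsilon_{\ok}^{-2}\kappa_{\ok} \to 0$. Similarly, \eqref{5.4(1)} and \eqref{5.4(2)} yield uniform pointwise gradient bounds on the rescaled sheets on $K_{\ell}$ and $\tilde{K}_{\ell}$, giving equicontinuity.

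With these two ingredients in hand, Arzel\`a--Ascoli produces on each $K_{\ell}$ (resp.\ $\tilde{K}_{\ell}$) a uniformly convergent subsequence for each $i \in \{1,\ldots,M\}$ (resp.\ $j \in \{1,\ldots,m\}$). A standard diagonal procedure across the $\ell$'s, and across the finitely many indices $i,j$, yields a single subsequence (which we relabel $\{T_{\ok}\}$) along which $\varepsilon_{\ok}^{-2} v_{i}^{(\ok)}$ and $\varepsilon_{\ok}^{-2} w_{j}^{(\ok)}$ converge uniformly on every compact subset of $\bV$ and $\bW$ respectively, to limit functions $f_{i}$ and $g_{j}$. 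This gives \eqref{6.1(3)} and \eqref{6.1(4)}, completing the extraction of a blowup sequence.

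There is no substantial obstacle here; the only points requiring a small amount of care are the fact that $v_{i}^{(\ok)}$ is defined on all of $\bV$ by extending by zero outside $\bV_{T_{\ok}}$ (harmless, since $\sigma_{T_{\ok}} \to 0$ means any compact $K \subset \bV$ eventually lies in $\bV_{T_{\ok}}$, and similarly for $w_{j}^{(\ok)}$ when $m \geq 1$, while for $m = 0$ we simply set $w_{j}^{(\ok)} \equiv 0$), and the bookkeeping for the diagonal argument across the finitely many sheet indices.
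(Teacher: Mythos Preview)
Your argument is correct and is exactly the approach the paper indicates: the paper does not give a detailed proof but simply records that the lemma follows from \eqref{6.1(5)} (itself a consequence of \eqref{4.1(2)}), the derivative bounds \eqref{5.4(1)}, \eqref{5.4(2)}, and the Arzel\`a--Ascoli theorem, which is precisely what you carry out via a diagonal extraction. One small point: the exponent in \eqref{6.1(3)}, \eqref{6.1(4)} is a typo and should read $\varepsilon_{\ok}^{-1}$ rather than $\varepsilon_{\ok}^{-2}$ (as Lemmas~\ref{sec6.3}, \ref{sec6.4} and the derivation of \eqref{6.1(5)} make clear); your $L^\infty$ bound from \eqref{4.1(2)} gives exactly the boundedness of $\varepsilon_{\ok}^{-1} v_i^{(\ok)}$, so your reasoning is correct for the intended scaling even though you reproduce the typo in the final line.
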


Our aim in sections \ref{sec6},\ref{sec7},\ref{sec8} is to show that for any blowup sequence $\{ T_{\ok} \}_{\ok \in \bbN}$ with $m \geq 1$ the associated harmonic blowups $f_{i},g_{j}$ are represented by two functions $f \in C^{2}(\bV \cup \bL),g \in C^{2}(\bW \cup \bL)$ so that
$$\begin{array}{cc}
\begin{aligned}
f|_{\bV} & = f_{1}=f_{2}=\ldots=f_{M}, \\
f|_{\bL} & =0=g|_{\bL},
\end{aligned}
&
\begin{aligned}
g|_{\bW}&=g_{1}=\ldots=g_{m}, \\
Df(0) & =Dg(0).
\end{aligned}
\end{array}$$

\subsection{Lemma} \label{sec6.2} 

We make a slight modification to the proof of this lemma as presented in \cite{HS79}, to make clearer the application to 5.3.7 of \cite{F69}. This lemma will be used for example in Theorem \ref{sec7.3} to show certain blowups have zero trace over $\bL.$ The proof requires referencing 5.3.7 of \cite{F69}, which gives essentially the same result at the interior for minimizers of an elliptic integrand.

\begin{lemma}
For every blowup sequence $\{ T_{\ok} \}_{\ok \in \bbN}$ with associate blowups $f_{i},g_{j},$ the two functions $\Pi,\Psi:B^{n}_{1} \rightarrow \bbR$ defined by
$$\begin{aligned}
\Pi|_{\bV} &= \sum_{i=1}^{M} f_{i}, \ \Pi|_{\bW} = \sum_{j=1}^{m} g_{j}, \ \Pi|_{\bL}=0, \\
\Psi|_{\bV} &= \min \{ |f_{1}|,|f_{2}|,\ldots,|f_{M}| \}, \ \Psi|_{\bW \cup \bL}=0,
\end{aligned}$$
both have locally square integrable weak gradients. Hence the function
$$\min \{ |f_{1}|,|f_{2}|,\ldots,|f_{M}| \}: \bV \rightarrow \bbR$$
has zero trace on $\bL.$
\end{lemma}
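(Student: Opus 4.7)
The plan is to realize $\Pi$ and $\Psi$ as weak $W^{1,2}_{\mathrm{loc}}$ limits of suitably normalized auxiliary functions built directly from the $T_{\ok}$. For each sufficiently large $\ok$ define on $B^{n}_{1}$ the approximant $\Pi_{\ok}(y) = \sum_{i=1}^{M} v^{(\ok)}_{i}(y) + \sum_{j=1}^{m} w^{(\ok)}_{j}(y)$ (the first sum interpreted as $0$ off $\bV_{T_{\ok}}$ and the second as $0$ off $\bW_{T_{\ok}}$). Up to Jacobian factors that differ from $1$ by $O(\varepsilon_{\ok}^{2})$ thanks to \eqref{5.4(1)}--\eqref{5.4(2)}, this is precisely $y \mapsto \int_{\bop^{-1}(y)} \boq_{1} \, d \mu_{T_{\ok}}$ coming from the graph decomposition of $T_{\ok}$ furnished by Lemma \ref{sec5.1} and Theorem \ref{sec5.4}. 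Similarly let $\Psi_{\ok}$ be the function equal to $\min_i |v^{(\ok)}_{i}|$ on $\bV_{T_{\ok}}$ and extended by zero on $\bW \cup \bL$.

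The crux of the proof is the uniform Caccioppoli-type estimate
$$\int_K \big( |D \Pi_{\ok}|^{2} + |D \Psi_{\ok}|^{2} \big) \, d \cH^{n} \leq c(K) \, \varepsilon_{\ok}^{2}$$
for each compact $K \subset B^{n}_{1}$. Inside $\bV_{T_{\ok}} \cup \bW_{T_{\ok}}$ this follows from the pointwise derivative bounds \eqref{5.4(1)}--\eqref{5.4(2)} combined with the $L^{2}$-height bound \eqref{4.1(1)}. The genuine difficulty is the transition strip $\{|y_{n}| < \sigma_{T_{\ok}}\}$, where the individual sheet functions $v^{(\ok)}_{i}$ cease to exist; here one invokes 5.3.7 of \cite{F69}, which yields interior $W^{1,2}$ regularity for the projection/height type functionals associated with a minimizer of an elliptic integrand. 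Its hypotheses are met because $T_{\ok}$ is absolutely area-minimizing and $\spt \partial T_{\ok}$ lies in a $\kappa_{\ok}$-thickening of $\bL$ whose contribution is absorbed via \eqref{6.1(2)}.

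Dividing through by $\varepsilon_{\ok}^{2}$, the sequences $\{ \varepsilon_{\ok}^{-2} \Pi_{\ok} \}$ and $\{ \varepsilon_{\ok}^{-2} \Psi_{\ok} \}$ are bounded in $W^{1,2}_{\mathrm{loc}}(B^{n}_{1})$, so by reflexivity one can pass to weak limits along a subsequence. The blowup convergence assumptions \eqref{6.1(3)}--\eqref{6.1(4)} together with the fact that $\bL$ has $\cH^{n}$-measure zero in $B^{n}_{1}$ force the weak limits to coincide $\cH^{n}$-a.e.\ with $\Pi$ and $\Psi$ respectively. Lower semicontinuity of the $W^{1,2}$ norm then gives $\Pi, \Psi \in W^{1,2}_{\mathrm{loc}}(B^{n}_{1})$. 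The trace corollary is immediate: since $\Psi \in W^{1,2}_{\mathrm{loc}}(B^{n}_{1})$ vanishes on the open half-ball $\bW$ lying on one side of $\bL$, standard Sobolev trace theory forces its $\bV$-side trace $\min_i|f_i|$ to vanish on $\bL$.

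The main obstacle is the work in step two, namely the uniform $O(\varepsilon_{\ok}^{2})$ gradient bound across the strip $\{|y_{n}| < \sigma_{T_{\ok}}\}$ where the graph decomposition of Theorem \ref{sec5.4} breaks down. Although no individual sheet $v^{(\ok)}_{i}$ extends across $\bL$, the aggregated quantities $\sum_i v^{(\ok)}_{i}$ and $\min_i |v^{(\ok)}_{i}|$ are projection-type functionals to which 5.3.7 of \cite{F69} does apply; the delicate bookkeeping is correctly isolating the $\kappa_{\ok}$-thickening of $\bL$ containing $\spt \partial T_{\ok}$ and showing its contribution is subleading relative to $\varepsilon_{\ok}^{2}$ by \eqref{6.1(2)}.
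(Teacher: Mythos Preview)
Your outline has the right endpoint in mind but skips the one genuinely nontrivial construction that makes the argument work, and without it the appeal to 5.3.7 of \cite{F69} is unjustified.

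The result 5.3.7 of \cite{F69} is an \emph{interior} statement: it applies to a sequence of minimizing currents with vanishing excess and \emph{no boundary} in the region under consideration. Your proposal applies it directly to $T_{\ok}$ across the strip $\{|y_{n}|<\sigma_{T_{\ok}}\}$, where $\partial T_{\ok}$ lives; saying ``the boundary sits in a $\kappa_{\ok}$-neighborhood of $\bL$ and is therefore subleading'' is not an argument --- the theorem simply has no mechanism for absorbing a nonzero boundary, however thin its support. Relatedly, your approximants $\Pi_{\ok},\Psi_{\ok}$ are built from the sheet functions $v^{(\ok)}_{i},w^{(\ok)}_{j}$, which are set identically to zero on $\bV\setminus\bV_{T_{\ok}}$ and $\bW\setminus\bW_{T_{\ok}}$; these functions jump across $\partial\bV_{T_{\ok}}$ and $\partial\bW_{T_{\ok}}$ by an amount of order $\varepsilon_{\ok}$, so as written they do not even lie in $W^{1,2}_{\mathrm{loc}}$, let alone satisfy a uniform $W^{1,2}$ bound.

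The paper's proof supplies exactly the missing idea. One first \emph{removes} the boundary by forming
\[
S_{\ok}=\big(T_{\ok}-Q_{\ok}+(M-m)(\bbE^{n}\res\bW)\big)\res C_{r},
\]
where $Q_{\ok}$ is the cone over $\partial T_{\ok}$ from Lemma~\ref{appendixlemma2}. By \eqref{appendixlemma2identity} this current has $\partial S_{\ok}\res(B^{n}_{r}\times\bbR)=0$ and $\bop_{\sharp}S_{\ok}=M\,\bbE^{n}\res B^{n}_{r}$, i.e.\ \emph{constant} projection multiplicity $M$ over the whole disk, and \eqref{appendixlemma2retraction} together with \eqref{6.1(2)} keeps its excess comparable to $\varepsilon_{\ok}^{2}$. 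Lemma~\ref{sec5.1} then decomposes $S_{\ok}$ into $M$ boundary-free sheets $S_{\ok,1},\ldots,S_{\ok,M}$ over \emph{all} of $B^{n}_{r}$, and it is to these that 5.3.7 of \cite{F69} legitimately applies, yielding limits $\Pi_{1},\ldots,\Pi_{M}\in W^{1,2}_{\mathrm{loc}}(B^{n}_{r})$. The added term $(M-m)(\bbE^{n}\res\bW)$ is not cosmetic: it forces $M-m$ of the global sheets to vanish identically on $\bW$, so that $\{\Pi_{1}(y),\ldots,\Pi_{M}(y)\}=\{0,g_{1}(y),\ldots,g_{m}(y)\}$ there, whence $\Pi=\sum_{i}\Pi_{i}$ and $\Psi=\min_{i}|\Pi_{i}|$ globally. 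This is what gives $\Psi$ its $W^{1,2}_{\mathrm{loc}}$ structure across $\bL$; your piecewise definition of $\Psi_{\ok}$ has no such global sheet representation and cannot be repaired without this construction.
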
 

\begin{proof} We take as in \cite{HS79} and above 
$$\varepsilon_{\ok} = \bE_{C}(T_{\ok},1)^{\frac{1}{2}}, \ \kappa_{\ok} = \kappa_{T_{\ok}}.$$
Define $q: \bbR \times \bbR^{n+1} \rightarrow \bbR^{n+1}$ by
$$q(t,x) = (x_{1},\ldots,x_{n-1},t x_{n},t x_{n+1})$$
for $(t,x) \in \bbR \times \bbR^{n+1}$ and subsequently define
$$Q_{\ok} = Q_{T_{\ok}} = q_{\sharp} \big( (\bbE^{1} \res [0,1]) \times (\partial T \res C_{2}) \big),$$ 
as in Lemma \ref{appendixlemma2}. Fix any $r \in (0,1)$ and define 
$$S_{\ok} = \big(T_{\ok} - Q_{\ok} + (M-m)(\bbE^{n} \res \bW) \big) \res C_{r}.$$ 

\medskip

Now, the second identity of \eqref{appendixlemma2identity} implies
$$\bM(\bop_{\sharp} T_{\ok} \res \bB^{n}_{r}) \leq M \cH^{n}(\bV \cap \bB^{n}_{r}) + m \cH^{n}(\bW \cap \bB^{n}_{r}) + \bM(\bop_{\sharp}Q_{\ok} \res \bB^{n}_{r}).$$
Using this together with \eqref{appendixlemma2identity} and \eqref{appendixlemma2retraction} we compute
$$\begin{aligned} 
\partial S_{\ok} \res (B^{n}_{r} \times \bbR) = & 0, \ \bop_{\sharp} S_{\ok} \res B^{n}_{r} = M \bbE^{n} \res B^{n}_{r}, \\
\bE_{C}(S_{\ok},r) \leq & r^{-n} \bM(T_{\ok} \res C_{r}) + r^{-n} \bM(Q_{\ok} \res C_{1}) + (M-m) r^{-n} \cH^{n}(\bW \cap \bB^{n}_{r}) \\
& - r^{-n} \bM \big( (M \bE^{n} \res \bV + M \bE^{n} \res \bW) \res \bB^{n}_{r} \big) \\
= & r^{-n} \bM(T_{\ok} \res C_{r}) + r^{-n} \bM(Q_{\ok} \res C_{r}) \\
& -r^{-n} \left( M \cH^{n}(\bV \cap \bB^{n}_{r}) + m \cH^{n} (\bW \cap \bB^{n}_{r}) \right) \\
\leq & \bE_{C}(T_{\ok},r) + r^{-n} \bM(Q_{\ok} \res C_{r}) + r^{-n} \bM(\bop_{\sharp} Q_{\ok} \res \bB^{n}_{r}) \\
\leq & r^{-n} \varepsilon_{\ok} + (M-m) \kappa_{\ok} \varpi_{n-1} r^{\alpha} \left( \frac{\alpha}{2} \right) \left(1+\frac{\alpha^{2} \kappa_{\ok}^{2}}{4} r^{2 \alpha} + \frac{\alpha^{4} \kappa_{\ok}^{4}}{16} r^{4 \alpha} \right)^{\frac{1}{2}} \\
& + (M-m) \kappa_{\ok} \varpi_{n-1} r^{\alpha}. 
\end{aligned}$$ 

\medskip

We can hence apply Lemma \ref{sec5.1} with $V,S,\mathfrak{m}$ replaced with $B^{n}_{r},S_{\ok},M$ to obtain corresponding $S_{\ok,1},\ldots,S_{\ok,M}.$ Observe that for each $i \in \{ 1,\ldots,M \}$
$$\limsup_{\ok \rightarrow \infty} \varepsilon^{-2}_{\ok} \bE_{C}(S_{\ok,i},r) \leq \limsup_{\ok \rightarrow \infty} \varepsilon^{-2}_{\ok} \bE_{C}(S_{\ok},r) \leq r^{-n}$$
by Definition \eqref{sec6.1}. Using \eqref{6.1(2)}, we see that the proof of 5.3.7 of \cite{F69} carries over with 
$$\kappa,\lambda,\Psi_{\nu},m,n,r_{\nu},s_{\nu},\varepsilon_{\nu},\alpha$$
replaced by
$$1,1,\text{the parameteric area integrand},n,n+1,r,1,\varepsilon_{\ok},r^{-n}.$$
To see this, we make two observations: first, by \eqref{4.1(2)} with $\sigma = (1-r)$ and \eqref{defderivative} together with 4.1.8 of \cite{F69} and Lemma 26.25 of \cite{S83}, we have
$$\begin{aligned}
& \spt S_{k} \\
& \subset \left\{ x \in C_{r}: |x_{n+1}| \leq \max \left\{ \frac{(c_{14}(1+c_{15}))^{\frac{1}{2}}}{(1-r)^{n+\frac{1}{2}}} ( \varepsilon_{\ok}+\kappa_{\ok}^{\frac{1}{2}}),\left( \frac{\alpha}{2} \right) \kappa_{\ok} r^{1+\alpha} \right\} \right\} \\
\end{aligned}$$
which is contained in $\{ x \in C_{r}: |x_{n+1}| < 1 \},$ for all sufficiently large $\ok$; second, note that in 5.3.7 of \cite{F69} the only terms which use the hypothesis
$$\lim_{\nu \rightarrow \infty} (\varepsilon^{-1}_{\nu} r_{\nu}+\varepsilon^{-1}_{\nu} s_{\nu}) = 0$$
are $M_{\nu,1}(t)$ and $M_{\nu,3}(t),$ both of which vanish because the area integrand is a constant coefficient integrand. 

\medskip

Choosing locally integrable functions $\Pi_{i}: B^{n}_{r} \rightarrow \bbR$ so that
$$\int \Pi_{i} \cdot \phi \ d \cH^{n} = \lim_{\ok \rightarrow \infty} \varepsilon^{-1}_{\ok} S_{\ok,i} \big( (\phi \circ \bop) \boq_{1} dx_{1} \wedge \ldots \wedge dx_{n} \big)$$
for any $\phi \in C^{\infty}_{c}(B^{n}_{r}),$ we deduce that each $\Pi_{i}$ has locally square integrable weak gradient. By Definition \ref{sec6.1} and the choice of $S_{\ok,i},$ the set of values $\{ \Pi_{i}(y),\ldots,\Pi_{M}(y) \}$ coincides for Lebesgue almost all $y \in \bW \cap B^{n}_{r}$ with the (unordered) set $\{ 0,g_{1}(y),\ldots,g_{m}(y) \}.$ After changing each $\Pi_{i}$ on an $\cH^{n}$-null set, we conclude that
$$\Pi|_{B^{n}_{r}} = \sum_{i=1}^{M} \Pi_{i}, \ \Psi|_{B^{n}_{r}} = \min \{ |\Pi_{1}|,\ldots,|\Pi_{M}| \}.$$
Since this holds for all $r \in (0,1),$ then both $\Pi,\Psi$ have locally square integrable weak gradients over $B^{n}_{1}.$

\medskip

The last part of the lemma follows from section 26 of \cite{T75}. \end{proof}

\subsection{Lemma} \label{sec6.3} 

As noted in \cite{DS93}, one crucial step in \cite{HS79} is to obtain information about the height of currents in a blowup sequence from bounds on the harmonic blowups. Proving this depends on a well-known barrier argument, see Corollary 4.3 of \cite{H77}.

\begin{lemma}
Suppose $\{ T_{\ok} \}_{\ok \in \bbN}$ is a blowup sequence with associated harmonic blowups $f_{i},g_{j}$ and let $\varepsilon_{\ok} = \bE_{C}(T_{\ok},1)^{\frac{1}{2}}.$ If $\sigma \in (0,1/2),$ $z \in B^{n-1}_{1-2\sigma},$ and $K$ is a compact subset of $\bop^{-1}(B^{n}_{\sigma}(z)),$ then
$$\begin{aligned}
\limsup_{\ok \rightarrow \infty} \sup_{K \cap \spt T_{\ok}} \varepsilon^{-1}_{\ok} \boq_{1} & \leq \max \{ \sup_{ \bV \cap \partial B^{n}_{\sigma}(z)} f_{M}, \sup_{ \bW \cap \partial B^{n}_{\sigma}(z)} g_{m},0 \}, \\
\liminf_{\ok \rightarrow \infty} \inf_{K \cap \spt T_{\ok}} \varepsilon^{-1}_{\ok} \boq_{1} & \geq \min \{ \inf_{ \bV \cap \partial B^{n}_{\sigma}(z)} f_{1}, \inf_{ \bW \cap \partial B^{n}_{\sigma}(z)} g_{1},0 \}.
\end{aligned}$$
\end{lemma}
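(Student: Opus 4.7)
The plan is a one-sided barrier argument for the area-minimizing current $T_\ok$, in the spirit of Corollary 4.3 of \cite{H77}. First, the reflection $(x_1,\ldots,x_{n+1}) \mapsto (x_1,\ldots,x_n,-x_{n+1})$ maps each $T_\ok \in \sT$ to another member of $\sT$, interchanging its top and bottom sheets over $\bV,\bW$ and sending the harmonic blowups $f_i,g_j$ to $-f_{M+1-i},-g_{m+1-j}$, so the lower estimate reduces to the upper one. With $\Lambda = \max\{\sup_{\bV \cap \partial B^n_\sigma(z)} f_M,\, \sup_{\bW \cap \partial B^n_\sigma(z)} g_m,\, 0\}$ fixed and $\eta > 0$ arbitrary, the upper estimate amounts to showing that, for all $\ok$ sufficiently large,
$$\spt T_\ok \cap K \subset \{x_{n+1} \leq \varepsilon_\ok(\Lambda + \eta)\};$$
dividing by $\varepsilon_\ok$, letting $\ok \to \infty$ and then $\eta \searrow 0$ then yields the lemma.

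The main step is bounding $\boq_1$ on the boundary of the cylinder $\bop^{-1}(B^n_{\sigma'}(z))$, where $\sigma' \in (\sigma,\,1-|z|-\sigma)$ is chosen so that $K \subset \bop^{-1}(B^n_{\sigma'}(z))$. By \eqref{defboundary}, \eqref{defderivative} and $\psi_{T_\ok,\ell}(0) = 0 = D\psi_{T_\ok,\ell}(0)$, $\sup_{\spt \partial T_\ok \cap C_2} |\boq_1| = O(\kappa_\ok)$, which is $o(\varepsilon_\ok)$ by \eqref{6.1(2)}. On $\partial B^n_{\sigma'}(z) \cap \bV_{T_\ok}$ and $\partial B^n_{\sigma'}(z) \cap \bW_{T_\ok}$, Theorem \ref{sec5.4} (or its natural translation-rescaling if $|z|+\sigma' > 1/4$) provides the graph decompositions $v_1^{(\ok)} \leq \ldots \leq v_M^{(\ok)}$ and $w_1^{(\ok)} \leq \ldots \leq w_m^{(\ok)}$, and \eqref{6.1(3)}, \eqref{6.1(4)} give $v_M^{(\ok)},\, w_m^{(\ok)} \leq \varepsilon_\ok(\Lambda + \eta/2)$ on the respective sets for $\ok$ large. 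Over the shrinking width-$\sigma_{T_\ok}$ strip $\partial B^n_{\sigma'}(z) \setminus (\bV_{T_\ok} \cup \bW_{T_\ok})$, the $T_\ok$-subharmonicity of $\max\{\boq_1 - \kappa_\ok,\, 0\}$ (as used in the proof of Lemma \ref{sec4.1}) together with the top-sheet bounds interpolates the height estimate across, giving $\sup \boq_1 \leq \varepsilon_\ok(\Lambda + 3\eta/4)$ on the full lateral boundary.

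The barrier argument then proceeds by a classical comparison: for generic $c \in (\varepsilon_\ok(\Lambda + 3\eta/4),\, \varepsilon_\ok(\Lambda + \eta))$ and $\sigma'' \in (\sigma',\, \sigma'+\delta)$ at which the slices $\langle T_\ok, \boq_1, c\rangle$ and $\langle T_\ok, \bop, \partial B^n_{\sigma''}(z)\rangle$ are integer rectifiable, set $A = \bop^{-1}(B^n_{\sigma''}(z)) \cap \{x_{n+1} > c\}$ and define the competitor
$$T_\ok' = T_\ok - T_\ok \res A + F_\sharp(T_\ok \res A), \qquad F(x) = (x_1,\ldots,x_n,\min\{x_{n+1},c\}).$$
The boundary controls from the previous paragraph ensure $\partial T_\ok' = \partial T_\ok$; since the vertical projection $F$ strictly decreases area wherever $\vec{T}_\ok \neq \pm e_1 \wedge \cdots \wedge e_n$, the area-minimality of $T_\ok$ forces $\mu_{T_\ok}(A) = 0$, i.e. $\spt T_\ok \cap A = \emptyset$. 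The principal obstacle is the seam-region estimate: bridging the height bound across the portion of $\partial B^n_{\sigma'}(z)$ near $\bL$ where Theorem \ref{sec5.4} gives no graph decomposition and the $L^\infty$ bound of Lemma \ref{sec4.1} is only of order $\varepsilon_\ok$, not $\varepsilon_\ok \Lambda$. Once that is handled, the comparison step and the Sard-type selection of generic $c$ and $\sigma''$ are routine.
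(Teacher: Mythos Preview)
Your strategy diverges from the paper at precisely the point you yourself flag as the ``principal obstacle,'' and that gap is genuine. You need $\boq_1 \leq \varepsilon_\ok(\Lambda + 3\eta/4)$ on $\spt T_\ok$ over the seam portion of $\partial B^n_{\sigma'}(z)$, but the $T_\ok$-subharmonicity of $\max\{\boq_1 - \kappa_\ok, 0\}^2$ combined with the mean-value inequality (Theorem~7.5(6) of \cite{A72} or Theorem~3.4 of \cite{MS73}) only yields $\boq_1 \leq C(n,M,m)\,\varepsilon_\ok$ there: the mean-value inequality costs a fixed multiplicative constant, and the integrand on the nearby graph portions is controlled only by \eqref{4.1(2)} or by \eqref{6.1(5)}, not by $\Lambda$, since uniform convergence to $f_M,g_m$ is available only on compact subsets of $\bV,\bW$ and fails as one approaches $\bL$. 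When $\Lambda$ is smaller than this constant---for instance when $f_M,g_m$ happen to be small on $\partial B^n_\sigma(z)$---your hyperplane at height $c = \varepsilon_\ok(\Lambda + \eta)$ is not above $\spt T_\ok$ on the lateral cylinder, the competitor $T_\ok'$ does not satisfy $\partial T_\ok' = \partial T_\ok$, and the comparison is void. Since the seam height control for arbitrary $\Lambda \geq 0$ is essentially what the lemma asserts, this step is circular.

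The paper avoids the problem by using a curved barrier rather than a flat one. It takes $C^2$ boundary data $b_\ok$ on $\partial B^n_\sigma(z)$ that dominates $\max\{v_M^{(\ok)},\kappa_\ok\}$ and $\max\{w_m^{(\ok)},\kappa_\ok\}$ on the graph portions $\bV_{2\sigma_\ok},\bW_{2\sigma_\ok}$ and is only crudely bounded by the $O(\varepsilon_\ok)$ height estimate from \eqref{4.1(2)} on the seam. Solving the minimal surface Dirichlet problem on $B^n_\sigma(z)$ with data $b_\ok$ gives $u_\ok$; since $u_\ok \geq \kappa_\ok$ by the maximum principle, $\gph u_\ok$ lies above $\spt \partial T_\ok$, and Hardt's barrier (Corollary~4.3 of \cite{H77}) then forces $\spt T_\ok$ below $\gph u_\ok$ over the entire ball. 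Passing to a subsequential limit, $\varepsilon_\ok^{-1}u_\ok \to u$ harmonic on $B^n_\sigma(z)$, and linear barriers pin down the boundary values of $u$ as $\max\{f_M,0\}$ on $\bV \cap \partial B^n_\sigma(z)$ and $\max\{g_m,0\}$ on $\bW \cap \partial B^n_\sigma(z)$. The seam's contribution---bounded data on a set of $(n-1)$-measure tending to zero---vanishes in the Poisson integral, so $\sup u \leq \Lambda$. Allowing $u_\ok$ to be large over the seam and absorbing that excess through the Poisson formula is exactly the device your flat barrier cannot replicate.
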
 

\begin{proof} For each $\ok \in \bbN,$ let $\kappa_{\ok} = \kappa_{T_{\ok}},$ $\sigma_{\ok}=\sigma_{T_{\ok}},$ and $v^{(\ok)}_{i},w^{(\ok)}_{j}$ be as in sections \ref{sec1.5,1.6},\ref{sec5.4},\ref{sec6.1}. We may assume $2 \sigma_{\ok} < \sigma.$

\medskip

To prove the first inequality, we use \eqref{4.1(2)} and Theorem \ref{sec5.4} to choose functions $b_{\ok} \in C^{2}(\partial B^{n}_{\sigma}(z))$ so that
$$\begin{aligned}
\max \{ v^{(\ok)}_{M}(y),\kappa_{\ok} \} \leq b_{\ok}(y) & \leq \max \{ v^{(\ok)}_{M}(y),\kappa_{\ok} \} + \frac{1}{\ok} \text{ for } y \in \bV_{2 \sigma_{\ok}} \cap \partial B^{n}_{\sigma}(z) \\
\max \{ w^{(\ok)}_{m}(y),\kappa_{\ok} \} \leq b_{\ok}(y) & \leq \max \{ w^{(\ok)}_{m}(y),\kappa_{\ok} \} + \frac{1}{\ok} \text{ for } y \in \bW_{2 \sigma_{\ok}} \cap \partial B^{n}_{\sigma}(z), \\
\max \{ \sup_{\bop^{-1}(y) \cap \spt T_{\ok}} \boq_{1}, \kappa_{\ok} \} & \leq b_{\ok}(y) \\
& \leq \frac{2(c_{14}(1+c_{15}))^{\frac{1}{2}}}{\sigma^{n+\frac{1}{2}}} (\varepsilon_{\ok}+\kappa^{\frac{1}{2}}_{\ok}) \text{ for } y \in \partial B^{n}_{\sigma}(z).
\end{aligned}$$

\medskip

Next we solve the Dirichlet problem (see Theorem 16.9 of \cite{GT83}) to obtain $u_{\ok} \in C^{2}(\bB^{n}_{\sigma}(z))$ so that $u_{\ok}|_{\partial B^{n}_{\sigma}(z)} = b_{\ok}$ and $u_{\ok}$ satisfies the minimal surface equation over $B^{n}_{\sigma}(z)$. The maximum principle (see section 3.6 of \cite{GT83}) implies $u_{\ok} \geq \kappa_{\ok}.$ Also, a well-known barrier argument (see Corollary 4.3 of \cite{H77}) can be used to show
\begin{equation} \label{6.3(1)}
x_{n+1} \leq u_{\ok}(\bop(x)) \text{ for } x \in \bop^{-1}(B^{n}_{\sigma}(z)) \cap \spt T_{\ok};
\end{equation}
this is 6.3(1) of \cite{HS79}. Using these two facts we conclude
$$\begin{aligned}
\spt \partial \big( T_{\ok} \res \bop^{-1}(B^{n}_{\sigma}(z)) \big) & \subseteq \big( \bop^{-1}(\bB^{n}_{\sigma}(z)) \cap \spt \partial T_{\ok} \big) \cup \big( \bop^{-1}(\partial B^{n}_{\sigma}(z)) \cap \spt T_{\ok} \big) \\
& \subset \bop^{-1}(\bB^{n}_{\sigma}(z)) \cap \{ x \in \bbR^{n+1}: x_{n+1} \leq u_{\ok}(\bop(x)) \}.
\end{aligned}$$
Using Lemma \ref{sec5.3} and \eqref{4.1(2)} to obtain local interior gradient bounds on $\varepsilon^{-1}_{\ok} u_{\ok},$ independent of $\ok,$ and using the interior H$\ddot{\text{o}}$lder estimates for the gradient (see Chapter 13 of \cite{GT83}), we find a bounded harmonic function $u$ on $B^{n}_{\sigma}(z)$ and an increasing sequence $\{ \ok_{l} \}_{l \in \bbN}$ so that as $l \rightarrow \infty$
\begin{equation} \label{6.3(2)}
\varepsilon^{-1}_{\ok_{l}} u_{\ok_{l}} \rightarrow u \text{ uniformly on compact subsets of } B^{n}_{\sigma}(z);
\end{equation}
this is as in 6.3(2) of \cite{HS79}. Recalling \eqref{6.1(1)}-\eqref{6.1(4)} and using linear barriers, we readily verify that
$$\begin{aligned}
\lim_{\tilde{y} \in B^{n}_{\sigma}(z), \tilde{y} \rightarrow y} u(\tilde{y}) &= \max \{ f_{M}(y),0 \} \text{ for } y \in \bV \cap \partial B^{n}_{\sigma}(z), \\
\lim_{\tilde{y} \in B^{n}_{\sigma}(z), \tilde{y} \rightarrow y} u(\tilde{y}) &= \max \{ g_{m}(y),0 \} \text{ for } y \in \bW \cap \partial B^{n}_{\sigma}(z).
\end{aligned}$$
From the Poisson integral formula, we then obtain the inequality
$$\sup_{B^{n}_{\sigma}(z)} u \leq \max \{ \sup_{\bV \cap \partial B^{n}_{\sigma}(z)} f_{M}, \sup_{\bW \cap \partial B^{n}_{\sigma}(z)} g_{m},0 \},$$
which along with \eqref{6.3(1)},\eqref{6.3(2)} establishes the first inequality of the conclusion.

\medskip

The second inequality similarly follows using a lower barrier. \end{proof}

\subsection{Lemma} \label{sec6.4}

We here show that if the harmonic blowups $f_{i}$ and $g_{j}$ are each restrictions to $\bV$ and respectively $\bW$ of some multiple of $\boq_{0},$ then in fact $f_{i}$ and $g_{j}$ are the restriction to $\bV$ and respectively $\bW$ of $\beta \boq_{0}$ for one value of $\beta \in \bbR.$  We will use this lemma most notably in the proof of Theorem \ref{sec9.1}, to show that if $T_{\ok}$ is a blowup sequence with linear harmonic blowups as described above, then for sufficiently large $\ok \in \bbN,$ if we tilt $T_{\ok}$ by $\beta$ and rescale by $\tau = \tau(n,M,m,\alpha),$ then the new cylindrical excess is proportional to $\tau$ by a factor depending on $\bE_{C}(T_{\ok},1)$ and $\kappa_{T_{\ok}}.$

\medskip

In this section we need $m \geq 1.$

\begin{lemma}  
Suppose $m \geq 1.$ If $\beta_{1} \leq \beta_{2} \leq \ldots \leq \beta_{M}$ and $\gamma_{1} \geq \ldots \geq \gamma_{m},$ while $\{ T_{\ok} \}_{\ok \in \bbN}$ is a blowup sequence with associated harmonic blowups
$$\begin{aligned}
f_{i}(y) & = \beta_{i} y_{n} \text{ for } y \in \bV, \\
g_{j}(y) & = \gamma_{j} y_{n} \text{ for } y \in \bW,
\end{aligned}$$
then
$$\beta_{1} = \beta_{2} = \ldots = \beta_{M} = \gamma_{1} = \ldots = \gamma_{m},$$
and 
$$\lim_{\ok \rightarrow \infty} \sup_{x \in C_{\rho} \cap \spt T_{\ok}} |\varepsilon^{-1}_{\ok} x_{n+1}-\beta_{1} x_{n} |=0$$
for $\rho \in (0,1),$ where $\varepsilon_{\ok} = \bE_{C}(T_{\ok},1)^{\frac{1}{2}}.$
\end{lemma}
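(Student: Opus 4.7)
The plan is to show that all of the slopes coincide by matching the ``top'' and ``bottom'' envelopes of the harmonic blowups across $\bL$; combined with the orderings $\beta_1 \leq \cdots \leq \beta_M$ and $\gamma_1 \geq \cdots \geq \gamma_m$, this will force every slope to equal a common value $\beta$. Specifically, I aim to prove $\beta_M = \gamma_m$ and $\beta_1 = \gamma_1$; combining with the orderings then yields $\beta_M = \gamma_m \leq \gamma_1 = \beta_1 \leq \beta_M$, giving equality throughout.

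To establish $\beta_M = \gamma_m$, I would define the top envelope $F: B^n_1 \to \bbR$ by $F(y) = \beta_M y_n$ on $\bV$, $F(y) = \gamma_m y_n$ on $\bW$, and $F = 0$ on $\bL$. This $F$ is Lipschitz and linear on each half; the key assertion is that $F$ is \emph{weakly harmonic} on $B^n_\rho$ for every $\rho \in (0,1)$. Since the only way a piecewise-linear function continuous across $\bL$ can be weakly harmonic is if its normal derivative has no jump, this immediately yields $\beta_M = \gamma_m$. The strategy for proving harmonicity is to realize $F$ as a local-uniform limit of the rescaled top-height functions $u_\ok(y) = \sup\{ t \in \bbR : (y,t) \in \spt T_\ok \}$: by Theorem \ref{sec5.4} and \eqref{4.1(2)} these are uniformly controlled, they agree with $v^{T_\ok}_M$ on $\bV_{T_\ok}$ and with $w^{T_\ok}_m$ on $\bW_{T_\ok}$, so on compact subsets of $\bV \cup \bW$ the rescalings converge to $F$ and harmonicity of the limit follows from the rescaled minimal surface equation.

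The main obstacle will be to handle the flux of $F$ across $\bL$: testing against $\phi \in C^1_c(B^n_\rho)$ and rewriting $\int F\, \Delta\phi\ d\cH^n$ as a limit of currents-side integrals, I need to show that the contribution from a thin strip $\bop^{-1}(\{|y_n| < \delta\})$ vanishes first as $\ok \to \infty$ and then as $\delta \to 0$. This is exactly the role advertised for the projection mass estimate \eqref{appendixlemma2projectionmass}, which controls $\mu_{T_\ok}$ on such strips and ensures the ``missing mass'' between the top sheets and $\bop^{-1}(\bL)$ is of order lower than $\varepsilon_\ok^2$; the hypothesis $m \geq 1$ is essential here, as noted in the introduction. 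Once $F$ is shown harmonic, $\beta_M = \gamma_m$ follows. An entirely analogous argument with the bottom envelope $G$ defined by $G(y) = \beta_1 y_n$ on $\bV$, $G(y) = \gamma_1 y_n$ on $\bW$, and $G = 0$ on $\bL$ yields $\beta_1 = \gamma_1$, completing the first assertion. Set $\beta$ to be the common value.

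For the uniform height estimate, fix $\rho \in (0,1)$ and $\eta > 0$, and choose $\sigma > 0$ with $3|\beta|\sigma < \eta$. On the compact set $\bop^{-1}(\bV_\sigma \cap \bB^n_\rho) \cap \spt T_\ok$, the uniform convergence of each $\varepsilon_\ok^{-1} v^{T_\ok}_i$ to $\beta y_n$ (from Definition \ref{sec6.1} and Theorem \ref{sec5.4}) gives $|\varepsilon_\ok^{-1} x_{n+1} - \beta x_n| < \eta$ for large $\ok$; analogously on $\bop^{-1}(\bW_\sigma \cap \bB^n_\rho)$. For the remaining strip $\bop^{-1}(\bB^n_\rho \cap \{|y_n| < \sigma\})$, I would cover by finitely many cylinders $\bop^{-1}(B^n_\sigma(z))$ with $z \in B^{n-1}_{1-2\sigma}$ and apply Lemma \ref{sec6.3}; since all blowups equal $\beta y_n$ and $|y_n| \leq \sigma$ on $\partial B^n_\sigma(z)$, the bounds
\begin{align*}
\limsup_{\ok \to \infty} \sup_{K \cap \spt T_\ok} \varepsilon_\ok^{-1} \boq_1 & \leq |\beta| \sigma, \\
\liminf_{\ok \to \infty} \inf_{K \cap \spt T_\ok} \varepsilon_\ok^{-1} \boq_1 & \geq -|\beta| \sigma
\end{align*}
hold, and combined with $|\beta x_n| \leq |\beta|\sigma$ in this strip yield $|\varepsilon_\ok^{-1} x_{n+1} - \beta x_n| \leq 2|\beta|\sigma < \eta$ for all large $\ok$. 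Since $\eta$ is arbitrary, the uniform convergence follows.
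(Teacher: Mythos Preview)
Your overall architecture matches the paper's: show the ``top'' envelope $b(y)=\beta_M y_n$ on $\bV$, $b(y)=\gamma_m y_n$ on $\bW$ is harmonic across $\bL$ (forcing $\beta_M=\gamma_m$), do the same for the bottom envelope, and then deduce the uniform height statement from Lemma~\ref{sec6.3}. Your argument for the second conclusion is essentially the paper's and is fine.

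The gap is in your mechanism for proving harmonicity of the top envelope. You propose to realize $F$ as a limit of the rescaled top-height functions $u_{\ok}(y)=\sup\{t:(y,t)\in\spt T_{\ok}\}$ and say ``harmonicity of the limit follows from the rescaled minimal surface equation.'' But $u_{\ok}$ satisfies the minimal surface equation only on $\bV_{T_{\ok}}$ and $\bW_{T_{\ok}}$; near $\bL$ it is merely the height of the support of $T_{\ok}$ and obeys no PDE. Uniform convergence of $u_{\ok}$ to $F$ (even if established via Lemma~\ref{sec6.3}) cannot by itself produce harmonicity across $\bL$: integrating by parts, $\int F\,\Delta\phi\,d\cH^{n}=(\gamma_m-\beta_M)\int_{\bL}D_n\phi\,d\cH^{n-1}$, so the ``strip contribution'' you hope to show vanishes is exactly the slope jump you are trying to prove is zero. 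There is no smallness here to extract, and \eqref{appendixlemma2projectionmass} does not bear directly on $\int u_{\ok}\,\Delta\phi$.

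What is missing is a \emph{variational} use of the area-minimality of the whole current $T_{\ok}$ (the top sheet alone is not minimizing, so its limit need not be harmonic). The paper builds an explicit Lipschitz deformation $F^{\sigma}_{\ok}$ that, inside $C_{1/2}$, flattens $T_{\ok}$ onto the piecewise-linear configuration, and uses Lemma~\ref{sec3.2} to show $\bM(F^{\sigma}_{\ok\sharp}T_{\ok})-\bM(T_{\ok})=O(\sigma\varepsilon_{\ok}^{2})$. Then it replaces the top sheet $R_l$ of the deformed current by the graph $S_l$ of an arbitrary Lipschitz competitor $\zeta$ with the same boundary values; here \eqref{appendixlemma2projectionmass} (which needs $m\geq1$) is what guarantees the mass \emph{splits additively}, i.e.\ $\bM(F^{\sigma_l}_{\ok_l\sharp}T_{\ok_l})=\bM(F^{\sigma_l}_{\ok_l\sharp}T_{\ok_l}-R_l)+\bM(R_l)$, so that swapping $R_l$ for $S_l$ produces a genuine competitor for $T_{\ok_l}$. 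Area-minimality then yields $\bM(R_l)\leq\bM(S_l)+o(\varepsilon_{\ok_l}^{2})$, and dividing by $\varepsilon_{\ok_l}^{2}$ and passing to the limit gives $\int|Db|^{2}\leq\int|D\zeta|^{2}$, i.e.\ $b$ minimizes Dirichlet energy. Your proposal does not contain this comparison-current construction, and without it the harmonicity claim has no proof.
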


The proof proceeds partly by proving that the function $b : \bB^{n}_{\frac{1}{2}} \rightarrow \bbR$ defined by $b(y) = \beta_{M} y_{n}$ for $y \in \bB^{n}_{\frac{1}{2}} \cap \Clos \bV$ and $b(y) = \gamma_{m} y_{n}$ for $y \in \bB^{n}_{\frac{1}{2}} \cap \Clos \bW$ is harmonic; $b$ is the ``top sheet'' of the harmonic blowups. Showing this is done by using the area minimality of each $T_{\ok}$ in the blowup sequence, to directly show $b$ minimizes the Dirichlet integral. To do this, we must crucially use \eqref{appendixlemma2projectionmass}, which means we must assume $m \geq 1.$

\begin{proof} While this lemma is analogous to Lemma 6.4 of \cite{HS79}, we must make some changes to the proof. In particular, we observe our use of Lemma \ref{sec3.2}, which differs from the corresponding Lemma 3.2 of \cite{HS79}.

\medskip

Let $v^{(\ok)}_{i},w^{(\ok)}_{j}$ be as in Definition \ref{sec6.1}, and let
$$\begin{aligned}
\Lambda & = \max \{|\beta_{1}|,|\beta_{M}|,|\gamma_{1}|,|\gamma_{m}| \}, \\
\lambda & = \min \big( \{ 1 \} \cup \{ \beta_{i+1}-\beta_{i}: \beta_{i} < \beta_{i+1} \} \cup \{ \gamma_{j}-\gamma_{j+1}: \gamma_{j} > \gamma_{j+1} \} \big).
\end{aligned}$$ 
For each $\sigma \in (0,\min \{ \frac{\lambda}{2},\frac{1}{16} \}),$ we use \eqref{5.4(1)},\eqref{5.4(2)}, \eqref{6.1(1)}-\eqref{6.1(4)}, and Lemma \ref{sec6.3} to find $N_{\sigma} \in \bbN$ so that for $\ok \geq N_{\sigma}$ 
\begin{equation} \label{6.4(1)}
\sigma_{T_{\ok}} < \frac{\sigma}{4}, \ \varepsilon^{2}_{\ok} < \sigma, \ \kappa_{T_{\ok}} < \sigma^{3} \varepsilon^{2}_{\ok}
\end{equation}
\begin{equation} \label{6.4(2)}
\sup_{\bV_{\sigma/2}} |v^{(\ok)}_{i} - \varepsilon_{\ok} \beta_{i} \boq_{0}|^{2} \leq \sigma^{n+4} \varepsilon^{2}_{\ok} \text{ for } i \in \{ 1,\ldots,M \},
\end{equation}
\begin{equation} \label{6.4(3)}
\sup_{\bW_{\sigma/2}} |w^{(\ok)}_{j} - \varepsilon_{\ok} \gamma_{j} \boq_{0}|^{2} \leq \sigma^{n+4} \varepsilon^{2}_{\ok} \text{ for } j \in \{ 1,\ldots,m \},
\end{equation}
\begin{equation} \label{6.4(4)}
\sup_{C_{\sigma+\frac{3}{4}} \cap (\spt T_{\ok}) \setminus \bop^{-1}(\bV_{2 \sigma} \cup \bW_{2 \sigma})} |\boq_{0}| \leq 2 \Lambda \sigma \varepsilon_{\ok};
\end{equation}
these are as in equations 6.4(1)-(4) of \cite{HS79}.

\medskip

Next, if $\ok \geq N_{\sigma}$ then \eqref{5.4(1)} and \eqref{6.4(1)} imply $|D v_{i}^{(\ok)}| \leq c_{25}$ for $y \in \bV_{T_{\ok}}.$ Using this, for each $i \in \{ 1,\ldots,m \}$ and $\jmath \in \{ 1,\ldots,n \}$ we apply \eqref{5.2(3)} with 
$$\begin{aligned}
u & = D_{\jmath} (v^{(\ok)} - \varepsilon_{\ok} \beta_{i} \boq_{0}) = D_{\jmath} v^{(\ok)}_{i} - \varepsilon_{\ok} \beta_{i} \delta_{n \jmath} \\
a_{kl} & = \frac{1}{\sqrt{1+|p|^{2}}} \left( \delta_{kl} - \frac{p_{k}p_{l}}{1+|p|^{2}} \right) \Big|_{p = Dv^{\ok}_{i}},
\end{aligned}$$
then apply \eqref{5.2(2)} with $u = v^{(\ok)}_{i} - \varepsilon_{\ok} \beta_{i} \boq_{0}$ and
$$a_{kl} = \int_{0}^{1} \frac{1}{\sqrt{1+|p|^{2}}} \left( \delta_{kl} - \frac{p_{k} p_{l}}{1+|p|^{2}} \right) \Big|_{p= tDv^{(\ok)}_{i}+(1-t) \varepsilon_{\ok} \beta_{i} e_{n}} \ dt$$
in order to conclude
\begin{equation} \label{6.4(5)}
\begin{aligned}
\sup_{\bV_{\sigma}} |D & ( v^{(\ok)}_{i} - \varepsilon_{\ok} \beta_{i} \boq_{0} )| \\
\leq & \frac{c_{26}}{\sigma^{n}} \int_{\bV_{\frac{\sigma}{2}}} |D( v^{(\ok)}_{i} - \varepsilon_{\ok} \beta_{i} \boq_{0} )|^{2} \ d \cH^{n} \\
\leq & \frac{c_{27}}{\sigma^{n+2}} \int_{\bV_{\frac{\sigma}{4}}} |v^{(\ok)}_{i} - \varepsilon_{\ok} \beta_{i} \boq_{0}|^{2} \ d \cH^{n} \leq c_{27} \varpi_{n} \sigma^{2} \varepsilon^{2}_{\ok}
\end{aligned}
\end{equation}
with $c_{26},c_{27}$ depending in $n,M,m;$ this is 6.4(5) of \cite{HS79}. We similarly verify 
$$\sup_{\bW_{\sigma}} |D(w^{(\ok)}_{j} - \varepsilon_{\ok} \gamma_{j} \boq_{0})|^{2} \leq c_{27} \varpi_{n} \sigma^{2} \varepsilon^{2}_{\ok}$$
for each $j \in \{1,\ldots,m\}.$ 

\medskip

Next, with $\sigma \in (0,\min \{ \frac{\lambda}{2},\frac{1}{16} \}),$ $i \in \{ 1,\ldots,M \},$ and $j \in \{ 1,\ldots,m \}$ define 
$$\begin{aligned}
H^{\sigma} & = \{ x \in \bbR^{n+1}: |x_{n}| \leq \sigma \}, \\
I^{\sigma}_{i,\ok} &= \{ x \in \bop^{-1}(\bV_{\sigma}): |\varepsilon^{-1}_{\ok} x_{n+1}- \beta_{i} x_{n}| < \lambda \sigma/2 \}, \\
J^{\sigma}_{j,\ok} &= \{ x \in \bop^{-1}(\bW_{\sigma}): |\varepsilon^{-1}_{\ok} x_{n+1}- \gamma_{j} x_{n}| < \lambda \sigma/2 \};
\end{aligned}$$
note that
$$\begin{aligned}
I^{\sigma}_{i,\ok} \cap I^{\sigma}_{\tilde{i},\ok} & \neq \emptyset \text{ if and only if } \beta_{i} = \beta_{\tilde{i}}, \\
J^{\sigma}_{j,\ok} \cap J^{\sigma}_{\tilde{j},\ok} & \neq \emptyset \text{ if and only if } \gamma_{j} = \gamma_{\tilde{j}}.
\end{aligned}$$
For each $\ok \geq N_{\sigma}$ we also define the set
$$G^{\sigma}_{\ok} = H^{\sigma} \cup \bigcup_{i=1}^{M} I^{\sigma}_{i,\ok} \cup \bigcup_{j=1}^{m} J^{\sigma}_{j,\ok},$$
and the Lipschitz map $\Lambda^{\sigma}_{\ok}:G^{\sigma}_{\ok} \rightarrow \bbR^{n+1}$ given by
$$\begin{array}{ll} 
L^{\sigma}_{\ok}(x) = \bop(x) & \text{ for } x \in H^{\sigma}, \\
L^{\sigma}_{\ok}(x) = (\bop(x),\varepsilon_{\ok} \beta_{i}(x_{n}-\sigma)) & \text{ for } x \in I^{\sigma}_{i,\ok}, \\
L^{\sigma}_{\ok}(x) = (\bop(x),\varepsilon_{\ok} \gamma_{j}(x_{n}+\sigma)) & \text{ for } x \in J^{\sigma}_{j,\ok}.
\end{array}$$
Finally, let $\phi \in C^{1}(\bB^{n}_{1};[0,1])$ be a function with
$$\begin{array}{ccc}
\phi|_{\bB^{n}_{1/2}} \equiv 0, & \phi|_{\bB^{n}_{1} \setminus B^{n}_{3/4}} \equiv 1, & \sup |D\phi| \leq 5,
\end{array}$$
and define the Lipschitz mapping
$$F^{\sigma}_{\ok}: G^{\sigma}_{\ok} \cup (\bbR^{n+1} \setminus C_{\frac{3}{4}}) \rightarrow \bbR^{n+1},$$
by
$$\begin{array}{ll}
F^{\sigma}_{\ok}(x) = x & \text{for } x \in \bbR^{n+1} \setminus C_{\frac{3}{4}}, \\
F^{\sigma}_{\ok}(x) = (1-(\phi \circ \bop)) L^{\sigma}_{\ok}(x) + (\phi \circ \bop)(x) & \text{for } x \in G^{\sigma}_{\ok}.
\end{array}$$

\medskip

We wish to estimate $\bM (F^{\sigma}_{\ok \sharp} T_{\ok}) - \bM (T_{\ok}).$ First notice that
\begin{equation} \label{6.4(6)}
\bop^{-1}(\bV_{\sigma}) \cap \spt T_{\ok} = \cup_{i=1}^{M} \graph{\bV_{\sigma}}{v^{(\ok)}_{i}} \subset G^{\sigma}_{\ok},
\end{equation}
\begin{equation} \label{6.4(7)}
\bop^{-1}(\bV_{\sigma}) \cap F^{\sigma}_{\ok \sharp} \spt T_{\ok} = \cup_{i=1}^{M} \graph{\bV_{\sigma}}{u^{(\ok)}_{i}} \subset G^{\sigma}_{\ok},
\end{equation}
as in 6.4(6)(7) of \cite{HS79}, where we define
$$\begin{aligned}
u^{(\ok)}_{i} &= (1-\phi) \varepsilon_{\ok} \beta_{i} (\boq_{0}-\sigma)+\phi v^{(\ok)}_{i} \\
&= (1-\phi) \varepsilon_{\ok} \beta_{i} \boq_{0} + \phi v^{(\ok)}_{i} - (1-\phi) \varepsilon_{\ok} \beta_{i} \sigma
\end{aligned}$$
over $\bV_{\sigma}$; hence,
\begin{equation} \label{6.4(8)}
\begin{aligned}
\sup_{\bV_{\sigma}} \left( |Du^{(\ok)}_{i}|+|Dv^{(\ok)}_{i}| \right) \leq &  c_{28} (1+\Lambda) \varepsilon_{\ok}, \\
u^{(\ok)}_{i} - v^{(\ok)}_{i} = & (1-\phi)( \varepsilon_{\ok} \beta_{i} \boq_{0} - v^{(\ok)}_{i}) \\
& - (1-\phi) \varepsilon_{\ok} \beta_{i} \sigma \text{ over } \bV_{\sigma}.
\end{aligned}
\end{equation}
\begin{equation} \label{6.4(9)}
\begin{aligned}
\sup_{\bV_{\sigma}} & \left| Du^{(\ok)}_{i}+Dv^{(\ok)}_{i} \right| \\
\leq & \sup_{\bV_{\sigma}} \left| (1-\phi) D(\varepsilon_{\ok} \beta_{i} \boq_{0} - v^{(\ok)}_{i})+(\varepsilon_{\ok} \beta_{i} \boq_{0} - v^{(\ok)}_{i} + \epsilon_{\ok} \beta_{i} \sigma ) D\phi \right| \\
\leq & c_{29} (1+\Lambda) \sigma \varepsilon_{\ok}
\end{aligned}
\end{equation}
by \eqref{6.4(2)} and \eqref{6.4(5)}, with $c_{28},c_{29}$ depending on $n,M,m$; these are as in 6.4(8)(9) of \cite{HS79}. Using \eqref{6.4(6)},\eqref{6.4(7)},\eqref{6.4(8)}, and \eqref{6.4(9)} we estimate
\begin{equation} \label{6.4(10)}
\begin{aligned}
\bM(F^{\sigma}_{\ok \sharp} & ( T_{\ok} \res \bop^{-1}(\bV_{\sigma}))) - \bM(T_{\ok} \res \bop^{-1}(\bV_{\sigma})) \\
= & \sum_{i=1}^{M} \int_{\bV_{\sigma}} \left( (1+|Du^{(\ok)}_{i}|^{2})^{1/2} - (1+|Dv^{(\ok)}_{i}|^{2})^{1/2} \right) \ d \cH^{n} \\
\leq & \sum_{i=1}^{M} \int_{\bV_{\sigma}} |Du^{(\ok)}_{i}-Dv^{(\ok)}_{i}| \left( |Du^{(\ok)}_{i}| + |Dv^{(\ok)}_{i}| \right) \ d \cH^{n} \\
\leq & c_{30} (1+\Lambda)^{2} \sigma \varepsilon^{2}_{\ok}
\end{aligned}
\end{equation}
with $c_{30}$ depending on $n,M,m$; this is as in 6.4(10) of \cite{HS79}. Similarly, we verify that
\begin{equation} \label{6.4(11)}
\bM(F^{\sigma}_{\ok \sharp} ( T_{\ok} \res \bop^{-1}(\bW_{\sigma}))) - \bM(T_{\ok} \res \bop^{-1}(\bW_{\sigma})) \leq c_{30} (1+\Lambda)^{2} \sigma \varepsilon^{2}_{\ok} 
\end{equation}
as in 6.4(11) of \cite{HS79}.

\medskip

Next we infer from \eqref{1.4(1)}  that
$$\begin{aligned} 
\mu_{T_{\ok}}(H^{2 \sigma} \cap C_{\frac{3}{4}+\sigma}) 
& \leq \bM(\bop_{\sharp}(T_{\ok} \res H^{2 \sigma} \cap C_{\frac{3}{4}+\sigma})) + (3/4+\sigma)^{n} \bE_{C}(T,3/4+\sigma) \\ 
& \leq \bM(\bop_{\sharp}(T_{\ok} \res H^{2 \sigma} \cap C_{\frac{3}{4}+\sigma})) + \varepsilon_{\ok}^{2}.
\end{aligned}$$ 
Using \eqref{appendixlemma2identity} and \eqref{appendixlemma2retraction} with $r = (\frac{3}{4}+\sigma)$ we compute 
$$\begin{aligned} 
\bM(\bop_{\sharp}(T_{\ok} \res H^{2 \sigma} \cap C_{\frac{3}{4}+\sigma})) \leq & (M+m) \varpi_{n-1} (3/4+\sigma)^{n-1} (2\sigma) \\ 
& + (M-m) \kappa_{T_{\ok}} \varpi_{n-1} (3/4+\sigma)^{n+\alpha}.
\end{aligned}$$
The previous two calculations, $\sigma \leq \frac{1}{16},$ and \eqref{6.4(1)} imply that for $\ok \geq N_{\sigma}$
\begin{equation} \label{6.4(12)} 
\mu_{T_{\ok}}(H^{2 \sigma} \cap C_{\frac{3}{4}+\sigma}) \leq c_{31} (\sigma + \kappa_{T_{\ok}}) + \varepsilon_{\ok}^{2} \leq 2c_{31} \sigma, 
\end{equation} 
with $c_{31}$ depending on $n,M,m$; this is as in 6.4(12) of \cite{HS79}.

\medskip

Noting that
$$F^{\sigma}_{\ok}(x) = (\bop(x),\phi(\bop(x))x_{n+1}) \text{ for all } x \in H^{\sigma} \cap C_{3/4},$$
then Lemma \ref{sec3.2} (with $A = H^{\sigma} \cap C_{\frac{3}{4}}$ and $\tau = \sigma$) and \eqref{6.4(6)},\eqref{6.4(12)} give for $c_{32},c_{33}$ depending on $n,M,m$ 
\begin{equation}
\begin{aligned} \label{6.4(13)}
\bM(F^{\sigma}_{\ok \sharp} (T_{\ok} \res H^{\sigma})) & - \bM (T_{\ok} \res H^{\sigma}) \\
& = \bM(F^{\sigma}_{\ok \sharp} (T_{\ok} \res H^{\sigma})) - \bM(T_{\ok} \res H^{\sigma}) \\ 
& \leq \frac{c_{32}}{\sigma^{2}} \left( \kappa_{T_{\ok}} + \int_{H^{2 \sigma} \cap C_{\frac{3}{4}+\sigma}} \boq_{1}^{2} \ d \mu_{T_{\ok}} \right) \\ 
& \leq c_{33} (1+\Lambda) \sigma \varepsilon^{2}_{\ok}, 
\end{aligned}
\end{equation} 
using as well $\kappa_{T_{\ok}} < \sigma^{3} \varepsilon^{2}_{\ok}$ from \eqref{6.4(1)}; this is as in 6.4(13) of \cite{HS79}. 

\medskip

Combining \eqref{6.4(10)},\eqref{6.4(11)}, and \eqref{6.4(13)} gives the desired estimate
\begin{equation} \label{6.4(14)}
\bM(F^{\sigma}_{\ok \sharp} T_{\ok}) - \bM(T_{\ok}) \leq (2c_{30}+c_{33})(1+\Lambda)^{2} \sigma \varepsilon^{2}_{\ok}
\end{equation}
for all $\ok \geq N_{\sigma}.$

\medskip

We will use this estimate to show that the function $b : \bB^{n}_{\frac{1}{2}} \rightarrow \bbR$
$$\begin{aligned} 
b(y) & = \beta_{M} y_{n} &\text{ for } y  \in \bB^{n}_{\frac{1}{2}} \cap \Clos{\bV} \\ 
b(y) & = \gamma_{m} y_{n} & \text{ for } y \in \bB^{n}_{\frac{1}{2}} \cap \Clos{\bW}. 
\end{aligned}$$ 
is harmonic in $B^{n}_{\frac{1}{2}}.$ For this purpose let $\zeta:\bB^{n}_{\frac{1}{2}} \rightarrow \bbR$ be any Lipschitz function with $\zeta|_{\partial \bB^{n}_{\frac{1}{2}}}=b|_{\partial \bB^{n}_{\frac{1}{2}}},$ and let $\{ \sigma_{l} \}_{l=1}^{\infty}$ be any decreasing sequence of positive numbers with limit zero and $\sigma_{1} < \min \{ \frac{\lambda}{2},\frac{1}{16} \}.$ For each $l \in \bbN,$ let $\ok_{l} = N_{\sigma_{l}}$ and define $b_{l}: \bB^{n}_{\frac{1}{2}} \rightarrow \bbR$ by
$$\begin{array}{ll}
b_{l}(y) = \beta_{M}(y_{n}-\sigma_{l}) & \text{ for } y \in \bB^{n}_{\frac{1}{2}} \cap \bV_{\sigma_{l}} \\
b_{l}(y) = \gamma_{m}(y_{n}-\sigma_{l}) & \text{ for } y \in \bB^{n}_{\frac{1}{2}} \cap \bW_{\sigma_{l}} \\
b_{l}(y) = 0 & \text{ for } y \in \bB^{n}_{\frac{1}{2}} \setminus (\bV_{\sigma_{l}} \cup \bW_{\sigma_{l}}),
\end{array}$$
and choose $\zeta_{l}: \bB^{n}_{\frac{1}{2}} \rightarrow \bbR$ Lipschitz so that $\zeta_{l}|_{\partial \bB^{n}_{\frac{1}{2}}} = b_{l}|_{\partial \bB^{n}_{\frac{1}{2}}},$
$$\begin{array}{cc}
\lim_{l \rightarrow \infty} \int_{\bB^{n}_{\frac{1}{2}}} |D\zeta_{l}-D\zeta|^{2} \ d \cH^{n} = 0, & \limsup_{l \rightarrow \infty} \sup |D\zeta_{l}| < \infty.
\end{array}$$
Also, define 
$$\begin{aligned} 
R_{l} & = (-1)^{n-1} \left( \partial (\bbE^{n+1} \res \{ x \in C_{\frac{1}{2}}: x_{n+1} > \varepsilon_{\ok_{l}} b_{l}(\bop(x)) \}) \right) \res C_{\frac{1}{2}} \\ 
S_{l} & = (-1)^{n-1} \left( \partial (\bbE^{n+1} \res \{ x \in C_{\frac{1}{2}}: x_{n+1} > \varepsilon_{\ok_{l}} \zeta_{l}(\bop(x)) \}) \right) \res C_{\frac{1}{2}}, 
\end{aligned}$$ 
(note the slight difference from the corresponding definitions in \cite{HS79}). Using \eqref{6.4(7)} with $\sigma = \sigma_{l},$ $\ok=\ok_{l}$ (as well as the analogous identity over $\bW_{\sigma_{l}}$), \eqref{6.4(1)}, and \eqref{appendixlemma2projectionmass} with $T=T_{\ok_{l}},$ $\sigma=\sigma_{l}$ we compute
\begin{equation} \label{6.4(15)}
\bM(F^{\sigma_{l}}_{\ok_{l} \sharp}(T_{\ok_{l}} \res C_{1})) = \bM(F^{\sigma_{l}}_{\ok_{l} \sharp}(T_{\ok_{l}} \res C_{1})-R_{l}) + \bM(R_{l})
\end{equation}
because $F^{\sigma_{l}}_{\ok_{l}}|_{C_{\frac{1}{2}}} = L^{\sigma_{l}}_{\ok_{l}}|_{C_{\frac{1}{2}}}$; this is as in 6.4(15) of \cite{HS79}.

\medskip

From the area minimality of $T_{\ok_{l}}$ and the equations
$$\partial (T_{\ok_{l}} \res C_{1}) = \partial F^{\sigma_{\ok}}_{\ok_{l}}(T_{\ok_{l}} \res C_{1}) + \partial P_{l}, \ \partial (R_{l}-S_{l}) = 0,$$
where
$$P_{l} = Q_{\ok_{l}} - F^{\sigma_{l}}_{\ok_{l} \sharp} Q_{\ok_{l}}$$
with $Q_{\ok_{l}}$ as in the proof of Lemma \ref{sec6.2} or Lemma \ref{appendixlemma2}, we deduce that
\begin{equation} \label{6.4(16)}
\begin{aligned}
\bM(T_{\ok_{l}} \res C_{1}) \leq & \bM \left( F^{\sigma_{l}}_{\ok_{l} \sharp}(T_{\ok_{l}} \res C_{1}) + P_{l} - R_{l} + S_{l} \right) \\
\leq & \bM \left( F^{\sigma_{l}}_{\ok_{l} \sharp}(T_{\ok_{l}} \res C_{1})-R_{l} \right) + \bM(P_{l}) + \bM(S_{l});
\end{aligned}
\end{equation}
this is as in 6.4(16) of \cite{HS79}. Combining \eqref{6.4(14)},\eqref{6.4(15)}, and \eqref{6.4(16)} gives the inequality
\begin{equation} \label{6.4(17)}
\bM(R_{l})-\bM(S_{l}) \leq \bM(P_{l}) + (2c_{30}+c_{33})(1+\Lambda)^{2} \sigma_{l} \varepsilon^{2}_{\ok_{l}},
\end{equation}
as in 6.4(17) of \cite{HS79}. Using the definition of $F^{\sigma_{l}}_{\ok_{l}},$ Lemma 26.25 of \cite{S83} or 4.1.14 of \cite{F69}, and \eqref{appendixlemma2retraction} with $r=\frac{3}{4}$ gives 
$$\limsup_{l \rightarrow \infty} \varepsilon^{-2}_{\ok_{l}} \bM(P_{l}) \leq \limsup_{l \rightarrow \infty} \left( 1+ (\Lip(F^{\sigma_{l}}_{\ok_{l}}))^{n} \right) \varepsilon^{-2}_{\ok_{l}} \bM(Q_{\ok_{l}} \res C_{\frac{3}{4}}) = 0$$
by \eqref{6.1(2)}. We deduce from \eqref{6.4(17)} that
$$\begin{aligned}
0 \geq & \limsup_{k \rightarrow \infty} \varepsilon^{-2}_{\ok_{l}} (\bM(R_{l})-\bM(S_{l})) \\
= & \limsup_{l \rightarrow \infty} \varepsilon^{-2}_{\ok_{l}} \left( \int_{\bB^{n}_{\frac{1}{2}}} \sqrt{1+ \varepsilon^{2}_{\ok_{l}} |D b_{l}|^{2}} \ d \cH^{n} - \int_{\bB^{n}_{\frac{1}{2}}} \sqrt{1+ \varepsilon^{2}_{\ok_{l}} |D \zeta_{l}|^{2}} \ d \cH^{n} \right) \\
= & \limsup_{l \rightarrow \infty} \varepsilon^{-2}_{\ok_{l}} \int_{\bB^{n}_{\frac{1}{2}}} \frac{|D b_{l}|^{2} - |D \zeta_{l}|^{2}}{\sqrt{1+\varepsilon^{2}_{\ok_{l}} |D b_{l}|^{2}} + \sqrt{1+\varepsilon^{2}_{\ok_{l}} |D \zeta_{l}|^{2}}} \ d \cH^{n} \\
= & \frac{1}{2} \int_{\bB^{n}_{\frac{1}{2}}} |D b|^{2} - |D \zeta|^{2} \ d \cH^{n}.
\end{aligned}$$
Since this holds for all Lipschitz $\zeta:\bB^{n}_{\frac{1}{2}} \rightarrow \bbR$ with $\zeta|_{\partial \bB^{n}_{\frac{1}{2}}} = b,$ then we conclude $b$ minimizes the Dirichlet integral and so is a harmonic function. In particular, $b$ is differentiable, and hence $\beta_{M} = \gamma_{m}.$

\medskip

Since $\beta_{1} \leq \beta_{2} \leq \cdots \leq \beta_{M} = \gamma_{m} \leq \ldots \leq \gamma_{1},$ then we can use a similar argument to show $\beta_{1} = \gamma_{1}$ to complete the proof of the first conclusion.

\medskip

To prove the second conclusion, we assume $\sigma \in (0,\frac{(1-\rho)}{2})$ and deduce from \eqref{6.4(2)} and \eqref{6.4(3)} that
$$\limsup_{\ok \rightarrow \infty} \sup_{(\spt T_{\ok}) \setminus H^{\sigma/2}} |\varepsilon^{-1}_{\ok} \boq_{1} - \beta_{1} \boq_{0}| = 0$$
and from Lemma \ref{sec6.3} that
$$\limsup_{\ok \rightarrow \infty} \sup_{H^{\sigma/2} \cap C_{\rho} \cap \spt T_{\ok}} |\varepsilon^{-1}_{\ok} \boq_{1} - \beta_{1} \boq_{0}| \leq \frac{|\beta_{1}| \sigma}{2} + \frac{|\beta_{1}| \sigma}{2},$$
and let $\sigma \rightarrow 0.$ \end{proof}

\section{Comparison of spherical and cylindrical excess} \label{sec7}

Sections \ref{sec7.1},\ref{sec7.3}, analogous to sections 7.1,7.3 of \cite{HS79}, give bounds for the cylindrical excess (at smaller radii) in terms of the spherical excess, for $T \in \sT$ with small cylindrical excess. We use Lemma \ref{sec7.1} and Theorem \ref{sec7.3} to prove $C^{2}$ boundary regularity for harmonic blowups in section \ref{sec8}. Section \ref{sec7.2}, analogous to section 7.2 of \cite{HS79}, gives a general lemma about homogeneous degree one harmonic functions over $\bV.$ We introduce $c_{34},\ldots,c_{37}.$

\medskip

Theorem \ref{sec7.3} requires $m \geq 1,$ but otherwise we assume $m \in \{ 0,\ldots,M-1 \}.$

\subsection{Lemma} \label{sec7.1}

\begin{lemma} 
There exists positive constants $c_{34} \geq 1+c_{14},$ $c_{35},$ and $c_{36},$ all depending on $n,M,m,$ so that if $T \in \sT,$ 
$$\begin{aligned}
\bE_{C}(T,1) + \kappa_{T} \leq & c_{34}^{-1}, \\
\sup_{C_{\frac{1}{4}} \cap \spt T} \boq_{1}^{2} \leq & c_{35}^{-1} \bE_{S}(T,1),
\end{aligned}$$
then
$$\bE_{C}(T,1/3) \leq c_{36} (\bE_{S}(T,1)+\kappa_{T}).$$
\end{lemma}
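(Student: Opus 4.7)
The plan is to bound $\bE_C(T,1/3)$ by combining a Caccioppoli-type height estimate with a careful decomposition of the resulting height integral that exploits both hypotheses.

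First, I will apply the first variation formula of Lemma \ref{sec2.1} with test vector field $X = \phi^2\boq_1 e_{n+1}$, where $\phi\in C^1_c(\bbR^{n+1};[0,1])$ is a cutoff satisfying $\phi\equiv 1$ on $C_{1/3}$, $\phi\equiv 0$ off $C_{1/3+\tau}$, and $\sup|D\phi|\le C/\tau$ for some small $\tau=\tau(n,M,m)$ to be chosen. Exploiting that $\boq_1$ is $T$-harmonic away from $\spt\partial T$, together with the pointwise bound $|\nabla^T\boq_1|^2 = 1-(\cV^T_{n+1})^2 \ge 1-|\cV^T_{n+1}|$, and arguing exactly as in the derivation of the first inequality of \eqref{4.1(2)}, this yields
\[
(1/3)^n\,\bE_C(T,1/3) \;\le\; \frac{c}{\tau^{2}}\int_{C_{1/3+\tau}}\boq_1^{2}\,d\mu_T + c\kappa_T,
\]
with $c=c(n,M,m)$; the $\kappa_T$ term absorbs the boundary contribution over $\spt\partial T$ via \eqref{defboundary}--\eqref{defderivative}.

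Second, I will bound the height integral $\int_{C_{1/3+\tau}}\boq_1^2\,d\mu_T$ in terms of $\bE_S(T,1)+\kappa_T$ by splitting into the inner cylinder $C_{1/4}$ and the annular region $C_{1/3+\tau}\setminus C_{1/4}$. On $C_{1/4}$, the hypothesis on $\sup\boq_1^2$ combined with the mass bound \eqref{2.2(2)} gives $\int_{C_{1/4}}\boq_1^2\,d\mu_T\le c\,c_{35}^{-1}\bE_S(T,1)$. On the annular region I use the orthogonal decomposition $e_{n+1} = \cV^T_{n+1}\cV^T+e_{n+1}^{\mathrm{tan}}$ to obtain the pointwise bound $\boq_1^2 \le 2(x\cdot\cV^T)^2+2|x|^2(1-(\cV^T_{n+1})^2)$; integrating the first piece against \eqref{2.2(5)} gives a $c(\bE_S(T,1)+\kappa_T)$ contribution, while the second piece is bounded using $1-(\cV^T_{n+1})^2\le 2(1-|\cV^T_{n+1}|)$ and the identity $\int_{C_r}(1-|\cV^T_{n+1}|)\,d\mu_T = r^n\bE_C(T,r)$, leaving a remainder of the form $c'\,\bE_C(T,1/3+\tau)$.

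The main obstacle will be absorbing this residual $\bE_C(T,1/3+\tau)$-term into the left-hand side, since the naive monotonicity bound $\bE_C(T,1/3+\tau)\le 3^n c_{34}^{-1}$ produces a constant rather than a multiple of $\bE_S(T,1)+\kappa_T$. My plan to handle this is to refine the annular-region estimate using the interior graph decomposition of Theorem \ref{sec5.4} (valid under the first hypothesis with $c_{34}$ large): write $\spt T\cap(C_{1/3+\tau}\setminus C_{1/4})$ as unions of graphs of $v^T_i, w^T_j$ away from the slab $\{|y_n|<\sigma_T\}$, expand each graph in spherical harmonics centered at the origin, and couple the weighted identity \eqref{5.4(3)} with the elementary harmonic-function inequality $\int_{B_R}|Dv^{(\ge 2)}|^2\,d\cH^n \le cR^n\int(\partial(v^{(\ge 2)}/|y|)/\partial r)^2|y|^{2-n}\,d\cH^n$ for functions vanishing to first order at the origin. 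The remaining slab $\{|y_n|<\sigma_T\}$ contribution is controlled crudely by the mass estimate $\mu_T(\text{slab}\cap C_{1/3+\tau}) = O(\sigma_T+\kappa_T)$ from \eqref{appendixlemma2retraction}, absorbed by choosing $c_{34}$ sufficiently large so that $\sigma_T = c_{24}(\bE_C(T,1)+\kappa_T)^{1/(2n+3)}$ is dominated. The delicate balancing of $\tau$, $c_{34}$, $c_{35}$, and $c_{36}$ to ensure all constants depend only on $n,M,m$ is the key technical step.
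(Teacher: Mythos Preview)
Your first two steps are sound: the Caccioppoli bound
\[
(1/3)^n\bE_C(T,1/3)\le \frac{c}{\tau^2}\int_{C_{1/3+\tau}}\boq_1^2\,d\mu_T + c\kappa_T
\]
is essentially the first inequality of \eqref{4.1(1)} applied to a rescaling, and your pointwise splitting $\boq_1^2\le 2(x\cdot\cV^T)^2+2|x|^2(1-(\cV^T_{n+1})^2)$ is correct. The genuine gap is the absorption plan. Your residual term $c'\bE_C(T,1/3+\tau)$ carries a dimensional constant (in fact magnified by $c/\tau^2$), so it cannot be absorbed into the left side, and your proposed refinement does not close. First, Theorem~\ref{sec5.4} yields graphs only over $\bV_T=B^n_{1/4}\cap\bV_{\sigma_T}$, not over the annulus $\{1/4\le|y|\le 1/3+\tau\}$ you need. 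Second, and more fundamentally, even granting an extended graph representation, the weighted identity \eqref{5.4(3)} controls $\int(\partial(v/|y|)/\partial r)^2|y|^{2-n}$, which kills every homogeneous mode \emph{except} the degree-one part of each $v^T_i$; but bounding precisely those linear pieces in terms of $\bE_S(T,1)+\kappa_T$ is tantamount to the conclusion of the lemma, so the argument becomes circular. Third, the slab contribution is $O(\sigma_T)\cdot\sup\boq_1^2$ with $\sigma_T\sim(\bE_C(T,1)+\kappa_T)^{1/(2n+3)}$, which is not majorized by $\bE_S(T,1)+\kappa_T$ merely by taking $c_{34}$ large.

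The paper avoids this absorption issue entirely by choosing a different test vector field. Instead of the vertical field $\phi^2\boq_1 e_{n+1}$, it uses the \emph{radial} field $X_k(x)=\chi^2(x)\zeta_k(|x|)\,x$ with the \emph{angular} cutoff $\chi(x)=\max\{0,\,x_{n+1}/|x|-4\varepsilon/\sqrt{c_{35}}-\kappa\}$, where $\varepsilon=\bE_C(T,1/3)^{1/2}$. The second hypothesis forces $\chi\equiv 0$ on $\spt T\cap C_{1/4}\setminus C_{1/8}$ (so $X_k$ extends by zero across the inner ball on $\spt T$), and $\chi$ also vanishes on $\spt\partial T$. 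The first variation then yields directly $\int_{B_1\setminus\bar B_{1/4}}\chi^2\,d\mu_T\le c\int_{B_1}|x/|x|\cdot\cV^T|\,\chi\,d\mu_T$, and together with the analogous lower bound and \eqref{2.2(5)} one gets $\int_{B_1}\boq_1^2\,d\mu_T\le c\,\varepsilon^2/c_{35}+c(\bE_S(T,1)+\kappa_T)$. Feeding this into your Caccioppoli inequality (for which one first checks $C_{1/2}\cap\spt T\subset B_1$) produces $\varepsilon^2\le \varepsilon^2/2+(c_{36}/2)(\bE_S+\kappa_T)$ after choosing $c_{35},c_{36}$ suitably, which is the conclusion. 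The point is that the radial/angular choice converts the height on the outer annulus into the tilt $(x/|x|\cdot\cV^T)^2$---controlled by $\bE_S$ via monotonicity---\emph{without} producing a residual cylindrical-excess term at a larger radius.
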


We note that we correct a typo in the statement appearing in Lemma 7.1 of \cite{HS79}, where in the second inequality of the hypothesis we need $\bE_{S}(T,1)$ and not $\bE_{C}(T,1)$ as it appears in \cite{HS79}. The proof follows using the first variation Lemma \eqref{sec4.1}.

\medskip

\begin{proof} The calculations carry over exactly from section 7.1 of \cite{HS79}, although we make a clarification for the reader. 

\medskip

With $c_{4}$ as in \eqref{2.2(5)} and $c_{12},c_{14},c_{15}$ from Lemma \ref{sec4.1}, all now depending on $n,M,m,$ we let
$$\begin{aligned} 
c_{34} & = 2^{2n+2} (1+c_{4})(1+c_{14})(1+c_{15}) \\
c_{35} & = 3^{2n+8} (1+M \varpi_{n}) c_{12}, \\ 
c_{36} & = 4^{3n+6} (1+M \varpi_{n} + c_{4}) c_{12}, 
\end{aligned}$$ 
We now assume for contradiction that $T \in \sT$ satisfies the hypothesis of the lemma, but that 
$$\bE_{S}(T,1) + \kappa_{T} < c_{36}^{-1} \bE_{C}(T,1/3).$$ 
Using \eqref{4.1(2)} with $\sigma=\frac{1}{2}$ and $\bE_{C}(T,1)+\kappa_{T} \leq c_{34}^{-1}$ we compute
$$\sup_{C_{\frac{1}{2}} \cap \spt T} \boq_{1}^{2} \leq 2^{2n+1} c_{14}(1+c_{15})(\bE_{C}(T,1)+\kappa_{T}) \leq \frac{1}{2},$$
hence
\begin{equation} \label{7.1(1)}
C_{\frac{1}{2}} \cap \spt T \subset B_{1};
\end{equation}
this is equation 7.1(1) of \cite{HS79}.

\medskip

Let $\kappa = \kappa_{T},$ $\varepsilon = \bE_{C}(T,1/3)^{\frac{1}{2}},$ 
$$\chi(x) = \max \left\{ 0, \frac{x_{n+1}}{|x|} - \frac{4 \varepsilon}{\sqrt{c_{35}}} - \kappa \right\} \text{ for } x \in \bbR^{n+1}$$
and for each $k \in \bbN$ let $\zeta_{k} \in C^{1}(\bbR)$ be such that
$$\zeta_{k}(t) = \max \{ 0, t^{-n}-1 \}^{1+\frac{1}{k}} \text{ for } t \geq 1/4.$$

\medskip

The assumptions $\sup_{C_{\frac{1}{4}} \cap \spt T} \boq_{1}^{2} \leq c_{35}^{-1} \bE_{S}(T,1)$ and $\bE_{S}(T,1)+\kappa_{T} \leq c_{36}^{-1} \bE_{C}(T,1/3)$ then imply 
$$\sup_{x \in (\spt T) \cap C_{\frac{1}{4}} \setminus C_{\frac{1}{8}}} \frac{x_{n+1}}{|x|} \leq 8 c_{35}^{-\frac{1}{2}} \bE_{S}(T,1)^{\frac{1}{2}} \leq \frac{8 \varepsilon}{\sqrt{c_{35} c_{36}}}.$$ 
We can therefore choose a vector field $X_{k} \in C^{1}(\bbR^{n+1};\bbR^{n+1})$ so that
$$\begin{array}{ll}
X_{k}(x) = 0 & \text{ for } x \in \bB_{\frac{1}{4}} \cap \spt T \\ 
X_{k}(x) = \chi^{2}(x) \zeta_{k}(|x|) x & \text{ for } x \in \bbR^{n+1} \setminus \bB_{\frac{1}{4}}. 
\end{array}$$ 
As in \cite{HS79}, $X_{k}$ vanishes on 
$$(\spt \partial T) \cup (\bB_{\frac{1}{4}} \cap \spt T) \cup \left\{ x \in \bbR^{n+1} : x_{n+1} \leq |x| \left( \frac{4 \varepsilon}{\sqrt{c_{35}}} + \kappa \right) \right\}.$$
Using the first variation formula Lemma \ref{sec2.1} we compute
$$\begin{aligned}
0 = & \int \dive{T}{X_{k}} \ d \mu_{T} \\
= & \int_{B_{1} \setminus \bB_{\frac{1}{4}}} \chi^{2}(x) \left( |x| \zeta_{k}'(|x|) \left( 1- \left( \frac{x}{|x|} \cdot \cV^{T} \right)^{2} \right) - n \zeta_{k}(|x|)  \right) \ d \mu_{T}(x) \\
& + \int_{B_{1} \setminus \bB_{\frac{1}{4}}} 2 \chi(x) \zeta_{k}(|x|) x \cdot \nabla^{T} \left( \frac{x_{n+1}}{|x|} \right) \ d \mu_{T}(x).
\end{aligned}$$

Computing as well

$$\left| x \cdot \nabla^{T} \left( \frac{x_{n+1}}{x} \right) \right| = \left| \frac{x_{n+1}}{|x|} \left( \frac{x}{|x|} \cdot \cV^{T} \right)^{2} - \cV^{T}_{n+1} \left( \frac{x}{|x|} \cdot \cV^{T} \right) \right| \leq 2 \left| \frac{x}{|x|} \cdot \cV^{T} \right|,$$

we let $k \rightarrow \infty$ and deduce that

$$\begin{aligned}
\frac{n}{2^{n}} \int_{B_{1} \setminus \bB_{\frac{1}{4}}} \chi^{2} \ d \mu_{T} \leq & n \int_{B_{1} \setminus \bB_{\frac{1}{4}}} \frac{\chi^{2}(x)}{|x|^{n}} \left( \frac{x}{|x|} \cdot \cV^{T} \right)^{2} \ d \mu_{T}(x) \\
& + 4 \int_{B_{1} \setminus \bB_{\frac{1}{4}}} \frac{\chi(x)}{|x|^{n}} \left| \frac{x}{|x|} \cdot \cV^{T} \right| 
 d \mu_{T}(x) \\
\leq & 5n4^{n} \int_{B_{1} \setminus \bB_{\frac{1}{4}}} \chi(x) \left| \frac{x}{|x|} \cdot \cV^{T} \right| \ d \mu_{T}(x). 
\end{aligned}$$
Similarly, we verify that
$$\begin{aligned}
\int_{B_{1} \setminus \bB_{\frac{1}{4}}} \min & \left\{ 0, \frac{x_{n+1}}{|x|} + \frac{4 \varepsilon}{\sqrt{c_{35}}} + \kappa \right\}^{2} \ d \mu_{T}(x) \\
& \leq 4^{2n+3} \int_{B_{1} \setminus \bB_{\frac{1}{4}}} \left( \frac{x}{|x|} \cdot \cV^{T} \right)^{2} d \mu_{T}(x).
\end{aligned}$$ 
Adding these inequality we infer that
$$\begin{aligned}
\int_{B_{1} \setminus \bB_{\frac{1}{4}}} \boq_{1}^{2} \ d \mu_{T} \leq & \int_{B_{1} \setminus \bB_{\frac{1}{4}}} \frac{x^{2}_{n+1}}{|x|^{2}} \ d \mu_{T}(x) \\
\leq & 4 \left( \frac{4 \varepsilon}{\sqrt{c_{35}}} + \kappa^{2} \right) \mu_{T}(B^{n}_{1} \setminus \bB_{\frac{1}{4}}) \\
& + 4^{2n+4} \int_{B_{1} \setminus \bB_{\frac{1}{4}}} \left( \frac{x}{|x|} \cdot \cV^{T} \right)^{2} \ d \mu_{T}(x).
\end{aligned}$$
We also compute, using $\sup_{C_{\frac{1}{4}} \cap \spt T} \boq_{1}^{2} \leq c_{35}^{-1} \bE_{S}(T,1)$ and $\bE_{S}(T,1)+\kappa_{T} \leq c_{36}^{-1} \bE_{C}(T,1/3),$ that
$$\int_{\bB_{\frac{1}{4}}} \boq_{1}^{2} \ d \mu_{T} \leq (c_{35}^{-1} c_{36}^{-1} \varepsilon^{2}) \mu_{T}(\bB_{\frac{1}{4}}) \leq 16 c^{-1}_{35} \varepsilon^{2} \mu_{T}(\bB_{\frac{1}{4}}).$$

Using \eqref{2.3(1)}, \eqref{4.1(1)} (with $T,\sigma$ replaced by $(\ETA_{\frac{1}{3} \sharp} T) \res B_{3},1/2$), \eqref{7.1(1)}, and \eqref{2.2(5)}, we conclude 
  
$$\begin{aligned} 
\varepsilon^{2} \leq & 3^{n+2} c_{12} \left( \kappa + \int_{C_{\frac{1}{2}}} \boq_{1}^{2} \ d \mu_{T} \right) \\ 
\leq & 3^{n+2} c_{12} \left( \kappa + \int_{B_{1}} \boq_{1}^{2} \ d \mu_{T} \right) \\ 
\leq & 3^{n+2} c_{12} \Big( \kappa + 16 c^{-1}_{35} \varepsilon^{2} \mu_{T}(\bB_{\frac{1}{4}}) + 64 c^{-1}_{35} \varepsilon^{2} \mu_{T}(B_{1} \setminus \bB_{\frac{1}{4}}) \\ 
& + 4 \kappa^{2} \mu_{T}(B_{1} \setminus \bB_{\frac{1}{4}}) + 4^{2n+4} \int_{B_{1} \setminus \bB_{\frac{1}{4}}} \left( \frac{x}{|x|} \cdot \cV^{T} \right)^{2} \ d \mu_{T}(x) \Big) \\ 
\leq & 3^{2n+4} c_{12} (1+ M \varpi_{n})(16 c^{-1}_{35} \varepsilon^{2} + \kappa) + 4^{3n+6} c_{12} (\bE_{S}(T,1) + c_{4} \kappa) \\ 
\leq & (\varepsilon^{2}/2) + \frac{c_{36}}{2} (\bE_{S}(T,1) + \kappa)
\end{aligned}$$ 
which is a contradiction. \end{proof}

\subsection{Remark} \label{sec7.2} 

As in section 7.2 of \cite{HS79}, we make a general observation about homogeneous degree one harmonic functions over $\bV.$ We use this in the proof of Theorem \ref{sec7.3} in order to apply Lemma \ref{sec6.4}.

\medskip

Suppose $h: \bV \rightarrow \bbR$ is harmonic and $h(\rho y) = \rho h(y)$ for all $\rho \in (0,1)$ and $y \in \bV.$ 
\begin{equation} \label{7.2(1)}
\begin{array}{ll}
\text{If $h$ is nonnegative, then $h$ has zero trace} \\
\text{(see section 26 of \cite{T75}) on $\bL.$}
\end{array}
\end{equation} 
\begin{equation} \label{7.2(2)}
\text{If $h$ has zero trace on $\bL,$ then $h= \beta \boq_{0} \res \bV$ for some $\beta \in \bbR.$}
\end{equation}
These are exactly as in 7.2(1)(2) of \cite{HS79}.

\medskip

\begin{proof} To verify \eqref{7.2(1)}, note that on the open hemisphere 
$$S^{n-1}_{+} = \{ y \in \bbR^{n}: |y|=1, \ y_{n}>0 \}$$
the spherical Laplacian has minimum eigenvalue $(n-1).$ Letting $h_{2}(y) = 2h(y/2)$ for $y \in S^{n-1}_{+}$ and choosing $\lambda>0$ so that the spherical domain $\Omega = \{ y \in S^{n-1}_{+}: \lambda y_{n} > h_{2}(y) \}$ is nonempty, we observe that $(\lambda \boq_{0} - h_{2})|_{\Omega}$ is a positive eigenfunction for the spherical Laplacian on $\Omega$ with eigenvalue $(n-1).$ If $h$ did not have zero trace on $\bL,$ then $S^{n-1}_{+} \setminus \Omega$ would have nonempty interior, which would contradict the strict monotonicity of the minimum eigenvalue of the spherical Laplacian with respect to the domain (see, for example, section 2.3 of \cite{BC76}). 

\medskip

Statement \eqref{7.2(2)} follows from the weak version of the Schwarz reflection principle. \end{proof}

\subsection{Theorem} \label{sec7.3}

We conclude here as in Theorem 7.3 of \cite{HS79}, although with $c_{37}$ with $n,M,m.$ For this, we require $m \geq 1,$ as we apply Theorem \ref{sec6.4}.

\begin{theorem}
Suppose $m \geq 1.$ There is a positive constant $c_{37}$ depending on $n,M,m$ so that if $T \in \sT,$
$$\begin{aligned}
\bE_{C}(T,1) + \kappa_{T} \leq & (2c_{34})^{-1} \text{ where $c_{34}$ is as in Lemma \ref{sec7.1},} \\
\bE_{C}(T,1/3) + \bE_{C}(T,1/3)^{-1} \kappa_{T} \leq & c_{37}^{-1}, \text{ and } \\
\bE_{C}(T,1/4) \leq & 2 \bE_{C}(\rot_{\theta \sharp}T,1/4) \text{ whenever } \theta \in (-1/8,1/8), \\
\end{aligned}$$
then
$$\bE_{C}(T,1/4) \leq c_{37} (\bE_{S}(T,1)+\kappa_{T}).$$
\end{theorem}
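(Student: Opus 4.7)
The plan is to argue by contradiction and reduce to Lemma \ref{sec7.1}. Suppose no such $c_{37}$ works: for each $k \in \bbN$ there is $T_k \in \sT$ satisfying the three hypotheses but $\bE_C(T_k, 1/4) > k(\bE_S(T_k, 1) + \kappa_{T_k})$. By \eqref{2.3(1)}, the rescaling $\tilde T_k := (\ETA_{1/3\sharp} T_k) \res B_3$ lies in $\sT$ with $\varepsilon_k^2 := \bE_C(\tilde T_k, 1) = \bE_C(T_k, 1/3)$; the second hypothesis gives $\varepsilon_k \to 0$ and $\kappa_{\tilde T_k}/\varepsilon_k^2 \to 0$, so after passing to a subsequence $\{\tilde T_k\}$ is a blowup sequence in the sense of section \ref{sec6.1} with harmonic blowups $f_i : \bV \to \bbR$ and $g_j : \bW \to \bbR$. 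The monotonicity \eqref{2.2(4)}--\eqref{2.2(5)} together with the contradiction hypothesis also force $\bE_S(\tilde T_k, 1) \leq \bE_S(T_k, 1) + 2c_4\kappa_{T_k} = o(\varepsilon_k^2)$.

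I would first show each blowup is linear of the common form $\beta y_n$. Dividing \eqref{5.4(3)} by $\varepsilon_k^2$ and letting $k \to \infty$ forces the radial derivative $\partial(f_i/|y|)/\partial r$ (and analogously for $g_j$) to vanish, so each blowup is homogeneous of degree one. Combined with the zero-trace statement for $\min_i|f_i|$ from Lemma \ref{sec6.2}, the orderings $f_1 \leq \ldots \leq f_M$ and $g_1 \leq \ldots \leq g_m$, and Remark \ref{sec7.2}(2) applied sheet by sheet, one concludes $f_i = \beta_i y_n$ and $g_j = \gamma_j y_n$ with $\beta_1 \leq \ldots \leq \beta_M$ and $\gamma_1 \geq \ldots \geq \gamma_m$. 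Since $m \geq 1$, Lemma \ref{sec6.4} then forces all $\beta_i$ and $\gamma_j$ to equal a single $\beta \in \bbR$, and gives the uniform convergence $\sup_{x \in C_\rho \cap \spt\tilde T_k}|\varepsilon_k^{-1}x_{n+1} - \beta x_n| \to 0$ for each $\rho \in (0, 1)$.

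Next I would invoke the third hypothesis to force $\beta = 0$. After rescaling it reads $\bE_C(\tilde T_k, 3/4) \leq 2 \bE_C(\rot_{\theta\sharp}\tilde T_k, 3/4)$ for $|\theta| < 1/8$. If $\beta \neq 0$, then taking $\theta_k = -\arctan(\varepsilon_k\beta) \in (-1/8, 1/8)$ for large $k$, the uniform convergence above shows that the tilt aligns the linearized support of $\tilde T_k$ with the horizontal plane, yielding $\bE_C(\rot_{\theta_k\sharp}\tilde T_k, 3/4) = o(\varepsilon_k^2)$; applying the same scheme at the finer normalization $\hat T_k := (\ETA_{1/4\sharp} T_k) \res B_3$ (where $\bE_C(\hat T_k, 1) = \bE_C(T_k, 1/4)$ is itself the normalizing excess) then produces the self-contradictory $\bE_C(\hat T_k, 1) \leq 2\bE_C(\rot_{\theta_k\sharp}\hat T_k, 1) = o(\bE_C(\hat T_k, 1))$. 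Hence $\beta = 0$, all harmonic blowups vanish, and by Lemma \ref{sec6.3} we have $\sup_{K \cap \spt\tilde T_k}|\boq_1|/\varepsilon_k \to 0$ for each compact $K \subset \bop^{-1}(B^n_1)$.

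To complete the argument I would verify the height hypothesis of Lemma \ref{sec7.1} for $\ETA_{3/4\sharp}T_k \in \sT$, namely $\sup_{C_{1/4} \cap \spt\ETA_{3/4\sharp}T_k}\boq_1^2 \leq c_{35}^{-1}\bE_S(\ETA_{3/4\sharp}T_k, 1)$; Lemma \ref{sec7.1} then gives $\bE_C(T_k, 1/4) \leq c_{36}(\bE_S(T_k, 1) + (1 + c_4)\kappa_{T_k})$, contradicting the contradiction hypothesis once $c_{37} > c_{36}(1 + c_4)$ and $k$ is large. The hard part will be this final verification: the blowup analysis delivers only $\sup|\boq_1|^2 = o(\varepsilon_k^2)$, whereas what is needed is the tighter comparison with the potentially much smaller $\bE_S(T_k, 1)$. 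Bridging this gap should be possible either by running the blowup/tilt scheme at the normalization $\hat T_k$ (where vanishing interior height translates more directly into a bound of order $\bE_S$) or by an Allard-type second-variation analysis applied to the zero-blowup current.
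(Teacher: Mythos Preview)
Your outline coincides with the paper through the point where all harmonic blowups equal $\beta\,\boq_0$: contradiction, rescale by $1/3$, extract a blowup sequence, homogeneity from \eqref{5.4(3)}, zero trace from Lemma \ref{sec6.2} and Remark \ref{sec7.2}, common slope from Lemma \ref{sec6.4}. After that you and the paper run the \emph{same} dichotomy in opposite orders. The paper argues $\beta\neq 0$ first (invoking Lemma \ref{sec7.1}) and then contradicts the third hypothesis via \eqref{7.3(5)}--\eqref{7.3(6)}: if $\beta\neq 0$ then \eqref{4.1(2)} gives $\liminf_{\ok}\varepsilon_\ok^{-2}\bE_C(T_\ok,1/4)\geq c\beta^2>0$, while tilting by $\theta_\ok=\arctan(\beta\varepsilon_\ok)$ makes the excess $o(\varepsilon_\ok^2)$, violating optimality. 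You instead use the third hypothesis to force $\beta=0$ (your computation is the same one, and is correct once the lower bound $\bE_C(T_\ok,1/4)\gtrsim\beta^2\varepsilon_\ok^2$ from \eqref{4.1(2)} is made explicit; the detour through a second normalization $\hat T_\ok$ is unnecessary), and then try to close via Lemma \ref{sec7.1}. The two routes are logically equivalent: the crux in both is ruling out $\beta=0$.

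The gap you flag is therefore the heart of the matter, and your proposed fixes do not work. Renormalizing at $\hat T_\ok$ reproduces the same obstruction---you still need $\sup\boq_1^2$ small relative to $\bE_S$, not relative to the new cylindrical excess, and no Allard-type identity converts one into the other. The actual remedy is already contained in the \emph{proof} of Lemma \ref{sec7.1}: read without its hypothesis (ii), that argument yields the unconditional bound
\[
\bE_C(T,1/3)\;\leq\; C\Bigl(\sup_{C_{1/4}\cap\spt T}\boq_1^2 + \bE_S(T,1)+\kappa_T\Bigr)
\]
for $T\in\sT$ with $\bE_C(T,1)+\kappa_T\leq c_{34}^{-1}$; one merely replaces the threshold $4\varepsilon/\sqrt{c_{35}}$ in the barrier $\chi$ by $8\sup_{C_{1/4}\cap\spt T}|\boq_1|$ and carries the extra term through the final chain. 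Applied to $T_\ok$ with $\beta=0$ this gives $\varepsilon_\ok^2\leq C\bigl(o(\varepsilon_\ok^2)+o(\varepsilon_\ok^2)\bigr)$, the required contradiction. This is precisely what the paper's terse ``From Lemma \ref{sec7.1} \ldots\ we deduce that $\beta_1\neq 0$'' encodes.
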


\begin{proof} There are only minor changes to the proof from section 7.3 of \cite{HS79}.

\medskip

Take for contradiction a sequence $\{ T_{\ok} \}_{\ok \in \bbN}$ in $\sT$ so that for each $\ok \in \bbN$
\begin{equation} \label{7.3(1)}
\begin{aligned}
\bE_{C}(T_{\ok},1) + \kappa_{T_{\ok}} \leq & (2c_{34})^{-1} \\
\bE_{C}(T_{\ok},1/4) \leq & 2 \bE_{C}(\rot_{\theta \sharp} T_{\ok},1/4) \text{ whenever } \theta \in (-1/8,1/8),
\end{aligned}
\end{equation}
and so that as $\ok \rightarrow \infty$
\begin{equation} \label{7.3(2)}
\begin{aligned}
\bE(T_{\ok},1/3) + \bE_{C}(T_{\ok},1/3)^{-1} \kappa_{T_{\ok}} & \rightarrow 0 \\
\bE_{C}(T_{\ok},1/4)^{-1} (\bE_{S}(T_{\ok},1)+\kappa_{T_{\ok}}) & \rightarrow 0;
\end{aligned}
\end{equation}
these are as 7.3(1)(2) of \cite{HS79}. Letting $S_{\ok} = (\ETA_{\frac{1}{3} \sharp} T_{\ok}) \res B_{3},$ then Remark \eqref{sec2.3} implies $S_{\ok} \in \sT.$  With 
$$\begin{array}{cc}
\varepsilon_{\ok} = \bE_{C}(S_{\ok},1)^{\frac{1}{2}} = \bE_{C}(T_{\ok},1/3)^{\frac{1}{2}}, & \kappa_{\ok} = \kappa_{S_{\ok}} \leq \frac{\kappa_{T_{\ok}}}{3}
\end{array}$$
we can by Lemma \ref{sec6.1} assume $\{S_{\ok}\}_{\ok \in \bbN}$ is a blowup sequence with associated harmonic blowups $f_{i},g_{j}.$ We can compute using \eqref{1.4(1)} and \eqref{2.2(1)} 
$$\begin{aligned} 
\limsup_{\ok \rightarrow \infty} & \ \varepsilon_{\ok}^{-2}(\bE_{S}(S_{\ok},1)+c_{4} \kappa_{\ok}) \\ 
\leq & \limsup_{\ok \rightarrow \infty} (4/3)^{n} \bE_{C}(T_{\ok},1/4)^{-1} \Big( e^{c_{1} \kappa_{\ok}} \bE_{S}(T_{\ok},1) \\
& + ( e^{c_{1} \kappa_{\ok}}-1 )\left(\frac{M+m}{2}\right) \varpi_{n} + c_{4} \kappa_{\ok} \Big) = 0. 
\end{aligned}$$ 
Thus, we may apply \eqref{5.4(3)} (with $T$ replaced by $S_{\ok}$) and Definition \ref{sec6.1} (with $T_{\ok}$ replaced by $S_{\ok}$) to deduce that for each $i \in \{ 1,\ldots,M\}$ and $j \in \{ 1,\ldots,m \}$
$$\int_{\bV} \left( \frac{\partial}{\partial r} \left( \frac{f_{i}(y)}{|y|} \right) \right)^{2} |y|^{2-n} \ d \cH^{n}(y) = 0 = \int_{\bW} \left( \frac{\partial}{\partial r} \left( \frac{g_{j}(y)}{|y|} \right) \right)^{2} |y|^{2-n} \ d \cH^{n}(y),$$
so that for all $\rho \in (0,1)$
$$\begin{array}{cc}
f_{i}(\rho y) = \rho f_{i}(y) \text{ for } y \in \bV, & g_{j}(\rho y) = \rho g_{j}(y) \text{ for } y \in \bW.
\end{array}$$
By \eqref{7.2(1)} the nonnegative function $f_{M}-f_{1}$ has zero trace on $\bL.$ Thus by Lemma \ref{sec6.2}, each
$$\begin{aligned}
|f_{i}| = & (|f_{i}| - \min \{ |f_{1}|,\ldots,|f_{M}| \} ) + \min \{ |f_{1}|,\ldots,|f_{M}| \} \\
\leq & (f_{M}-f_{1}) + \min \{ |f_{1}|,\ldots,|f_{M}| \}
\end{aligned}$$
has zero trace on $\bL$ for each $i \in \{ 1,\ldots,M \}.$ By \eqref{7.2(2)} we conclude $f_{i} = \beta_{i} \boq_{0}|_{\bV}$ for some $\beta_{i} \in \bbR.$ By Lemma \ref{sec6.2} we as well conclude $\sum_{j=1}^{m} g_{j}$ has zero trace on $\bL$ because $\sum_{i=1}^{M} f_{i}$ does. From \eqref{7.2(1)} it follows that each function
$$\begin{aligned}
m|g_{j}| = & \left| \sum_{\tilde{j}=1}^{m} (g_{j}-g_{\tilde{j}}) +\sum_{\tilde{j}=1}^{m} g_{\tilde{j}} \right| \\
\leq & m (g_{m}-g_{1}) + \left| \sum_{\tilde{j}=1}^{m} g_{\tilde{j}} \right|
\end{aligned}$$
has zero trace on $\bL$; hence $g_{j} = \gamma_{j} \boq_{0}|_{\bW}$ for some $\gamma_{j} \in \bbR$ by \eqref{7.2(1)}.

\medskip

From Lemma \ref{sec6.4} (since $m \geq 1$) we conclude that 
$$\beta_{1} = \beta_{2} = \ldots = \beta_{M} = \gamma_{1} = \ldots = \gamma_{m},$$
\begin{equation} \label{7.3(4)}
\lim_{\ok \rightarrow \infty} \sup_{C_{\frac{7}{8}} \cap \spt S_{\ok}} |\varepsilon^{-1}_{\ok} \boq_{1} - \beta_{1} \boq_{0}| = 0;
\end{equation}
this is 7.3(4) of \cite{HS79} (note that there is no equation 7.3(3) in \cite{HS79}, as it was mistakenly skipped).

\medskip

From Lemma \ref{sec7.1}, \eqref{7.3(2)},\eqref{7.3(4)}, and the inequalities
$$\begin{aligned}
\sup_{C_{\frac{1}{4}} \cap \spt T_{\ok}} \boq_{1}^{2} \leq & \sup_{C_{\frac{7}{8}} \cap \spt S_{\ok}} \boq_{1}^{2}, \\
(3/4)^{n} \bE_{C}(T_{\ok},1/4) \leq & \bE_{C}(T_{\ok},1/3) = \varepsilon^{2}_{\ok} \leq 3^{n} \bE_{C}(T_{\ok},1),
\end{aligned}$$
we deduce that $\beta_{1} \neq 0.$ Letting $\theta_{\ok} = \arctan(\beta_{1} \varepsilon_{\ok}),$ we infer from Remark \ref{sec4.2}
$$R_{\ok} = (\ETA_{\frac{1}{4} \sharp} \rot_{\theta_{\ok} \sharp} T) \res B_{3} \in \sT$$
for all $\ok \in \bbN$ sufficiently large, and from \eqref{7.3(4)} and \eqref{4.1(1)} that
\begin{equation} \label{7.3(5)}
\begin{aligned}
\limsup_{\ok \rightarrow \infty} & \ \varepsilon^{-2}_{\ok} \bE_{C}(\rot_{\theta_{\ok} \sharp} T_{\ok},1/4) \\
= & \limsup_{\ok \rightarrow \infty} \varepsilon^{-2}_{\ok} \bE_{C}(T_{\ok},1) \\
\leq & \limsup_{\ok \rightarrow \infty} (c_{12}c_{13} \sup_{C_{\frac{7}{6}} \cap \spt R_{\ok}} \varepsilon^{-2}_{\ok} \boq_{1}^{2} + c_{12} \varepsilon^{-2}_{\ok} \kappa_{\ok}) = 0;
\end{aligned}
\end{equation}
this is as in 7.3(5) of \cite{HS79}. On the other hand, by \eqref{4.1(2)} and \eqref{7.3(4)},
\begin{equation} \label{7.3(6)}
\begin{aligned}
\liminf_{\ok \rightarrow \infty} & \ \varepsilon^{-2}_{\ok} \bE_{C}(T_{\ok},1/4) \\
\geq & \liminf_{\ok \rightarrow \infty} \big( (8^{2n+1}c_{14}c_{15})^{-1} \sup_{C_{\frac{1}{8}} \cap T_{\ok}} \varepsilon^{-2}_{\ok} \boq_{1}^{2} - (8^{n+1} c_{15})^{-1} \varepsilon^{-2}_{\ok} \kappa_{\ok} \big) \\
= & (8^{2n+1}c_{14}c_{15})^{-1}(\beta_{1}/8)^{2} > 0;
\end{aligned}
\end{equation}
this is as in 7.3(6) of \cite{HS79}. Statements \eqref{7.3(5)} and \eqref{7.3(6)} now contradict \eqref{7.3(1)} for $\ok$ sufficiently large. \end{proof}

\section{Boundary regularity of harmonic blowups} \label{sec8}

We remark as in \cite{DS93}, that a well-known key step in various regularity proofs for minimizing currents is to derive a decay rate $\bE_{C}(\rot_{\vartheta \sharp} T,\rho) \approx \rho^{\alpha}$ as $\rho \searrow 0$ for $T \in \sT$ with $\bE_{C}(T,1)+\kappa_{T}$ sufficiently small for a suitable angle $\vartheta$; this is done in Theorem \ref{sec9.2}. However, proving this decay rate for the cylindrical excess requires we prove all harmonic blowups corresponding to all blowup sequences are given by $C^{2}$ functions over $\bV \cup \bL$ and $\bW \cup \bL$; this is Theorem \ref{sec8.2}. We only need to make a clarification to the end of the proof of Lemma \ref{sec8.1}, which is as in Lemma 8.1 of \cite{HS79}. We introduce $c_{38},\ldots,c_{43}$ now depending on $n,M,m.$

\medskip

We assume throughout this section that $m \geq 1.$

\subsection{Lemma} \label{sec8.1}

To proof of Theorem \ref{sec8.2} relies on an application of the Hopf boundary point lemma. For this, we must first show that all harmonic blowups are roughly differentiable at the origin, in the sense of the following lemma.

\begin{lemma}
Suppose $m \geq 1.$ If $\{ T_{\ok} \}_{\ok \in \bbN}$ is a blowup sequence in $\sT$ with associated harmonic blowups $f_{i},g_{j}$ then
$$\begin{array}{cc}
\sup_{y \in \bV \cap B^{n}_{\rho}} \frac{|f_{i}(y)|}{|y|} < \infty, & \sup_{y \in \bW \cap B^{n}_{\rho}} \frac{|g_{j}(y)|}{|y|} < \infty,
\end{array}$$
for each $\rho \in (0,1),$ $i \in \{ 1,\ldots,M \},$ and $j \in \{ 1,\ldots,m \}.$
\end{lemma}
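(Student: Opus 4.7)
The claim is equivalent, by the uniform convergence $\varepsilon_\ok^{-2} v_i^{(\ok)} \to f_i$ on compact subsets of $\bV$ from \ref{sec6.1} together with Theorem \ref{sec5.4}, to establishing the scale-invariant height bound
\[
\limsup_{\ok \to \infty} \varepsilon_\ok^{-2} \sup_{C_{r/2} \cap \spt T_\ok} \boq_1^2 \leq C r^2
\]
for every sufficiently small $r > 0$, with $C$ independent of $r$ as $r \searrow 0$; the analogous estimate for the $g_j$ over $\bW$ follows by the same argument. The plan is to obtain this height bound by applying Theorem \ref{sec7.3} at scale $r$ to a suitably tilted rescaling of $T_\ok$, and then passing to the limit $\ok \to \infty$.

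For each small $r > 0$ and each large $\ok$, I would pick $\vartheta_\ok(r) \in (-1/16,1/16)$ minimizing $\theta \mapsto \bE_C(\rot_{\theta \sharp} T_\ok, r/4)$ on that interval, and set $S_\ok^r = (\ETA_{r \sharp} \rot_{\vartheta_\ok(r) \sharp} T_\ok) \res B_3$. By \eqref{4.2(4)}, $S_\ok^r \in \sT$ with $\kappa_{S_\ok^r} \leq r^\alpha \kappa_{T_\ok}$ and $\bE_C(S_\ok^r,1)$ small; the minimizing choice of $\vartheta_\ok(r)$, together with the identity $\rot_{\theta \sharp}\rot_{\vartheta_\ok(r) \sharp} = \rot_{(\theta + \vartheta_\ok(r)) \sharp}$, supplies Theorem \ref{sec7.3}'s tilting hypothesis for $S_\ok^r$; and the remaining smallness hypotheses hold for large $\ok$ by \ref{sec6.1}. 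Theorem \ref{sec7.3} then yields
\[
\bE_C(S_\ok^r, 1/4) \leq c_{37}(\bE_S(S_\ok^r,1) + \kappa_{S_\ok^r}) \leq C(\varepsilon_\ok^2 + \kappa_{T_\ok}),
\]
using rotational invariance of $\bE_S$ together with \eqref{2.2(1)}, \eqref{2.2(4)}, and \eqref{1.6(1)}. Feeding this into \eqref{4.1(2)} applied to $S_\ok^r$ at $\sigma = 1/2$ and translating through the scaling $\boq_1 \circ \ETA_r^{-1} = r \boq_1$ produces
\[
\sup_{C_{cr} \cap \spt T_\ok} \boq_1^2 \leq C r^2(\varepsilon_\ok^2 + \kappa_{T_\ok}) + C r^2 \sin^2 \vartheta_\ok(r),
\]
for some fixed $c > 0$, the last term arising from untilting the estimate back from $\rot_{\vartheta_\ok(r) \sharp} T_\ok$ to $T_\ok$.

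The main obstacle will be the angle estimate $|\sin \vartheta_\ok(r)| = O(\varepsilon_\ok)$ needed to absorb the untilting error; once this is in hand, dividing the previous display by $\varepsilon_\ok^2$ and invoking \eqref{6.1(2)} completes the proof. To establish the angle bound, I would expand the excess integrand $1 - |\cV^{\rot_{\theta \sharp} T_\ok}_{n+1}|$ from \eqref{1.4(1)} to second order in $\theta$, writing $\bE_C(\rot_{\theta \sharp} T_\ok, r/4)$ as $\bE_C(T_\ok, r/4)$ minus a first-order term linear in $\theta$ whose coefficient is controlled by Cauchy-Schwarz by $\bE_C(T_\ok, r/4)^{1/2} = O(\varepsilon_\ok)$, plus an $O(\theta^2)$ term with a definite positive coefficient; minimality of $\bE_C(\rot_{\vartheta_\ok(r) \sharp} T_\ok, r/4)$ then forces $|\vartheta_\ok(r)| = O(\varepsilon_\ok)$.
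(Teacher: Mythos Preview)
Your overall strategy---tilt optimally at scale $r$, apply Theorem \ref{sec7.3}, convert to a height bound via \eqref{4.1(2)}, then untilt---is natural, but the final step breaks down. The untilting produces the error $Cr^2\sin^2\vartheta_\ok(r)$, and to absorb it you need $|\vartheta_\ok(r)|\le C\varepsilon_\ok$ with $C$ \emph{independent of $r$}. Your derivation of this is circular: the second-order expansion of $\theta\mapsto\bE_C(\rot_{\theta\sharp}T_\ok,r/4)$ does control the minimizer by $\bE_C(T_\ok,r/4)^{1/2}$, but the claim ``$\bE_C(T_\ok,r/4)^{1/2}=O(\varepsilon_\ok)$'' is precisely what is not known. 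Monotonicity \eqref{1.4(1)} only gives $\bE_C(T_\ok,r/4)\le (4/r)^n\varepsilon_\ok^2$, hence at best $|\vartheta_\ok(r)|\le Cr^{-n/2}\varepsilon_\ok$, and the resulting height bound $\sup\boq_1^2\le Cr^{2-n}\varepsilon_\ok^2$ is useless for $n\ge 2$. In fact a uniform-in-$r$ bound $\bE_C(T_\ok,r)=O(\varepsilon_\ok^2)$ is essentially equivalent to the lemma: if $|f_i(y)|/|y|$ were unbounded along some sequence $y\to 0$, the untilted excess at the corresponding scales would be large relative to $\varepsilon_\ok^2$ by \eqref{4.1(1)}.

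The paper's proof sidesteps this by never estimating the tilt angle. It obtains the tilted height bound \eqref{8.1(3)} exactly as you propose, but then extracts only \emph{tilt-invariant} information: under $\rot_\theta$, the combinations $v_i^{(\ok)}(y)-v_{\tilde\imath}^{(\ok)}(y)$ and $v_i^{(\ok)}(y)+w_j^{(\ok)}(\bar y)$ (with $\bar y=(y_1,\dots,y_{n-1},-y_n)$) are preserved up to a harmless $\cos\theta$ factor, so \eqref{8.1(3)} yields $|f_i(y)-f_{\tilde\imath}(y)|\le C|y|$ and $|f_i(y)+g_j(\bar y)|\le C|y|$ without any angle control (this is \eqref{8.1(4)}). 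The bound on each individual $f_i$ then comes from a different mechanism: Lemma \ref{sec6.2} shows that $\Pi$ (equal to $\sum f_i$ on $\bV$, $\sum g_j$ on $\bW$, zero on $\bL$) has locally square-integrable gradient across $\bL$, so the odd function $\mathcal P(y)=\Pi(y)-\Pi(\bar y)$ is, by Schwarz reflection, harmonic on all of $B^n_1$ and therefore satisfies $|\mathcal P(y)|/|y|<\infty$. Writing $\mathcal P(y)=(M+m)f_{\tilde\imath}(y)+(\text{terms controlled by \eqref{8.1(4)}})$ then isolates each $f_{\tilde\imath}$. This reflection step via Lemma \ref{sec6.2} is the essential idea your argument is missing.
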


\begin{proof} For each $\ok \in \bbN,$ letting $\varepsilon_{\ok} = \bE_{C}(T_{\ok},1)^{\frac{1}{2}}$ and $\kappa_{\ok} = \kappa_{T_{\ok}},$ choose for each $\sigma \in (0,1/12]$ a $\theta(\ok,\sigma) \in [-1/8,1/8]$ so that
\begin{equation} \label{8.1(1)}
\bE_{C}(\rot_{\theta(\ok,\sigma) \sharp}T,\sigma/4) \leq 2 \bE_{C}(\rot_{\theta \sharp} T,\sigma/4) \text{ whenever } |\theta| \leq 1/8;
\end{equation}
this is as in 8.1(1) of \cite{HS79}. From \eqref{4.1(1)},\eqref{4.1(2)}, \eqref{4.2(3)}, \eqref{1.4(1)}, and \eqref{6.1(1)},\eqref{6.1(2)}, we infer that
\begin{equation} \label{8.1(2)}
\theta(\ok,\sigma) \rightarrow 0 \text{ and } \bE_{C}(\rot_{\theta(\ok,\sigma) \sharp} T_{\ok},\sigma) \rightarrow 0 \text{ as } \ok \rightarrow \infty;
\end{equation}
this is as in 8.1(2) of \cite{HS79}.

\medskip

We now consider two possibilities:
\begin{enumerate}
 \item[] \emph{Case 1.} $\bE_{C}(\rot_{\theta(\ok,\sigma) \sharp} T_{\ok}, \sigma/3) < \varepsilon^{2}_{\ok}$ for infinitely many $\ok \in \bbN.$
 \item[] \emph{Case 2.} $\bE_{C}(\rot_{\theta(\ok,\sigma) \sharp} T_{\ok}, \sigma/3) \geq \varepsilon^{2}_{\ok}$ for all sufficiently large $\ok \in \bbN.$
\end{enumerate}

\medskip

In \emph{Case 2}, using \eqref{8.1(1)},\eqref{8.1(2)}, \eqref{4.2(3)},\eqref{4.2(4)}, and \eqref{6.1(1)},\eqref{6.1(2)},\eqref{6.1(3)},\eqref{6.1(4)} we can choose $N_{\sigma} \in \bbN$ so that, for all $\ok \geq N_{\sigma},$ we have $\kappa_{\ok} \leq \varepsilon^{2}_{\ok},$
$$\begin{aligned} 
S_{\ok} = & (\rot_{\theta(\ok,\sigma) \sharp} \ETA_{\sigma \sharp} T_{\ok}) \res B_{3} \in \sT \\
\bE_{C}(S_{\ok},1) + \kappa_{S_{\ok}} \leq & (4 \sigma)^{-n} \bE_{C}((\ETA_{\frac{1}{4} \sharp} \rot_{\theta(\ok,\sigma) \sharp} T_{\ok}) \res B_{3},1) + \sigma^{\alpha} \kappa_{\ok} \\ 
\leq & (2c_{34})^{-1} \\ 
\bE_{C}(S_{\ok},1/3)+\bE_{C}(S_{\ok},1/3)^{-1} \kappa_{S_{\ok}} \leq & \left(\frac{3}{4\sigma} \right)^{n} \bE_{C}((\ETA_{\frac{1}{4} \sharp} \rot_{\theta(\ok,\sigma) \sharp} T_{\ok}) \res B_{3},1) \\ 
& + \varepsilon^{-2}_{\ok} \sigma^{\alpha} \kappa_{\ok} \leq c^{-1}_{37}, \\ 
\bE_{C}(S_{\ok},1/4) \leq & 2 \bE_{C}(\rot_{\theta \sharp} S_{\ok},1/4) \text{ whenever } |\theta| \leq 1/8. 
\end{aligned}$$ 
These are exactly the assumptions needed to apply Theorem 7.3 with $T = S_{\ok}$ for $\ok \geq N_{\sigma}$, and so we conclude by \eqref{2.3(2)}, \eqref{1.6(1)}, and \eqref{6.1(2)}
$$\begin{aligned} 
\bE_{C}(\rot_{\theta(\ok,\sigma) \sharp}, \sigma/4) = & \bE_{C}(S_{\ok},1/4) \\
\leq & c_{37} (\bE_{S}(S_{\ok},1) + \kappa_{S_{\ok}}) \\ 
\leq & c_{37} (\bE_{S}(T_{\ok},\sigma) + \sigma^{\alpha} \kappa_{\ok}) \\ 
\leq & c_{38} (\bE_{S}(T_{\ok},1) + \kappa_{\ok}] 
\\ \leq & c_{39} \varepsilon^{2}_{\ok}.
\end{aligned}$$
Here, $c_{38}$ depends on $n,M,m$ as we used 
$$\bE_{S}(T_{\ok},\sigma) \leq e^{c_{1} \kappa_{\ok}} \bE_{S}(T_{\ok},1) + (e^{c_{1} \kappa_{\ok}}-1) \left(\frac{M+m}{2}\right) \varpi_{n}$$ 
by \eqref{2.2(1)}; thus, $c_{39}$ also depends on $n,M,m.$

\medskip

If instead \emph{Case 1} occurs so that $\bE_{C}(\rot_{\theta(\ok,\sigma) \sharp} T_{\ok},\sigma/3) < \varepsilon^{2}_{\ok}$ for infinitely many $\ok \in \bbN,$ then we use \eqref{1.4(1)} to conclude that in either \emph{Case 1} or \emph{Case 2} 
$$\bE_{C}(\rot_{\theta(\ok,\sigma) \sharp} T_{\ok},\sigma/4) \leq c_{40} \varepsilon^{2}_{\ok}$$ 
for infinitely many $\ok \in \bbN,$ where $c_{40}$ depends on $n,M,m.$ Since $S_{\ok} \in \sT,$ then we can apply \eqref{2.3(1)}, \eqref{4.1(2)} with $T,\sigma$ replaced by $(\rot_{\theta(\ok,\sigma) \sharp} \ETA_{\frac{\sigma}{4} \sharp} T_{\ok}) \res B_{3},1/20,$ as well as \eqref{6.1(2)} to conclude
\begin{equation} \label{8.1(3)}
\sup_{C_{\frac{\sigma}{5}} \cap \spt \rot_{\theta(\ok,\sigma) \sharp} T_{\ok}} |\boq_{1}| \leq c_{41} \varepsilon_{\ok} \sigma
\end{equation}
for infinitely many $\ok \in \bbN,$ with $c_{41}$ now depending on $n,M,m$; this is as in 8.1(3) of \cite{HS79}.

\medskip

Letting $\overline{y} = (y_{1},\ldots,y_{n-1},-y_{n})$ for $y \in \bbR^{n}$ and letting $v^{(\ok)}_{i},w^{(\ok)}_{j}$ be associated with $T_{\ok}$ as in Definition \ref{sec6.1}, we note that $\theta(\ok,\sigma) \rightarrow 0$ as $\ok \rightarrow \infty,$ and we use \eqref{8.1(3)} to estimate, for each $\tau \in (0,1),$
$$\begin{array}{cc}
|v^{(\ok)}_{i}(y)-v^{(\ok)}_{\tilde{i}}(y)| \leq 2c_{41} \varepsilon_{\ok} \sigma & \text{ for } y \in \bV_{\tau} \cap B^{n}_{\frac{\sigma}{5}}, \\
|w^{(\ok)}_{j}(y)-w^{(\ok)}_{\tilde{j}}(y)| \leq 2c_{41} \varepsilon_{\ok} \sigma & \text{ for } y \in \bW_{\tau} \cap B^{n}_{\frac{\sigma}{5}}, \\
|v^{(\ok)}_{i}(y)+w^{(\ok)}_{j}(\overline{y})| \leq 2c_{41} \varepsilon_{\ok} \sigma & \text{ for } y \in \bV_{\tau} \cap B^{n}_{\frac{\sigma}{5}}, \\
\end{array}$$
for infinitely many $\ok \in \bbN$ and for all $\{ i,\tilde{i} \} \subseteq \{ 1,\ldots,M \}$ and all $\{ j,\tilde{j} \} \subseteq \{ 1,\ldots,m \}.$

\medskip

From the arbitrariness of $\sigma$ and \eqref{6.1(3)},\eqref{6.1(4)} we infer that
\begin{equation} \label{8.1(4)}
\begin{array}{cc}
|f_{i}(y)-f_{\tilde{i}}(y)| \leq 8c_{41} |y| & \text{ for } y \in \bV \cap B^{n}_{\frac{1}{60}}, \\
|g_{j}(y)-g_{\tilde{j}}(y)| \leq 8c_{41} |y| & \text{ for } y \in \bW \cap B^{n}_{\frac{1}{60}}, \\
|f_{i}(y)+g_{j}(\overline{y})| \leq 8c_{41} |y| & \text{ for } y \in \bV \cap B^{n}_{\frac{1}{60}}, \\
\end{array}
\end{equation}
for all $\{ i,\tilde{i} \} \subseteq \{ 1,\ldots,M \}$ and all $\{ j,\tilde{j} \} \subseteq \{ 1,\ldots,m \}$; this is 8.1(4) of \cite{HS79}.

\medskip

We may also apply Lemma \ref{sec6.2} to see that the functions
$$\begin{array}{lll}
\Pi:B^{n}_{1} \rightarrow \bbR, & \mathcal{P}:B^{n}_{1} \rightarrow \bbR & \\
\Pi|_{\bV} = \sum_{i=1}^{M} f_{i}, & \Pi|_{\bW} = \sum_{j=1}^{m} g_{j}, & \Pi|_{\bL}=0, \\
\mathcal{P}(y)=\Pi(y)-\Pi(\overline{y}) & \text{for } y \in B^{n}_{1} &
\end{array}$$
have locally square integrable weak gradients. Since $\mathcal{P}$ is odd in the second variable and $\mathcal{P}|_{\bV \cup \bW}$ is harmonic, $\mathcal{P}$ is, by the weak version of the Schwarz reflection principle, harmonic on all of $B^{n}_{1}$; hence, for each $\rho \in (0,1)$
\begin{equation} \label{8.1(5)}
\sup_{y \in B^{n}_{\rho}} \frac{|\mathcal{P}(y)|}{|y|} < \infty;
\end{equation}
this is 8.1(5) of \cite{HS79}.

\medskip

On the other hand, for $y \in \bV$ 
$$\begin{aligned} 
\mathcal{P}(y) = & \sum_{i=1}^{M} f_{i}(y) - \sum_{j=1}^{m} g_{j}(\overline{y}) \\
= & \sum_{i=1}^{M} f_{i}(y) - \sum_{j=1}^{m} (f_{j}(y)+ g_{j}(\overline{y})) + \sum_{j=1}^{m} f_{j}(y) \\ 
= & 2 \sum_{i=1}^{m} f_{i}(y) + \sum_{i=m+1}^{M} f_{i} - \sum_{j=1}^{m} (f_{j}(y)+ g_{j}(\overline{y})) \\ 
= & (M+m) f_{\tilde{i}}(y) + 2 \sum_{i=1}^{m} (f_{i}(y)-f_{\tilde{i}}(y)) + \\
& + \sum_{i=m+1}^{M} (f_{i}(y)-f_{\tilde{i}}(y)) - \sum_{j=1}^{m} (f_{j}(y)+ g_{j}(\overline{y})) 
\end{aligned}$$ 
for each $\tilde{i} \in \{1,\ldots,M\},$ and for any $j \in \{1,\ldots,m\}$ 
$$g_{j}(\overline{y}) = (f_{1}(y)+g_{j}(\overline{y})) - f_{1}(y).$$ 
The theorem thus follows by \eqref{6.1(5)} and \eqref{8.1(4)},\eqref{8.1(5)}. \end{proof}

\subsection{Theorem} \label{sec8.2}

The following theorem will be instrumental in showing the necessary decay rate Theorem \ref{sec9.2} for the excess; in particular, $\beta$ as given below will be used to find the necessary angle $\vartheta$ as in Theorem \ref{sec9.2}. As such, we give a slightly more thorough conclusion, in stating specifically that $Df(0)=(0,\beta)=Dg(0),$ than as in the statement of Theorem 8.2 of \cite{HS79}, to which Theorem \ref{sec8.2} is analogous.

\begin{theorem}
Suppose $m \geq 1.$ If $\{ T_{\ok} \}_{\ok \in \bbN}$ is a blowup sequence in $\sT$ with associated blowups $f_{i},g_{j},$ then there exists two functions $f \in C^{2}(\bV \cup \bL)$ and $g \in C^{2}(\bW \cup \bL)$ and $\beta \in \bbR$ such that
$$\begin{array}{lll}
f|_{\bV} = f_{1} = f_{2} = \ldots = f_{M}, & & g|_{\bW} = g_{1} = \ldots = g_{m} \\
f|_{\bL} = 0 = g|_{\bL}, & \text{and} & Df(0)= (0,\beta) = Dg(0).
\end{array}$$
\end{theorem}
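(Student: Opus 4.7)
The strategy is to reduce Theorem \ref{sec8.2} to the rigidity statement of Lemma \ref{sec6.4} via a second blow-up, applied simultaneously to the harmonic blowups and the underlying currents. By Lemma \ref{sec8.1}, the rescaled functions $\lambda^{-1} f_i(\lambda\,\cdot)$ and $\lambda^{-1} g_j(\lambda\,\cdot)$ are uniformly bounded on compact subsets of $\bV$ and $\bW$ respectively as $\lambda \searrow 0$. Standard interior gradient estimates for harmonic functions together with Arzel\`a--Ascoli then give a subsequence $\lambda_\ell \searrow 0$ along which these rescalings converge uniformly on compact subsets to harmonic, homogeneous-of-degree-one limits $\tilde f_i$ on $\bV$ and $\tilde g_j$ on $\bW$. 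Simultaneously, from the rescaled currents $\ETA_{\lambda_\ell \sharp} T_{\ok_\ell}$, by a diagonal choice $\ok_\ell \to \infty$ growing fast enough relative to $\lambda_\ell$, and using the monotonicity \eqref{1.4(1)} together with the excess-and-tilt bounds \eqref{4.2(4)} to secure \eqref{6.1(1)} and \eqref{6.1(2)}, one extracts a new blowup sequence in $\sT$ whose associated harmonic blowups are precisely $\tilde f_i$ and $\tilde g_j$.

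Because each $\tilde f_i,\tilde g_j$ is homogeneous of degree one and $\min_i|\tilde f_i|$ has zero trace on $\bL$ by Lemma \ref{sec6.2}, the argument used in the proof of Theorem \ref{sec7.3} via \eqref{7.2(1)} and \eqref{7.2(2)} forces $\tilde f_i = \beta_i y_n$ on $\bV$ and $\tilde g_j = \gamma_j y_n$ on $\bW$ with $\beta_1 \leq \cdots \leq \beta_M$ and $\gamma_1 \geq \cdots \geq \gamma_m$. Lemma \ref{sec6.4} (which requires $m \geq 1$) applied to this new sequence collapses all coefficients to a single value $\beta := \beta_1 = \cdots = \beta_M = \gamma_1 = \cdots = \gamma_m$. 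Since the limit is independent of the subsequence, we obtain the uniform expansions
\begin{equation*}
f_i(y) = \beta y_n + o(|y|), \qquad g_j(y) = \beta y_n + o(|y|) \text{ as } y \to 0,
\end{equation*}
with the $o(|y|)$ bound independent of $i$ and $j$.

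To show the sheets coincide, fix $h := f_M - f_i \geq 0$, which is harmonic on $\bV$ with $h(y) = o(|y|)$ at the origin. Iterating the blow-up at any nearby $(z,0) \in \bL$ in place of the origin --- the class $\sT$ is stable under re-centering at a tangential boundary point of $\partial T$, since the parameterizations $\Phi_{T,\ell}$ retain their $C^{1,\alpha}$ structure throughout $B^{n-1}_2$ --- shows the tangent of $h$ vanishes at every such point, so $h$ has zero trace on $\bL$ in a neighborhood of $0$. By the weak Schwarz reflection principle, $h$ extends to a harmonic (hence analytic) function on a full Euclidean neighborhood of the origin that is nonnegative on the $\bV$-side, vanishes at the origin, and has vanishing first-order derivative there (because $h(y) = o(|y|)$). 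The Hopf boundary point lemma applied at the origin (using the interior ball condition from $\bV$) then forces $h \equiv 0$. Thus $f_1 = \cdots = f_M =: f$, and similarly $g_1 = \cdots = g_m =: g$. Reflection applied to $f$ and $g$ themselves, both now with zero trace on $\bL$, yields $f \in C^\infty(\bV \cup \bL)$ and $g \in C^\infty(\bW \cup \bL)$, in particular $C^2$, and the tangent expansion above gives $Df(0) = (0,\beta) = Dg(0)$.

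The main technical obstacle is the coordinated diagonal construction in the first step: one must balance $\lambda_\ell \to 0$ against $\ok_\ell \to \infty$ so that the rescaled currents $\ETA_{\lambda_\ell \sharp} T_{\ok_\ell}$ remain in $\sT$ with sufficient decay of excess and $\kappa$ to meet the blowup-sequence conditions \eqref{6.1(1)}--\eqref{6.1(4)} and produce the prescribed harmonic blowups $\tilde f_i, \tilde g_j$. A secondary obstacle is propagating the linear tangent rigidity from the origin to nearby boundary points $(z,0) \in \bL$, which requires translating and tilting the members of the blowup sequence to re-enter the framework of $\sT$ at a non-origin tangential boundary point.
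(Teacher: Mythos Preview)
Your overall architecture---second blow-up of the $f_i,g_j$, diagonal choice of rescaled currents to realize the limit as harmonic blowups of a new sequence in $\sT$, application of Lemma~\ref{sec6.4}, then Hopf---matches the paper's proof. Two points need correction.

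First, the homogeneity of the limits $\tilde f_i,\tilde g_j$ is not automatic from Arzel\`a--Ascoli; convergence of $\lambda_\ell^{-1}f_i(\lambda_\ell\,\cdot)$ along a subsequence does not by itself force degree-one homogeneity of the limit. The paper obtains homogeneity from the finiteness of the radial integral in \eqref{5.4(3)} together with Fatou's lemma, which gives $\int_{\bV}\big(\tfrac{\partial}{\partial r}(\tilde f_i/|y|)\big)^2|y|^{2-n}\,d\cH^n=0$. You should invoke this.

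Second, and more seriously, your re-centering step is both unnecessary and incorrect. Membership in $\sT$ requires \emph{all} boundary sheets $\Phi_{T,\ell}$ to pass through the chosen base point with zero first derivative there; this holds at the origin by hypothesis but fails at a generic $(z,0)\in\bL$, where the points $(z,\varphi_{T,\ell}(z),\psi_{T,\ell}(z))$ may differ across $\ell$. So you cannot iterate the blow-up at nearby $(z,0)$. Fortunately you do not need to: once the diagonal argument gives $\tilde f_M=\tilde f_1=\beta\boq_0|_{\bV}$ along some $\lambda_\ell\searrow 0$, you have
\[
\liminf_{t\searrow 0}\frac{f_M(0,t)-f_1(0,t)}{t}=0,
\]
and the classical Hopf boundary point lemma applied directly to the nonnegative harmonic function $f_M-f_1$ on $\bV$ at the origin (which satisfies the interior sphere condition) forces $f_M\equiv f_1$; similarly $g_m\equiv g_1$. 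No reflection of $h=f_M-f_1$ and no zero-trace argument for $h$ is needed beforehand. After the sheets are collapsed, Lemma~\ref{sec6.2} gives the zero trace of $f$ and $g$ on $\bL$, and \emph{then} Schwarz reflection yields the $C^2$ extension and $Df(0)=(0,\beta)=Dg(0)$.
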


\begin{proof} The proof is precisely the same as in section 8.2 of \cite{HS79}, with only minor notational changes.

\medskip

By Lemma \ref{sec8.1}, the harmonic functions $f^{(\rho)}_{i},g^{(\rho)}_{j}$ defined for $i \in \{ 1,\ldots,M \},$ $j \in \{ 1,\ldots,m \},$ and $\rho \in (0,1/4]$ by
$$\begin{array}{ll}
f^{(\rho)}_{i}(y) = f_{i}(\rho y)/\rho & \text{for } y \in \bV, \\
g^{(\rho)}_{j}(y) = g_{j}(\rho y)/\rho & \text{for } y \in \bW,
\end{array}$$ 
are uniformly bounded. By Theorem 2.11 of \cite{GT83}, there is a decreasing sequence $\{ \rho_{l} \}_{l \in \bbN} \subset (0,1/4]$ so that $\rho_{l} \rightarrow 0$ as $l \rightarrow \infty$ and bounded harmonic functions $f^{\ast}_{1},\ldots,f^{\ast}_{M}$ on $\bV$ and $g^{\ast}_{1},\ldots,g^{\ast}_{m}$ on $\bW$ such that for all $i \in \{ 1,\ldots,M \}$ and $j \in \{ 1,\ldots,m \}$
$$\begin{array}{llll}
f^{(\rho_{l})}_{i} \rightarrow f^{\ast}_{i} & \text{and} & Df^{(\rho_{l})}_{i} \rightarrow Df^{\ast}_{i} & \text{pointwise on } \bV \text{ as } l \rightarrow \infty, \\ 
g^{(\rho_{l})}_{j} \rightarrow g^{\ast}_{j} & \text{and} & Dg^{(\rho_{l})}_{j} \rightarrow Dg^{\ast}_{j} & \text{pointwise on } \bW \text{ as } l \rightarrow \infty.
\end{array}$$
Noting that
$$\begin{aligned}
\sum_{i=1}^{M} & \int_{\bV} \left( \frac{\partial}{\partial r} \left( \frac{f_{i}(y)}{|y|} \right) \right)^{2} |y|^{2-n} \ d \cH^{n}(y) \\
& + \sum_{j=1}^{m} \int_{\bW} \left( \frac{\partial}{\partial r} \left( \frac{g_{j}(y)}{|y|} \right) \right)^{2} |y|^{2-n} \ d \cH^{n}(y) < \infty
\end{aligned}$$
by \eqref{5.4(3)} and \eqref{6.1(2)}, we use Fatou's lemma to compute for each $i \in \{ 1,\ldots,M \}$
$$\begin{aligned}
\int_{\bV} \left( \frac{\partial}{\partial r} \left( \frac{f^{\ast}_{i}(y)}{|y|} \right) \right)^{2} & |y|^{2-n} \ d \cH^{n}(y) \\
\leq & \liminf_{l \rightarrow \infty} \int_{\bV} \left( \frac{\partial}{\partial r} \left( \frac{f^{(\rho_{l})}_{i}(y)}{|y|} \right) \right)^{2} |y|^{2-n} \ d \cH^{n}(y) \\
\leq & \liminf_{l \rightarrow \infty} \int_{\bV \cap B^{n}_{\rho_{l}}} \left( \frac{\partial}{\partial r} \left( \frac{f_{i}(y)}{|y|} \right) \right)^{2} |y|^{2-n} \ d \cH^{n}(y) = 0,
\end{aligned}$$
hence $\frac{\partial}{\partial r} \left( \frac{f^{\ast}_{i}(y)}{|y|} \right) = 0$ for all $y \in \bV.$ Similarly, $\frac{\partial}{\partial r} \left( \frac{g^{\ast}_{j}(y)}{|y|} \right) = 0$ for all $y \in \bW.$ As in the proof of Theorem \ref{sec7.3}, it follows from Remark \ref{sec7.2} that each $f^{\ast}_{i}$ (respectively, $g^{\ast}_{j}$) is the restriction to $\bV$ (respectively, $\bW$) of some multiple of the linear function $\boq_{0}(y)=y_{n}.$ In the next paragraph, we shall verify that all of these multiples coincide.

\medskip

Define for each $l \in \bbN$ 
$$S^{l}_{\ok} = (\ETA_{\rho_{l} \sharp} T_{\ok}) \res B_{3},$$ 
and choose, by Remark \ref{sec2.3} and \eqref{6.1(1)},\eqref{6.1(2)}, an integer $N_{l} \geq l$ so that $S^{l}_{\ok} \in \sT$ whenever $\ok \geq N_{l}.$ Noting that, for each $i \in \{ 1,\ldots,M \},$ $j \in \{ 1,\ldots,m \},$ and $l \in \bbN$
$$\begin{array}{ll}
\varepsilon^{-1}_{\ok} v^{S^{l}_{\ok}}_{i} \rightarrow f^{(\rho_{l})}_{i} & \text{uniformly on compact subsets of } \bV, \\
\varepsilon^{-1}_{\ok} w^{S^{l}_{\ok}}_{j} \rightarrow g^{(\rho_{l})}_{j} & \text{uniformly on compact subsets of } \bW
\end{array}$$
as $\ok \rightarrow \infty,$ we may find integers $\ok_{l} \geq N_{l}$ so that 
\begin{equation} \label{8.2(1)} 
\begin{aligned} 
\max \left\{ \sup_{\bV_{\frac{1}{2}}} \left| f^{(\rho_{l})}_{1} \right|, \right. \ & \left. \sup_{\bV_{\frac{1}{2}}} \left| f^{(\rho_{k})}_{M} \right|, \ \sup_{\bW_{\frac{1}{2}}} \left| g^{(\rho_{l})}_{1} \right|, \ \sup_{\bW_{\frac{1}{2}}} \left| g^{(\rho_{l})}_{m} \right| \right\} \\ 
& \leq \sup_{C_{\frac{1}{2}} \cap \spt S^{l}_{\ok_{l}}} \varepsilon^{-1}_{\ok_{l}} |\boq_{1}|+\frac{1}{l}, 
\end{aligned} 
\end{equation}
and, by applying Lemma \ref{sec6.3} to $\{ T_{\ok} \}_{\ok \in \bbN}$ with $y=0$ and $\sigma = 3 \rho_{l},$ so that
\begin{equation} \label{8.2(2)} 
\begin{aligned} 
& \sup_{C_{3} \cap \spt S^{l}_{\ok_{l}}} \varepsilon^{-1}_{\ok_{l}} |\boq_{1}| \\ 
& \ \ \leq 3 \max \left\{ \sup_{\bV} \left|f^{(\frac{\rho_{l}}{3})}_{1} \right|, \sup_{\bV} \left|f^{(\frac{\rho_{l}}{3})}_{M} \right|, \sup_{\bW} \left| g^{(\frac{\rho_{l}}{3})}_{1} \right|, \sup_{\bW} \left| g^{(\frac{\rho_{l}}{3})}_{m} \right| \right\}+\frac{1}{l}, 
\end{aligned} 
\end{equation} 
and so that, with $S^{\ast}_{l} = S^{l}_{\ok_{l}},$
\begin{equation} \label{8.2(3)}
\begin{array}{cccc}
\varepsilon^{-1}_{\ok_{l}} v^{S^{\ast}_{l}}_{i} \rightarrow f^{\ast}_{i} & \text{and} & \varepsilon^{-1}_{\ok_{l}} w^{S^{\ast}_{l}}_{j} \rightarrow g^{\ast}_{j} & \text{as } l \rightarrow \infty;
\end{array}
\end{equation}
these three assumptions are as in 8.2(1)(2)(3) of \cite{HS79}. In case not all functions $f^{\ast}_{i},g^{\ast}_{j}$ are identically zero, using \eqref{4.1(1)},\eqref{4.1(2)} (with $T,\sigma$ replaced by $S^{\ast}_{l},1/2$) we see that \eqref{8.2(1)},\eqref{8.2(2)} along with \eqref{2.3(2)} and \eqref{6.1(2)} imply
$$0 < \liminf_{l \rightarrow \infty} \varepsilon^{-1}_{\ok_{l}} \bE_{C}(S^{\ast}_{l},1)^{\frac{1}{2}} \leq \limsup_{l \rightarrow \infty} \varepsilon^{-1}_{\ok_{l}} \bE_{C}(S^{\ast}_{l},1)^{\frac{1}{2}} < \infty,$$
and we may use Definition \ref{sec6.1} and \eqref{8.2(2)} to obtain a positive number $\lambda$ and a blowup sequence $\{ S^{\ast}_{l} \}_{l \in \bbN}$ whose associated harmonic blowups are $\lambda f^{\ast}_{i},\lambda g^{\ast}_{j}.$  It follows from Lemma \ref{sec6.2} and Lemma \ref{sec6.4} (as in the proof of Theorem \ref{sec7.3}) that, in any case
$$\begin{array}{cc} 
f^{\ast}_{1} = f^{\ast}_{2} = \ldots = f^{\ast}_{M} = \beta \boq_{0}|_{\bV} & g^{\ast}_{1} = \ldots = g^{\ast}_{m} = \beta \boq_{0}|_{\bW}
\end{array}$$
for some $\beta \in \bbR.$ 

\medskip

This now implies that the nonnegative harmonic functions $f_{M}-f_{1}$ and $g_{m}-g_{1}$ satisfy the conditions
$$\liminf_{t \searrow 0} \frac{f_{M}(0,t)-f_{1}(0,t)}{t} = 0 = \liminf_{t \searrow 0} \frac{g_{m}(0,-t)-g_{1}(0,-t)}{t}.$$
By the Hopf boundary point lemma (see Lemma 3.4 of \cite{GT83}), we conclude
$$f_{M}-f_{1} \equiv 0 \text{ on } \bV \text{ and } g_{m}-g_{1} \equiv 0 \text{ on } \bW;$$
hence,
$$f_{1} = f_{2} = \ldots = f_{M} \text{ on } \bV \text{ and } g_{1} = \ldots = g_{m} \text{ on } \bW.$$
By Lemma \ref{sec6.2} these functions all have zero trace on $\bL.$ Thus, there exist by the weak version of the Schwarz reflection principle functions $f \in C^{2}(\bV \cup \bL)$ and $g \in C^{2}(\bW \cup \bL)$ such that
$$\begin{array}{ccc} 
f|_{\bV} = f_{1} = f_{2} = \ldots = f_{M}, & g|_{\bW} = g_{1} = \ldots = g_{m}, & f|_{\bL} = 0 = g|_{\bL}.
\end{array}$$
Moreover, $Df(0)=(0,\beta)=Dg(0).$ \end{proof}

\subsection{Remark} \label{sec8.3}

Suppose $m \geq 1.$ By the Schwarz reflection principle, well-known $L^{2}$ a priori estimates for harmonic functions (see 5.2.5 of \cite{F69}), together with \eqref{6.1(5)} taking $\rho=1/2,$ we conclude there are positive constants $c_{42},c_{43}$ depending on $n,M,m$ so that for any harmonic blowups $f,g$ as in Theorem \ref{sec8.2}
\begin{equation} \label{8.3(1)}
\begin{aligned}
|\beta| & = |Df(0)| = |Dg(0)| \\
\leq & c_{42} \min \left\{ \left( \int_{\bV \cap B^{n}_{\frac{1}{2}}} |f|^{2} \ d \cH^{n} \right)^{\frac{1}{2}}, \left( \int_{\bW \cap B^{n}_{\frac{1}{2}}} |g|^{2} \ d \cH^{n} \right)^{\frac{1}{2}} \right\} \leq c_{43}, \\
\end{aligned}
\end{equation}
\begin{equation} \label{8.3(2)}
\begin{aligned}
|f(y) & - y \cdot Df(0)| \\
\leq & c_{42} \left( \int_{\bV \cap B^{n}_{\frac{1}{2}}} |f|^{2} \ d \cH^{n} \right)^{\frac{1}{2}} |y|^{2} \leq c_{43} |y|^{2} \text{ for } y \in \bV \cap B^{n}_{\frac{1}{4}},
\end{aligned}
\end{equation}
\begin{equation} \label{8.3(3)}
\begin{aligned}
|g(y) & - y \cdot Dg(0)| \\
\leq & c_{42} \left( \int_{\bW \cap B^{n}_{\frac{1}{2}}} |g|^{2} \ d \cH^{n} \right)^{\frac{1}{2}} |y|^{2} \leq c_{43} |y|^{2} \text{ for } y \in \bW \cap B^{n}_{\frac{1}{4}};
\end{aligned}
\end{equation}
these are as in 8.3(1)(2)(3) of \cite{HS79} (note that section 8.3 of \cite{HS79} is mislabeled in \cite{HS79} as section 8.2).

\section{Excess growth estimate} \label{sec9}

\medskip

Theorem \ref{sec9.2} is the central cylindrical excess decay lemma we need to prove Corollary \ref{sec9.3}, which together with the Hopf-type boundary point lemma from section \ref{sec10.1} directly implies our main result Theorem \ref{main}. All of the results and proofs hold from section 9 of \cite{HS79} without change. We introduce $c_{44},c_{45},c_{46}$ now depending on $n,M,m,$ and we assume $m \geq 1.$

\subsection{Theorem} \label{sec9.1}

As noted before, proving a result such as Theorem \ref{sec9.2} is standard in proving regularity results for area-minimizing currents. Moreover, the proof of Theorem \ref{sec9.2} is a standard iteration argument. For this, we first require Theorem \ref{sec9.1}. The proof of the following is exactly the same as in section 9.1 of \cite{HS79}.

\begin{theorem}
Suppose $m \geq 1.$ There is a constant $c_{44} \geq 1$ depending on $n,M,m$ so that corresponding to each $T \in \sT$ with $\max \{ \bE_{C}(T,1), c_{44} \kappa_{T} \} \leq c^{-1}_{44},$ there exists $\theta \in \bbR$ for which
$$\begin{aligned}
|\theta| \leq & c_{43} \bE_{C}(T,1)^{\frac{1}{2}} \\
\bE_{C}(\rot_{\theta \sharp}T,\tau) \leq & \tau \max \{ \bE_{C}(T,1),c_{44} \kappa_{T} \},
\end{aligned}$$
where $\tau = (c_{16}(1+c_{43}))^{-2/\alpha},$ with $c_{16}$ as in \eqref{4.2(4)} and $c_{43}$ as in Remark \ref{sec8.3}; thus $\tau$ depends only on $n,M,m.$
\end{theorem}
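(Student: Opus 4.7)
We plan to argue by contradiction combined with a blowup analysis. Suppose no such $c_{44}$ exists; then for each $k \in \bbN$ we can find $T_k \in \sT$ with $\max\{\bE_{C}(T_k,1), k \kappa_{T_k}\} \leq 1/k$ such that for every $\theta \in \bbR$ with $|\theta| \leq c_{43} \bE_{C}(T_k,1)^{1/2}$ we have $\bE_{C}(\rot_{\theta \sharp} T_k, \tau) > \tau \max\{\bE_{C}(T_k,1), k \kappa_{T_k}\}$. The elementary bound $\bE_{C}(T_k,\tau) \leq \tau^{-n} \bE_{C}(T_k,1)$ from \eqref{1.4(1)}, applied with $\theta=0$, rules out (for $k$ large) the regime where $\bE_{C}(T_k,1) \leq k^{-1} \kappa_{T_k}$. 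So after passing to a subsequence we may assume $\varepsilon_k := \bE_{C}(T_k,1)^{1/2} \to 0$ and $\varepsilon_k^{-2} \kappa_{T_k} \to 0$, and by Lemma \ref{sec6.1} we extract a further subsequence that is a blowup sequence with associated harmonic blowups $f_i, g_j$.

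\medskip

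By Theorem \ref{sec8.2}, these blowups are represented by $f \in C^{2}(\bV \cup \bL)$ and $g \in C^{2}(\bW \cup \bL)$ with $f|_{\bL} = 0 = g|_{\bL}$ and $Df(0) = (0,\beta) = Dg(0)$ for some $\beta \in \bbR$ satisfying $|\beta| \leq c_{43}$ by \eqref{8.3(1)}. We select the tilt $\theta_k = -\arctan(\beta \varepsilon_k)$, so that $|\theta_k| \leq |\beta| \varepsilon_k \leq c_{43} \varepsilon_k$ and the first conclusion of the theorem is met. By Remark \ref{sec4.2}, specifically \eqref{4.2(4)}, for $c_{44}$ chosen large enough relative to $c_{16}$ and $c_{43}$, the rescaled tilted currents $S_k := (\ETA_{\tau \sharp} \rot_{\theta_k \sharp} T_k) \res B_3$ all lie in $\sT$ with $\kappa_{S_k} \leq \tau^{\alpha} \kappa_{T_k}$, and $\bE_{C}(S_k, 1) = \bE_{C}(\rot_{\theta_k \sharp} T_k, \tau)$ by the scaling of cylindrical excess.

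\medskip

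The core step is the quadratic height bound
\[
\limsup_{k \to \infty} \varepsilon_k^{-1} \sup_{C_{2\tau} \cap \spt \rot_{\theta_k \sharp} T_k} |\boq_1| \leq C(n,M,m)\, \tau^{2}.
\]
Over the graph regions from Theorem \ref{sec5.4}, a support point has the form $(y, v_{i}^{(k)}(y))$ with $y \in \bV_{\sigma_{T_k}}$ (and similarly over $\bW$), and its image under $\rot_{\theta_k}$ has $\boq_1$-coordinate
\[
y_n \sin\theta_k + v_{i}^{(k)}(y) \cos\theta_k = -\beta \varepsilon_k y_n + \varepsilon_k f(y) + o(\varepsilon_k) = \varepsilon_k (f(y) - \beta y_n) + o(\varepsilon_k);
\]
the $C^2$ bound \eqref{8.3(2)} then yields the required quadratic control since $|f(y) - \beta y_n| \leq c_{43} |y|^{2} \leq 4 c_{43} \tau^{2}$ for $|y| \leq 2\tau < 1/4$, and \eqref{8.3(3)} gives the analogous bound on $\bW$. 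Over the transition region not covered by these graphs (points projecting into $B^{n}_{2\tau} \setminus (\bV_{\sigma_{T_k}} \cup \bW_{\sigma_{T_k}})$) we invoke Lemma \ref{sec6.3} on small balls centered at points of $\bL \cap B^{n-1}_{2\tau}$, using that $f$ and $g$ share boundary values $0$ on $\bL$ (by Lemma \ref{sec6.2}) and gradient $(0,\beta)$ there, to propagate the same quadratic bound uniformly up to the boundary.

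\medskip

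Combining the height bound with \eqref{4.1(1)} applied to $S_k$, choosing $\sigma$ close to $1$ therein and using the rescaling identity $\sup_{C_{1+\sigma} \cap \spt S_k} \boq_1^{2} = \tau^{-2} \sup_{C_{(1+\sigma)\tau} \cap \spt \rot_{\theta_k \sharp} T_k} \boq_1^{2}$, we obtain
\[
\bE_{C}(\rot_{\theta_k \sharp} T_k, \tau) = \bE_{C}(S_k, 1) \leq c_{12} c_{13} C^{2} \tau^{2} \varepsilon_k^{2} (1 + o(1)) + c_{12} \tau^{\alpha} \kappa_{T_k}.
\]
The particular choice $\tau = (c_{16}(1+c_{43}))^{-2/\alpha}$ is calibrated so that $c_{12} c_{13} C^{2} \tau^{2} \leq \tau/2$ (noting that $c_{16}$ absorbs $c_{12}(1+c_{13})$ and $C$ is comparable to $c_{43}$), and taking $c_{44}$ further large enough guarantees $c_{12} \tau^{\alpha} \kappa_{T_k} \leq (\tau/2) c_{44} \kappa_{T_k}$; together these force $\bE_{C}(\rot_{\theta_k \sharp} T_k, \tau) \leq \tau \max\{\varepsilon_k^{2}, c_{44} \kappa_{T_k}\}$ for all sufficiently large $k$, contradicting our standing assumption. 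The main obstacle is the quadratic height estimate all the way to $\bL$: the graph decomposition of Theorem \ref{sec5.4} breaks down near the boundary, so one must carefully combine Lemma \ref{sec6.3}, the $C^{2}$ estimates \eqref{8.3(2)},\eqref{8.3(3)}, and the boundary vanishing $f|_{\bL} = 0 = g|_{\bL}$ from Lemma \ref{sec6.2} on an appropriate cover of the near-boundary strip to obtain the uniform $O(\tau^{2})$ control required.
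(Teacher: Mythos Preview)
Your proposal is correct and follows essentially the same route as the paper's proof: contradiction, reduction via \eqref{1.4(1)} with $\theta=0$ to the regime $\varepsilon_k^{-2}\kappa_{T_k}\to 0$, extraction of a blowup sequence, invocation of Theorem~\ref{sec8.2} and Remark~\ref{sec8.3} to obtain the tilt angle, the quadratic height control via \eqref{8.3(2)},\eqref{8.3(3)} together with Lemma~\ref{sec6.3} near $\bL$, and conversion to an excess bound by \eqref{4.1(1)}. Two small remarks: your sign $\theta_k=-\arctan(\beta\varepsilon_k)$ is actually the one consistent with the rotation convention $\rot_\theta$ defined in Section~\ref{sec1.1,1.2,1.3} (the paper writes $\theta_k=\arctan(\beta\varepsilon_k)$, which appears to be a harmless sign slip); and your intermediate phrasing ``rules out the regime $\bE_C(T_k,1)\le k^{-1}\kappa_{T_k}$'' is slightly weaker than what your own $\theta=0$ argument actually yields---it gives the stronger bound $k\kappa_{T_k}<\tau^{-n-1}\varepsilon_k^2$ (exactly \eqref{9.1(3)}), which is what you need to conclude $\varepsilon_k^{-2}\kappa_{T_k}\to 0$.
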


\begin{proof} If the theorem were false, then there would exist for each $\ok \in \bbN$ a current $T_{\ok} \in \sT$ so that, with $\varepsilon_{\ok} = \bE_{C}(T_{\ok},1)^{\frac{1}{2}}$ and $\kappa_{\ok} = \kappa_{T_{\ok}},$
\begin{equation} \label{9.1(1)}
\max \{ \varepsilon^{2}_{\ok}, \ok \kappa_{\ok} \} \leq \ok^{-1}
\end{equation}
\begin{equation} \label{9.1(2)}
\bE_{C}(\rot_{\theta \sharp} T_{\ok},\tau) > \tau \max \{ \varepsilon^{2}_{\ok}, \ok \kappa_{\ok} \} \text{ whenever } |\theta| \leq c_{43} \varepsilon_{\ok};
\end{equation}
these are as in 9.1(1)(2) of \cite{HS79}.

\medskip

We assume as in 9.1(3) of \cite{HS79} that
\begin{equation} \label{9.1(3)}
\ok \kappa_{\ok} < \tau^{-n-1} \varepsilon^{2}_{\ok} \text{ for all } \ok \in \bbN,
\end{equation}
otherwise \eqref{9.1(2)} is, by \eqref{1.4(1)}, contradicted with $\theta=0.$

\medskip

Replacing $\{ T_{\ok} \}_{\ok \in \bbN}$ by a subsequence, we may assume by Definition \ref{sec6.1} that $\{ T_{\ok} \}_{\ok \in \bbN}$ is a blowup sequence with associated harmonic blowups $f,g$ as in Theorem \ref{sec8.2}. By Theorem \ref{sec8.2} and \eqref{8.3(1)},
\begin{equation} \label{9.1(4)}
Df(0)=(0,\beta) = Dg(0) \text{ for some } \beta \in [-c_{43},c_{43}];
\end{equation}
this is as in 9.1(4) of \cite{HS79}. Letting $\theta_{\ok} = \arctan (\beta \varepsilon_{\ok})$ so that
\begin{equation} \label{9.1(5)}
|\theta_{\ok}| \leq |\beta| \varepsilon_{\ok} \leq c_{43} \varepsilon_{\ok},
\end{equation}
as in 9.1(5) of \cite{HS79}, we infer from \eqref{9.1(1)} and \eqref{4.2(4)} that
\begin{equation} \label{9.1(6)}
\begin{array}{ccc}
(\ETA_{\tau \sharp} \rot_{\theta_{\ok} \sharp} T_{\ok}) \res B_{3} \in \sT & \text{ and } & \kappa_{(\ETA_{\tau \sharp} \rot_{\theta_{\ok} \sharp} T_{\ok}) \res B_{3}} \leq \tau^{\alpha} \kappa_{\ok}
\end{array}
\end{equation}
for all $\ok \in \bbN$ sufficiently large; this is as in 9.1(6) of \cite{HS79}.

\medskip

Noting that $f|_{\bL}=0=g_{\bL}$ and that the functions $\varepsilon^{-1}_{\ok}v^{T_{\ok}}_{i},\varepsilon^{-1}_{\ok}w^{T_{\ok}}_{j}$ converge uniformly as $\ok \rightarrow \infty$ on $\bV_{\sigma},\bW_{\sigma}$ respectively, for each $\sigma \in (0,1)$ we conclude from Lemma \ref{sec6.3} that
$$\begin{aligned}
\lim_{\ok \rightarrow \infty} \sup_{C_{\frac{1}{2}} \cap \bop^{-1}(\bV) \cap \spt T_{\ok}} |\varepsilon^{-1}_{\ok} \boq_{1} - f \circ \bop| = & 0, \\
 \lim_{\ok \rightarrow \infty} \sup_{C_{\frac{1}{2}} \cap \bop^{-1}(\bW) \cap \spt T_{\ok}} |\varepsilon^{-1}_{\ok} \boq_{1} - g \circ \bop| = & 0.
\end{aligned}$$
From these equations, \eqref{9.1(4)}, and \eqref{8.3(2)},\eqref{8.3(3)} we deduce that
$$\begin{aligned}
\limsup_{\ok \rightarrow \infty} & \sup_{C_{2} \cap \spt ( \ETA_{\tau \sharp} \rot_{\theta_{\ok}} T_{\ok})} \varepsilon^{-1}_{\ok} |\boq_{1}| \\
\leq & \tau^{-1} \limsup_{\ok \rightarrow \infty} \sup_{C_{3 \tau} \cap \spt T_{\ok}} |\varepsilon^{-1}_{\ok} \boq_{1} - \beta \boq_{0}| \\
\leq & \tau^{-1} \limsup_{\ok \rightarrow \infty} \Big( \sup_{C_{3 \tau} \cap \bop^{-1}(\bV) \cap \spt T_{\ok}} |f \circ \bop - \beta \boq_{0}| \\
& + \sup_{C_{3 \tau} \cap \bop^{-1}(\bW) \cap \spt T_{\ok}} |g \circ \bop - \beta \boq_{0}| \Big) \\
\leq & 18 c_{43} \tau.
\end{aligned}$$
Using this estimate, \eqref{9.1(1)},\eqref{9.1(3)},\eqref{9.1(6)}, and \eqref{4.1(1)} (with $\sigma \nearrow 1$), we conclude that
$$\begin{aligned}
\limsup_{\ok \rightarrow \infty} \varepsilon^{-2}_{\ok} \bE_{C}(\rot_{\theta_{\ok} \sharp} T_{\ok},\tau) = & \limsup_{\ok \rightarrow \infty} \varepsilon^{-2}_{\ok} \bE_{C}(\ETA_{\tau \sharp} \rot_{\theta_{\ok} \sharp} T_{\ok},1) \\
\leq & (18)^{2} c_{12}c_{13}c^{2}_{43} \tau^{2} < \theta,
\end{aligned}$$
which, along with \eqref{9.1(5)}, contradicts \eqref{9.1(2)}. \end{proof}

\subsection{Theorem} \label{sec9.2}

Finally, we attend to the key excess decay estimate.

\begin{theorem}
Suppose $m \geq 1,$ and let $c_{43},c_{44},$ and $\tau$ be as in Remark \ref{sec8.3} and Theorem \ref{sec9.1}. For any $T \in \sT$ with $\max \{ \bE_{C}(T,1),c_{44} \kappa_{T} \} \leq c^{-1}_{44} \tau^{\alpha},$ there exists a $\vartheta \in [-2c_{43}c^{-1/2}_{44} \tau^{\alpha/2},2c_{43}c^{-1/2}_{44} \tau^{\alpha/2}]$ such that
$$\bE_{C}(\rot_{\vartheta \sharp} T,\rho) \leq c^{-1}_{44} \vartheta^{-(n+2) \alpha} \rho^{\alpha} \text{ for } \rho \in (0,\tau/4).$$
\end{theorem}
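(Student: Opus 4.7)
\medskip

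\textbf{Proof proposal.} The plan is a standard iteration of the one-step decay from Theorem \ref{sec9.1}, followed by passage to the limit in the sequence of tilt angles. Set $T_{0}=T$ and $E_{0}=\max\{\bE_{C}(T,1),c_{44}\kappa_{T}\}\leq c_{44}^{-1}\tau^{\alpha}$. Assuming $T_{k}\in\sT$ has been constructed with $\max\{\bE_{C}(T_{k},1),c_{44}\kappa_{T_{k}}\}\leq\tau^{k\alpha}E_{0}$, apply Theorem \ref{sec9.1} to $T_{k}$ to produce an angle $\theta_{k+1}$ with $|\theta_{k+1}|\leq c_{43}\bE_{C}(T_{k},1)^{1/2}\leq c_{43}\tau^{k\alpha/2}E_{0}^{1/2}$ and
\[
\bE_{C}(\rot_{\theta_{k+1}\sharp}T_{k},\tau)\leq\tau\max\{\bE_{C}(T_{k},1),c_{44}\kappa_{T_{k}}\}\leq\tau^{(k+1)\alpha}E_{0},
\]
where we use $\tau\leq\tau^{\alpha}$. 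Define $T_{k+1}=(\ETA_{\tau\sharp}\rot_{\theta_{k+1}\sharp}T_{k})\res B_{3}$. By \eqref{4.2(4)} one has $T_{k+1}\in\sT$ and $\kappa_{T_{k+1}}\leq\tau^{\alpha}\kappa_{T_{k}}$; and because $\bE_{C}$ is scaling-covariant in the manner $\bE_{C}(\ETA_{\lambda\sharp}S,r)=\bE_{C}(S,\lambda r)$, we also have $\bE_{C}(T_{k+1},1)=\bE_{C}(\rot_{\theta_{k+1}\sharp}T_{k},\tau)$, closing the induction. The smallness hypothesis $E_{0}\leq c_{44}^{-1}\tau^{\alpha}\leq c_{44}^{-1}$ ensures the applicability of Theorem \ref{sec9.1} at each step.

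\medskip

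Since rotations in the $(x_{n},x_{n+1})$ plane form a one-parameter group, the composed rotation equals $\rot_{\vartheta_{k}\sharp}$ with $\vartheta_{k}=\sum_{j=1}^{k}\theta_{j}$, and unrolling the scalings yields
\[
\bE_{C}(\rot_{\vartheta_{k}\sharp}T,\tau^{k})=\bE_{C}(T_{k},1)\leq\tau^{k\alpha}E_{0}.
\]
The angle series is geometric: $|\vartheta_{k+\ell}-\vartheta_{k}|\leq c_{43}E_{0}^{1/2}\sum_{j>k}\tau^{(j-1)\alpha/2}\leq c_{43}E_{0}^{1/2}\tau^{k\alpha/2}/(1-\tau^{\alpha/2})$, so $\vartheta=\lim_{k\to\infty}\vartheta_{k}$ exists. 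Choosing $c_{44}$ large enough to force $\tau^{\alpha/2}=(c_{16}(1+c_{43}))^{-1}\leq 1/2$, the total bound $|\vartheta|\leq 2c_{43}c_{44}^{-1/2}\tau^{\alpha/2}$ follows from $E_{0}^{1/2}\leq c_{44}^{-1/2}\tau^{\alpha/2}$.

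\medskip

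To conclude the decay at an arbitrary scale $\rho\in(0,\tau/4)$, choose $k\geq 1$ with $\tau^{k+1}<\rho\leq\tau^{k}$. By the monotonicity \eqref{1.4(1)} applied to $\rot_{\vartheta_{k}\sharp}T$ at the pair $\rho<\tau^{k}$,
\[
\bE_{C}(\rot_{\vartheta_{k}\sharp}T,\rho)\leq(\tau^{k}/\rho)^{n}\tau^{k\alpha}E_{0}\leq\tau^{-n-\alpha}E_{0}\,\rho^{\alpha},
\]
using $\tau^{k\alpha}\leq\tau^{-\alpha}\rho^{\alpha}$ and $\tau^{k}/\rho<\tau^{-1}$. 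The remaining step is to replace $\vartheta_{k}$ by $\vartheta$; here one applies the tilt-excess estimate \eqref{4.2(3)} (rescaled to the ball of radius $\rho$, using \eqref{4.2(1)},\eqref{4.2(4)} to stay within $\sT$) with tilt angle $\vartheta-\vartheta_{k}$, noting that $(\vartheta-\vartheta_{k})^{2}\leq C\tau^{k\alpha}E_{0}$ is of the same order as $\bE_{C}(T_{k},1)+\kappa_{T_{k}}$. Collecting the $\tau^{-n-\alpha}$ and additional scaling constants absorbs into $\tau^{-(n+2)\alpha}$ (there is a mild dimensional slack), and adjusting $c_{44}$ further upward so that all accumulated multiplicative constants together with $E_{0}\leq c_{44}^{-1}\tau^{\alpha}$ collapse into the single factor $c_{44}^{-1}\tau^{-(n+2)\alpha}$ yields the stated inequality.

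\medskip

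The routine parts are the geometric decay in $k$ and the monotonicity bridging $\rho$ to $\tau^{k}$; the main technical obstacle will be the last step of swapping $\vartheta_{k}$ for the limiting angle $\vartheta$ while keeping the right-hand side linear in $\rho^{\alpha}$ with only a fixed loss in the prefactor. This is where the careful bookkeeping of constants, in particular the quadratic appearance of $|\vartheta-\vartheta_{k}|$ in \eqref{4.2(3)} and the need to verify that the rescaled currents at scale $\rho$ lie in $\sT$, dictates the final exponent $\tau^{-(n+2)\alpha}$ and the calibration of $c_{44}$.
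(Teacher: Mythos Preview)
Your proposal is correct and follows essentially the same iteration as the paper: set $T_{0}=T$, apply Theorem~\ref{sec9.1} to produce $\theta_{k}$, rescale by $\tau$ to get $T_{k}\in\sT$, sum the angles to a limit $\vartheta$, and bridge an arbitrary scale $\rho$ to the dyadic scales $\tau^{k}$ via \eqref{1.4(1)} and \eqref{4.2(3)}. Two implementation details are worth flagging.

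First, the paper handles the angle-swap more directly than you do. Rather than estimating $\bE_{C}(\rot_{\vartheta_{k}\sharp}T,\rho)$ and then tilting by $\vartheta-\vartheta_{k}$ at scale $\rho$, the paper observes (since rotations and dilations commute) that
\[
\bE_{C}(\rot_{\vartheta\sharp}T,\tau^{k}/4)=\bE_{C}\big((\ETA_{1/4\sharp}\rot_{(\vartheta-\vartheta_{k})\sharp}T_{k})\res B_{3},1\big),
\]
and then applies \eqref{4.2(3)} directly to $T_{k}\in\sT$ with tilt $\theta=\vartheta-\vartheta_{k}$. This explains why the dyadic partition in the paper is $\rho\in[\tau^{k+1}/4,\tau^{k}/4)$ rather than your $\rho\in(\tau^{k+1},\tau^{k}]$: the factor $1/4$ is built into \eqref{4.2(3)}, so no separate rescaling of that estimate to radius $\rho$ is needed, and there is no need to verify that an auxiliary current at scale $\rho$ lies in $\sT$.

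Second, your suggestion to ``adjust $c_{44}$ upward'' to absorb constants does not work as stated: $c_{44}$ is fixed once and for all by Theorem~\ref{sec9.1} and appears on \emph{both} sides of Theorem~\ref{sec9.2} (in the hypothesis via $c_{44}^{-1}\tau^{\alpha}$ and in the conclusion via $c_{44}^{-1}\tau^{-(n+2)\alpha}$), so enlarging it strengthens the conclusion rather than making it easier to prove. The paper instead uses the explicit form $\tau^{-\alpha}=(c_{16}(1+c_{43}))^{2}$ to absorb the factors $4^{n}$, $4c_{43}^{2}+2$, etc.\ arising from \eqref{4.2(3)} and the geometric series for $\sum_{l>k}|\theta_{l}|$ into the single factor $\tau^{-(n+2)\alpha}$; no freedom in $c_{44}$ is invoked. (Likewise, your remark about choosing $c_{44}$ to force $\tau^{\alpha/2}\le 1/2$ is moot: $\tau^{\alpha/2}=(c_{16}(1+c_{43}))^{-1}$ does not depend on $c_{44}$ and is already small since $c_{16}\ge 4^{2n+5}$.)
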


\begin{proof} The proof is a standard inductive argument using Theorem \ref{sec9.1}, and we closely follow the proof in section 9.2 of \cite{HS79}.

\medskip

By Theorem \ref{sec9.1} and \eqref{4.2(4)} we may inductively select real numbers $\{ \theta_{\ok} \}_{\ok \in \bbN}$ and rectifiable currents $\{ T_{\ok} \}_{\ok \in \bbN}$ so that, after setting $T_{0}=T,$ we have for each $\ok \in \bbN$
$$\begin{aligned}
|\theta_{\ok}| \leq & c_{43} \bE_{C}(T_{\ok-1},1)^{\frac{1}{2}}, \\
T_{\ok} = & (\ETA_{\tau \sharp} \rot_{\theta_{\ok} \sharp} T_{\ok-1}) \res B_{3} \in \sT, \text{ and} \\
\max \{ \bE_{C}(T_{\ok},1),c_{44} \kappa_{T_{\ok}} \} \leq & \tau^{\alpha} \max \{ \bE_{C}(T_{\ok-1},1),c_{44} \kappa_{T_{\ok-1}} \} \leq c^{-1}_{44} \tau^{(j+1) \alpha}.
\end{aligned}$$
In inductively checking that $T_{\ok}$ belongs to $\sT,$ we use \eqref{4.2(4)} and the definition of $\tau$ in Theorem \ref{sec9.1} to estimate
\begin{equation} \label{9.2(1)}
|\theta_{\ok}| \leq c_{43} c^{-1/2}_{44} \tau^{\ok \alpha/2},
\end{equation}
\begin{equation} \label{9.2(2)}
\bE_{C}(T_{\ok -1},1)+\kappa_{T_{\ok-1}} \leq 2 \max \{ \bE_{C}(T_{\ok-1},1), \kappa_{T_{\ok-1}} \} \leq 2c^{-1}_{44} \tau^{j \alpha};
\end{equation}
these are as in 9.2(1)(2) of \cite{HS79}. Letting $\vartheta_{\ok} = \sum_{l=1}^{\ok} \theta_{l}$ and $\vartheta = \sum_{l=1}^{\infty} \theta_{l},$ we notice that by \eqref{9.2(1)}
$$|\vartheta| \leq 2c_{43}c^{-1/2}_{44} \tau^{\alpha/2}.$$
For any $\rho \in (0,\tau/4),$ we may choose $\ok \in \bbN$ so that $\rho \in [\frac{\tau^{\ok+1}}{4},\frac{\tau^{j}}{4}),$ and apply \eqref{9.2(1)},\eqref{9.2(2)} (with $\ok$ replaced by $\ok+1$), \eqref{4.2(3)},\eqref{4.2(4)} (with $T,\theta$ replaced by $T_{\ok},\vartheta-\vartheta_{\ok}$) and \eqref{1.4(1)} to estimate
$$\begin{aligned}
\bE_{C}(\rot_{\vartheta \sharp}T,\rho) \leq & \tau^{-n \alpha} \bE_{C}(\rot_{\vartheta \sharp}T, \tau^{\ok}/4) = 4^{n} \tau^{-n \alpha} \bE_{C}(\ETA_{\frac{1}{4} \sharp} \rot_{\vartheta \sharp} T,\tau^{j}) \\
= & 4^{n} \tau^{-n \alpha} \bE_{C}(\ETA_{\frac{1}{4} \sharp} \rot_{(\vartheta-\vartheta_{\ok}) \sharp}T_{\ok},1) \\
\leq & \tau^{-(n+1) \alpha} \left( \left( \sum_{l=\ok+1}^{\infty} |\theta_{l}|^{2} \right) + \bE_{C}(T_{\ok},1) + \kappa_{T_{\ok}} \right) \\
\leq & c^{-1}_{44} \theta^{-(n+2) \alpha} \rho^{\alpha}.
\end{aligned}$$ \end{proof}

\subsection{Corollary} \label{sec9.3}

The following is essentially Theorem \ref{main}, prior to an application of the Hopf-type boundary point Lemma \ref{sec10.1}.

\begin{corollary}
Suppose $m \geq 1.$ If $T$ and $\vartheta$ are as in Theorem \ref{sec9.2}, if $a = \alpha/(2n+6),$ and if
$$\begin{aligned}
\tilde{V} = &  \{ y \in B^{n}_{\delta}: y_{n} > |y|^{1+a} \}, \\
\tilde{W} = & \{ y \in B^{n}_{\delta}: y_{n} < -|y|^{1+a} \},
\end{aligned}$$
where $\delta = ((4c_{24})^{-2n-3} \tau^{(n+2) \alpha})^{\frac{2}{\alpha}}$ and $c_{24},\tau$ are as in Theorem \ref{sec5.4} and Theorem \ref{sec9.1}, then
$$\begin{aligned}
\bop^{-1}(\tilde{V}) \cap \spt \rot_{\vartheta \sharp} T = \bigcup_{i=1}^{M} \graph{\tilde{V}}{\tilde{v}_{i}}, \\
\bop^{-1}(\tilde{W}) \cap \spt \rot_{\vartheta \sharp} T = \bigcup_{j=1}^{m} \graph{\tilde{W}}{\tilde{w}_{j}}
\end{aligned}$$
for some $\tilde{v}_{i} \in C^{1,a}(\Clos{\tilde{V}}),$ $\tilde{w}_{j} \in C^{1,a}(\Clos{\tilde{W}})$ such that $\tilde{v}_{i}|_{\Tilde{V}},$ $\tilde{w}_{j}|_{\tilde{W}}$ satisfy the minimal surface equation, $D \tilde{v}_{i}(0)=0=D \tilde{w}_{j}(0)=0,$ and
$$\begin{array}{cc}
\tilde{v}_{1} \leq \tilde{v}_{2} \leq \ldots \leq \tilde{v}_{M}, & \tilde{w}_{1} \leq \ldots \leq \tilde{w}_{m}.
\end{array}$$ 
\end{corollary}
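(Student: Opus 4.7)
The plan is to apply the interior graphical decomposition Theorem~\ref{sec5.4} to the family of rescalings $T_{\rho} := (\ETA_{\rho \sharp} \rot_{\vartheta \sharp} T) \res B_{3}$ for $\rho \in (0,\tau/4)$ and to patch the resulting local graphs into globally defined functions on the cusp domain $\tilde{V}$. From Theorem~\ref{sec9.2} together with the rescaling relation~\eqref{4.2(4)} one has, once $\delta = \delta(n,M,m,\alpha)$ is sufficiently small, $T_{\rho} \in \sT$ with $\bE_{C}(T_{\rho},1) = \bE_{C}(\rot_{\vartheta \sharp} T,\rho) \leq c^{-1}_{44} \tau^{-(n+2)\alpha} \rho^{\alpha}$ and $\kappa_{T_{\rho}} \leq \rho^{\alpha} \kappa_{T}$, so that $\sigma_{T_{\rho}} = c_{24}(\bE_{C}(T_{\rho},1)+\kappa_{T_{\rho}})^{1/(2n+3)} \leq C \rho^{\alpha/(2n+3)}$. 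Theorem~\ref{sec5.4} then produces ordered graphs $v^{T_{\rho}}_{1} \leq \cdots \leq v^{T_{\rho}}_{M}$ over $\bV_{T_{\rho}} = B^{n}_{1/4} \cap \bV_{\sigma_{T_{\rho}}}$, and symmetrically $w^{T_{\rho}}_{1} \leq \cdots \leq w^{T_{\rho}}_{m}$ over $\bW_{T_{\rho}}$, each smooth and solving the minimal surface equation.

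\medskip

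To patch, I would set $\tilde{v}^{(\rho)}_{i}(y) := \rho\, v^{T_{\rho}}_{i}(y/\rho)$ on $\rho \bV_{T_{\rho}}$. For $y \in \tilde{V}$ the natural choice is $\rho := 8|y|$, since then $|y/\rho| = 1/8 < 1/4$, while the defining inequality $y_{n} > |y|^{1+a}$ together with $a = \alpha/(2n+6) < \alpha/(2n+3)$ forces $(y/\rho)_{n} \geq |y|^{a}/8 > \sigma_{T_{\rho}}$ provided $\delta$ is small, so $y/\rho \in \bV_{T_{\rho}}$. The ordering $v^{T_{\rho}}_{1} \leq \cdots \leq v^{T_{\rho}}_{M}$ uniquely determines the decomposition produced by Lemma~\ref{sec5.1}, so the values $\tilde{v}^{(\rho)}_{i}(y)$ are consistent across the admissible range of $\rho$, and yield functions $\tilde{v}_{i}$ on $\tilde{V}$ that inherit the ordering $\tilde{v}_{1} \leq \cdots \leq \tilde{v}_{M}$ and satisfy the minimal surface equation. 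The construction of $\tilde{w}_{j}$ on $\tilde{W}$ is entirely symmetric.

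\medskip

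For the regularity, applying \eqref{5.4(1)} at scale $\rho = 8|y|$ and noting $\dist(y/\rho,\partial \bV) \geq (y/\rho)_{n} \geq |y|^{a}/8$ gives $|D\tilde{v}_{i}(y)| = |Dv^{T_{\rho}}_{i}(y/\rho)| \leq C|y|^{\alpha/2-a}$ and, with $l=2$, $|D^{2}\tilde{v}_{i}(y)| \leq C|y|^{\alpha/2-1-2a}$. An elementary check shows $\alpha/2 - 2a = \alpha(n+1)/(2(n+3)) \geq a$ for every $n \geq 2$, so $|D^{2}\tilde{v}_{i}(y)| \leq C|y|^{a-1}$. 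Because the radial segment $\{sy : s \in (0,1]\}$ remains inside $\tilde{V}$ (the condition $sy_{n} > |sy|^{1+a}$ reduces to $s^{a} \leq 1$), integrating this second-derivative bound along the segment together with $|D\tilde{v}_{i}(sy)| \to 0$ as $s \searrow 0$ yields $|D\tilde{v}_{i}(y)| \leq C|y|^{a}$. Hence $\tilde{v}_{i}$ extends continuously to $0$ with $D\tilde{v}_{i}(0) = 0$ and has the required $|y|^{a}$ modulus at the origin. Away from $0$, the boundary $\partial \tilde{V}$ is a $C^{1,a}$ hypersurface disjoint from $\spt \partial T$, and the smoothness of $v^{T_{\rho}}_{i}$ from Theorem~\ref{sec5.4} extends $\tilde{v}_{i}$ in a $C^{1,a}$ manner to $\Clos{\tilde{V}} \setminus \{0\}$, completing the verification that $\tilde{v}_{i} \in C^{1,a}(\Clos{\tilde{V}})$.

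\medskip

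The hard part will be the second and third steps: one must simultaneously verify that the local graphical pieces from different scales agree on their overlaps \emph{and} that the scaled second-derivative estimate from \eqref{5.4(1)} survives integration through the narrowing neck of $\tilde{V}$ down to the origin. The arithmetic of these two competing requirements is exactly what pins the Hölder exponent to $a = \alpha/(2n+6)$, rather than the larger $\alpha/(2n+3)$ coming directly from the definition of $\sigma_{T_{\rho}}$.
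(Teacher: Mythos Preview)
Your approach is essentially the paper's: rescale by $\rho$, apply Theorem~\ref{sec5.4} to $T_{\rho}=(\ETA_{\rho\sharp}\rot_{\vartheta\sharp}T)\res B_{3}$, read off ordered graphs, and use the derivative bounds from \eqref{5.4(1)} at the scale $\rho\sim|y|$. The paper uses $\rho\in[3|y|/2,2|y|]$ rather than $\rho=8|y|$ and records the slightly sharper bound $\sigma_{S_{\rho}}\le \rho^{a}/4$ (which is what the specific choice of $\delta$ buys), but these are cosmetic differences. Your verification that $y/\rho\in \bV_{T_{\rho}}$ using $a<\alpha/(2n+3)$ is correct, and your estimates $|D\tilde v_i(y)|\le C|y|^{\alpha/2-a}$ and $|D^{2}\tilde v_i(y)|\le C|y|^{\alpha/2-1-2a}$ are in fact stronger than the paper's \eqref{9.3(6)},\eqref{9.3(7)} (note that already $\alpha/2-a=(n+2)a\ge a$, so the radial integration step is not even needed to get $|D\tilde v_i(y)|\le C|y|^{a}$).

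The one real gap is the final paragraph. Having $|D\tilde v_i(y)|\le C|y|^{a}$ at the origin and interior smoothness elsewhere does \emph{not} yield $\tilde v_i\in C^{1,a}(\Clos{\tilde V})$: the local $C^{2}$ norm of $\tilde v_i$ blows up like $|y|^{a-1}$ near $0$, so ``smoothness on $\Clos{\tilde V}\setminus\{0\}$'' cannot by itself produce a uniform H\"older constant. What is needed (and what the paper does in \eqref{9.3(8)},\eqref{9.3(9)}) is the standard two--case argument: for $y,\tilde y\in\tilde V$, if $\max\{|y|,|\tilde y|\}\le 2|y-\tilde y|$ use $|D\tilde v_i(y)|+|D\tilde v_i(\tilde y)|\le C|y-\tilde y|^{a}$ directly; if $\max\{|y|,|\tilde y|\}> 2|y-\tilde y|$ integrate the bound $|D^{2}\tilde v_i|\le C|\cdot|^{a-1}$ along the segment joining $y$ and $\tilde y$, noting that every point of that segment has norm exceeding $|y-\tilde y|$. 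This gives $|D\tilde v_i(y)-D\tilde v_i(\tilde y)|\le C|y-\tilde y|^{a}$ uniformly on $\tilde V$, hence the extension to $\Clos{\tilde V}$. You have all the ingredients; you just need to carry out this step rather than invoking interior smoothness.
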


\begin{proof} For each $\rho \in (0,\delta)$ we note that, as in the proof of Theorem \ref{sec9.2}, the current $S_{\rho} = (\ETA_{\rho \sharp} \rot_{\vartheta \sharp}T) \res B_{3}$ belongs to $\sT$ and that, by Theorem \ref{sec5.4}, Theorem \ref{sec9.2}, and \eqref{2.3(2)},
\begin{equation} \label{9.3(1)}
\begin{aligned}
\sigma_{S_{\rho}} \leq & c_{24} (\bE_{C}(S_{\rho},1)+\kappa_{S_{\rho}})^{\frac{1}{2n+3}} \leq c_{24} ((\tau^{-(n+2) \alpha} + \tau^{\alpha}) \rho^{\alpha})^{\frac{1}{2n+3}} \\
\leq & c_{24} (2 \tau^{-(n+2) \alpha} \rho^{\alpha/2})^{\frac{1}{2n+3}} \rho^{a} \leq \rho^{a}/4;
\end{aligned}
\end{equation}
this is as in 9.3(1) of \cite{HS79}. Applying \eqref{5.4(1)} (with $T=S_{\rho}$ and $l=1,2$), we infer that
\begin{equation} \label{9.3(2)}
\sup_{\bV_{\rho^{a/4}}} |Dv^{S_{\rho}}_{i}| \leq c_{25} (2 \theta^{-(n+2) \alpha} \rho^{\alpha})^{1/2} (\rho^{a}/4)^{-1} \leq c_{45} \rho^{a},
\end{equation}
\begin{equation} \label{9.3(3)}
\sup_{\bV_{\rho^{\beta/4}}} |D^{2}v^{S_{\rho}}_{i}| \leq c_{25} (2 \theta^{-(n+2) \alpha} \rho^{\alpha})^{1/2} (\rho^{a}/4)^{-2} \leq c_{46} \rho^{a},
\end{equation}
for each $i = 1,\ldots,M,$ with $c_{45},c_{46}$ depending on $n,M,m$; these are is in 9.3(2)(3) of \cite{HS79}.

\medskip

Next, observe that if we take $y \in \tilde{V}$ and if we take $\rho \in (0,2|y|),$ then (by definition of $\tilde{V}$)
\begin{equation} \label{9.3(4)}
\begin{array}{cc}
y_{n}/\rho > \rho^{a}/4 & (\sigma_{S_{\rho}} \text{ by } \eqref{9.3(1)});
\end{array}
\end{equation}
this is as in 9.3(4) of \cite{HS79}. Hence, using \eqref{9.3(1)}, it follows that
$$\bop^{-1}(\tilde{V}) \cap \spt \rot_{\vartheta \sharp} T = \bigcup_{i=1}^{M} \graph{\tilde{V}}{\tilde{v}_{i}}$$
for some functions $\tilde{v}_{i}: \tilde{V} \rightarrow \bbR$ which are well-defined by the condition
\begin{equation} \label{9.3(5)}
\tilde{v}_{i}(y) = \rho v^{S_{\rho}}_{i}(y/\rho) \text{ whenever } y \in \tilde{V} \text{ and } \frac{3}{2} |y| \leq \rho \leq 2 |y|;
\end{equation}
this is as in 9.3(5) of \cite{HS79}. By Theorem \eqref{sec5.4}, $\tilde{v}_{i} \leq \tilde{v}_{2} \leq \ldots \leq \tilde{v}_{M}$ and each $\tilde{v}_{i}$ satisfies the minimal surface equation on $\tilde{V}.$ Moreover, we may use \eqref{9.3(4)},\eqref{9.3(5)},\eqref{9.3(2)},\eqref{9.3(3)} (with $\rho = 2|y|$) to estimate
\begin{equation} \label{9.3(6)}
|D \tilde{v}_{i}(y)| \leq 2c_{45} |y|^{a},
\end{equation}
\begin{equation} \label{9.3(7)}
|D^{2} \tilde{v}_{i}(y)| \leq 2c_{46} |y|^{a} (2|y|)^{-1} = c_{46} |y|^{a-1}
\end{equation}
for any $y \in \tilde{V};$ these are as in 9.3(6)(7) of \cite{HS79}. 

\medskip

To obtain the desired H$\ddot{\text{o}}$lder, we suppose $y,\tilde{y} \in \tilde{V}$ and consider the two possibilities:
\begin{enumerate}
 \item[] \emph{Case 1.} $\max \{ |y|,|\tilde{y}| \} \leq 2|y-\tilde{y}|.$ Here we infer from \eqref{9.3(6)} that
\begin{equation} \label{9.3(8)}
\begin{aligned}
|D\tilde{v}_{i}(y)-D\tilde{v}_{i}(\tilde{y})| \leq & |D\tilde{v}_{i}(y)|+|D\tilde{v}_{i}(\tilde{y})| \\
\leq & 2c_{45} \left( |y|^{a} + |\tilde{y}|^{a} \right) \\
\leq & 4c_{45} |y-\tilde{y}|^{a};
\end{aligned}
\end{equation}
this is as in 9.3(8) of \cite{HS79}.
 \item[] \emph{Case 2.} $\max \{ |y|,|\tilde{y}| \} > 2|y-\tilde{y}|.$ Here
$$|y+t(\tilde{y}-y)| > |y-\tilde{y}| \text{ whenever } t \in [0,1],$$
and so, by \eqref{9.3(7)},
\begin{equation} \label{9.3(9)}
\begin{aligned}
|D \tilde{v}_{i}(y)-D\tilde{v}_{i}(\tilde{y})| \leq & |y-\tilde{y}| \int_{0}^{1} |D^{2}\tilde{v}_{i}(y+t(\tilde{y}-y))| \ dt \\
\leq & c_{46} |y-\tilde{y}|^{a};
\end{aligned}
\end{equation}
\end{enumerate}
this is 9.3(9) of \cite{HS79}.

\medskip

By \eqref{9.3(8)},\eqref{9.3(9)}, and \eqref{9.3(6)} each function $\tilde{v}_{i}$ extends uniquely to a member of $C^{1,a}(\Clos{\tilde{V}})$ with $D \tilde{v}_{i}(0)=0.$

\medskip

The argument to show the existence of $\tilde{w}_{i} \in C^{1,a}(\Clos{\tilde{W}})$ is similar. \end{proof}

\section{A Hopf-type boundary point lemma} \label{sec10}

Here we give a short proof of a Hopf-type boundary point lemma for divergence-form elliptic equations, which was first established by Finn and Gilbarg; see Lemma 7 of \cite{FG57}. This is as in section 10 of \cite{HS79}.

\medskip

We use in section \ref{sec10.1} positive constants $c_{47},c_{48},c_{49}$ from applying Theorem 5.5.5'(b) of \cite{M66}; while these constants do not depend only on $n,M,m,$ we will only use these constants in section \ref{sec10.1}.

\subsection{Lemma} \label{sec10.1}

\begin{lemma}
If $a \in (0,1),$ $\Omega$ is a connected $C^{1,a}$ domain in $\bbR^{n},$ $a_{kl} \in C^{0,a}(\Clos{\Omega})$ for $k,l \in \{ 1,\ldots,n \},$
$$\sum_{k,l=1}^{n} a_{kl}(y) \xi_{k} \xi_{l} \geq |\xi|^{2} \text{ for } y \in \Omega \text{ and } \xi = (\xi_{1},\ldots,\xi_{n}) \in \bbR^{n},$$
$u \in C^{1,a}(\Clos{\Omega},[0,\infty))$ is a weak solution of the equation
$$\sum_{k,l=1}^{n} D_{k}(a_{kl} D_{l} u) = 0$$
on $\Omega,$ $0 \in \partial \Omega,$ and $u(0)=0,$ then
$$\text{either } u \equiv 0 \text{ or } \mathbf{n} \cdot Du(0)>0$$
where $\mathbf{n}$ is the interior unit normal to $\Omega$ at $0.$
\end{lemma}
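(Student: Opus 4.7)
The plan is to follow the classical Finn--Gilbarg approach: reduce via the strong maximum principle, flatten the boundary, and then compare with a barrier provided by Morrey's theorem. First I will observe that either $u \equiv 0$ or $u > 0$ throughout $\Omega$: this is the strong maximum principle for weak solutions of uniformly elliptic divergence-form equations (De Giorgi--Nash--Moser Harnack), noting that since $u$ is continuous and nonnegative with $u(0) = 0,$ any interior zero would force $u \equiv 0.$ Next, using the $C^{1,a}$ hypothesis on $\partial\Omega,$ I will straighten the boundary near $0$ by a $C^{1,a}$ diffeomorphism $\Phi$ mapping a neighborhood of $0$ in $\Clos \Omega$ onto $\bB^n_r \cap \{y_n \geq 0\},$ with $\Phi(0) = 0$ and with $D\Phi(0)$ sending $\mathbf{n}$ to $e_n.$ The pulled-back function $\tilde{u} = u \circ \Phi^{-1}$ is then a nonnegative $C^{1,a}$ weak solution of a new uniformly elliptic divergence-form equation
$$\sum_{k,l=1}^{n} D_k(\tilde{a}_{kl} D_l \tilde{u}) = 0$$
with new $C^{0,a}$ coefficients $\tilde{a}_{kl},$ vanishing on $\{y_n = 0\} \cap \bB^n_r,$ so it suffices to show $D_n \tilde{u}(0) > 0.$

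For the core barrier argument, I will choose $R > 0$ so small that the annulus $A = B^n_R(R e_n) \setminus B^n_{R/2}(R e_n)$ lies in $\bB^n_r \cap \{y_n > 0\},$ with $\partial B^n_R(R e_n)$ tangent to $\{y_n = 0\}$ precisely at the origin. By Theorem 5.5.5'(b) of \cite{M66}, the Dirichlet problem $\sum D_k(\tilde{a}_{kl} D_l w) = 0$ on $A$ with $w = 1$ on $\partial B^n_{R/2}(R e_n)$ and $w = 0$ on $\partial B^n_R(R e_n)$ admits a unique solution $w \in C^{1,a}(\Clos A),$ with quantitative $C^{1,a}$ bounds from which I extract the constants $c_{47}, c_{48}, c_{49}.$ Harnack's inequality gives $m := \inf_{\partial B^n_{R/2}(R e_n)} \tilde{u} > 0,$ and the weak maximum principle yields $\tilde{u} \geq m w$ on $A,$ reducing the problem to showing $D_n w(0) > 0.$ This I will verify by comparison with the explicit subsolution $v(y) = e^{-\gamma|y - R e_n|^2} - e^{-\gamma R^2},$ which for $\gamma = \gamma(n, \lambda, \|\tilde{a}_{kl}\|_{C^{0,a}})$ sufficiently large satisfies $\sum D_k(\tilde{a}_{kl} D_l v) \geq 0$ weakly on $A,$ vanishes on $\partial B^n_R(R e_n),$ and is positive on the inner sphere. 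The comparison principle gives $w \geq c v$ on $A$ for some $c > 0,$ and since a direct computation yields $v(t e_n) = 2 \gamma R e^{-\gamma R^2} t + O(t^2)$ as $t \searrow 0,$ combined with $w \in C^{1,a}(\Clos A),$ this forces $D_n w(0) > 0.$

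The main technical challenge I anticipate is verifying that the exponential barrier $v$ is a weak subsolution of the divergence-form equation when the coefficients $\tilde{a}_{kl}$ are only $C^{0,a}.$ In the $C^1$ or classical setting, one would expand $D_k(\tilde{a}_{kl} D_l v)$ as $\tilde{a}_{kl} D_{kl} v + (D_k \tilde{a}_{kl}) D_l v,$ choose $\gamma$ large so that the ellipticity of the second-order term dominates, and finish. With only $C^{0,a}$ coefficients this expansion is unavailable pointwise; the argument must instead be carried out at the level of the weak formulation, testing against nonnegative compactly supported test functions, integrating by parts to expose the dominant quadratic-in-$\gamma$ contribution, and absorbing the H\"older-continuity error using the Campanato-type characterization underlying Theorem 5.5.5'(b) of \cite{M66}. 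This is precisely why the constants $c_{47}, c_{48}, c_{49}$ enter the paper only at this step.
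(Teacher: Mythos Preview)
Your outline (strong maximum principle, flatten the boundary, construct a barrier in an annulus tangent to $\{y_n=0\}$ at the origin, and compare) matches the paper's, but the barrier step diverges in an important way. The paper does \emph{not} attempt to verify that an explicit exponential function is a weak subsolution of the variable-coefficient divergence-form operator; as you correctly flag, that computation is problematic when the coefficients are only $C^{0,a}$, since the term $(D_k \tilde a_{kl})D_l v$ is not available pointwise and cannot simply be dominated by taking $\gamma$ large. Your proposed fix via the weak formulation and Campanato-type estimates is too vague to close this gap: after freezing the coefficients at a point, the error term still pairs $D_l v$ against $D_k\zeta$ rather than against $\zeta$, and there is no integration by parts available to absorb it.

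The paper sidesteps this entirely by a freezing-plus-scaling argument. On the fixed annulus $A=B^n_{1/2}(e_n/2)\setminus \bB^n_{1/4}(e_n/2)$ one first solves the \emph{constant-coefficient} problem $\sum_{k,l} a_{kl}(0)D_kD_l\phi=0$ with the annular boundary data, so that the classical non-divergence Hopf lemma gives $D_n\phi(0)>0$ directly. One then introduces the rescalings $u^\epsilon(y)=u(\epsilon y)$ and $a^\epsilon_{kl}(y)=a_{kl}(\epsilon y)$, solves the $a^\epsilon$-Dirichlet problem on $A$ to obtain $\phi_\epsilon\in C^{1,a}(\Clos A)$, and observes that $\psi_\epsilon=\phi_\epsilon-\phi$ satisfies $\sum_k D_k(a^\epsilon_{kl}D_l\psi_\epsilon)=\sum_k D_k\zeta^\epsilon_k$ with $\zeta^\epsilon_k=-\sum_l(a^\epsilon_{kl}(y)-a_{kl}(0))D_l\phi$, whose $C^{0,a}$ norm is $O(\epsilon^a)$. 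It is \emph{here}, in bounding $\|\psi_\epsilon\|_{C^{1,a}(\Clos A)}$ by $\sum_k\|\zeta^\epsilon_k\|_{C^{0,a}}$ plus a lower-order $L^2$ term, that Theorem~5.5.5'(b) of \cite{M66} and the constants $c_{47},c_{48},c_{49}$ actually enter --- not in constructing the barrier itself. This yields $D_n\phi_\epsilon(0)\to D_n\phi(0)>0$, and then the comparison $u^\epsilon\geq\lambda\phi_\epsilon$ on $A$ via the weak maximum principle gives $D_n u(0)=\epsilon^{-1}D_n u^\epsilon(0)>0$. In short: rather than forcing an explicit subsolution through the variable-coefficient divergence operator, the paper perturbs the Dirichlet solution off the frozen-coefficient one, which is the natural route when $a_{kl}\in C^{0,a}$ only.
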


\begin{proof} We follow the proof of Lemma 10.1 of \cite{HS79}, with only changes in notation.

\medskip

By making a nonsingular $C^{1,a}$ transformation of coordinates near $0$ we may assume $\Omega \cap B^{n}_{1} = \bV$; hence $\mathbf{n} \cdot Du(0) = D_{n}u(0).$ Let
$$A = B^{n}_{\frac{1}{2}}(e_{n}/2) \setminus \bB^{n}_{\frac{1}{4}}(e_{n}/2)$$
and let $\phi$ be the analytic function on $\Clos{A}$ for which 
$$\begin{array}{ll}
\phi|_{\partial B^{n}_{\frac{1}{2}}(\frac{e_{n}}{2})} \equiv 0 & \phi|_{\partial \bB^{n}_{\frac{1}{4}}(\frac{e_{n}}{2})} \equiv 1 \\
\sum_{k,l=1}^{n} a_{kl}(0) D_{k} D_{l} \phi \equiv 0. &
\end{array}$$
By the classical Hopf argument (see, for example, Theorem 3.6 of \cite{GT83}) for nondivergence form equations,
\begin{equation} \label{10.1(1)}
D_{n} \phi(0)>0;
\end{equation}
this is as in 10.1(1) of \cite{HS79}.

\medskip

Next, for each $\epsilon \in (0,1),$ let
$$u^{\epsilon}(y) = u(\epsilon y) \text{ and } a^{\epsilon}_{kl}(y) = a_{kl}(\epsilon y) \text{ for } y \in \bV \text{ and } \{ k,l \} \subseteq \{ 1,\ldots,n \},$$
and note that $a^{\epsilon}_{kl}(0) = a_{kl}(0)$ and that $u^{\epsilon}$ is a weak solution of the equation
\begin{equation} \label{10.1(2)}
\sum_{k,l=1}^{n} D_{k} (a^{\epsilon}_{kl} D_{l} w) = 0
\end{equation}
on $\bV$; this is as in 10.1(2) of \cite{HS79}. By section 23 of \cite{T75} and Theorem 5.5.4' of \cite{M66}, we may choose $\phi_{\epsilon} \in C^{1,a}(\Clos{A})$ a weak solution of \eqref{10.1(2)} on $A$ so that $\phi_{\epsilon}|_{\partial A} = \phi|_{\partial A}.$ Using the functions
$$\zeta^{\epsilon}_{k}(y) = - \sum_{l=1}^{n} (a^{\epsilon}_{kl}(y)-a^{\epsilon}_{kl}(0)) D_{l} \phi(y) \text{ for } y \in \Clos{A} \text{ and } k \in \{ 1,\ldots,n \},$$
we observe that $\psi_{\epsilon} = \phi_{\epsilon}-\phi \in C^{1,a}(\Clos{A})$ is a weak solution of the equation
\begin{equation} \label{10.1(3)}
\sum_{k,l=1}^{n} D_{k} (a^{\epsilon}_{kl} D_{l} w) = \sum_{k=1}^{n} D_{k} \zeta^{\epsilon}_{k}
\end{equation}
on $A$ with $\psi_{\epsilon}|_{\partial A} = 0$; this is as in 10.1(3) of \cite{HS79}. We infer from Theorem 5.5.5'(b) of \cite{M66}
\begin{equation} \label{10.1(4)}
\| \psi_{\epsilon} \|_{C^{1,a}(\Clos{A})} \leq c_{47} \left( \sum_{k=1}^{n} \| \zeta^{\epsilon}_{k} \|_{C^{0,a}(\Clos{A})} + \int_{A} |\psi_{\epsilon}| \ d \cH^{n} \right),
\end{equation}
where $c_{47}$ is a constant independent of $\epsilon$; this is as in 10.4 of \cite{HS79}. Moreover, using the coercivity of the operator on the left of \eqref{10.1(3)} as in section 23 of \cite{T75}, we readily verify that
\begin{equation} \label{10.1(5)}
\int_{A} |\psi_{\epsilon}|^{2} \ d \cH^{n} \leq c_{48} \sum_{k=1}^{n} \int_{A} |\zeta^{\epsilon}_{k}|^{2} \ d \cH^{n} \leq c_{49} \sum_{k=1}^{n} |\zeta^{\epsilon}_{k}|^{2}_{C^{0,a}(\Clos{A})}
\end{equation}
with $c_{48},c_{49}$ independent of $\epsilon$; this is as in 10.1(5) of \cite{HS79}. Combining \eqref{10.1(4)},\eqref{10.1(5)}, and Schwartz's inequality, we conclude that
$$|D_{n} \phi_{\epsilon}(0)-D_{n} \phi(0)| \leq \| \psi_{\epsilon} \|_{C^{1,a}(\Clos{A})} \leq c_{47}(1+\sqrt{c_{49}}) \sum_{l=1}^{n} \| \zeta^{\epsilon}_{l} \|_{C^{0,a}(\Clos{A})} \rightarrow 0$$
as $\epsilon \rightarrow 0.$ Thus we may, by \eqref{10.1(1)}, fix $\epsilon>0$ sufficiently small so that
\begin{equation} \label{10.1(6)}
D_{n} \phi_{\epsilon}(0) > \frac{1}{2} D_{n} \phi(0) > 0;
\end{equation}
this is as in 10.1(6) of \cite{HS79}.

\medskip

In case $u|_{\Omega}$ is strictly positive we can choose $\lambda \in (0,1)$ so that $(u^{\epsilon} - \lambda \phi_{\epsilon})|_{\partial A} \geq 0.$ Since $u^{\epsilon}-\lambda \phi_{\epsilon}$ is a solution of \eqref{10.1(2)}, we may infer from the weak maximum principle (see, for example, Theorem 3.6 of \cite{GT83}) that $D_{n}(u^{\epsilon}-\lambda \phi_{\epsilon})(0) \geq 0$; hence, by \eqref{10.1(6)},
$$D_{n}u(0) = \epsilon D_{n}u^{\epsilon}(0) \geq \epsilon \lambda D_{n} \phi_{\epsilon}(0) > 0.$$

\medskip

To complete the proof we will verify that if $u|_{\Omega}$ is not identically zero, then $u|_{\Omega}$ is strictly positive. Otherwise, there is $\bB_{\rho}(y) \subset \Omega$ and a point $\tilde{y} \in \partial B_{\rho}(y)$ so that $u|_{B_{\rho}(y)}>0$ and $u(\tilde{y})=0.$ Since $u \geq 0$ we infer $Du(\tilde{y})=0,$ contradicting the argument above with $\Omega$ replaced by $B_{\rho}(y).$ \end{proof}

\section{Concluding Theorem \ref{main}} \label{proofmain}

Having established Corollary \ref{sec9.3}, the proof of Theorem \ref{main} follows exactly as in the first part of Theorem 11.1 of \cite{HS79}. Recall that $m \geq 1$ in Theorem \ref{main}.

\medskip

\begin{proof}[Proof of Theorem \ref{main}:] Suppose $T \in \sR^{n}(B_{3})$ satisfies $(\ast),(\ast \ast)$ from Theorem \ref{main}. Choose $r_{\ok} \rightarrow 0$ with $r_{k} < 1$ so that as currents 
$$\ETA_{r_{\ok} \sharp} T \rightarrow M \bbE^{n} \res \{ y \in \bbR^{n} : y_{n} > 0 \} + m \bbE^{n} \res \{ y \in \bbR^{n}: y_{n} < 0 \}.$$ 
By 5.4.2 of \cite{F69} 
$$\lim_{\ok \rightarrow \infty} \sup_{\bB_{r_{\ok}} \cap \spt T} \boq_{1}/r_{\ok} = 0,$$ 
and so we can choose $\ok \in \bbN$ sufficiently large so that 
$$\begin{aligned} 
T_{\ok} & = (\ETA_{r_{\ok} \sharp} T) \res B_{3} \in \sT \\ 
\max \{ \bE_{C}(T_{\ok},1), c_{44} \kappa_{T_{\ok}} \} & \leq c_{44}^{-1}. 
\end{aligned}$$ 
It follows that Theorem \ref{sec9.2} holds for $T_{\ok}$ specifically with $\vartheta = 0.$ By Lemma \ref{sec10.1} we conclude in applying Theorem \ref{sec9.2} to $T_{\ok}$ that $\tilde{v}_{1} = \tilde{v}_{2} = \ldots = \tilde{v}_{M}$ and $\tilde{w}_{1} = \ldots = \tilde{w}_{m}.$ Together with \eqref{9.3(7)}, we now have Theorem \ref{main}. \end{proof}
  
\appendix

\section{Appendix} 

In this section we present some calculations based on the homotopy formula 4.1.9 of \cite{F69}. The identities presented in this Appendix will be needed in \eqref{1.6(1)}, Lemma \ref{sec4.1}, Lemma \ref{sec6.2}, and Lemma \ref{sec6.4}. 

\medskip

We begin with the following lemma, which follows from the constancy theorem (see Theorem 26.27 of \cite{S83}) and induction, which will be used to prove Lemma \ref{appendixlemma2}.

\begin{appendixlemma} \label{appendixlemma1} 
Let $\rho \in (0,\infty)$ and $\sigma \in (0,1).$ Suppose $P \in \sR_{n}(\bbR^{n})$ is nonzero with
\begin{equation} \label{appendixlemma1spt}
\begin{aligned}
\spt P \cap \{ y \in \bbR^{n} & : |(y_{1},\ldots,y_{n-1})| < \rho, |y_{n}| < 1 \} \\
& \subset \{ y \in \bbR^{n} : |(y_{1},\ldots,y_{n-1})| < \rho, |y_{n}| < \sigma \}
\end{aligned}
\end{equation}
and 
\begin{equation} \label{appendixlemma1boundary}
\begin{aligned} 
\partial P \res \{ y \in \bbR^{n} : & |(y_{1},\ldots,y_{n-1})| < \rho, |y_{n}| < 1 \} \\ 
= & (-1)^{n} \sum_{\ell=1}^{N} m_{\ell} \Phi_{P,\ell \sharp}(\bbE^{n-1} \res B^{n-1}_{\rho}) \\
& + (-1)^{n-1} m_{0} \bbE^{n-1} \res B^{n-1}_{\rho} 
\end{aligned}
\end{equation}
where $N,m_{0},m_{1},\ldots,m_{N} \in \bbN$ satisfy $\sum_{\ell=1}^{N} m_{\ell} = m_{0},$ and for each $\ell = 1,\ldots,N$ the map $\Phi_{P,\ell} \in C^{1}(B^{n-1}_{\rho};\bbR^{n})$ is given by $\Phi_{P,\ell}(z) = (z,\varphi_{P,\ell}(z))$ where $\varphi_{P,\ell} \in C^{1}(B^{n-1}_{\rho})$ with $\sup_{B^{n-1}_{\rho}}|\varphi_{P,\ell}| < \sigma.$

\medskip

Then there is a nonempty $K \subseteq \bbN$ such that for each $k \in K$ there is an open nonempty connected set $O_{P,k} \subset \{ y \in \bbR^{n}: |(y_{1},\ldots,y_{n-1})| < \rho, |y_{n}|< \sigma \}$ and an integer $m_{O_{P,k}} \neq 0$ so that 
$$P \res \{ y \in \bbR^{n} : |(y_{1},\ldots,y_{n-1})| < \rho, |y_{n}| < 1 \} = \sum_{k \in K} m_{O_{P,k}} \bbE^{n} \res O_{P,k}.$$
Moreover, $m_{O_{P,k}} \in [-m_{0},0)$ if $O_{P,k} \cap \{ y \in \bbR^{n}: y_{n}>0 \} \neq \emptyset,$ while $m_{O_{P,k}} \in (0,m_{0}]$ if $O_{P,k} \cap \{ y \in \bbR^{n}: y_{n}<0 \} \neq \emptyset.$ 
\end{appendixlemma}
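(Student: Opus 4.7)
The plan is to apply the constancy theorem (Theorem 26.27 of \cite{S83}) to $P$ on the connected components of $W := \{y \in \bbR^n : |(y_1,\ldots,y_{n-1})| < \rho,\ |y_n| < 1\} \setminus \spt \partial P$. By \eqref{appendixlemma1boundary}, $\spt(\partial P) \cap W$ consists of the $N$ graphs of the $\varphi_{P,\ell}$ together with the horizontal disk $B^{n-1}_\rho \times \{0\}$, all contained in the thin slab $\{|y_n| < \sigma\}$ since each $|\varphi_{P,\ell}| < \sigma$. The constancy theorem then yields $P \res U = m_U (\bbE^n \res U)$ for an integer $m_U$ on each connected component $U$ of $W$.

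Next I would eliminate the ``outer'' components: the support hypothesis \eqref{appendixlemma1spt} forces $m_U = 0$ for any $U$ reaching into $\{|y_n| \geq \sigma\}$. The surviving components $O_{P,k}$ (those with $m_U \neq 0$) are bounded and contained in $\{|y_n| < \sigma\}$, and since the disk $B^{n-1}_\rho \times \{0\}$ fully separates $W$ across $y_n = 0$, each $O_{P,k}$ lies either in $\{y_n > 0\}$ or in $\{y_n < 0\}$.

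To establish the multiplicity bounds I would induct on $N$. In the base case $N = 1$ (so $m_1 = m_0$), the boundary $\partial P \res W$ is exactly the boundary of $\pm m_1$ times the indicator current of the region bounded by the graph of $\varphi_{P,1}$ and the disk, which directly verifies the conclusion. For the inductive step, select an index $\ell_0$ for which $\varphi_{P,\ell_0}$ is ``outermost'' (for instance, one achieving $\max_{\ell} \sup_z \varphi_{P,\ell}(z)$ when this maximum is positive, symmetrically otherwise). Subtract from $P$ the current $\pm m_{\ell_0} (\bbE^n \res R_{\ell_0})$ where $R_{\ell_0}$ is the region immediately below the graph of $\varphi_{P,\ell_0}$ and above the next-highest graph (or above the disk if no higher graph is present); this peels off one layer and produces a new current whose boundary involves only $N - 1$ graphs and whose support still lies in the slab, so the inductive hypothesis applies.

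The main obstacle will be the careful bookkeeping of orientation signs (the $(-1)^n$ and $(-1)^{n-1}$ factors in \eqref{appendixlemma1boundary}) to ensure that the peeling produces the correct multiplicity change and that the cumulative multiplicity ends up in $[-m_0, 0]$ for components meeting $\{y_n > 0\}$ and in $[0, m_0]$ for components meeting $\{y_n < 0\}$. An equivalent direct verification is provided by a fiberwise analysis along $\{z\} \times \bbR$ for generic $z \in B^{n-1}_\rho$: using $\partial(\bbE^n \res \{y_n > c\}) = (-1)^n \bbE^{n-1}$ translated to height $c$, moving downward across $\varphi_{P,\ell}(z)$ decreases the multiplicity by $m_\ell$ while moving downward across $y_n = 0$ increases it by $m_0$; the walk starts and ends at $0$ precisely because $\sum_\ell m_\ell = m_0$, and a signed count of what has been crossed yields the claimed bounds.
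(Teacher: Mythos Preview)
Your induction on $N$ has a genuine gap once $n\geq 3$. With $z\in B^{n-1}_\rho\subset\bbR^{n-1}$ and $n-1\geq 2$, the graphs $\varphi_{P,\ell}$ can intertwine: the index $\ell_0$ achieving $\max_\ell\sup_z\varphi_{P,\ell}(z)$ need not satisfy $\varphi_{P,\ell_0}\geq\varphi_{P,\ell'}$ pointwise, so there is no globally well-defined ``next-highest graph'' beneath it, and the region $R_{\ell_0}$ you describe is not bounded above and below by two of the given $C^1$ graphs. This layer-by-layer peeling is precisely what the paper can carry out only in its base case $n=2$, where the graphs lie over an interval and a pointwise ordering $\tilde\varphi_{P,1}\leq\cdots\leq\tilde\varphi_{P,N}$ is always available. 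A repair, if you want to keep the induction on $N$: peel between an \emph{arbitrary} graph $\varphi_{P,\ell_0}$ and the disk $\{y_n=0\}$ rather than between consecutive graphs. Subtracting $m_{\ell_0}$ times the signed region between them removes $\Phi_{P,\ell_0}$ from $\partial P$ and replaces $m_0$ by $m_0-m_{\ell_0}$, with no ordering required.

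Your fiberwise argument, on the other hand, is correct and is the paper's approach in compressed form. The paper also opens with the constancy theorem, then establishes the multiplicity bounds by induction on the ambient dimension $n$: the base case $n=2$ is the ordered-layer analysis just mentioned, and for $n>2$ the paper slices $P$ by the hyperplane $\{y_1=t\}$ at generic $t$, applies the inductive hypothesis to the $(n-1)$-dimensional slice, and identifies $m_{O_{P,k}}$ with a slice multiplicity. Your vertical-fiber walk is this slicing collapsed all the way to dimension one in a single stroke. A clean way to make it rigorous without any induction or slicing is to set
\[
Q=-\sum_{\ell=1}^{N} m_\ell\,\bbE^n\res\{y_n>\varphi_{P,\ell}\}+m_0\,\bbE^n\res\{y_n>0\}
\]
inside the open cylinder, verify from \eqref{appendixlemma1boundary} that $\partial(P+Q)=0$ there, and apply the constancy theorem once together with \eqref{appendixlemma1spt} to conclude $P=-Q$; evaluating $-Q$ at any $(z,t)$ reproduces exactly your signed count and hence the claimed bounds.
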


\medskip

\begin{proof} We prove this by induction on $n.$ Note first that the constancy theorem (see Theorem 26.27 of \cite{S83}) implies
\begin{equation} \label{appendixlemma1decomposition}
P \res \{ y \in \bbR^{n} : |(y_{1},\ldots,y_{n-1})| < \rho, |y_{n}| < 1 \} = \sum_{k \in K} m_{O_{P,k}} \bbE^{n} \res O_{P,k} 
\end{equation} 
for some nonempty $K \subseteq \bbN,$ where $m_{O_{P,k}} \neq 0$ is an integer and $O_{P,k}$ is a nonempty open connected subset of
$$\{ y \in \bbR^{n} : |(y_{1},\ldots,y_{n-1})| < \rho, |y_{n}| < \sigma \} \setminus \bigcup_{\ell=1}^{N} \Phi_{P,\ell}(B^{n-1}_{\rho})$$ 
for each $k \in K.$ We now begin our proof by induction.

\medskip

{\bf n=2:} Define $\tilde{\varphi}_{P,1},\ldots,\tilde{\varphi}_{P,N} \in C((-\rho,\rho))$ so that
$$\tilde{\varphi}_{P,1} \leq \tilde{\varphi}_{P,2} \leq \ldots \leq \tilde{\varphi}_{P,N-1} \leq \tilde{\varphi}_{P,N}$$
and so that for each $z \in (-\rho,\rho)$ we have
$$\{ \tilde{\varphi}_{P,\tilde{\ell}}(z) \}_{\tilde{\ell}=1}^{N} = \{ \varphi_{P,\ell}(z) \}_{\ell=1}^{N}.$$
Take $O_{P,k}$ as in \eqref{appendixlemma1decomposition}, and suppose $O_{P,k} \cap \{ y \in \bbR^{2}: y_{2}>0 \} \neq \emptyset.$ The constancy theorem together with \eqref{appendixlemma1spt},\eqref{appendixlemma1boundary} imply there is an open interval $I_{k} \subset (-\rho,\rho)$ and an $\tilde{\ell}_{k} \in \{1,\ldots,N\}$ so that 
$$O_{P,k} = \{ y \in \bbR^{2} : y_{1} \in I_{k}, \ y_{2} \in (\max \{ 0,\tilde{\varphi}_{P,\tilde{\ell}_{k}-1}(y_{1})\},\tilde{\varphi}_{P,\tilde{\ell}_{k}}(y_{1})) \}.$$

\medskip

First, suppose $\tilde{\varphi}_{P,\tilde{\ell}_{k}}(z) = \tilde{\varphi}_{P,N}(z)$ for each $z \in I_{k}.$ It follows we can find $\ell_{1},\ldots,\ell_{N-\tilde{\ell}_{k}+1} \in \{1.\ldots,N \}$ so that 
$$\varphi_{P,\ell_{1}}(z) = \ldots = \varphi_{P,\ell_{N-\tilde{\ell}_{k}+1}}(z) = \tilde{\varphi}_{P,N}(z)$$
for each $z \in I_{k},$ and hence 
$$\begin{aligned}
\partial O_{P,k} \cap \{ y \in \bbR^{2} & : \max \{ 0,\tilde{\varphi}_{P,\tilde{\ell}_{k}-1}(y_{1}) \} < y_{2} \} \\
& = \Phi_{P,\ell_{1}}(I_{k}) = \ldots = \Phi_{P,\ell_{N-\tilde{\ell}_{k}+1}}(I_{k}).
\end{aligned}$$
From this $m_{O_{P,k}} = - (m_{\ell_{1}}+\ldots+m_{\ell_{N-\tilde{\ell}_{k}+1}})$ follows, and so $m_{O_{P,k}} \in [-m_{0},0).$

\medskip

Second, suppose $\tilde{\varphi}_{P,\tilde{\ell}_{k}}(z) = \tilde{\varphi}_{P,N-1}(z)$ for each $z \in I_{k},$ but $\tilde{\varphi}_{P,N-1}(z) < \tilde{\varphi}_{P,N}(z)$ for some $z \in I_{k}.$ We can thus find an open interval $\tilde{I}_{k} \subset I_{k}$ and an $O_{P,\tilde{k}}$ from \eqref{appendixlemma1decomposition} disjoint from $O_{P,k}$ so that 
$$\begin{aligned}
\partial O_{P,\tilde{k}} \cap & \{ y \in \bbR^{2} : y_{1} \in \tilde{I}_{k},y_{2}>0 \} \\
& = \{ (z,\tilde{\varphi}_{P,N-1}(z)): z \in \tilde{I}_{k} \} \cup \{ (z,\tilde{\varphi}_{P,N}(z)) : z \in \tilde{I}_{k} \}.
\end{aligned}$$ 
The previous paragraph applied to $O_{P,\tilde{k}}$ implies there are $\ell_{1},\ldots,\ell_{N-\tilde{\ell}_{k}+1} \in \{1,\ldots,N \}$ so that 
$$m_{O_{P,k}} = - (m_{\ell_{1}}+\ldots+m_{\ell_{N-\tilde{\ell}_{k}+1}}),$$
and from this $m_{O_{P,k}} \in [-m_{0},0)$ follows.

\medskip

Third, we can argue inductively that $m_{O_{P,k}} \in [-m_{0},0)$ whenever $O_{P,k} \cap \{ y \in \bbR^{2}: y_{2}>0 \} \neq \emptyset.$ By likewise first considering $\tilde{\varphi}_{P,1},$ we can show $m_{O_{P,k}} \in (0,m_{0}]$ whenever $O_{P,k} \cap \{ y \in \bbR^{2}: y_{2}<0 \} \neq \emptyset.$ This shows the lemma in case $n=2.$

\medskip

{\bf n $>$ 2.} With $\bop_{1}:\bbR^{n+1} \rightarrow \bbR$ given by $\bop_{1}(x)=x_{1},$ take any of the Lebesgue almost-every $t \in (-\rho,\rho)$ such that the slice
$$\langle P,\bop_{1},t \rangle = \partial \big( P \res \{ x \in \bbR^{n+1}: x_{1} < t \} \big) - (\partial P) \res \{ x \in \bbR^{n+1}: x_{1} < t \}$$ 
exists, by 4.3.6 of \cite{F69}. Then the following two facts also hold for Lebesque almost-every $t \in (-\rho,\rho).$ First, by \eqref{appendixlemma1spt},
$$\begin{aligned}
\spt & \langle P,\bop_{1},t \rangle \cap \{ y \in \bbR^{n} : |(y_{1},\ldots,y_{n-1})| < \rho, |y_{n}| < 1 \} \\
& = \spt \langle P,\bop_{1},t \rangle \cap \{ y \in \bbR^{n} : y_{1}=t, |(y_{2},\ldots,y_{n-1})| < \rho-t, |y_{n}| < 1 \} \\
& \subset \{ y \in \bbR^{n}: y_{1}=t, |(y_{2},\ldots,y_{n-1})| < \rho-t, |y_{n}| < \sigma \}.
\end{aligned}$$
Second, by \eqref{appendixlemma1boundary} and Lemma 28.5(3) of \cite{S83},
$$\begin{aligned} 
\partial \langle & P,\bop_{1},t \rangle \res \{ y \in \bbR^{n}: y_{1}=t,|(y_{2},\ldots,y_{n-1})| < \rho-t, |y_{n}| < 1 \} \\ 
= & (-1)^{n-1} \sum_{\ell=1}^{N} m_{\ell} \Phi_{P,\ell \sharp} (\bbE^{t,n-2} \res \{ z \in \bbR^{n-1}: z_{1}=t, |(z_{2},\ldots,z_{n-1})| < \rho-t \}) \\
& + (-1)^{n-2} m_{0} \bbE^{t,n-2} \res \{ z \in \bbR^{n-1}: z_{1}=t, |(z_{2},\ldots,z_{n-1})| < \rho-t \}
\end{aligned}$$
where $\bbE^{t,n-2}$ is the $(n-2)$-dimensional current in $\bbR^{n}$ given by 
$$\bbE^{t,n-2}(\omega) = \int_{\{ z \in \bbR^{n-1}: z_{1}=t \}} \langle \omega, e_{2} \wedge \ldots \wedge e_{n-1} \rangle \ d \cH^{n-2} \text{ for } \omega \in \mathcal{D}^{n-2}(\bbR^{n}).$$
These two facts imply by induction that whenever $\langle P,\bop_{1},t \rangle \neq 0$ 
$$\begin{aligned}
\langle P,\bop_{1},t \rangle & \res \{ y \in \bbR^{n} : y_{1}=t, |(y_{2},\ldots,y_{n-1})| < \rho-t, |y_{n}| < 1 \} \\
& = \sum_{k \in K^{t}} m_{O^{t}_{P,k}} \bbE^{t,n-1} \res O^{t}_{P,k}
\end{aligned}$$ 
where $\bbE^{t,n-1}_{t}$ is the $(n-1)$-dimensional current in $\bbR^{n}$ given by 
$$\bbE^{t,n-1}(\omega) = \int_{\{ y \in \bbR^{n}: y_{1}=t \}} \langle \omega, e_{2} \wedge \ldots \wedge e_{n} \rangle \ d \cH^{n-1} \text{ for } \omega \in \mathcal{D}^{n-1}(\bbR^{n}),$$
and where $K^{t} \subseteq \bbN$ is a nonempty set such that for each $k \in K^{t}$ the set $O^{t}_{P,k} \subset \{ y \in \bbR^{t,n-1}: |(y_{2},\ldots,y_{n-1})| < \rho-t, |y_{n}| < \sigma \}$ is a (nonempty) open connected set, and $m_{O^{t}_{P,k}} \neq 0$ is an integer so that $m_{O^{t}_{P,k}} \in [-m_{0},0)$ whenever $O^{t}_{P,k} \cap \{ y \in \bbR^{t,n-1}: y_{n}>0 \} \neq \emptyset$ while $m_{O^{t}_{P,k}} \in (0,m_{0}]$ whenever $O^{t}_{P,k} \cap \{ y \in \bbR^{t,n-1}: y_{n}<0 \} \neq \emptyset.$

\medskip

For each $k \in K$ (as in \eqref{appendixlemma1decomposition}), we can choose $t \in (-\rho,\rho)$ so that $O_{P,k} \cap \{ y \in \bbR^{n}: y_{1}=t \} \neq \emptyset$ and so that $\langle P,\bop_{1},t \rangle \neq 0$ exists. Thus $m_{O_{P,k}}= m_{O^{t}_{P,\tilde{k}}}$ for some $\tilde{k} \in K^{t}.$ We conclude the lemma. \end{proof}

The following calculations will be used throughout, and hence we collect them here. In particular, \eqref{appendixlemma2projectionmass} is instrumental in checking that the proofs of \cite{HS79} carry over to the more general setting of Theorem \ref{main}; see the proof of Lemma \ref{sec6.4}, analogous to Lemma 6.4 of \cite{HS79}. On the other hand, the fact that \eqref{appendixlemma2projectionmass} holds in general only with $m \geq 1$ means we can only presently prove Theorem \ref{main} with $m \geq 1.$

\begin{appendixlemma} \label{appendixlemma2} 
Let $M \in \bbN,$ $m \in \{ 0,\ldots,M-1 \},$  and let $\alpha \in (0,1].$ Suppose  $T \in \sT = \sT(M,m,\alpha)$ (see Definition \ref{sec1.5,1.6}). With $q: \bbR \times \bbR^{n+1} \rightarrow \bbR^{n+1}$ given by
$$q(t,x) = (x_{1},\ldots,x_{n-1},tx_{n},tx_{n+1}) \text{ for } (t,x) \in \bbR \times \bbR^{n+1},$$ 
define $Q_{T} \in \sR_{n}(\bbR^{n+1})$ by
$$Q_{T} = q_{\sharp} \left( (\bbE^{1} \res [0,1]) \times (\partial T \res C_{2}) \right)$$ 
(see 4.1.9 of \cite{F69}). Then for every $r \in (0,2)$ 
\begin{equation} \label{appendixlemma2retraction} 
\begin{aligned} 
\bM(Q_{T} \res C_{r}) \leq & (M-m) \kappa_{T} \varpi_{n-1} r^{n+\alpha} \\
& \times \left( \frac{\alpha}{2} \right) \left( 1+\frac{\alpha^{2} \kappa_{T}^{2}}{4} r^{2 \alpha}+ \frac{\alpha^{4} \kappa_{T}^{4}}{16} r^{4 \alpha} \right)^{\frac{1}{2}}, \\ 
\bM (\bop_{\sharp} Q_{T} \res C_{r}) \leq & (M-m) \kappa_{T} \varpi_{n-1} r^{n+\alpha}, 
\end{aligned}
\end{equation} 
and we as well have
\begin{equation} \label{appendixlemma2identity}
\begin{aligned}
\partial Q_{T} \res C_{r} & = \partial T \res C_{r} - (-1)^{n}(M-m) \bbE^{n-1} \res C_{r} \\ 
(\bop_{\sharp} T) \res \bB^{n}_{r} & = (M \bbE^{n} \res \bV + m \bbE^{n} \res \bW + \bop_{\sharp} Q_{T}) \res \bB^{n}_{r}. 
\end{aligned}
\end{equation} 
If $m \geq 1,$ $\sigma \in (0,1/2),$ and $\kappa_{T} < \sigma$ (see \eqref{defderivative}), then
\begin{equation} \label{appendixlemma2projectionmass} 
\begin{aligned}
\bM ((\bop_{\sharp} T - \bbE^{n}) \res & \{ y \in B^{n}_{\frac{1}{2}} : |y_{n}| < \sigma \} ) \\ 
= & \bM ( \bop_{\sharp} T \res \{ y \in B^{n}_{\frac{1}{2}} : |y_{n}| < \sigma \}) \\ 
& - \bM (\bbE^{n} \res \{ y \in B^{n}_{\frac{1}{2}} : |y_{n}| < \sigma \}).
\end{aligned}
\end{equation} 
\end{appendixlemma}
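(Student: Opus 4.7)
The plan is to reduce \eqref{appendixlemma2projectionmass} to the pointwise claim that $\bop_\sharp T \res S = \Theta\, \bbE^n \res S$ for some integer-valued function $\Theta \geq 1$ holding $\cH^n$-almost everywhere on $S = \{y \in B^n_{1/2} : |y_n| < \sigma\}.$ Once this is in hand, $(\bop_\sharp T - \bbE^n) \res S$ has nonnegative multiplicity $\Theta - 1,$ and the mass identity follows simply by integrating $|\Theta - 1| = \Theta - 1$ against $\cH^n.$

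The main step is to apply Lemma \ref{appendixlemma1} to the comparison current $P$ defined by restricting $\bop_\sharp (T \res C_2) - M \bbE^n \res \{y_n > 0\} - m \bbE^n \res \{y_n < 0\}$ to the cylinder $\{y \in \bbR^n : |(y_1,\ldots,y_{n-1})| < 1,\, |y_n| < 1\}.$ To verify its hypotheses, I would first combine \eqref{defderivative} with $\varphi_{T,\ell}(0) = 0$ and $D\varphi_{T,\ell}(0) = 0$ to get $|\varphi_{T,\ell}(z)| \leq \tfrac{\alpha \kappa_T}{2(1+\alpha)}|z|^{1+\alpha}$ on $B^{n-1}_{1/2}$; since $\kappa_T < \sigma < 1/2,$ this forces $\sup_{B^{n-1}_{1/2}}|\varphi_{T,\ell}| < \sigma/2$ for every $\ell.$ Invoking \eqref{defprojection} then shows that $\spt P,$ intersected with $\{|(y_1,\ldots,y_{n-1})| < 1/2\},$ lies in the thin strip $\{|y_n| < \sigma/2\},$ verifying the support hypothesis. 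A direct boundary computation using \eqref{defboundary} together with the elementary identities $\partial(\bbE^n \res \{y_n > 0\}) = (-1)^n \bbE^{n-1}$ and $\partial(\bbE^n \res \{y_n < 0\}) = (-1)^{n-1} \bbE^{n-1}$ then yields
$$\partial P \res \{|(y_1,\ldots,y_{n-1})| < 1/2,\, |y_n| < 1\} = (-1)^n \sum_{\ell=1}^N m_\ell (\bop \circ \Phi_{T,\ell})_\sharp (\bbE^{n-1} \res B^{n-1}_{1/2}) + (-1)^{n-1}(M-m) \bbE^{n-1} \res B^{n-1}_{1/2},$$
which matches \eqref{appendixlemma1boundary} with $m_0 = M - m = \sum_\ell m_\ell$ and $\Phi_{P,\ell}(z) = \bop \circ \Phi_{T,\ell}(z) = (z,\varphi_{T,\ell}(z)).$

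Lemma \ref{appendixlemma1} then produces a decomposition $P \res \{|(y_1,\ldots,y_{n-1})| < 1/2,\, |y_n| < 1\} = \sum_k m_{O_{P,k}} \bbE^n \res O_{P,k}$ (the case $P \equiv 0$ on the cylinder being trivial), and the disjointness of the allowed sign intervals $[-(M-m),-1]$ and $[1,M-m]$ in the lemma's conclusion forces each component $O_{P,k}$ to lie entirely in either $\{y_n > 0\}$ or $\{y_n < 0\}.$ Translating back through the definition of $P,$ on components above I obtain $\bop_\sharp T = (M + m_{O_{P,k}}) \bbE^n$ with multiplicity in $\{m,\ldots,M-1\};$ on components below, $\bop_\sharp T = (m + m_{O_{P,k}}) \bbE^n$ with multiplicity in $\{m+1,\ldots,M\};$ and at all remaining points of $S$ the current $\bop_\sharp T$ equals $M \bbE^n$ or $m \bbE^n$ according to the sign of $y_n.$ In every case the multiplicity is at least $m.$

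The main obstacle, and the essential reason the hypothesis $m \geq 1$ cannot be dropped, is precisely this final sign bound: if $m = 0$ were allowed, the region below all the graphs would carry $\bop_\sharp T = 0$ while $\bbE^n \res S$ contributes unit multiplicity there, so $(\bop_\sharp T - \bbE^n) \res S$ would develop a $-\bbE^n$ summand with orientation opposite to $\bbE^n,$ strictly inflating its mass above the difference $\bM(\bop_\sharp T \res S) - \bM(\bbE^n \res S).$ With $m \geq 1,$ however, $\Theta \geq 1$ holds $\cH^n$-a.e. on $S$ (after ignoring the $\cH^n$-null union of the graph images and the slice $\{y_n = 0\}$), and \eqref{appendixlemma2projectionmass} follows immediately.
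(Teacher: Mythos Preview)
Your argument is correct and follows essentially the same route as the paper's proof of \eqref{appendixlemma2projectionmass}: both define the same comparison current (the paper writes it as $P=(\bop_\sharp Q_T)\res B^n_1$, which by \eqref{appendixlemma2identity} coincides with your $\bop_\sharp T - M\bbE^n\res\{y_n>0\} - m\bbE^n\res\{y_n<0\}$) and feed it into Lemma~\ref{appendixlemma1} with $\rho=1/2$ and $m_0=M-m$. Your observation that the sign dichotomy in Lemma~\ref{appendixlemma1} forces every $O_{P,k}$ to lie entirely in one open half-space is a clean shortcut; the paper instead writes out a three-term splitting of $P$ according to whether $O_{P,k}\cap\bW$ is empty and then verifies \eqref{appendixlemma2projectionmass} by computing both sides, but the content is the same.
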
 

\begin{proof} First, we compute by 4.1.9 of \cite{F69} (see as well the proof of 26.23 of \cite{S83}) and \eqref{defboundary},\eqref{defderivative} (as in the end of the proof of Lemma \ref{sec3.2})
$$\begin{aligned} 
\bM(Q_{T} \res C_{r}) \leq &  \int \sqrt{\boq_{0}^{2}+\boq_{1}^{2}} \ d \mu_{\partial T \res C_{r}} \\ 
\leq & \left( \frac{\alpha}{2} \right) \kappa_{T} r^{1+\alpha} \mu_{\partial T}(C_{r}) \\ 
\leq & (M-m) \kappa_{T} \varpi_{n-1} r^{n+\alpha} \\
& \times \left( \frac{\alpha}{2} \right) \left( 1+\frac{\alpha^{2} \kappa_{T}^{2}}{4} r^{2\alpha}+ \frac{\alpha^{4} \kappa_{T}^{4}}{16} r^{4 \alpha} \right)^{\frac{1}{2}}. \end{aligned}$$ 
By a similar calculation for $\bM(\bop_{\sharp} Q_{T} \res C_{r}),$ we conclude \eqref{appendixlemma2retraction}.

\medskip

Second, for any $r \in (0,2)$ we have by 4.1.8-9 of \cite{F69} (see as well 26.22 of \cite{S83}, the homotopy formula) the first identity in \eqref{appendixlemma2identity} and
$$\partial ( \bop_{\sharp} T - M \bbE^{n} \res \bV - m \bbE^{n} \res \bW - \bop_{\sharp} Q_{T}) \res \bB^{n}_{r} = 0,$$
recalling that $\sum_{\ell=1}^{N} m_{\ell} = M-m$ by \eqref{defboundary}. This proves the second identity in \eqref{appendixlemma2identity}, by the constancy theorem and \eqref{defprojection}.

\medskip

Third, suppose $m \geq 1$ and with $\sigma \in (0,1/2)$ assume $\kappa_{T} < \sigma.$ Let $P = (\bop_{\sharp} Q_{T}) \res B^{n}_{1}.$ If $P=0,$ then \eqref{appendixlemma2projectionmass} readily follows from \eqref{appendixlemma2identity} (since $1 \leq m \leq (M-1)$). Now suppose $P \neq 0,$ then we first note that \eqref{defderivative},\eqref{defprojection}, and \eqref{appendixlemma2identity} imply 
$$\begin{aligned}
\spt P \cap \{ y \in \bbR^{n} & : |(y_{1},\ldots,y_{n-1})| < 1/2, |y_{n}| < 1 \} \\
& \subset \{ y \in \bbR^{n}: |(y_{1},\ldots,y_{n-1})| < 1/2, |y_{n}| < \sigma \}.
\end{aligned}$$
This together with \eqref{defboundary},\eqref{appendixlemma2identity} means we apply Lemma \ref{appendixlemma1} with $\rho = 1/2$ and $m_{0} = M-m$ to conclude
$$P \res \{ y \in \bbR^{n}: |(y_{1},\ldots,y_{n-1})|<1/2, |y_{n}| < 1 \} = \sum_{k \in K} m_{O_{P,k}} \bbE^{n} \res O_{P,k},$$ 
where we recall $m_{O_{P,k}} \in [-(M-m),0)$ if $O_{P,k} \cap \{ y \in \bbR^{n}: y_{n}>0 \} \neq \emptyset$ while $m_{O_{P,k}} \in (0,(M-m)]$ if $O_{P,k} \cap \{ y \in \bbR^{n}: y_{n}<0 \} \neq \emptyset.$ By further writing
$$\begin{aligned}
P \res \{ y \in \bbR^{n} & : |(y_{1},\ldots,y_{n-1})|<1/2, |y_{n}| < 1 \} \\
= & \sum_{\{ k \in K: O_{P,k} \cap \bW = \emptyset \}} m_{O_{P,k}} \bbE^{n} \res O_{P,k} \\
& + \sum_{\{ k \in K: O_{P,k} \cap \bW \neq \emptyset \}} m_{O_{P,k}} \bbE^{n} \res (O_{P,k} \cap \bV) \\
& + \sum_{\{ k \in K: O_{P,k} \cap \bW \neq \emptyset \}} m_{O_{P,k}} \bbE^{n} \res (O_{P,k} \cap \bW),
\end{aligned}$$ 
then using \eqref{appendixlemma2identity} we can compute each side of \eqref{appendixlemma2projectionmass} in terms of $M,m,\{ m_{O_{P,k}} \}_{k \in K}$ in order to verify equality (for this, $m \geq 1$ is needed). \end{proof}

The following calculation, \eqref{appendixlemma3RT}, is used in the proof of Lemma \ref{sec4.1}. We prove it here for the sake of cleaner exposition.

\begin{appendixlemma} \label{appendixlemma3} 
Let $M \in \bbN,$ $m \in \{ 0,\ldots,M-1 \},$ $\alpha \in (0,1],$ and suppose  $T \in \sT = \sT(M,m,\alpha)$ (see Definition \ref{sec1.5,1.6}). Also suppose $\tau \in (0,1)$ and that $\kappa_{T} \leq 4 \cdot 3^{n} (1+M \varpi_{n}) \tau^{2}$ (see \eqref{defderivative}). Let $\phi \in C^{1}(\bbR^{n};[0,1])$ satisfy
$$\begin{aligned}
\phi(y) & = 0 & \text{ if } & |y| \leq 1 \\ 
0 < \phi(y) & <1 & \text{ if } & 1 < |y| < 1 + \tau \\
\phi(y) & = 1 & \text{ if } & 1+\tau \leq |y| \\
|D\phi(y)| & \leq 3/\tau & \text{ for} & \text{ all } y \in \bbR^{n},
\end{aligned}$$
and define $F : \bbR^{n+1} \rightarrow \bbR^{n+1}$ and $h:\bbR \times \bbR^{n+1} \rightarrow \bbR^{n+1}$ by
$$F(x) = (\bop(x),f(\bop(x)) x_{n+1}) \text{ and } h(t,x) = (1-t)F(x)+tx.$$
Then $R_{T} = h_{\sharp} ([0,1] \times \partial T)$ satisfies the mass bound
\begin{equation} \label{appendixlemma3RT} 
\bM(R_{T}) \leq \left( \frac{\sqrt{21}}{8}+ 2^{\frac{9n-7}{2}} 3^{n^{2}-\frac{1}{2}} \right) (M-m) \varpi_{n-1} (1+M \varpi_{n})^{n-1} \kappa_{T}.
\end{equation} 
\end{appendixlemma}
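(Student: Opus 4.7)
The plan is to apply the homotopy formula (4.1.9 of \cite{F69}) to the map $h$, obtaining
\[
\bM(R_T) \leq \int_0^1 \int_{\spt \partial T \cap C_{1+\tau}} \left| \partial_t h(t,x) \wedge \wedge_{n-1} D_x h(t,x)\,\vec{\partial T}(x) \right| \, d\mu_{\partial T}(x) \, dt,
\]
where I restrict the spatial integration to $C_{1+\tau}$ since $\phi \equiv 1$ outside, forcing $F(x) = x$ and hence $\partial_t h \equiv 0$ there. Direct computation yields $\partial_t h(t,x) = (1-\phi(\bop(x))) x_{n+1} e_{n+1}$ and $D_x h(t,\cdot) = (1-t)DF + tI$, where
\[
DF(x) = \begin{pmatrix} I_n & 0 \\ x_{n+1} D\phi(\bop(x))^T & \phi(\bop(x)) \end{pmatrix}.
\]
Using $|u \wedge w| \leq |u|\cdot|w|$, the integrand is bounded by $(1-\phi)|x_{n+1}|\cdot|\wedge_{n-1} D_x h(\vec{\partial T})|$; the time integration then removes the $t$-dependence from the vertical factor via $\int_0^1 (1-\phi)\,dt = 1-\phi \leq 1$, while \eqref{defderivative} with $\psi_{T,\ell}(0) = D\psi_{T,\ell}(0) = 0$ gives the pointwise bound $|x_{n+1}| \leq (\alpha/2) \kappa_T |z|^{1+\alpha}$ on $\spt \partial T$ parametrized via $\Phi_{T,\ell}$.

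I split the spatial integration into the interior $\{|\bop(x)| \leq 1\}$ (where $\phi \equiv 0$ and $D\phi \equiv 0$) and the transition annulus $\{1 < |\bop(x)| < 1+\tau\}$. On the interior, $D_x h$ reduces to $\mathrm{diag}(I_n, t)$, so the $(n-1)$-volume Jacobian of $h(t,\cdot)$ on $\spt \partial T$ equals the ordinary graph Jacobian of $\Phi_{T,\ell}$ (with the $\psi$-component scaled by $t \leq 1$), which, by the same computation as in the proof of \eqref{appendixlemma2retraction}, is bounded by
\[
J_\ell(z) \leq \sqrt{1 + \tfrac{\alpha^2 \kappa_T^2}{4} |z|^{2\alpha} + \tfrac{\alpha^4 \kappa_T^4}{16}|z|^{4\alpha}}.
\]
Multiplying by $(\alpha/2)\kappa_T|z|^{1+\alpha}$, summing $\sum_\ell m_\ell = M-m$ from \eqref{defboundary}, and integrating over $B^{n-1}_1$, I arrive at the first summand $(\sqrt{21}/8)(M-m)\varpi_{n-1}\kappa_T$. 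The constant $\sqrt{21}/8$ is exactly the maximum of $(\alpha/2)\sqrt{1 + \alpha^2\kappa_T^2/4 + \alpha^4\kappa_T^4/16}$ at $|z| = \alpha = \kappa_T = 1$, since for $|z| \leq 1$ we have $|z|^{1+\alpha} \leq 1$.

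On the transition annulus, $D\phi$ contributes to $DF$. Using $|D\phi| \leq 3/\tau$, $|x_{n+1}| \leq (\alpha/2)\kappa_T (1+\tau)^{1+\alpha}$, and crucially the hypothesis $\kappa_T \leq 4\cdot 3^n(1+M\varpi_n)\tau^2$, the entry $x_{n+1} D\phi^T$ is bounded by a constant of order $3^n (1+M\varpi_n)$ independent of $\tau$, yielding $\Lip(F) \leq C_0 \cdot 3^n (1+M\varpi_n)$ and hence $|\wedge_{n-1} D_x h(\vec{\partial T})| \leq (1+\Lip(F))^{n-1}$. The $(n-1)$-dimensional area of the transition annulus on $\spt \partial T$ is $O(\tau (M-m)\varpi_{n-1})$, which combined with $|x_{n+1}| \leq (\alpha/2)\kappa_T(1+\tau)^{1+\alpha}$ yields a factor of order $\tau \kappa_T$; the extra $\tau$ is benign since $\tau \leq 1$. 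The main obstacle is the careful bookkeeping required to recover the explicit constant $2^{\frac{9n-7}{2}} 3^{n^2 - \frac{1}{2}}$ in the second summand; this involves tracking the $n$-fold product in $(\Lip(F))^{n-1} \leq [C_0 \cdot 3^n (1+M\varpi_n)]^{n-1}$ together with the annular area and the constants from $|D\phi| \leq 3/\tau$, noting that the quadratic form inside $\Lip(F)^2 \leq 2(1 + x_{n+1}^2|D\phi|^2)$ contributes a factor of $\sqrt{2}$ at each of the $n-1$ dimensions. No new ideas are needed beyond those already present in the proofs of Lemma \ref{sec3.2} and \eqref{appendixlemma2retraction}; the task is essentially a meticulous version of those calculations, with the two additive summands reflecting the clean separation between the interior (graph-only) and boundary (cutoff) contributions to $R_T$.
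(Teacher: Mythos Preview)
Your approach is essentially the same as the paper's: apply the homotopy mass bound from 4.1.9 of \cite{F69}, observe that $\partial_t h=(1-\phi)\boq_1 e_{n+1}$ vanishes outside $C_{1+\tau}$, split the $\mu_{\partial T}$--integral into $C_1$ and $C_{1+\tau}\setminus C_1$, and on the annulus absorb the bad factor $|\boq_1|\,|D\phi|\le (3/\tau)(\alpha/2)\kappa_T(1+\tau)^{1+\alpha}$ using the hypothesis $\kappa_T\le 4\cdot 3^n(1+M\varpi_n)\tau^2$. The paper computes the tangential Jacobian slightly more explicitly, obtaining the integrand bound $(1-\phi)|\boq_1|\max\{1,(1+\phi^2+\boq_1^2|D\phi|^2)^{(n-1)/2}\}$ rather than your cruder $(1+\Lip F)^{n-1}$, but the structure and the constant--tracking are otherwise identical; your remark that ``the time integration removes the $t$--dependence via $\int_0^1(1-\phi)\,dt$'' is a small slip (the $t$--dependence sits in $D_xh$, not in $(1-\phi)$), though the uniform-in-$t$ Jacobian bound you actually use makes this harmless.
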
 

\begin{proof} We compute using 4.1.9 of \cite{F69} (see also the end of section \ref{sec3.2}), 
$$\begin{aligned} 
\bM(R_{T}) & \leq \int (1-\phi)|\boq_{1}| \max \left\{ 1, \left(1+\phi^{2}+ \boq_{1}^{2} |D\phi|^{2} \right)^{\frac{n-1}{2}} \right\} \ d \mu_{\partial T} \\ 
& \begin{aligned} 
\leq \Big( & \frac{\alpha}{2} \kappa_{T} \Big) \mu_{\partial T}(C_{1}) \\
+ & \left( \frac{\alpha}{2} \kappa_{T} \right) (1+\tau)^{1+\alpha} \left(2+ \left( \frac{\alpha^{2}}{4} \right) \kappa_{T}^{2} (1+\tau)^{2+2\alpha} \left( \frac{9}{\tau^{2}}\right) \right)^{\frac{n-1}{2}} \\
& \times \mu_{\partial T} (C_{1+\tau} \setminus C_{1})
\end{aligned} \\ 
& \leq \frac{1}{2} \left( \mu_{\partial T}(C_{1}) + 2^{2} \left(2+ \frac{36 \kappa_{T}^{2}}{\tau^{2}} \right)^{\frac{n-1}{2}} \mu_{\partial T}(C_{1+\tau} \setminus C_{1}) \right) \kappa_{T} \\ 
& \begin{aligned} 
\leq \frac{1}{2} \Big( & \mu_{\partial T}(C_{1})  + 2^{2} \left( 2+ 2^{6} \cdot 3^{2n+2} (1+M \varpi_{n})^{2} \right)^{\frac{n-1}{2}} \mu_{\partial T} (C_{1+\tau} \setminus C_{1}) \Big) \kappa_{T} 
\end{aligned} \\ 
& \begin{aligned}
\leq \frac{1}{2} \Big( & \mu_{\partial T}(C_{1}) + 2^{\frac{7n-3}{2}} \cdot 3^{n^{2}-1} (1+M \varpi_{n})^{n-1} \mu_{\partial T}( C_{1+\tau} \setminus C_{1}) \Big) \kappa_{T} 
\end{aligned} \\ 
& \leq \frac{1}{2} \left( \frac{\sqrt{21}}{4}+ 2^{\frac{9n-5}{2}} 3^{n^{2}-\frac{1}{2}} \right) (M-m) \varpi_{n-1} (1+M \varpi_{n})^{n-1} \kappa_{T}. 
\end{aligned}$$ \end{proof}

We will also need the following lemma for the proof of Lemma \ref{sec6.1}. Again, we give it here for the sake of cleaner exposition.

\begin{appendixlemma} \label{appendixlemma4} 
Let $M \in \bbN,$ $m \in \{ 0,\ldots,M-1 \},$ $\alpha \in (0,1],$ and suppose $T \in \sT = \sT(M,m,\alpha)$ (see Definition \ref{sec1.5,1.6}). Also suppose there is a $\sigma \in (0,1)$ so that
\begin{equation} \label{appendixlemma4excess}
\bE_{C}(T,1) + \kappa_{T} < \frac{(\frac{2}{3})^{n} \varpi_{n}}{(1+(M-m) \varpi_{n-1}+c_{4})} \sigma^{n+1}
\end{equation}
with $c_{4}$ as in \eqref{2.2(4)},\eqref{2.2(5)}, and
\begin{equation} \label{appendixlemma4boundary}
\partial (T \res B_{1-\frac{\sigma}{6}}) \res C_{1-\frac{\sigma}{3}} = (\partial T) \res C_{1-\frac{\sigma}{3}}.
\end{equation}
Then
$$\bop_{\sharp} \big( (T \res B_{1-\frac{\sigma}{6}}) \res C_{1-\frac{\sigma}{3}} \big) = \bop_{\sharp} (T \res C_{1-\frac{\sigma}{3}}).$$
\end{appendixlemma}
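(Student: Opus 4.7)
The plan is to reduce the identity to showing $\bop_\sharp R = 0$ where $R = T \res (C_{1-\sigma/3} \setminus B_{1-\sigma/6})$; this is immediate from the set-theoretic identity $T \res C_{1-\sigma/3} - (T \res B_{1-\sigma/6}) \res C_{1-\sigma/3} = T \res (C_{1-\sigma/3} \setminus B_{1-\sigma/6})$. First I would analyze $\partial R$ via slicing by $f(x) = |\bop(x)|$ and $g(x) = |x|$: the boundary hypothesis \eqref{appendixlemma4boundary}, together with the slicing formula $\partial(T \res B_{1-\sigma/6}) = (\partial T) \res B_{1-\sigma/6} + \langle T, g, 1-\sigma/6\rangle$, rearranges to give $\langle T, g, 1-\sigma/6\rangle \res C_{1-\sigma/3} = (\partial T) \res (C_{1-\sigma/3} \setminus B_{1-\sigma/6})$, and both sides vanish because \eqref{defboundary}--\eqref{defderivative} combined with the smallness of $\kappa_T$ forced by \eqref{appendixlemma4excess} imply $\spt \partial T \cap C_{1-\sigma/3} \subset B_{1-\sigma/6}$. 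Assembling, $\partial R = \langle T, f, 1-\sigma/3\rangle \res (\bbR^{n+1} \setminus B_{1-\sigma/6})$, which is supported on the cylindrical wall $\{|\bop(x)| = 1-\sigma/3\}$.

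Pushing forward, $\bop_\sharp R$ is an integer rectifiable $n$-current in $\bbR^n$ supported in $\bB^n_{1-\sigma/3}$ whose boundary lies in $\partial B^n_{1-\sigma/3}$, so the constancy theorem (Theorem 26.27 of \cite{S83}) yields $\bop_\sharp R = \mathfrak{m}\,\bbE^n \res B^n_{1-\sigma/3}$ for some integer $\mathfrak{m}$, and the problem is reduced to showing $\mathfrak{m}=0$. Since $\bop$ is $1$-Lipschitz, $|\mathfrak{m}|\varpi_n(1-\sigma/3)^n = \bM(\bop_\sharp R) \leq \bM(R) = \mu_T(C_{1-\sigma/3} \setminus B_{1-\sigma/6})$, and because $1-\sigma/3 < 1-\sigma/6$ one has $\bB_{1-\sigma/3} \subset C_{1-\sigma/3} \cap B_{1-\sigma/6}$, whence $\mu_T(C_{1-\sigma/3} \setminus B_{1-\sigma/6}) \leq \mu_T(C_{1-\sigma/3}) - \mu_T(\bB_{1-\sigma/3})$.

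The decisive step is estimating this difference. Applying \eqref{appendixlemma2identity} and \eqref{appendixlemma2retraction} together with the definition of cylindrical excess yields $\mu_T(C_{1-\sigma/3}) \leq (M+m)\varpi_n(1-\sigma/3)^n/2 + (M-m)\varpi_{n-1}\kappa_T(1-\sigma/3)^{n+\alpha} + (1-\sigma/3)^n\bE_C(T,1-\sigma/3)$, while the monotonicity formula \eqref{2.2(4)} taken in the limit $r \searrow 0$ (with interior density $\Theta_T(0) = (M+m)/2$) yields $\mu_T(\bB_{1-\sigma/3}) \geq (M+m)\varpi_n(1-\sigma/3)^n/2 - c_4\kappa_T(1-\sigma/3)^{n+\alpha}/2$; the leading $(M+m)$-terms cancel exactly, and after absorbing the excess at radius $1-\sigma/3$ into $\bE_C(T,1)$ via \eqref{1.4(1)} and using $(1-\sigma/3)^{n+\alpha} \leq 1$, one obtains $\mu_T(C_{1-\sigma/3} \setminus B_{1-\sigma/6}) \leq [1+(M-m)\varpi_{n-1}+c_4](\bE_C(T,1)+\kappa_T)$. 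The hypothesis \eqref{appendixlemma4excess} then forces this to be strictly less than $(2/3)^n\varpi_n\sigma^{n+1}$, which is in turn at most $\varpi_n(1-\sigma/3)^n$ (using $\sigma<1$ and $(1-\sigma/3)^n \geq (2/3)^n$); combined with the mass lower bound this yields $|\mathfrak{m}| < 1$, so $\mathfrak{m}=0$. The main obstacle is precisely this sharp cancellation in the mass estimate: the naïve bound by $\mu_T(C_{1-\sigma/3})$ alone is far too large, and one must carefully match the projection formula from \eqref{appendixlemma2identity}--\eqref{appendixlemma2retraction} against the monotonicity formula so that the $(M+m)$-dominant terms vanish, leaving only excess and $\kappa_T$ contributions with the exact constant $[1+(M-m)\varpi_{n-1}+c_4]$ matching the denominator of the hypothesis.
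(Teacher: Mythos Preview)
Your proposal is correct and follows essentially the same route as the paper: reduce to a remainder current, apply the constancy theorem to its $\bop$-pushforward, and kill the resulting integer via the mass estimate $\mu_T(C_{1-\sigma/3}) - \mu_T(\bB_{1-\sigma/3}) \leq [1+(M-m)\varpi_{n-1}+c_4](\bE_C(T,1)+\kappa_T)$ obtained by playing \eqref{appendixlemma2identity}--\eqref{appendixlemma2retraction} against monotonicity. The only cosmetic difference is that the paper removes the smaller ball $\bB_{1-\sigma/3}$ (and invokes \eqref{2.2(5)} directly) whereas you remove $B_{1-\sigma/6}$ (and take the $r\searrow 0$ limit in \eqref{2.2(4)}); your choice is slightly more direct since it matches the statement without a further step, and your explicit slicing analysis of $\partial R$ spells out what the paper compresses into ``the constancy theorem and \eqref{appendixlemma4boundary} imply.''
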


\begin{proof} First, we compute using \eqref{2.2(5)} and \eqref{defmass}
$$\left( \frac{M+m}{2} \right) \varpi_{n} - \frac{\mu_{T}(\bB_{1-\frac{\sigma}{3}})}{\left( 1-\frac{\sigma}{3} \right)^{n}} = - \bE_{S} \left( T,1-\frac{\sigma}{3} \right) \leq c_{4} \kappa_{T}.$$
This together with Lemma 26.25 of \cite{S83}, \eqref{1.4(1)}, and \eqref{appendixlemma2retraction},\eqref{appendixlemma2identity} gives
$$\begin{aligned}
\bM \Big( \bop_{\sharp} (T \res & C_{1-\frac{\sigma}{3}} \setminus \bB_{1-\frac{\sigma}{3}}) \Big) \\
\leq & \mu_{T}(C_{1-\frac{\sigma}{3}} \setminus \bB_{1-\frac{\sigma}{3}}) \\
= & \left( 1-\frac{\sigma}{3} \right)^{n} \bE_{C} \left( T,1-\frac{\sigma}{3} \right) +  \mu_{\bop_{\sharp} T}(C_{1-\frac{\sigma}{3}}) - \mu_{T}(\bB_{1-\frac{\sigma}{3}}) \\
\leq & \bE_{C}(T,1) + \mu_{\bop_{\sharp} T}(C_{1-\frac{\sigma}{3}}) - \mu_{T}(\bB_{1-\frac{\sigma}{3}}) \\
\leq & \bE_{C}(T,1) + \left( \frac{M+m}{2} \right) \varpi_{n} \left( 1-\frac{\sigma}{3} \right)^{n} \\
& + (M-m) \kappa_{T} \varpi_{n-1} \left( 1-\frac{\sigma}{3} \right)^{n+\alpha} - \mu_{T}(\bB_{1-\frac{\sigma}{3}}) \\
\leq & \bE_{C}(T,1) + (M-m) \kappa_{T} \varpi_{n-1} \left( 1-\frac{\sigma}{3} \right)^{n+\alpha} + \left( 1-\frac{\sigma}{3} \right)^{n} c_{4} \kappa_{T} \\
\leq & (1+(M-m) \varpi_{n-1}+c_{4}) (\bE_{C}(T,1) + \kappa_{T}),
\end{aligned}$$
recalling as well $\sigma \in (0,1).$ Combining with \eqref{appendixlemma4excess} gives
$$\bM \Big( \bop_{\sharp} (T \res C_{1-\frac{\sigma}{3}} \setminus \bB_{1-\frac{\sigma}{3}}) \Big) \leq \left( \frac{2}{3} \right)^{n} \varpi_{n} \sigma^{n+1}.$$
On the other hand, the constancy theorem (see Theorem 26.27 of \cite{S83}) and \eqref{appendixlemma4boundary} imply
$$\bop_{\sharp} (T \res C_{1-\frac{\sigma}{3}} \setminus \bB_{1-\frac{\sigma}{3}}) = \tilde{m} \bbE^{n} \res \bB^{n}_{1-\frac{\sigma}{3}}$$
for some integer $\tilde{m}.$ The above estimate implies we need
$$|\tilde{m}| \left( \frac{2}{3} \right)^{n} \varpi_{n} \leq |\tilde{m}| \left( 1-\frac{\sigma}{3} \right)^{n} \varpi_{n} \leq \left( \frac{2}{3} \right)^{n} \varpi_{n} \sigma^{n+1}.$$
Since $\sigma \in (0,1)$ we conclude $\tilde{m}=0,$ from which the lemma follows. \end{proof}

\end{flushleft}
\end{document}